\documentclass[11pt]{amsart} 
\usepackage{amssymb}
\usepackage{amsmath}
\usepackage{enumitem}
\usepackage{mathrsfs}
\usepackage[english]{babel}
\usepackage[all]{xy}
\usepackage{tensor}
\usepackage{hyperref}
\usepackage{marginnote}
\usepackage{mathtools}

\usepackage{tikz-cd}
\usepackage{tikz}

 \usetikzlibrary{calc}

\tikzset{
  fibre node/.style={circle, draw, inner sep=1.2pt, minimum size=6pt},
  fibre edge/.style={line width=0.8pt},
  labeltext/.style={font=\small}
}

\newcommand{\drawInFiber}[3]{%
  \def\n{#1}\def\r{#2}\def\m{#3}%
  \foreach \k in {1,...,\n}{
    \path ({360/\n*\k}: \r) node[fibre node] (v\k) {};
  }
  \foreach \k in {1,...,\n}{
    \pgfmathtruncatemacro{\knext}{mod(\k,\n)+1}
    \draw[fibre edge] (v\k) -- (v\knext);
  }
  \ifnum\m>1
    \node[labeltext] at (0,-1.2*\r) {$\times\,\m$};
  \fi
}

\usepackage{stmaryrd}

\usepackage[margin=1.1in]{geometry}

\newtheorem{theorem}{Theorem}[section]
\newtheorem{lemma}[theorem]{Lemma}
\newtheorem{proposition}[theorem]{Proposition}
\newtheorem{corollary}[theorem]{Corollary}
\newtheorem{conjecture}[theorem]{Conjecture}
\newtheorem{alphtheorem}{Theorem}
\newtheorem{alphcorollary}[alphtheorem]{Corollary}

\theoremstyle{definition}
\newtheorem*{ack}{Acknowledgements}
\newtheorem*{con}{Conventions}
\newtheorem{remark}[theorem]{Remark}
\newtheorem{example}[theorem]{Example}
\newtheorem{definition}[theorem]{Definition}

\numberwithin{equation}{section} \numberwithin{figure}{section}

\DeclareMathOperator{\Spec}{Spec}

\DeclareMathOperator{\supp}{supp}

\DeclarePairedDelimiter\floor{\lfloor}{\rfloor}
\DeclarePairedDelimiter\powerseries{[ \! [}{] \! ]}

\newcommand{\SL}{\textrm{SL}}

\newcommand*\ratmap{\mathbin{\tikz [baseline=0ex,-latex, dashed, ->] \draw [densely dashed] (0em,0.58ex) -- (1.3em,0.58ex);}}

\usepackage{color}

\definecolor{orange}{rgb}{1,0.5,0}

\newcommand{\ari}[1]{{\color{red} \sf [#1]}}

\RequirePackage[normalem]{ulem}

\title[Weakly special varieties,  Campana stacks,  Orbifold Mordell]{Weakly special varieties, Campana stacks, and  Remarks on Orbifold Mordell}

\author{Finn Bartsch}
\address{Finn Bartsch \\
IMAPP Radboud University Nijmegen \\
PO Box 9010, 6500GL \\
Nijmegen, The Netherlands\\}
\email{f.bartsch@math.ru.nl}

\author{Ariyan Javanpeykar}
\address{Ariyan Javanpeykar,
IMAPP Radboud University Nijmegen,
PO Box 9010, 6500GL,
Nijmegen, The Netherlands}
\email{ariyan.javanpeykar@ru.nl}

\subjclass[2010]
{14D99 
(14E22, 
14G99, 
11G99)} 

\keywords{Campana orbifold pairs, special varieties, function fields, integral points, Lang-Vojta conjecture, abc conjecture,  algebraic stacks}

\begin{document}

\begin{abstract}   We construct the first weakly special surfaces that are not Campana-special, including the complement of the plane curve $x^2y^3 = 1$ in $\mathbb{A}^2$. We prove that the set of $\mathcal{O}_{K,S}$-integral points on this surface is non-dense for every number field $K$ and finite set $S$ of finite places of $K$ if and only if Campana's Orbifold Mordell conjecture holds for $(\mathbb{G}_m, \tfrac{1}{2}[1])$.  This basic example carries a natural $\mathbb{G}_m$-action, and the quotient stack is an Artin stack parametrizing points on a C-pair. This leads to the introduction of ``Campana stacks'', which encode morphisms of C-pairs in a manner analogous to the role of root stacks for integral points satisfying prescribed divisibility conditions.
\end{abstract}
\maketitle

\thispagestyle{empty}

  \tableofcontents 
\section{Introduction}

 The first aim of this paper is to present new  examples of weakly special varieties that are not Campana-special. Our basic example is the complement of the plane curve $x^2y^3=1$ in $\mathbb{A}^2$. This yields a two-dimensional weakly special but nonspecial variety and illustrates that the notions ``weakly special'' and ``special'' already differ in dimension two. 
 
\begin{alphtheorem}[{Corollary \ref{cor:ab_puncture}}]\label{thm:A2_minus_x2y3}
The surface $\mathbb{A}^2\setminus Z(x^2y^3-1)$ is weakly special but not special. 
\end{alphtheorem}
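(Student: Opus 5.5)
The plan is to exploit the fibration $f\colon X=\mathbb{A}^2\setminus Z(x^2y^3-1)\to \mathbb{G}_m$, $(x,y)\mapsto x^2y^3$, shifted so that the target is $\mathbb{A}^1\setminus\{1\}\cong\mathbb{G}_m$. Equivalently, we view $X$ as the total space over $\mathbb{A}^1\setminus\{1\}$ via $t=x^2y^3$. The fibres over $t\neq 0,1$ are curves $x^2y^3=t$. Each such fibre is isomorphic to $\mathbb{G}_m$, parametrized by $s\mapsto (t s^{-3}\cdot(\text{appropriate}),\dots)$; concretely, writing $x=u^3$ and $y=u^{-2}\cdot(\dots)$, the map $\mathbb{G}_m^2\to \mathbb{G}_m^2$, $(x,y)\mapsto(x^2y^3,\,xy)$, is an automorphism, since the matrix $\begin{pmatrix}2&3\\1&1\end{pmatrix}$ has determinant $-1$. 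The fibre over $t=0$ is the union of the axes $x=0$ and $y=0$, and these appear with multiplicities $2$ and $3$. So the orbifold base of $f$ is $(\mathbb{A}^1\setminus\{1\},\ (1-\tfrac{1}{\gcd\text{-type multiplicity}})[0])$. In Campana's sense (inf multiplicity) the multiplicity at $0$ is $\min(2,3)=2$, giving the base $(\mathbb{G}_m,\tfrac12[0])$. This base has $\deg(K+\Delta)=-2+2+\tfrac12>0$, so it is of general type. Hence $f$ is a fibration of general type onto an orbifold base, and $X$ is not special. I would verify this using the definition of special via Bogomolov sheaves or orbifold bases, presumably through a result stated before Corollary \ref{cor:ab_puncture}, computing multiplicities on a suitable compactification. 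The key step is checking that the $t=0$ fibre has no other components (true in $\mathbb{A}^2$), and that passing to neat models does not lower the multiplicity below $2$ at the generic point of $t=0$.

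For weak specialness I must show two things. First, no finite étale cover of $X$ dominates a positive-dimensional variety of log general type. Second, $X$ should have no such covers at all, or covers of a controlled form; this requires computing $\pi_1$. The divisor $Z(x^2y^3-1)$ is an irreducible curve isomorphic to $\mathbb{G}_m$ (by the same monomial change). By Zariski–van Kampen, or simply by the $\mathbb{G}_m^2$-coordinates restricted appropriately, I expect $\pi_1(X)\cong\mathbb{Z}$, generated by a meridian. Its finite étale covers are then $X_n=\{(x,y,w): w^n=x^2y^3-1\}$. Each $X_n$ again fibres, via $w$, over $\mathbb{A}^1\setminus\{\mu_n\text{-roots of }-1\}$, with fibres $x^2y^3=w^n+1$. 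Over points where $w^n\neq -1$ the fibre is $\mathbb{G}_m$, and over the $n$ points with $w^n=-1$ the fibre has multiplicities $2$ and $3$. Since $\gcd(2,3)=1$, these fibres are not multiple in the classical sense.

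The main obstacle is to show that any dominant $g\colon X_n\to Y$ with $Y$ of log general type and $\dim Y>0$ is impossible. Since the general fibres are $\mathbb{G}_m$, which is not of log general type, and $\dim X_n=2$, I would argue in two cases. If $\dim Y=2$, then $X_n$ would be of log general type, which contradicts the fact that it is covered by images of $\mathbb{G}_m$ and so has $\bar\kappa\le 1$. If $\dim Y=1$, then the $\mathbb{G}_m$-fibres must be contracted, because $Y$ is hyperbolic. Hence $g$ factors through the $w$-fibration up to Stein factorization. Then $Y$ receives a dominant map from $\mathbb{A}^1_w$ that is surjective onto $Y$ away from the points over $w^n=-1$. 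Those special fibres are non-multiple ($\gcd=1$), so their images still contain a reduced component. This makes the induced map $\mathbb{A}^1\to Y$ defined and dominant, and so $Y$ is not of log general type, a contradiction. The delicate point is exactly this use of the gcd multiplicity versus Campana's inf multiplicity: the gap between these two is what separates weakly special from special.
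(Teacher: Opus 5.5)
Your non-specialness argument is the same as the paper's: the map $(x,y)\mapsto x^2y^3$ has orbifold base $(\mathbb{P}^1,[1]+[\infty]+\tfrac12[0])$, which is of general type. For weak specialness you take a different route, and as written the decisive step is missing.

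\textbf{The missing step: $\pi_1(X)\cong\mathbb{Z}$.} The paper never computes a fundamental group. It applies Theorem~\ref{thm:family_of_ws_vars} to the same fibration. There, non-divisibility of the fibre $2F_1+3F_2$ is fed into Lemma~\ref{lemma:stein}, which shows that for \emph{every} finite \'etale cover $X'\to X$ the Stein factorization over $\mathbb{A}^1\setminus\{1\}$ is \'etale, hence weakly special. In your approach all of that content sits in the claim $\pi_1(X)\cong\mathbb{Z}$, which you only ``expect''. The claim is true, and it is exactly where $\gcd(2,3)=1$ enters, so it has to be proved. Here is one way:
\begin{itemize}
\item Via $(x,y)\mapsto(x^2y^3,xy)$ one gets $X\cap\{xy\neq 0\}\cong(\mathbb{A}^1\setminus\{0,1\})\times\mathbb{G}_m$. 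Its fundamental group is $F(a,b)\times\langle c\rangle$, where $a,b$ are loops around $t=0,1$ and $c$ is the fibre loop.
\item The meridians of $\{x=0\}$ and $\{y=0\}$ are conjugates of $a^2c$ and $a^3c$.
\item Killing them, and using that $c$ is central, kills $a$ and then $c$, leaving $\langle b\rangle\cong\mathbb{Z}$.
\item By comparison, $\pi_1^{\mathrm{et}}(X_{\overline{k}})\cong\widehat{\mathbb{Z}}$. Each $X_n$ is a connected cover of degree $n$, and $\widehat{\mathbb{Z}}$ has a unique subgroup of each index, so the $X_n$ are all the connected covers.
\end{itemize}

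\textbf{The final step is justified incorrectly.} Since $w$ is invertible on $X_n$, the base of $w\colon X_n\to\mathbb{G}_m$ is $\mathbb{G}_m$. The points with $w^n=-1$ lie in this base; it is neither $\mathbb{A}^1_w$ nor $\mathbb{A}^1$ minus the $n$-th roots of $-1$. The fibres over $w^n=-1$ are $2F_1+3F_2$, which have \emph{no} reduced component, so ``$\gcd=1$ gives a reduced component'' is false.

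Nothing about multiplicities is needed at this point. If $g$ contracts the general fibre, then $g=h\circ w$, with $h\colon\mathbb{G}_m\to\overline{Y}$ a morphism to the smooth compactification of $Y$. Moreover $h$ lands in $Y$: $g$ is a morphism to $Y$ on a modification of $X_n$, and that modification surjects onto every (nonempty) fibre of $w$. A dominant morphism $\mathbb{G}_m\to Y$ gives $\bar\kappa(Y)\le 0$, a contradiction. So the gcd-versus-inf distinction matters only in the $\pi_1$ computation, not here. In the case $\dim Y=2$, make your bound $\bar\kappa(X_n)\le 1$ precise via easy addition for $w$.

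With these repairs your proof is a valid, explicit alternative. The paper's criterion avoids any fundamental-group computation. It therefore applies verbatim to $x_1^{a_1}\cdots x_n^{a_n}$, to punctured versions, and to the elliptic examples, whereas your argument depends on knowing all covers explicitly.
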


To prove that $\mathbb{A}^2\setminus Z(x^2y^3-1)$ is weakly special, we apply a fibration theorem (Theorem~\ref{thm:family_of_ws_vars_intro}). This result is established in Section~\ref{section:ws}, where we also construct further weakly special nonspecial varieties.

  \subsection{Weakly special varieties}
Let $X$ be a smooth projective  variety over a field $k$ of characteristic zero, and let $D$ be an snc divisor on $X$. Recall that the pair $(X,D)$ is of \emph{general type} if $K_X+D$ is big.  
If $U$ is a variety over $k$, we say that $U$ is of \emph{log-general type} if there exists a proper birational surjective morphism $V\to U$ with $V$ smooth and an snc compactification $X$ of $V$ such that $(X,D)$ is of general type, where $D:=X\setminus V$. 
A rational map $X\ratmap Y$ of varieties is \emph{strictly rational} if there exists a proper birational surjective morphism $X'\to X$ such that $X'\ratmap Y$ is a morphism.  

\begin{definition}\label{def:ws}
A smooth quasi-projective variety $X$ over a field $k$ of characteristic zero with algebraic closure~$\overline{k}$ is \emph{weakly special} if no finite \'etale cover $X'\to X_{\overline{k}}$ admits a dominant strictly rational map $X'\ratmap Y$ to a positive-dimensional variety of log-general type $Y$ over~$\overline{k}$. A geometrically integral variety $X$ over $k$ is \emph{weakly special} if there exists a proper birational surjective morphism $X'\to X$ with $X'$ smooth quasi-projective, geometrically integral, and weakly special. A finite type separated scheme over $k$ is \emph{weakly special} if every irreducible component of $X_{\overline{k}}$ is weakly special.
\end{definition}

If $X$ is a smooth quasi-projective connected curve over $\overline{k}$, then $X$ is weakly special if and only if it is isomorphic to $\mathbb{P}^1$, $\mathbb{A}^1$, $\mathbb{A}^1\setminus \{0\}$, or a smooth proper curve of genus one. If $X\subset A$ is an integral closed subvariety of an abelian variety $A$ over $\overline{k}$, then $X$ is weakly special if and only if it is a translate of an abelian subvariety.

 \subsection{Orbifold base}
Given a dominant morphism of varieties $X\to S$, we follow Campana (see \cite[Definition~4.2]{Ca11}) and define a $\mathbb{Q}$-divisor on $S$ that keeps track of the inf-multiple fibres (in codimension one); this divisor is  referred to as the ``orbifold base'' of the fibration.
 
Let $S$ be an integral regular noetherian scheme, and let $S^{(1)}$ denote the set of codimension one points of $S$. If $s$ is a codimension one point of $S$, we let $D_s$ be its closure in $S$. Let $f\colon X\to S$ be a dominant finite type   morphism  with  $X$ a regular  noetherian scheme, and let $s\in S^{(1)}$. Let $X_{D_s}$ be the scheme-theoretic inverse image of $D_s$ in $X$. 

\begin{definition} 
We define the \emph{inf-multiplicity $m_s$ of $X_s$} (resp.\ \emph{gcd-multiplicity $m_s^+$ of $X_s$}) as follows.
If $X_{s}$ is empty, we define $m_s=\infty$ (resp.\ $m_s^+=\infty$). If $X_{s}$ is non-empty, then $X_{D_s}$ is a non-trivial divisor dominating $D_s$ and we can write 
\[
X_{D_s} = R + \sum_{i=1}^n a_i F_i ,
\]
where $R$ is an effective $f$-exceptional divisor (i.e., none of its components dominate $D_s$), each $F_i$ is an irreducible component of $X_{D_s}$ dominating $D_s$, and $a_i\in \mathbb{Z}_{\geq 1}$. We then define $m_s = \inf(a_i)$ (resp.\ $m_s^+ = \gcd(a_i)$).   
\end{definition}

Note that $m_s$ is finite if and only if $m_s^+$ is finite.  Moreover, if $m_s$ is an integer, then $m_s^+$ divides $m_s$.

\begin{definition}
We say that $X_s$ is \emph{inf-multiple} if $m_s\in \mathbb{Z}_{\geq 2}\cup \{\infty\}$. We say that $X_s$ is \emph{divisible} if $m_s^+ \in \mathbb{Z}_{\geq 2} \cup \{\infty\}$.  
\end{definition} 

If $X$ is regular and $f$ is generically smooth (for instance, if everything is over a field of characteristic zero), we define the \emph{orbifold divisor $\Delta_f$ of $f$} to be the Weil $\mathbb{Q}$-divisor $\sum_{s\in S^{(1)}} \left(1-\frac{1}{m_s}\right) D_s$ and refer to the pair $(S,\Delta_f)$ as the \emph{orbifold base of $f$}.

 \subsection{Special varieties} \label{section:c-pair} 
   
Let $f \colon X \ratmap Y$ be a dominant rational map of smooth varieties over a field of characteristic zero. Following  Campana  \cite[Definition~1.3.1]{Ca04} (see also \cite[\S 1]{BJRSeveri}), we say that $f$ is of \emph{general type} if, for every smooth proper variety $Y'$, every birational map $Y\ratmap Y'$, and every smooth variety $X'$ with a proper birational morphism $X'\to X$ such that $f'\colon X'\ratmap Y'$ is a morphism, the  $\mathbb{Q}$-divisor $K_{Y'}+ \Delta_{f'}$ is big. 

\begin{definition}\label{definition:special}  
A variety $X$ over a field $k$ of characteristic zero is \emph{(Campana-)special} if some resolution of singularities $X'$ of $X_{\overline{k}}$ admits no  dominant rational map of general type onto a positive-dimensional smooth variety over $\overline{k}$.
\end{definition}

Note that $X =\mathbb{A}^2\setminus Z(x^2y^3-1)$ is not special (Theorem \ref{thm:A2_minus_x2y3})     as the dominant  morphism  $X\to \mathbb{A}^1\setminus \{1\}\subset \mathbb{P}^1$ given by $(x,y)\mapsto x^2y^3$ is a map of general type. Indeed, its orbifold base is $(\mathbb{P}^1,  [1] +[\infty] + \frac{1}{2}[0] )$. 
 
 Smooth projective rationally connected varieties are special \cite[Theorem~3.22]{Ca04}. Abelian varieties, K3 surfaces, and Enriques surfaces are also special; more generally, all smooth projective varieties of Kodaira dimension zero are special (see \cite[Theorem~5.1]{Ca04}). Since a finite \'etale cover of a special variety is special by Campana's theorem \cite[Th\'eor\`eme~10.11]{Ca11}, every special variety is weakly special.

\begin{definition}
A finite type scheme $X$ over a number field $K$ is \emph{arithmetically special} (or \emph{satisfies potential density of integral points}) if there exists a finite field extension $L/K$, a finite set of finite places $S$ of $L$, and a finite type model $\mathcal{X}$ for $X$ over $\mathcal{O}_{L,S}$ such that  the image of $\mathcal{X}(\mathcal{O}_{L,S})$ in $X(L)$ is dense in $X$.
\end{definition}

Lang and Vojta conjectured the non-density of integral points on positive-dimensional varieties of log-general type (see \cite[Conjecture~5.7]{Langconj} for the projective case and \cite[Conjecture~4.3]{Vojta3} for the quasi-projective case). Following~\cite[Remark~3.2.1]{CTSSD}, an application of the Chevalley--Weil theorem predicts that a variety $X$ over a number field $K$ satisfying potential density of integral points must be weakly special.

 \begin{conjecture}[Lang--Vojta + Chevalley--Weil]\label{conj:lv} Let $X$ be a variety over a number field. If $X$ satisfies potential density of integral points, then $X$ is weakly special.  
\end{conjecture}

In the projective setting, Harris--Tschinkel, following suggestions of Abramovich and Colliot-Th\'el\`ene, asked whether the converse of Conjecture \ref{conj:lv} holds (see \cite[Conjecture~1.2]{HarrisTschinkel}). More generally, one may ask whether a weakly special variety over a number field is arithmetically special. We record this as a conjecture (although it is expected to fail in general; see the discussion below).

\begin{conjecture}[Weakly Special Conjecture]\label{conj:ws}
If $X$ is a weakly special variety over a number field $K$, then $X$ has a potentially dense set of integral points.
\end{conjecture}

Conjecture \ref{conj:ws} holds for abelian varieties, Enriques surfaces, and certain K3 surfaces and Calabi--Yau threefolds (see \cite{HassettSurvey, HassettTschinkel, BogHalPazTan, BT3}).
Nevertheless, it is expected to fail for several weakly special varieties. Another conjecture, formulated by Campana in \cite{Ca04, Ca11}, is the following.

\begin{conjecture}[Campana]\label{conj:special}
Let $X$ be a variety over a number field $K$. Then $X$ is special if and only if the set of integral points on $X$ is potentially dense.
\end{conjecture}

Campana introduced the terminology ``weakly special varieties''  to emphasize the distinction between weakly special and special varieties (see \cite{Ca05}). The first examples of weakly special nonspecial varieties were constructed by Bogomolov--Tschinkel \cite{BT}. These threefolds fail to be special because they dominate a Deligne--Mumford   root stack of general type,  indicating that the Weakly Special Conjecture contradicts the analogue of Lang--Vojta for root stacks. In \cite{BJR, GRTW, RTW} (resp. \cite{CP07, CampanaWinkelmannBrody})  some of the threefolds of Bogomolov and Tschinkel were used to disprove certain  function field (resp. complex-analytic) analogues of the Weakly Special Conjecture.

Subsequently, \cite{BCJW} established the existence of nonspecial threefolds over $\mathbb{Q}$ that are weakly special but do not dominate any (Deligne--Mumford) root stack of general type.  Using these threefolds, \cite{BCJW} further shows that the Weakly Special Conjecture contradicts Campana's Orbifold Mordell Conjecture discussed below. 

\subsection{Families of weakly special varieties}
  \label{section:weakly_special_examples}

To prove that $\mathbb{A}^2 \setminus Z(x^2 y^3-1)$ is weakly special (Theorem \ref{thm:A2_minus_x2y3}), we  show that the total space of a family of weakly special varieties over a weakly special base is itself weakly special, provided that the family contains no divisible fibres. This criterion was first proved in the projective setting in \cite[\S 2]{BCJW}; here we extend it to the quasi-projective case.

\begin{alphtheorem}[{Theorem~\ref{thm:family_of_ws_vars}}] \label{thm:family_of_ws_vars_intro}
Let $k$ be a field of characteristic zero. Let $X\to S$ be a surjective morphism of smooth varieties over $k$ whose generic fibre is geometrically connected (hence non-empty).  
Assume that the following hold.
\begin{enumerate}
\item The variety $S$ is weakly special.
\item The set of $s$ in $S(\overline{k})$ for which $X_s$ is a weakly special variety is dense in $S$. 
\item For every codimension one point $s$ in $S$, the fibre $X_s$ is not divisible.
\end{enumerate}
Then $X$ is weakly special. 
\end{alphtheorem}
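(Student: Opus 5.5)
We argue by contradiction. Suppose we are given a finite étale cover $X'\to X_{\overline{k}}$ and a dominant strictly rational map $g\colon X'\ratmap Y$ onto a positive-dimensional variety $Y$ of log-general type. We may replace $k$ by $\overline{k}$ and, after passing to the Galois closure, assume that $X'\to X$ is Galois. Let $X'\to S'\to S$ be the Stein factorization of the composite $X'\to X\to S$, computed after normalizing and passing to the finite part over the generic point.

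\textbf{Step 1: the cover $S'\to S$ is étale in codimension one.} Since the generic fibre of $X\to S$ is geometrically connected, $S'\to S$ is finite. We check that it is unramified over codimension one points. Over a codimension one point $s$, the fibre $X_s$ is not divisible by hypothesis (3). So either some component $F_i$ of $X_{D_s}$ dominating $D_s$ has multiplicity $a_i$ coprime to the others, or the gcd of the multiplicities is $1$. The cover $X'\to X$ is étale, so the multiplicities of the components of $X'$ lying over $F_i$ are again the $a_i$. If $e$ is the ramification index of $S'\to S$ along a divisor over $D_s$, then $e$ divides the multiplicity of every component of $X'$ over that divisor, hence $e\mid\gcd(a_i)=1$. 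By purity of the branch locus, $S'\to S$ becomes finite étale after removing a codimension $\geq 2$ subset. Since $S$ is smooth, purity actually gives that $S'$ (normalized) is finite étale over $S$. We then use that a finite étale cover of a weakly special variety is weakly special, which is immediate from Definition~\ref{def:ws}. This shows that $S'$ is weakly special.

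\textbf{Step 2: the map $g$ factors through the base.} The fibres of $X'\to S'$ over a dense set of points are finite étale covers of fibres $X_s$ with $X_s$ weakly special, by hypothesis (2) and the connectedness of Stein factors. Each such fibre is therefore weakly special. We restrict $g$ to a general such fibre $X'_{s'}$; its image is a subvariety of $Y$. To conclude that $g$ contracts the fibres, we use the standard fact that the image of a weakly special variety under a strictly rational map to a variety of log-general type must be a point. This fact requires care: a subvariety of a log-general type variety need not be of log-general type. I would handle this with a relative version of Kawamata's or Viehweg's result that the locus $\{y : \text{the closure of } g(X'_{s'}) \text{ through } y \text{ is not of log-general type}\}$ is contained in a proper subvariety. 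Concretely, replace $Y$ by the base of the relative Iitaka/log-canonical fibration and use the subadditivity of log Kodaira dimension. This is the main obstacle, and it is where the quasi-projective case differs from \cite[\S 2]{BCJW}; I expect to mimic their argument using Kawamata's theorem on the logarithmic Kodaira dimension of fibre spaces.

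\textbf{Step 3: conclusion.} Granting Step 2, $g$ factors as a dominant strictly rational map $S'\ratmap Y$, with $Y$ positive-dimensional and of log-general type. This contradicts the weak specialness of $S'$ established in Step 1. Hence no such $g$ exists, and $X$ is weakly special.
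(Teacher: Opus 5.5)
Your architecture matches the paper's. First you show that the Stein factor $S'\to S$ of the cover is finite étale, using non-divisibility and purity (the paper packages this as Lemma~\ref{lemma:stein}). Then you show that a dominant map to a log-general-type variety must contract the weakly special fibres and descend to $S'$ (Lemma~\ref{lemma:ws_family_step_one}). However, two steps are not justified as written.

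\textbf{Step 1.} The inference ``$e$ divides the multiplicity of every component of $X'$ over $D'$, hence $e\mid\gcd(a_i)=1$'' assumes that the components of $X'_s$ lying over a single point $s'\in S'$ above $s$ map onto \emph{all} the $F_i$. Nothing you wrote ensures this. $X'_s$ is the disjoint union of the pieces $X'_{s'}$, and a priori one piece could lie only over those $F_i$ whose multiplicities share a common factor.

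For non-proper $X$ this is not a formality. Take $X\setminus\{(0,0)\}$ with $X=\mathbb{A}^2\setminus Z(x^2y^3-1)$. Its fibre over $0$ is $(Z(x)\setminus 0)\sqcup(Z(y)\setminus 0)$, with multiplicities $2$ and $3$. So the fibre is non-divisible, although each connected component is divisible.

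Proving this surjectivity is precisely what the paper's proof does. It extends $X\to S$ to a proper $\overline{X}\to S$ and normalizes $\overline{X}$ in $k(X')$. It uses that $\overline{X}_s$ is connected to see that each connected component $F$ of $\overline{X'}_s$ maps finitely and surjectively onto $\overline{X}_s$. Restricting to $X$, $F\cap X'_s\to X_s$ is finite étale surjective, hence non-divisible.

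Your Galois reduction also yields this, but you have to actually use it. Since $k(S)$ is algebraically closed in $k(X)$, $G$ acts transitively on the points of $S'$ over $s$. (This is where geometric connectedness of the generic fibre enters; it is not what makes $S'\to S$ finite.) Since $X'\to X$ is $G$-invariant, all the $X'_{s'}$ have the same image, which must then be $X_s$.

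\textbf{Step 2.} This step is left open, and the remedy you sketch is heavier than necessary and not really formulated. The ``standard fact'' is false: a rational curve on a surface of general type, for instance an exceptional curve of a blow-up, is a counterexample. Subadditivity and relative Iitaka fibrations play no role, since $Y$ is already of log-general type.

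What you need is only that the closures $W_{s'}$ of $g(X'_{s'})$, which form a covering family of $Y$, are of log-general type for general $s'$. This is elementary:
\begin{enumerate}
\item Cut the incidence family down to a subfamily $\mathcal{W}\to T$ that is generically finite and dominant over $Y$.
\item Then $\mathcal{W}$ is of log-general type, because $\bar\kappa$ does not drop under dominant generically finite maps.
\item Easy addition, $\bar\kappa(\mathcal{W})\le\bar\kappa(W_t)+\dim T$, then forces $W_t$ to be of log-general type.
\end{enumerate}

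The weakly special fibres are only dense, not open. So pick $s'$ in that dense set lying inside the open locus where three things hold: $W_{s'}$ is positive-dimensional, $W_{s'}$ is of log-general type, and $g|_{X'_{s'}}$ is strictly rational. This contradicts weak specialness of $X'_{s'}$. That is exactly the content of Lemma~\ref{lemma:ws_family_step_one}, which you would then invoke in place of your Step 2.
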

 

 The fact that $X =\mathbb{A}^2\setminus Z(x^2y^3-1)$ is weakly special (Theorem \ref{thm:A2_minus_x2y3})    follows  from applying  Theorem \ref{thm:family_of_ws_vars_intro} to the surjective morphism  $X\to \mathbb{A}^1\setminus \{1\}$ given by $(x,y)\mapsto x^2y^3$.   
  This construction yields further   examples of weakly special nonspecial varieties; see Section \ref{section:ws}.

\subsection{Campana maps and C-pairs}\label{section:orbifold_mordell} 
Let $X$ be an integral locally factorial noetherian scheme, and let $\Delta = \sum_i (1-\frac{1}{m_i}) D_i$ be a $\mathbb{Q}$-divisor, where $m_i\in \mathbb{Z}_{\geq 1}\cup \{\infty\}$. We refer to $(X,\Delta)$ as a \emph{C-pair} and to $m_i$ as the \emph{multiplicity} of $D_i$ in $\Delta$.

If $X$ is a locally factorial variety over a field $k$ and $(X,\Delta)$ is a C-pair, we say that $(X,\Delta)$ is \emph{smooth} if $X$ is smooth and the support of $\Delta$ has simple normal crossings. Moreover, the C-pair $(X,\Delta)$ is \emph{proper} if $X$ is proper over $k$. 
A smooth proper C-pair $(X,\Delta)$ is of \emph{general type} if $K_X+\Delta$ is a big $\mathbb{Q}$-divisor. 

Let $X$ be a regular noetherian scheme. Let $D_1,\ldots,D_n$ be integral divisors on $X$, and for each $1\leq i\leq n$ let $m_i\in \mathbb{Z}_{\geq 1}\cup \{\infty\}$.  Write $\Delta = \sum_{i=1}^n(1-\frac{1}{m_i})D_i$ and consider the C-pair  $(X,\Delta)$. 
The \emph{logarithmic part} $\floor{\Delta}$ of $(X,\Delta)$ is    the union of those $D_i$ for which $m_i =\infty$.
Let $T$ be an integral regular noetherian scheme. A morphism $f\colon T\to X$ is a \emph{morphism of   C-pairs (or Campana map)}, denoted $f\colon T\to (X, \Delta)$, if $f$ factors through $X\setminus\floor{\Delta}$ and, for every $i$ with $f(T)\not\subset D_i$, the coefficient   of each irreducible component $E$ of $f^{-1}(D_i)$ in $f^*D_i$ is at least $m_i$.  Note that we allow for $f\colon T\to X$ to factor over   $D_i$, as long as $m_i$ is finite.  Thus, our definition slightly differs from that of Campana in \cite[Definition~2.3]{Ca11}.  

It is useful to view Campana's condition functorially.  Consider a smooth variety $X$ over $\mathbb{C}$, a smooth divisor $D$ and an integer $m\geq 2$. Then any subfunctor of $\mathrm{Hom}(-,X)$ which on the category of smooth varieties is defined by
\[
T \longmapsto \{\text{morphisms } T\to (X,(1-\tfrac{1}{m})D)\}
\]
 cannot be an fppf sheaf, and in particular is not be representable by a scheme; see Remark \ref{remark:sheaf_vs_stack} for an explicit example. 
 We will see that there is nevertheless an algebraic stack parametrizing Campana maps. The construction of this stack  is based on the observation that in the definition of a morphism of C-pairs we should not only consider the condition on the multiplicities of the irreducible components of $f^\ast D$ but also remember the possible decompositions of $f^\ast D$ as a sum of effective Cartier divisors.

  \subsection{Orbifold Mordell}  Let $(X, \sum_{i=1}^n(1-\frac{1}{m_i})D_i)$ be a C-pair,  where $X$ is a  smooth variety over a number field $K$.   Choose a finite set $S$ of finite places of $K$ and a  regular model  $\mathcal{X}$ for $X$ of finite type over $\mathcal{O}_{K,S}$. Let $\mathcal{D}_i$ be the closure of $D_i$ in $\mathcal{X}$ and define $\Delta =  \sum_{i=1}^n\left(1-\frac{1}{m_i}\right)\mathcal{D}_i $.  We say that  $(X, \sum_{i=1}^n(1-\frac{1}{m_i})D_i)$  is \emph{arithmetically special}  (or \emph{satisfies potential density of integral points}) if there is a finite field extension $L/K$ and  a  finite set $T$ of finite places of $L$ such  that the image of $(\mathcal{X},   \Delta)(\mathcal{O}_{L,T})$  in $X(L)$ is dense. (This property is independent of the  choice of $S$ and   model for $X$ over $\mathcal{O}_{K,S}$.)


\begin{conjecture}[Orbifold Mordell]\label{conj:orb_mor} 
Let $(X,\Delta)$ be a C-pair over a number field $K$ with $X$ a smooth proper curve over $K$ of genus $g$.
Assume that $2g-2 +\deg \Delta >0$.  Then the C-pair $(X,\Delta)$ does not satisfy potential density of integral points.  
\end{conjecture}

  Campana's Orbifold Mordell Conjecture  first appeared in \cite[Conjecture~4.5]{Ca05} and is a very special case of Campana's generalization of Lang's conjecture for C-pairs \cite[Conjecture~13.23]{Ca11}.
The abc conjecture over number fields implies   Orbifold Mordell; this was shown by Smeets \cite[Theorem~5.3]{Smeets} building on the work of Elkies    \cite{Elkies},   Abramovich \cite{AbramovichBirGeom} and Campana \cite[Remarque~4.8]{Ca05}.   If $X$ is of genus at least two or $\Delta$ only has infinite multiplicities, then the Orbifold Mordell Conjecture follows from Faltings's finiteness theorem \cite{Faltings2}.   The function field analogue of Orbifold Mordell was proven by Campana in characteristic zero \cite{Ca05} and Kebekus--Pereira--Smeets in positive characteristic \cite{Smeets2}.  An ``analytic'' analogue of Orbifold Mordell   is established in \cite[Corollary~4]{CampanaWinkelmannBrody}.   

\subsection{Relating Orbifold Mordell to the   Weakly Special Conjecture }
 
Recall that $X=\mathbb{A}^2\setminus Z(x^2y^3-1)$ is a nonspecial surface over $\mathbb{Q}$ (Theorem~\ref{thm:A2_minus_x2y3}). According to Conjecture~\ref{conj:special}, one therefore expects that $X$ does not have a potentially dense set of integral points.
To study its integral points, observe that $X$ admits a faithful $\mathbb{G}_m$-action given by
\[
\lambda \cdot (x,y) = (\lambda^3 x, \lambda^{-2} y).
\]

We prove that the potential density of integral points on $X$ is \emph{equivalent} to the potential density of integral points on the quotient stack $\mathcal{X}=[X/\mathbb{G}_m]$, as well as the potential density of integral points  on the C-pair $(\mathbb{A}^1\setminus \{0\},   \frac{1}{2} [1])$.  Let $R$ be a principal ideal domain and let $x\in R$. For a prime element $p$ of $R$, write $v_p(x)$ for the largest integer $n$ such that $p^n$ divides $x$, with the convention $v_p(0)=\infty$. For a positive integer $m$, we say that $x\in R$ is \emph{$m$-full} if for every prime $p$ we have $v_p(x)=0$ or $v_p(x)\ge m$.
 
\begin{alphtheorem}[Theorem \ref{thm:ws_contradicts_abc_text}]\label{thm:ws_contradicts_abc} Let $X=\mathbb{A}^2\setminus Z(x^2y^3-1)$ and $\mathcal{X}=[X/\mathbb{G}_m]$.  Then  the following are equivalent.
\begin{enumerate}
\item The integral points  on $X= \mathbb{A}^2_{\mathbb{Z}}\setminus Z(x^2y^3-1)$ are potentially dense, i.e., there is a number field $K$ and a finite set of finite places $S$ of $K$ such that $X(\mathcal{O}_{K,S})$ is dense.
\item   There  is a number field $K$ and a finite set $S$ of finite places of $K$ such that the image of   $\pi_0 ( \mathcal{X}(\mathcal{O}_{K,S})) $  in $\pi_0(\mathcal{X}(\overline{K}))$ is infinite.\footnote{Here we denote the set of isomorphism classes in a groupoid $\mathcal{G}$ by $\pi_0(\mathcal{G})$.  In the terminology of \cite[\S 4]{JLalg},  the  stack  $\mathcal{X}$ is not arithmetically hyperbolic.}  
\item The C-pair $(\mathbb{P}^1, [0] + [\infty] +  \frac{1}{2} [1])$  is arithmetically special  (and thus fails ``Orbifold Mordell''). 
\item  There is a number field $K$ and a finite set of finite places $S$ of $K$ such that $\mathcal{O}_{K,S}$ is a principal ideal domain and  
\[\{x \in \mathcal{O}_{K,S}^\times~|~x-1~\text{is}~2\text{-full}\}\] is infinite. \end{enumerate}
\end{alphtheorem}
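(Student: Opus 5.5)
The plan is to route all four conditions through the invariant morphism $f\colon X\to \mathbb{A}^1\setminus\{1\}$, $(x,y)\mapsto t=x^2y^3$, which is $\mathbb{G}_m$-invariant for the action $\lambda\cdot(x,y)=(\lambda^3x,\lambda^{-2}y)$. The key arithmetic observation is that, over a PID $R=\mathcal{O}_{K,S}$, a nonzero $t\in R$ is of the form $x^2y^3$ with $x,y\in R$ if and only if $t$ is $2$-full. Indeed, $2a+3b$ with $a,b\ge 0$ takes exactly the values $0$ and all integers $\ge 2$. Conversely, if every valuation is $0$ or $\ge 2$, one picks prime generators and the exponents $a_p,b_p$, and then absorbs the remaining unit. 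To absorb it, enlarge $S$ so that $R^\times$ is also infinite, and note that a unit $u$ can be written as $u=u^{-2}\cdot u^3$. A point $(x,y)\in X(R)$ is exactly such a pair with $t-1\in R^\times$. Putting $s:=1-t$ gives a unit $s$ with $s-1=-t$ being $2$-full. This identifies $X(R)$ modulo the $R^\times$-action with the set in (4), up to the harmless case $t=0$, i.e.\ $s=1$. Throughout I will use freely that enlarging $K$ and $S$ preserves density. In particular, after passing to the Hilbert class field, or simply adding generators of the class group to $S$, we may assume $R$ is a PID with infinite unit group.

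For (1)$\Leftrightarrow$(4), first suppose $X(R)$ is dense. Then $f(X(R))$ is dense in $\mathbb{A}^1$, hence infinite, and the previous paragraph produces infinitely many $s$ as in (4). Conversely, suppose infinitely many $t$ occur. Each fibre $x^2y^3=t$ with $t\ne 0$ is geometrically irreducible, because $\gcd(2,3)=1$. The orbit map $\lambda\mapsto\lambda\cdot(x_0,y_0)$ identifies this fibre with $\mathbb{G}_m$, since the stabilizer is trivial. Because $R^\times$ is infinite, the orbit $R^\times\cdot(x_0,y_0)\subset X(R)$ is dense in the fibre. A union of dense subsets of infinitely many fibres is dense in the surface $X$.

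For (3)$\Leftrightarrow$(4), I use the model $\mathbb{P}^1_R$ with the closures of $0,\infty,1$. An $R$-point of $\mathbb{P}^1$ is a Campana point of $(\mathbb{P}^1,[0]+[\infty]+\frac12[1])$ if and only if it avoids $0$ and $\infty$ at every prime, so $s\in R^\times$. In addition, its intersection multiplicity with $[1]$ at each prime must be $0$ or $\ge 2$, i.e.\ $s-1$ is $2$-full. Since we are on a curve, infinitude is the same as density, and potential density allows the enlargement of $K$ and $S$ to the PID setting.

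For (2), I would identify objects of $\mathcal{X}(R)$ with $\mathbb{G}_m$-torsors $P\to\Spec R$ equipped with an equivariant map $P\to X$. Composing with $f$ gives a morphism $\mathcal{X}\to\mathbb{A}^1\setminus\{1\}$. Geometrically each fibre over $t\neq 0$ is a single free orbit, so $\pi_0(\mathcal{X}(\overline K))$ restricted to $t\ne0$ injects into $\overline K$ via $t$. Hence the image in (2) is infinite if and only if infinitely many $t$-values arise from $\mathcal{X}(R)$.

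The main obstacle I anticipate is the possibly nontrivial torsors in $\mathcal{X}(R)$, which do not obviously come from $X(R)$. I would handle them by noting that a $\mathbb{G}_m$-torsor corresponds to a class in $\mathrm{Pic}(R)$. After enlarging $S$ so that $R$ becomes a PID, every torsor is trivialized, and $\mathcal{X}(R)$ then comes from $X(R)/R^\times$. Each $t$-value obtained this way is still $2$-full with $t-1$ a unit. The enlargement of $S$ does not lose $t$-values, since the original points still map to $R'$-points, so (2)$\Rightarrow$(4). Conversely, (1)$\Rightarrow$(2) is immediate via $X\to\mathcal{X}$.
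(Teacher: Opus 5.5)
Your proof is correct. It uses the same ingredients as the paper: a $2$-full element of a PID factors as $a^2b^3$, $\mathbb{G}_m$-torsors over a PID are trivial, and $R^\times$ is dense in each orbit. The organization is different, though. You route everything through the invariant $t=x^2y^3$ with (4) as the hub, whereas the paper proves (i)$\Leftrightarrow$(ii), (i)$\Leftrightarrow$(iii) and (iii)$\Leftrightarrow$(iv).

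For (i)$\Rightarrow$(iii) the paper invokes the general spreading-out result Proposition~\ref{prop:campana_integral_points}. For (ii)$\Rightarrow$(i) it cites the Chevalley--Weil statement Proposition~\ref{prop:cw_integrally_affine}, with a sketched direct version. You replace the first by the identity $v_p(x^2y^3)=2v_p(x)+3v_p(y)$, and the second by describing $\pi_0(\mathcal{X}(\overline K))$ as $\overline K^{\times}$-orbits, which are separated by $t$ when $t\neq 0$.

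What your route buys is that it is elementary and self-contained. It also makes explicit two points the paper passes over: why an infinite image in $\pi_0(\mathcal{X}(\overline K))$ is ``dense'', and, in (iii)$\Rightarrow$(i), why unit orbits in infinitely many fibres give density of $X(R)$. The paper's route instead shows the steps to be instances of general results, which apply without recomputation to Theorem~\ref{thm:YmodGm_text} and to other Campana spaces.

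Two small points need fixing.
\begin{enumerate}
\item Your ``if and only if'' for the image in (2) needs the fact that the fibre over $t=0$ consists of only three $\overline K^{\times}$-orbits: the origin and the two punctured axes. You should state this.
\item The $t$-map on $X(R)/R^{\times}$ is surjective onto the admissible $t$-values but not injective. For example, $(p^3,1)$ and $(1,p^2)$ both lie over $t=p^6$ when $1-p^6\in R^{\times}$, and they differ by $\lambda=p^{-1}\notin R^{\times}$. So ``identifies'' should read ``surjects onto''. Your argument only uses surjectivity, so nothing breaks.
\end{enumerate}
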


It was    shown in \cite{BCJW} that   there is a weakly special smooth projective threefold $V$ over $\mathbb{Q}$ such that the  Orbifold Mordell Conjecture for $(\mathbb{P}^1, \frac{1}{2} [0] + \frac{1}{2} [1]  + \frac{1}{2} [2] + \frac{1}{2} [3] + \frac{1}{2} [\infty])$ implies that  $V(K)$ is not dense for any number field $K$.  In particular,   the Weakly Special Conjecture contradicts the Orbifold Mordell Conjecture.   The latter can also be deducd from Theorem \ref{thm:ws_contradicts_abc}. On the other hand, Theorem \ref{thm:ws_contradicts_abc} demonstrates a stronger connection between the Weakly Special Conjecture (as formulated in Conjecture \ref{conj:ws}) and  the Orbifold Mordell Conjecture:  the validity of one conjecture is equivalent to the failure of the other.  In particular, deciding  whether $X$ satisfies potential density of integral points   naturally leads to the study of integral points on C-pairs.

Although the equivalence of $(i)$ and $(ii)$ is elementary (see Section \ref{section:remarks_on_orbi_mordell}),  it   also follows from a generalized Chevalley--Weil theorem for $G$-torsors with $G$ an algebraic group.  This version of the Chevalley--Weil theorem is  established in  the Appendix and helps to clarify the relation between potential density on varieties and stacks.

\subsection{Campana stacks} \label{section:intro_campana}

The appearance of the   stack $[X/\mathbb{G}_m]$ in our study of integral points  on $X = \mathbb{A}^2\setminus Z(x^2y^3-1)$  and of the C-pair $(\mathbb{A}^1\setminus \{0\}, \frac{1}{2}[1])$ 
reflects a more general principle that has already appeared several times in this introduction:

\medskip

\centerline{\emph{Morphisms to C-pairs should factor through suitable algebraic stacks.}}

\medskip 
In the divisible setting, where the condition in Campana's definition of a morphism of C-pairs
that multiplicities ``are at least $m_i$'' is replaced by the stronger requirement that they
``are divisible by $m_i$'', this principle is well understood.
Such morphisms factor through root stacks; 
see Example \ref{example:darmon} for a brief discussion.

In the general setting of morphisms of C-pairs, we construct algebraic stacks (with non-finite inertia) that naturally encode such morphisms; see Theorem \ref{thm:campana_stack_intro} below for a precise statement.

To define the Campana stack, we recall that $[\mathbb{A}^1/\mathbb{G}_m]$
is the stack of generalized effective Cartier divisors; see \cite[\S 10.3]{OlssonBook}
and \cite{AGV, Cadman}. 
Let $X$ be a scheme and let $D$ be an effective Cartier divisor on $X$.
If $f_D\colon X\to [\mathbb{A}^1/\mathbb{G}_m]$ is the associated moduli morphism,
then the $n$-th root stack $\sqrt[n]{D/X}$  is defined by the Cartesian diagram
\[
\xymatrix{ \sqrt[n]{D/X} \ar[rr] \ar[d] & &  [\mathbb{A}^1/\mathbb{G}_m] \ar[d]^{z\longmapsto z^n} \\
 X \ar[rr]_{f_D} & & [\mathbb{A}^1/\mathbb{G}_m]}
\]
That is, the stack $\sqrt[n]{D/X}$ is obtained by pulling back the morphism
$X\to [\mathbb{A}^1/\mathbb{G}_m]$ along the   map  $[\mathbb{A}^1/\mathbb{G}_m]\to [\mathbb{A}^1/\mathbb{G}_m]$ given by $z\mapsto z^n$; the latter map  may be viewed as a ``universal divisible fibre of multiplicity $n$''.

 Let $r\geq 1$ and let $a=(a_1,\ldots,a_r)$ be an   $r$-tuple of positive integers.
We define $p_a\colon \mathbb{A}^r\to \mathbb{A}^1$ by
\[
p_a(x_1,\ldots,x_r)=x_1^{a_1}\cdots x_r^{a_r}.
\]
This induces a morphism of quotient stacks
\[
P_a\colon [\mathbb{A}^1/\mathbb{G}_m]^r \to [\mathbb{A}^1/\mathbb{G}_m].
\]
The morphism $P_a$ may be viewed as the ``universal inf-multiple fibre with multiplicities
$a_1,\ldots,a_r$''. 

\begin{definition}\label{def:campana_stack_intro}
The \emph{Campana stack $\mathcal{C}_{X,D,a}$  of $(X,D,a)$}  is defined by the  Cartesian diagram.
\[
\xymatrix{ \mathcal{C}_{X,D,a}  \ar[rr] \ar[d] & & [\mathbb{A}^1/\mathbb{G}_m]^r \ar[d]^{P_a} \\ X \ar[rr]_{f_D} & & [\mathbb{A}^1/\mathbb{G}_m]}
\]
The \emph{Campana space $U_{X,D,a}$  of $(X,D,a)$} is defined  by the Cartesian diagram
\[
\xymatrix{ U_{X,D,a} \ar[rr] \ar[d]_{\mathbb{G}_m^r-\text{torsor}} & &  \mathbb{A}^r \ar[d]^{\mathbb{G}_m^r-\text{torsor}} \\ 
\mathcal{C}_{X,D,a}  \ar[rr]   & & [\mathbb{A}^1/\mathbb{G}_m]^r  }
\]
\end{definition}

Now let $\Delta=(1-\frac{1}{m})D$, where $D$ is a prime divisor on $X$ and $m>1$ is an integer.
The Campana stack $\mathcal{C}_{X,\Delta}$ of $(X,\Delta)$ is defined by
$
\mathcal{C}_{X,\Delta}=\mathcal{C}_{X,D,a},
$
where $a=(m,m+1,\ldots,2m-1)$ is the minimal set of generators of the semigroup $\mathbb{Z}_{\geq m}$.  (Note that this choice of $a$ encodes the condition that multiplicities in a morphism of C-pairs are required to be at least $m$.)
The Campana space $U_{X,\Delta}=U_{X,D,a}$ admits a morphism of C-pairs
$
U_{X,\Delta}\to (X,\Delta)
$
which  is a $\mathbb{G}_m^m$-torsor over   $X\setminus\supp(\Delta)$.
  
  In general, the stacks $\mathcal{C}_{X,\Delta}$ and the spaces $U_{X,\Delta}$
are defined by iterating the above construction
(see Definition \ref{def:campana_stack_2}).  We first note that they  satisfy the desired property (see Theorem \ref{thm:C-pair-Campana-stack}):

\begin{alphtheorem}[{Theorem~\ref{thm:C-pair-Campana-stack}}] \label{thm:campana_stack_intro}
Let $T$ be an integral regular noetherian scheme. Then,  a morphism $T\to X$ defines a morphism of C-pairs $T\to (X,\Delta)$
if and only if it factors through $\mathcal{C}_{X,\Delta}$.
\end{alphtheorem}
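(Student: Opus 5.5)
We reduce everything to the single-divisor case and then to a statement about effective Cartier divisors on $T$. A morphism $T\to[\mathbb{A}^1/\mathbb{G}_m]$ is a pair $(L,s)$ of a line bundle with a section. When $T$ is integral and $s\neq 0$, this is the same as an effective Cartier divisor on $T$. When $s=0$ it is a line bundle with zero section, which is the degenerate case where $T$ maps into $D$.

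Since $\mathcal{C}_{X,\Delta}$ is the fibre product $X\times_{[\mathbb{A}^1/\mathbb{G}_m]}[\mathbb{A}^1/\mathbb{G}_m]^r$, a factorization of $f\colon T\to X$ through $\mathcal{C}_{X,D,a}$ is equivalent to the following data:
\begin{itemize}
\item pairs $(L_j,s_j)$ for $j=1,\dots,r$;
\item an isomorphism $\bigotimes_j L_j^{\otimes a_j}\cong f^*\mathcal{O}(D)$ carrying $\prod_j s_j^{a_j}$ to $f^*s_D$.
\end{itemize}
For the general $\Delta=\sum_i(1-\frac1{m_i})D_i$ we use Definition~\ref{def:campana_stack_2}. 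We assume it is the iterated fibre product over $X$ of the single-divisor stacks, with infinite multiplicity handled by removing $D_i$. Then factorizations decompose divisor by divisor, and the Campana condition is also divisor-by-divisor, so it suffices to treat $\Delta=(1-\frac1m)D$ with $a=(m,m+1,\dots,2m-1)$. The case $m=\infty$ is immediate, since both sides amount to $f$ landing in $X\setminus D$.

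\emph{Case $f(T)\subset D$.} Here $f^*s_D=0$. For the Campana direction, $m$ is finite, so $f$ is allowed. For the stack direction, take $L_1=f^*\mathcal{O}(D)\otimes\bigl(\bigotimes_{j\ge2}\mathcal{O}^{\otimes -a_j}\bigr)^{\cdot}$; more simply, take $L_j=\mathcal{O}_T$ with $s_j=0$ for $j\geq 2$. Then we need $L_1^{\otimes m}\cong f^*\mathcal{O}(D)$, which may fail. Instead, use two consecutive exponents $m$ and $m+1$: set $L_1=f^*\mathcal{O}(D)^{-1}$ and $L_2=f^*\mathcal{O}(D)$, so that $L_1^{m}\otimes L_2^{m+1}\cong f^*\mathcal{O}(D)$. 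Take all sections zero and the remaining $L_j$ trivial. This gives a factorization. Conversely, any factorization is allowed, since Campana maps into $D$ are permitted when $m<\infty$.

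\emph{Case $f(T)\not\subset D$.} Then $E:=f^*D$ is an effective Cartier divisor. A factorization forces every $s_j\neq0$, because $T$ is integral, so it gives effective Cartier divisors $E_j$ with $E=\sum_j a_jE_j$. Since $T$ is regular, hence locally factorial, Cartier divisors are Weil divisors. Every prime component of $E$ appears in some $E_j$, so its coefficient is $\sum_j a_j\operatorname{mult}(E_j)\ge m$. This proves that a factorization yields a morphism of C-pairs.

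Conversely, write $E=\sum_P n_PP$ with each $n_P\ge m$. Every integer $n\geq m$ is a nonnegative combination $n=\sum_j c_j(n)a_j$ of $m,\dots,2m-1$: write $n=qm+\rho$ with $0\leq\rho<m$ and $q\ge1$, and use $(q-1)$ copies of $m$ plus one copy of $m+\rho$. Set $E_j=\sum_P c_j(n_P)P$. Each $E_j$ is Cartier because $T$ is locally factorial, and $\sum_j a_jE_j=E$ as divisors. This gives an isomorphism $\bigotimes_j\mathcal{O}(E_j)^{a_j}\cong\mathcal{O}(E)$ matching canonical sections, hence a factorization.

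The main obstacle is the degenerate case $f(T)\subset D$ together with the bookkeeping of the iterated construction. One must check that the definition of $\mathcal{C}_{X,\Delta}$ allows the line bundles and zero sections above. One must also check that when several $D_i$ meet, the iterated fibre products still decompose independently over the components; this needs care at points of $T$ lying over intersections of the $D_i$. The surrounding checks that remain are that the line-bundle isomorphism compatibly identifies sections, which is automatic since divisors on an integral scheme determine their sections up to units, and that locally factorial regularity is used exactly where Weil divisors are converted to Cartier divisors.
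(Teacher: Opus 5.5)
Your proposal is correct and follows the paper's proof essentially step for step. You reduce divisor by divisor via the fibre product over $X$. You treat $f(T)\subset D$ by a B\'ezout relation among the exponents: your relation $(m+1)-m=1$ is a special case of Lemma~\ref{lemma:campana_stack_of_a_1}, and for $m=1$, where $1$ is the only atom, $P_{(1)}$ is the identity, so nothing is needed. Otherwise you identify lifts with decompositions $f^*D=\sum_j a_jE_j$ and use that $m,\dots,2m-1$ generate $\mathbb{Z}_{\geq m}$, exactly as in Lemma~\ref{lemma:campana_stack_of_a_2} and Proposition~\ref{prop:C_pair_vs_Campana_stack}. The worry you raise about points of $T$ over intersections of the $D_i$ is unnecessary: a lift to a fibre product over $X$ is the same as a lift to each factor, so no local analysis is required.
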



Intuitively, the Campana space $U_{X,\Delta}$ may be viewed as an
``untwisting'' of the C-pair structure on $(X,\Delta)$.
In this way, questions about the arithmetic or geometry of C-pairs
can be translated into questions about quasi-projective varieties.
For example, the Campana space of a C-pair detects potential density of integral points
on the C-pair.  

\begin{alphtheorem}[{Theorem~\ref{thm:U_is_ar_spec_iff_XDelta_is_ar_spec}}]\label{thm:potential_density_campana_space}
Let $(X,\Delta)$ be a C-pair with $X$ a smooth variety over a number field $K$, and let $U_{X,\Delta}$ be its Campana space.
Then the variety $U_{X,\Delta}$ has a potentially dense set of integral points
if and only if  $(X,\Delta)$ has a potentially dense set of integral points. \end{alphtheorem}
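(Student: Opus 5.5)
We prove the two directions separately, working with a model $\mathcal{U}\to\mathcal{X}$ of the morphism $U_{X,\Delta}\to X$ over some $\mathcal{O}_{K,S}$. First reduce to the case of a single component $\Delta=(1-\frac1m)D$: the general $U_{X,\Delta}$ is obtained by iterating the construction (Definition~\ref{def:campana_stack_2}), so both directions follow by induction once the one-component case is established, with the model chosen compatibly at each stage. Enlarge $S$ and $K$ so that $\mathcal{O}_{K,S}$ is a principal ideal domain, $\mathcal{X}$ is regular, and the closure $\mathcal{D}$ of $D$ is an effective Cartier divisor on $\mathcal{X}$ cut out locally by a single equation.

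\emph{Direction $U\Rightarrow (X,\Delta)$.} The morphism $U_{X,\Delta}\to X$ is a morphism of C-pairs $U_{X,\Delta}\to (X,\Delta)$. Spreading out, it extends to a morphism of models $\mathcal{U}\to(\mathcal{X},\Delta)$. Composing with an integral point $\operatorname{Spec}\mathcal{O}_{L,T}\to\mathcal{U}$ gives a Campana map $\operatorname{Spec}\mathcal{O}_{L,T}\to(\mathcal{X},\Delta)$; alternatively one can argue via Theorem~\ref{thm:campana_stack_intro}, since the point factors through $\mathcal{C}_{X,\Delta}$. The map $U_{X,\Delta}\to X$ is dominant, because it is a $\mathbb{G}_m^m$-torsor over $X\setminus\supp\Delta$. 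Hence the image of a dense set of integral points of $U$ is dense in $X$, and $(X,\Delta)$ is arithmetically special.

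\emph{Direction $(X,\Delta)\Rightarrow U$.} This is the main step. Let $P\colon\operatorname{Spec}\mathcal{O}_{L,T}\to(\mathcal{X},\Delta)$ be a Campana integral point with generic point outside $\supp\Delta$. By Theorem~\ref{thm:campana_stack_intro}, $P$ factors through $\mathcal{C}_{X,\Delta}$. Concretely, $P^*\mathcal{D}=\sum_{\mathfrak p}e_{\mathfrak p}\mathfrak p$ with each $e_{\mathfrak p}\ge m$. Write each $e_{\mathfrak p}$ as $\sum_i c_{\mathfrak p,i}a_i$ with $a=(m,\dots,2m-1)$; such a decomposition exists because $a$ generates $\mathbb{Z}_{\ge m}$. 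Set $E_i=\sum_{\mathfrak p}c_{\mathfrak p,i}\mathfrak p$. Because $\mathcal{O}_{L,T}$ is a principal ideal domain, each $E_i$ is principal, say $E_i=(x_i)$. Then $\prod_i x_i^{a_i}=u\cdot t$, where $t$ is the local equation of $\mathcal{D}$ pulled back along $P$ and $u$ is a unit. The remaining difficulty is to lift the resulting $\mathcal{O}_{L,T}$-point of the stack, which is a $\mathbb{G}_m^m$-torsor over $\operatorname{Spec}\mathcal{O}_{L,T}$, to an actual point of $\mathcal{U}$. That torsor is trivial when $\operatorname{Pic}(\mathcal{O}_{L,T})=0$, but the unit $u$ must be absorbed. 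We handle this by passing to a finite extension of $L$ in which $u$ becomes a product of $a_i$-th powers of units, for instance an $m$-th root of $u$ times the $x_1$ factor adjusted using $\gcd(a_i)=1$. Concretely, since $\gcd(m,m+1)=1$ we can write $u=u^{(m+1)}\cdot u^{-m}$ and absorb it into $x_1,x_2$ without any field extension at all. So every Campana point lifts to an integral point of $\mathcal{U}$. The hard part is making this uniform and global: $\mathcal{D}$ is only locally principal on $\mathcal{X}$. We address this by covering $\mathcal{X}$ with finitely many opens on which $\mathcal{D}$ is principal and the torsor is trivial, and then arguing chart by chart.

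Finally, the lifts map onto the given dense set of Campana points in $X$. Their preimage in $U$ meets every fibre of the $\mathbb{G}_m^m$-torsor. Acting by $\mathbb{G}_m^m(\mathcal{O}_{L,T})$, which is Zariski dense in $\mathbb{G}_m^m$ once $T$ is large enough that the unit group is infinite, preserves integrality since the action extends over the model. The resulting orbits give a dense set of integral points in $U_{X,\Delta}$, because $U\to X$ restricted to $X\setminus\supp\Delta$ is a torsor under a group whose integral points are dense.
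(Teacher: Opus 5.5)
Your argument is essentially the paper's. For one direction you spread out the Campana map $U_{X,\Delta}\to(X,\Delta)$. For the other you lift each Campana point outside $\supp\Delta$ to the Campana stack via Theorem~\ref{thm:campana_stack_intro}, trivialize the pulled-back $\mathbb{G}_m^r$-torsor over the principal ideal domain $\mathcal{O}_{L,T}$, and use the infinite unit group to get dense orbits in the fibres.

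The reduction to one component, the unit-absorption trick and the chart-by-chart covering can all be dropped. The chart-by-chart step would not even work as stated, since an $\mathcal{O}_{L,T}$-point need not land in a single chart. It is not needed anyway: triviality of the torsor over $\Spec \mathcal{O}_{L,T}$ is already global and absorbs the unit $u$ automatically, and Theorem~\ref{thm:campana_stack_intro} handles all components of $\Delta$ at once.
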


Theorem \ref{thm:potential_density_campana_space} yields the following consequence. 
\begin{alphcorollary}[{Corollary~\ref{cor:campana_conjectures_equivalence}}] \label{cor:orb_mordell_and_campana_conj}
Assume that, for every number field $K$, every smooth quasi-projective
variety over $K$ satisfying potential density of integral points is
special. Then Campana's Orbifold Mordell conjecture
(Conjecture \ref{conj:orb_mor}) holds.
\end{alphcorollary}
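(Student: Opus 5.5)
We argue by contraposition and reduce to Theorem~\ref{thm:potential_density_campana_space}. Let $(X,\Delta)$ be a C-pair over a number field $K$, with $X$ a smooth proper curve of genus $g$ and $2g-2+\deg\Delta>0$, and assume $(X,\Delta)$ satisfies potential density of integral points. By Theorem~\ref{thm:potential_density_campana_space}, the Campana space $U:=U_{X,\Delta}$ then has a potentially dense set of integral points. After a finite extension of $K$ we may assume $U$ is smooth, quasi-projective and geometrically integral. Smoothness should follow from the local description of $U$ as a fibre product built from the maps $p_a$ over the smooth curve $X$, possibly after taking the open part where the map to $X$ is smooth. By hypothesis $U$ is then special, and it remains to contradict this.

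For the contradiction we show that the structure morphism $\pi\colon U\to X$ is a fibration of general type. Its orbifold base is at least $(X,\Delta)$, i.e.\ $\Delta_\pi\geq \Delta$:
\begin{enumerate}
\item over a point of multiplicity $m=\infty$ in $\floor{\Delta}$, the fibre of $\pi$ is empty, because $U\to X$ is a morphism of C-pairs and so factors through $X\setminus\floor{\Delta}$;
\item over a point $P$ of finite multiplicity $m$, the fibre of $U\to (X,\Delta)$ is a union of divisors on which $\pi^*P$ has coefficients $a_i\in\{m,\dots,2m-1\}$ (or iterated versions of these), so $m_P(\pi)\geq m$. This holds because the pullback of $P$ is locally $\prod x_i^{a_i}$ times a unit.
\end{enumerate}
Consequently $K_X+\Delta_\pi\geq K_X+\Delta$, whose degree $2g-2+\deg\Delta$ is positive. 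A $\mathbb{Q}$-divisor of positive degree on a curve is big, so $\pi$ is of general type. This uses that for a curve base no birational modification $Y'$ is possible and that the orbifold base only increases on passing to resolutions $U'\to U$. Hence $U$ is not special, a contradiction.

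The main obstacle is checking that $\pi\colon U\to X$ really satisfies the hypotheses of Definition~\ref{definition:special}. Concretely, one must verify three things. First, $U$ is smooth (or replace it by a resolution, which remains arithmetically special since birational proper modifications preserve density of integral points up to a closed subset). Second, $\pi$ is dominant with geometrically connected generic fibre; the generic fibre is a $\mathbb{G}_m^r$-torsor, hence connected. Third, the multiplicity computation is stable under passing to a resolution $U'\to U$. If $U$ is not smooth along the fibres over $\supp\Delta$, I would first argue on the smooth locus $U^{\mathrm{sm}}$, whose complement has codimension at least two in the relevant fibres. I would then use that inf-multiplicities of fibres only increase under proper birational modifications, because exceptional components are ignored.
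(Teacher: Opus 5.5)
Your argument is essentially the paper's: potential density passes to $U_{X,\Delta}$ by Theorem~\ref{thm:U_is_ar_spec_iff_XDelta_is_ar_spec}, and $U_{X,\Delta}$ is not special because $U_{X,\Delta}\to X$ has orbifold base exactly $\Delta$ (Theorem~\ref{thm:U_to_X_and_orbifold_base} and Corollary~\ref{cor:ws_text}). That theorem also gives regularity, hence smoothness, of $U_{X,\Delta}$, so your hedges are unnecessary; in fact restricting to the locus where $\pi$ is smooth would delete the multiple fibre components, and the hypothesis would then no longer apply, since the integral points need not stay dense. One correction to your last step: over a closed point $P$ of a curve every fibre component dominates $P$, so no exceptional component is ignored in the orbifold base. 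Stability under a resolution $\beta\colon U'\to U$ holds instead because any prime divisor $E\subset U'$ over $P$ has centre in some component $F_i$ of $\pi^{-1}(P)$ (which is Cartier since $U$ is regular), whence $\mathrm{ord}_E(\beta^*\pi^*P)=\sum_i a_i\,\mathrm{ord}_E(\beta^*F_i)\geq \min_i a_i$.
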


A basic question in Campana's theory of special varieties and C-pairs
is whether a given pair $(X,\Delta)$ occurs as the orbifold base of
some morphism $U\to X$. The following  result
shows that such a morphism exists and can be constructed
via the Campana space.
 
 \begin{alphtheorem}[{Theorem~\ref{thm:U_to_X_and_orbifold_base}}] \label{thm:existence_of_U_intro}
Let $(X,\Delta)$ be a C-pair with $X$   a regular scheme over $\mathbb{Q}$ and $\supp(\Delta)$ a simple normal crossings divisor. 
Let $U$ be the Campana space of $(X,\Delta)$.
Then $U$ is regular and the morphism
$
U\to X
$
is affine, flat, and of finite type with image $X\setminus\floor{\Delta}$.
Moreover, it satisfies the following properties.
\begin{enumerate}
\item  $U\to X$ induces a morphism of C-pairs $U\to (X,\Delta)$ and its orbifold  base divisor is $\Delta$.  

\item For every $x\in X\setminus \supp(\Delta)$, the     fibre of $U\to X$ over   $x$ is  a smooth affine torus over the residue field $k(x)$.

\item The morphism $U\to X\setminus\floor{\Delta}$ has no divisible fibres.

\end{enumerate}
\end{alphtheorem}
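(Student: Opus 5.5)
The plan is to reduce everything to an explicit local computation on the Campana space. First I reduce to the case of a single prime divisor. The construction of $U=U_{X,\Delta}$ is iterated over the components of $\Delta$, as in Definition \ref{def:campana_stack_2}, and all the claimed properties are local on $X$ and stable under flat base change. Regularity, affineness, flatness, finite type, torus fibres, the orbifold base and the absence of divisible fibres can all be checked Zariski (or étale) locally. Since $\supp(\Delta)$ is snc, locally $X$ carries regular parameters $t_1,\dots,t_k$ with $D_i=Z(t_i)$.

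Take a component $D_i$ with finite multiplicity $m_i$ and put $a=(m_i,\dots,2m_i-1)$. Unwinding the Cartesian diagrams in Definition \ref{def:campana_stack_intro} on such a chart, I expect the Campana space to be explicitly
\[
U\cong \Spec\, \mathcal{O}_X[x_1,\ldots,x_r,u^{\pm1}]/(u\,x_1^{a_1}\cdots x_r^{a_r}-t_i),
\]
or a closely related presentation. One has to check this carefully, since the $\mathbb{G}_m$ of the base stack is absorbed; a cleaner version may be $t_i=\prod x_j^{a_j}$ after trivializing the line bundle. For components with $m_i=\infty$ I would use the convention that $U$ is just the restriction to $X\setminus D_i$. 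The global description then follows: $U\to X$ is affine and of finite type, and its image is $X\setminus\floor{\Delta}$ because every point of $D_i$ is hit by setting some $x_j=0$.

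For flatness and regularity, note that $t_i$ is part of a regular system of parameters. The equation $t_i=u\prod x_j^{a_j}$ then lets me eliminate $t_i$ locally, so $U$ is locally $\mathbb{A}^r\times\mathbb{G}_m\times(\text{regular scheme of dimension }\dim X-1)$. This gives regularity. Flatness follows from miracle flatness, since both sides are regular and the fibres are equidimensional of dimension $r$; alternatively, $\mathcal{O}_X[x,u^{\pm1}]/(u x^a-t)$ has $t$-torsion-free structure because $ux^a-t$ is monic-like in $t$. Over $x\notin\supp\Delta$ every $t_i$ is a unit, so each equation $u\prod x_j^{a_j}=t_i$ forces all $x_j$ to be units and solves for $u$. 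Hence the fibre is $\mathbb{G}_m^{r}$ (times the other factors), which proves (2).

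For (1) and (3), I pull back $D_i=Z(t_i)$ to $U$ and get $\sum_j a_j\,Z(x_j)$. The divisors $Z(x_j)$ are prime and dominate $D_i$, and there is no exceptional part. Thus the inf-multiplicity is $\min a_j=m_i$ and the gcd-multiplicity is $\gcd(m_i,m_i+1,\dots)=1$. This gives a Campana map, an orbifold base equal to $\Delta$ over the $D_i$ with finite multiplicity and boundary $\infty$ along the log part (fibre empty), and no divisible fibres. Codimension-one points of $X$ outside $\supp\Delta$ have torus fibres, hence multiplicity $1$. Alternatively, the Campana map property follows from Theorem \ref{thm:campana_stack_intro} since $U\to X$ factors through $\mathcal{C}_{X,\Delta}$.

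The main obstacle is the first step: making the local presentation of the iterated fibre product rigorous. This requires handling the stack $[\mathbb{A}^1/\mathbb{G}_m]^r$ via its $\mathbb{G}_m^r$-torsor $\mathbb{A}^r$, checking that the resulting $U$ is a scheme, and dealing with the intersections of several $D_i$ in the snc case, where the iterated equations involve distinct parameters and so remain regular.
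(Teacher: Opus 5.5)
Your proposal is correct and is essentially the paper's argument, carried out directly on the scheme $U$ rather than through the stack $\mathcal{C}_{X,\Delta}$. Your first chart $\Spec \mathcal{O}_X[x_1,\ldots,x_r,u^{\pm1}]/(u\,x_1^{a_1}\cdots x_r^{a_r}-t_i)$ is the right one: the chart without $u$ is the smooth atlas $V$ of $\mathcal{C}_{X,D,a}$ used in the proof of Theorem~\ref{thm:regularity}, and $U\cong V\times\mathbb{G}_m$ because $\gcd(m,\ldots,2m-1)=1$. Flatness, the image, the torus fibres and the decomposition $\sum_j a_j Z(x_j)$ over the generic point of $D_i$ are then exactly what Proposition~\ref{prop:U_maps_to_X} and the proof of Theorem~\ref{thm:U_to_X_and_orbifold_base} use. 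The only step you need to tighten is regularity, since for an arbitrary regular $X$ there is no local product decomposition $U\cong\mathbb{A}^r\times\mathbb{G}_m\times(\text{regular})$. Instead, make the ``elimination of $t_i$'' precise as in Lemma~\ref{lemma:comm_alg_2}: pass to the completion, take $t_i=y_i$ among the regular parameters, and apply the Jacobian criterion with respect to $\partial/\partial y_i$; this also handles several snc components at once.
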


The proof of Theorem \ref{thm:U_to_X_and_orbifold_base} and the construction of the Campana stack are motivated by the observation that, for every integer $m>1$, the morphism
$
(x_1,\ldots,x_m)\mapsto x_1^{m}\cdots x_m^{2m-1}
$
defines an affine flat fibration $U\to \mathbb A^1_{\mathbb C}$ with orbifold divisor $(1-\frac{1}{m})[0]$ and no divisible fibres. It is unclear whether there exists a flat \emph{projective} fibration $V \to \mathbb A^1_{\mathbb C}$ with the same properties, as any projective extension $\overline U\to \mathbb A^1_{\mathbb C}$ of $U \to \mathbb{A}^1_{\mathbb{C}}$ would admit a section by Graber--Harris--Starr \cite{GraberHarrisStarr}. 

Combining the fibration criterion (Theorem \ref{thm:family_of_ws_vars_intro}) with Theorem \ref{thm:existence_of_U_intro} yields the following result.

\begin{alphcorollary}[{Corollary~\ref{cor:ws_text}}] \label{cor:ws_intro}
If $(X,\Delta)$ is a C-pair with $X$ a smooth variety over a field $k$ of characteristic zero and $X\setminus\floor{\Delta}$ weakly special, then the Campana space $U$ of $(X,\Delta)$ is also a smooth weakly special variety over $k$. If $(X,\Delta)$ is of general type and $\dim X>0$, then $X$ is not special.
\end{alphcorollary}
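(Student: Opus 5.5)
Both assertions follow from the two structural results stated earlier, Theorem~\ref{thm:existence_of_U_intro} and Theorem~\ref{thm:family_of_ws_vars_intro}; after reducing to $\overline{k}$ and to $X$ connected, the argument is short.

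\emph{Smoothness and weak speciality of $U$.} By Theorem~\ref{thm:existence_of_U_intro}, $U$ is regular, hence smooth over $k$ since $\operatorname{char}k=0$. The morphism $f\colon U\to X\setminus\floor{\Delta}$ is affine, flat, of finite type and surjective onto $S:=X\setminus\floor{\Delta}$, and $S$ is weakly special by hypothesis. I apply Theorem~\ref{thm:family_of_ws_vars_intro} to $f\colon U\to S$ and check its hypotheses in turn.
\begin{enumerate}
\item $S$ is weakly special by assumption.
\item The fibres over the dense open set $S\setminus\supp(\Delta)$ are smooth affine tori over residue fields. Geometrically these are $\mathbb{G}_m^N$, which is weakly special (a finite étale cover of a torus is a torus, and tori are special). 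So the fibres over a dense set of $\overline{k}$-points are weakly special.
\item By Theorem~\ref{thm:existence_of_U_intro}(iii), no fibre over a codimension-one point is divisible.
\end{enumerate}
It remains to check that the generic fibre of $f$ is geometrically connected. It is a torsor under a split torus over $k(S)$, hence geometrically a torus, hence geometrically connected. I expect this to follow from the construction of $U$ as an iterated torus torsor over $X\setminus\supp(\Delta)$. If Theorem~\ref{thm:existence_of_U_intro} only gives "a torus" in the weaker sense, I would extract from the construction in Definition~\ref{def:campana_stack_2} that the generic fibre is a $\mathbb{G}_m^N$-torsor; this torsor is then trivial by Hilbert~90 and so is geometrically connected. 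Theorem~\ref{thm:family_of_ws_vars_intro} then gives that $U$ is weakly special.

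\emph{$U$ is not special when $(X,\Delta)$ is of general type and $\dim X>0$.} By Theorem~\ref{thm:existence_of_U_intro}(i), the orbifold base of $f\colon U\to X$ is exactly $(X,\Delta)$. Since $(X,\Delta)$ is of general type, I expect $f$ to be a fibration of general type in Campana's sense onto a positive-dimensional base, so $U$ is not special by Definition~\ref{definition:special}.

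The main obstacle is in this last step. Campana's definition of a general-type map requires bigness of $K_{Y'}+\Delta_{f'}$ for \emph{all} birational models of the base and all modifications of the source, whereas Theorem~\ref{thm:existence_of_U_intro} computes only the orbifold base of the given morphism. To handle this, compare an arbitrary birational model $Y'\to X$ with $(X,\Delta)$. Where $f'$ agrees with $f$ (off the exceptional locus), $\Delta_{f'}$ agrees with the strict transform of $\Delta$. On exceptional divisors of $Y'$, the multiplicities $\Delta_{f'}$ should dominate the ones imposed by the smooth pair $(X,\Delta)$, since $\supp\Delta$ is snc. Then $K_{Y'}+\Delta_{f'}\ge \pi^*(K_X+\Delta)$ up to effective exceptional divisors, which gives bigness. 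This is the standard invariance of Campana's orbifold Kodaira dimension, and I would cite Campana for it rather than reprove it. One caveat: the statement as written says "$X$ is not special", but the intended conclusion is presumably that $U$ is not special, since $X$ itself might well be special (for instance $\mathbb{P}^1$). I would prove the statement for $U$.
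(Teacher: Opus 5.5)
Your proposal is correct and follows the paper's proof: smoothness of $U$ and the absence of divisible fibres over $X\setminus\floor{\Delta}$ from Theorem~\ref{thm:existence_of_U_intro} feed into Theorem~\ref{thm:family_of_ws_vars_intro}, and non-speciality of $U$ comes from the orbifold base of $U\to X$ being $\Delta$, with the birational-model issue you flag settled by citing \cite[Proposition~2.9 and Corollary~2.10]{BJRSeveri}. You are right that the conclusion should read that $U$ (not $X$) is not special, as in Corollary~\ref{cor:ws_text}; note also that, like that version, your argument needs $\supp(\Delta)$ to be snc, which Theorem~\ref{thm:existence_of_U_intro} requires and the displayed statement leaves implicit.
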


\begin{con}
A variety over a field $k$ is a geometrically integral finite type separated $k$-scheme.  Concerning algebraic stacks, we follow the conventions of the Stacks Project \cite{stacks-project}.
\end{con}

\begin{ack}
A.J. was supported by the Netherlands Organization for Scientific Research NWO, through grant OCENW.M.23.206.
A.J.  gratefully acknowledges support from  the Laboratoire de Math\'ematiques de Bretagne Atlantique and a    CNRS  Poste Rouge. 
F.B. is grateful to Lisanne Taams for several discussions on stacky interpretations of Campana's theory of morphisms to C-pairs.
We thank Fr\'ed\'eric Campana, Stefan Kebekus,   Erwan Rousseau, and Olivier Wittenberg for many valuable discussions.
\end{ack}

\section{Families of quasi-projective weakly special varieties}\label{section:ws}
 
In this section, we prove Theorem \ref{thm:family_of_ws_vars_intro} (stated below as Theorem \ref{thm:family_of_ws_vars}).
  
\begin{lemma}\label{lemma:stein}
Let $X \to S$ be a proper surjective morphism of integral normal noetherian schemes over~$\mathbb{Q}$. Suppose   $S$ is regular. Assume that, for every point $s$ of codimension one and every connected component $F$ of $X_s$,   the divisor  $F$  on $X\times_S 	\Spec(\mathcal{O}_{S,s})$ is not divisible. Then, the Stein factorization $T \to S$ of $X \to S$ is finite \'etale. 
\end{lemma}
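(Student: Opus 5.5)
Let $X\to T\to S$ be the Stein factorization, so $T\to S$ is finite, $T$ is normal and integral, and $X\to T$ is proper with geometrically connected fibres and $\mathcal{O}_T=g_*\mathcal{O}_X$. The plan is to prove that $T\to S$ is étale in codimension one and then invoke Zariski--Nagata purity of the branch locus. Purity applies because $S$ is regular and $T$ is normal and finite over $S$. Since $S$ is a $\mathbb{Q}$-scheme, $T\to S$ is generically étale (residue field extensions are separable), so the only possible ramification is along divisors. Thus it suffices to fix a codimension one point $s\in S$ and show that $T\to S$ is étale over $\Spec\mathcal{O}_{S,s}$.

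For the local reduction, base change to $R=\mathcal{O}_{S,s}$, a DVR with uniformizer $\pi$. Stein factorization commutes with flat base change, so over $R$ the ring $B=\Gamma(T_R,\mathcal{O})$ is a finite normal $R$-algebra, hence a semilocal Dedekind domain. Its maximal ideals $\mathfrak{t}_1,\dots,\mathfrak{t}_k$ correspond to the connected components $F_1,\dots,F_k$ of $X_s$, since the fibres of $X\to T$ are connected and $X_s=\bigsqcup_j X_{t_j}$. Write $\pi B=\prod_j \mathfrak{t}_j^{e_j}$. We must show every $e_j=1$; residue field separability is automatic in characteristic zero.

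The key step is to compare ramification indices with multiplicities in the fibre. Let $\varpi_j\in B_{\mathfrak{t}_j}$ be a uniformizer, so that $\pi=u\varpi_j^{e_j}$ with $u$ a unit. Pulling back to $X$ near $F_j$ gives the divisor equality
\[
\operatorname{div}(\pi)|_{\text{near }F_j} = e_j\cdot \operatorname{div}(g^*\varpi_j),
\]
where $\operatorname{div}(g^*\varpi_j)$ is an effective divisor supported on $F_j$ and nonzero because $g$ is surjective. Consequently every component of $F_j$ appears in $X_{D_s}$ with multiplicity divisible by $e_j$, and this includes the $f$-exceptional components. Hence $\gcd$ of the multiplicities of $F_j$ is divisible by $e_j$. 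This divisibility statement is where I expect the main care to be needed. The hypothesis says $F_j$ is not divisible, meaning the gcd of multiplicities of its (dominating) components is $1$, so $e_j=1$.

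There are two subtleties to check. First, $X$ is only normal, so divisors must be read as Weil divisors at the generic points of the components of $F_j$, which are codimension one points of $X$ where $X$ is regular; this suffices. Second, I must make sure the relevant gcd is the one in the hypothesis. With $f$ proper and surjective, every component of $X_s$ dominates the point $s$. Non-divisibility then says exactly that this gcd is $1$, so the argument closes.
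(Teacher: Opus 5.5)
Your argument is correct, and it is essentially the proof of \cite[Lemma~2.1]{BCJW}, which the paper cites. That proof reduces via Zariski--Nagata purity to codimension-one points of $S$ and shows that the ramification index $e_j$ of $T\to S$ at $t_j$ divides the multiplicity of every component of the connected component $F_j=g^{-1}(t_j)$ of the fibre. Your remark that only the generic points of the components of $F_j$ matter, where the normal scheme $X$ is regular, is exactly why the paper can weaken ``regular'' to ``normal''.
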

\begin{proof}
This is \cite[Lemma~2.1]{BCJW} except that $X$ is supposed to be regular in \emph{loc. cit.}. This assumption can however be replaced by $X$ normal as its proof shows.
\end{proof}

\begin{lemma} \label{lemma:ws_family_step_one} Let $k$ be a field of characteristic zero.
Let $X \to Y$ be a surjective morphism of varieties over $k$ whose generic fibre is geometrically connected.  Suppose that $Y$ is weakly special and that  the set of $y$ in $Y(\overline{k})$ with $X_y$ a weakly special variety is dense in $Y$.
Then $X$ does not admit a dominant strictly rational map onto a positive-dimensional variety of log-general type.
\end{lemma}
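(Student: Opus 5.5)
We argue by contradiction. Suppose $f\colon X\ratmap Z$ is a dominant strictly rational map onto a positive-dimensional variety $Z$ of log-general type. After base change to $\overline{k}$ we may assume $k=\overline{k}$. We may also replace $X$ by a proper birational modification on which $f$ is a morphism. This does not change the hypotheses: the generic fibre over $Y$ is only modified birationally, and a dense set of fibres $X_y$ is replaced by proper birational modifications of weakly special varieties, which are still weakly special by Definition~\ref{def:ws}. The plan is to restrict $f$ to the fibres $X_y$ and use the rigidity of maps to varieties of log-general type.

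\emph{Step 1: $f$ is constant on general fibres.} Take $y$ in the dense set with $X_y$ weakly special. Then $f|_{X_y}\colon X_y\to Z$ has image closure $Z_y\subset Z$. Consider the Stein factorization, or more precisely a log-general-type quotient. The goal is that $f|_{X_y}$ is constant for $y$ in a dense set. If it is not, then $Z_y$ is a positive-dimensional subvariety of the log-general-type variety $Z$. Since $y$ varies in a dense set, these subvarieties cover $Z$. Here I would invoke the known fact (Kawamata/Maehara-type results for log varieties) that a log-general-type variety is not covered by a family of subvarieties whose desingularizations are dominated by weakly special varieties. Concretely, let $W\to Z_y$ be a resolution. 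Through a very general point, the covering family consists of subvarieties of log-general type, up to finite étale covers, by the log version of the subadditivity/Kawamata theorem on subvarieties of log-general type. A weakly special $X_y$ cannot dominate them after passing to finite étale covers (Definition~\ref{def:ws}). I expect this to be the main obstacle. The cleanest route I would try first is the log version of the theorem that the \emph{special}/log-general-type core is preserved. Alternatively one can apply Kawamata's theorem that a dominant map to a log-general-type variety factors through a quasi-abelian structure, so its image contains no covering family of weakly special-dominated subvarieties unless these are points.

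\emph{Step 2: descent to $Y$.} Once $f$ contracts the fibres $X_y$ for a dense set of $y$, it contracts the generic fibre as well, since this is a closed condition on the family of fibres. Because the generic fibre is geometrically connected, $f$ factors rationally through $X\to Y$, giving a rational map $g\colon Y\ratmap Z$ with $f=g\circ\pi$. Since $f$ is dominant, so is $g$. Since $f$ is strictly rational and $X\to Y$ is surjective, $g$ is strictly rational after a proper birational modification of $Y$: take the closure of the graph of $g$ and note that it is proper birational over $Y$. This contradicts the weak specialness of $Y$ (take the trivial étale cover in Definition~\ref{def:ws}). Hence no such $f$ exists.

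The delicate point is Step 1. To make it robust I would first reduce via Kawamata's structure theorem to the case where the relevant subvarieties of $Z$ are of log-general type, and then use that the weakly special $X_y$ admit no dominant strictly rational map onto a positive-dimensional log-general-type variety.
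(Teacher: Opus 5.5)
Your outline has the same architecture as the paper's proof. If $g$ does not contract the general fibre, the closures $W_y=\overline{g(X_y)}$ form a covering family of positive-dimensional subvarieties of $Z$ whose general member is of log-general type, which contradicts weak specialness of some $X_y$ with $y\in\Sigma$. If $g$ does contract the general fibre, it descends to $Y$. However, your justifications fail at two points.

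\textbf{Step 1.} You need $W_y$ to be positive-dimensional and of log-general type, and $X'_y\to X_y$ to be proper birational, for all $y$ in a \emph{dense open} subset $V\subset Y$. Only then can you pick $y\in\Sigma\cap V$. A ``very general'' statement does not suffice. The set $\Sigma$ is only assumed dense and may be countable; when $k$ is countable, ``very general'' has no content at all. So $\Sigma$ may avoid every very general parameter.

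The tools you invoke are also not the right ones. Kawamata's structure theorem concerns subvarieties of quasi-abelian varieties and says nothing about arbitrary dominant maps onto log-general-type varieties. The qualifier ``up to finite \'etale covers'' is beside the point. The fact you need is the elementary one the paper uses (``the general member of this family is of log-general type''):
\begin{enumerate}
\item Restrict to a generic subfamily whose total space is generically finite and dominant over $Z$. That total space is then of log-general type.
\item Its generic fibre, which is the geometric generic fibre of the whole family, is therefore of log-general type, since bigness restricts to the generic fibre.
\item Upper semicontinuity of log plurigenera on a log smooth model over a dense open of $Y$ spreads this to every $y$ in that open.
\end{enumerate}
Relatedly, ``$g$ contracts $X_y$'' is not a closed condition. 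What you actually use is that if the generic fibre is not contracted, then no fibre over some dense open is.

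\textbf{Step 2.} The strict rationality of $h\colon Y\ratmap Z$ does not follow from your argument. The closure of the graph of $h$ in $Y\times Z$ need not be proper over $Y$, because $Z$ is not proper. Its closure in $Y\times\overline{Z}$ is proper and birational over $Y$, but surjectivity of $X\to Y$ only shows that each fibre of it \emph{meets} $Y\times Z$.

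For a concrete failure, let $Y$ be a smooth surface, $p\in Y$, $q$ a point on the exceptional curve of $\mathrm{Bl}_pY$, and $X=Z=\mathrm{Bl}_pY\setminus\{q\}$ with $g=\mathrm{id}$. Then $X\to Y$ is surjective with connected generic fibre, and $g$ is a morphism. Yet $h$ is not strictly rational: for any proper birational $Y'\to Y$ on which $h$ is a morphism, $Y'\to\mathrm{Bl}_pY$ would be proper birational, hence surjective, and would hit $q$. This $Z$ is not of log-general type when $Y$ is weakly special, so it does not contradict the lemma. It does show that the properties you cite do not imply the step.

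What surjectivity does give, for normal $Y$, is that $h$ is a morphism into $Z$ on an open subset whose complement has codimension at least two. To conclude, you still need an extra input. One such input would be that a big open subset of a smooth weakly special variety admits no dominant morphism onto a positive-dimensional variety of log-general type. Definition~\ref{def:ws} does not provide this directly.

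The paper's proof also asserts the strictly rational factorization without argument. So your outline is at the level of the source here, but the graph-closure reason you give for it is wrong.
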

\begin{proof}   We may assume that $k$ is algebraically closed.

 
Let $g\colon X \ratmap Z$ be a strictly rational map onto a  variety of log-general type.   If $g$ contracts the general fibre of $f\colon X\to Y$, then there is a strictly rational map $h\colon Y\ratmap Z$ such that  $g =  h\circ f $.  Since $Y$ is weakly special and the induced strictly rational map $Y\ratmap Z$ is dominant,  this implies that $Z$ is zero-dimensional. 

Let $\Sigma$ be the set of $y$ in $Y(\overline{k})$ such that $X_y$ is weakly special. 
If $g$ does not contract the general fibre of $X\to Y$, we consider the scheme-theoretic image of $g(X_y)$ in $Z$ for a general point $y$ in $Y$. These form a family of positive-dimensional subvarieties of $Z$ covering $Z$. Since $Z$ is of log-general type, the general member of this family is of log-general type.    Since $X_y$ is weakly special for $y$ in $\Sigma$, we conclude that $g(X_y)$ is zero-dimensional for all $y$ in $\Sigma$. This implies   that $Z$ is zero-dimensional.
\end{proof}

\begin{theorem}\label{thm:family_of_ws_vars}
Let $k$ be a field of characteristic zero. Let $X \to S$ be a surjective morphism of smooth varieties over $k$ whose generic fibre is geometrically connected (hence non-empty).  
Assume that the following hold.
\begin{enumerate}
\item The variety $S$ is weakly special.
\item The set of $s$ in $S(\overline{k})$ with $X_s$ a weakly special variety is dense in $S$. 
\item For every codimension one point $s$ in $S$, the fibre $X_s$ is not divisible.
\end{enumerate}
Then $X$ is weakly special.
\end{theorem}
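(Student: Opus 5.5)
We may assume $k$ is algebraically closed. The plan is to reduce to Lemma~\ref{lemma:ws_family_step_one}. By Definition~\ref{def:ws} we must show that every finite \'etale cover $X'\to X$ admits no dominant strictly rational map to a positive-dimensional variety of log-general type. Lemma~\ref{lemma:ws_family_step_one} gives this for $X$ itself. The task is therefore to show that a finite \'etale cover $X'$ (which we may take connected) again carries a fibration satisfying the hypotheses of that lemma.

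Fix a connected finite \'etale cover $X'\to X$. The composite $X'\to S$ need not have connected generic fibre, so we take its Stein factorization through a normal variety $S'\to S$. First compactify relatively: choose a normal variety $\overline{X}'$ with a proper morphism $\overline{X}'\to S$ containing $X'$ as a dense open subset. By Nagata, plus normalization, we can arrange this so that no component of the boundary dominates any prime divisor of $S$ where it would create multiplicities; at the least, the boundary components over codimension one points are simply ignored in what follows. Let $S'\to S$ be the finite part of the Stein factorization of $\overline X'\to S$, so $X'\to S'$ has geometrically connected generic fibre.

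The key claim is that $S'\to S$ is finite \'etale. We aim to apply Lemma~\ref{lemma:stein}, or rather its proof, after restricting to the local schemes $\Spec \mathcal{O}_{S,s}$ for $s\in S^{(1)}$. Hypothesis~(iii) says $X_s$ is not divisible, meaning the gcd of the multiplicities of the components of $X_{D_s}$ dominating $D_s$ is $1$. Because $X'\to X$ is \'etale, the multiplicities of components of $X'_{D_s}$ equal those of their images in $X_{D_s}$. Consequently, over each point $s'\in S'$ above $s$ with ramification index $e$, every dominating component of $X'$ over $s'$ has multiplicity divisible by $e$ when computed over $s$. Since $X'_{D_s}\to X_{D_s}$ is surjective onto every component of $X_{D_s}$, one would like to deduce $e=1$. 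This step is the main obstacle. The components of $X_{D_s}$ may split among several points $s'$, so the gcd condition must be distributed correctly. The natural approach is to run the argument of \cite[Lemma~2.1]{BCJW} directly: $S'\to S$ factors $X'\to S$, and Abhyankar-type local computations in characteristic zero show that ramification at $s'$ forces all multiplicities over $s'$ to be divisible by $e_{s'}$. The gcd condition should be applied point by point, using that $X'\to X$ restricted to $X'\times_{S}\Spec\mathcal O_{S,s}$ surjects. If one uses the gcd over all points, a possible issue is that the multiplicities over distinct $s'$ could have coprime common divisors. To avoid this, one may use that $X'\to X$ is Galois after passing to a further \'etale cover. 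Then the Galois group permutes the points $s'$ transitively, so all $e_{s'}$ are equal and each divides every multiplicity, forcing $e=1$. Ramification occurs only in codimension one by purity of the branch locus, since $S$ is smooth and $S'$ is normal. Hence $S'\to S$ is finite \'etale.

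Given this, $S'$ is a finite \'etale cover of the weakly special variety $S$. It is therefore smooth, and weakly special directly from Definition~\ref{def:ws}, since any finite \'etale cover of $S'$ is one of $S$. The map $X'\to S'$ is surjective onto its image, which is dense; if needed, replace $S'$ by the image, which is weakly special as an open subvariety whose complement has codimension $\ge 2$, or handle this via $S$. For the fibres, for $s\in S$ general in the dense set $\Sigma$ of hypothesis~(ii), the fibre $X_s$ is weakly special. The fibre $X'_s$ is a finite \'etale cover of $X_s$, and it is the disjoint union of the fibres $X'_{s'}$. Each $X'_{s'}$ is a union of connected finite \'etale covers of components of $X_s$, hence weakly special. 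So the preimage of $\Sigma$ gives a dense set of points of $S'$ with weakly special fibres. Lemma~\ref{lemma:ws_family_step_one} applied to $X'\to S'$ then shows $X'$ has no dominant strictly rational map to a positive-dimensional variety of log-general type. Since $X'$ was arbitrary, $X$ is weakly special.
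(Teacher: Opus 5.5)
Your proof is correct, but you obtain the key step (that $S'\to S$ is finite \'etale) by a different route from the paper.

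\emph{The paper's route.} It works with an arbitrary connected cover $X'$. It chooses a smooth $\overline{X}$ such that $X\to S$ extends to a proper $\overline{X}\to S$, and takes $\overline{X'}$ to be the normalization of $\overline{X}$ in $k(X')$. Geometric connectedness of the generic fibre then gives that each codimension-one fibre $\overline{X}_s$ is connected. Hence every connected component $F$ of $\overline{X'}_s$ surjects onto $\overline{X}_s$, so $F\cap X'_s\to X_s$ is finite \'etale and surjective, and $F$ is non-divisible. This component-wise non-divisibility is exactly the input of Lemma~\ref{lemma:stein}, which yields that the Stein factorization is finite \'etale.

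\emph{Your route.} You bypass both the proper model and Lemma~\ref{lemma:stein}. After replacing $X'$ by its Galois closure (legitimate, since a dominant strictly rational map from $X'$ pulls back to one from the closure), the Galois group $G$ makes all ramification indices over $s$ equal to a single $e$. One preimage of each dominating component of $X_{D_s}$ then shows that $e$ divides every multiplicity, hence divides their gcd, which is $1$. Zariski--Nagata purity finishes the argument.

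\emph{Comparison.} Your argument is more elementary and self-contained. You can even define $S'$ directly as the normalization of $S$ in the algebraic closure $L$ of $k(S)$ in $k(X')$, which is what the Stein factorization is anyway. Your remarks about arranging the boundary of $\overline{X}'$ can then be dropped; that claim is unjustified but is never used. The paper's route avoids the Galois reduction, and rests on a lemma that only needs each connected component of the fibres of a proper model to be non-divisible.

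\emph{Two points to make explicit.} First, the transitivity of $G$ on the points over $s$ is where geometric connectedness of the generic fibre enters. You have $L^G=L\cap k(X)=k(S)$ because $k(S)$ is algebraically closed in $k(X)$, so $L/k(S)$ is Galois with group a quotient of $G$. You should say this. Second, your fallback of replacing $S'$ by the image of $X'$ is unjustified: nothing shows that this image is open with complement of codimension at least two, nor that such opens remain weakly special. It is also unnecessary. The image of $X'$ in $S'$ is $G$-stable and meets every fibre of $S'\to S$, so it equals $S'$.
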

\begin{proof} 
We may assume that $k$ is algebraically closed.
Let $X' \to X$ be a finite \'etale morphism with $X'$ connected. Note that $X'$ is smooth.
To prove the theorem, it suffices to show that $X'$ does not admit a dominant strictly rational map onto a positive-dimensional variety of log-general type.
 
Let $\overline{X}$ be a smooth   (integral) variety containing $X$ as a dense open subscheme such that $X\to S$ extends to a proper morphism $\overline{X}\to S$.
Let $\overline{X'} \to \overline{X}$ be the normalization of $\overline{X}$ in the function field of $X'$.
Since the generic fibre of $X \to S$ is geometrically connected, the generic fibre of $\overline{X} \to S$ is geometrically connected as well.
In particular, for every $s$ of codimension one, the fibre $\overline{X}_s$ is geometrically connected.

Let $s$ be a point of codimension one in $S$ and let $F$ be a connected component of $\overline{X'}_s$.
We claim that $F$ is not divisible. To show this, we prove that $F \cap X_s'$ is not divisible.
First, since $\overline{X}_s$ is connected, the morphism $F \to \overline{X}_s$ is surjective.
It is also finite, since it is the composition of the closed immersion $F \subseteq \overline{X'}_s$ and the finite morphism $\overline{X'}_s \to \overline{X}_s$.
As $F \cap X_s' \to X_s$ is the base change of $F \to \overline{X}_s$ along $X_s \subseteq \overline{X}_s$, it follows that $F \cap X_s' \to X_s$ is finite surjective.
Moreover, since $X' \to X$ is étale, the morphism $X_s' \to X_s$ is étale.
Since $F \cap X_s'$ is open in $X_s'$, it follows that $F \cap X_s' \to X_s$ is étale as well.  Therefore,  the non-divisibility of $X_s$ implies that $F \cap X_s'$ is not divisible, as claimed.

As $s$ and $F$ were arbitrary, it now follows from Lemma~\ref{lemma:stein} that the Stein factorization $T \to S$ of $\overline{X'}\to S$ is finite \'etale.
In particular, since $S$ is weakly special, it follows that $T$ is weakly special as well.  
Moreover,  as the fibres of $X' \to T$ are finite étale covers of the fibres of $X \to S$,  it follows that the set of $t$ in $T(k)$ with $X'_t$ a  weakly special variety is dense in $T$.
The result now follows from Lemma~\ref{lemma:ws_family_step_one}.
\end{proof}

 \begin{remark} Our definition of a weakly special variety (Definition \ref{def:ws}) differs from  that  in  \cite[Definition~2.1]{CDY} and \cite{Ca04}. An explicit example  illustrating this is given in   \cite[\S 2]{BJL}. We stress that this difference only occurs for non-smooth varieties. 
\end{remark} 

\begin{remark}
Some assumption concerning divisible fibres is necessary in Theorem \ref{thm:family_of_ws_vars}. A well-known example illustrating this  is given in \cite[Remark~2.5]{BCJW}: there is a smooth projective surface $X$ which is not weakly special but which admits an elliptic fibration $X\to \mathbb{P}^1$ (with six divisible fibres).
\end{remark}

\begin{corollary} Let $k$ be a field of characteristic zero.
Let $X\to S$ be a smooth surjective morphism of smooth varieties  over $k$ with geometrically connected generic fibre. If $S$ is weakly special and the set of $s$ in $S(\overline{k})$ with $X_s$ weakly special is dense, then $X$ is weakly special.
\end{corollary}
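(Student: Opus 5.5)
The plan is to deduce the corollary directly from Theorem~\ref{thm:family_of_ws_vars} by checking its three hypotheses. Hypotheses (1) and (2) are exactly the assumptions of the corollary: $S$ is weakly special, and the set of $s\in S(\overline{k})$ with $X_s$ weakly special is dense. The generic fibre is geometrically connected by assumption, and $X\to S$ is surjective, so the standing assumptions of Theorem~\ref{thm:family_of_ws_vars} also hold. It therefore remains to verify hypothesis (3): for every codimension one point $s\in S$, the fibre $X_s$ is not divisible.

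For (3), fix a codimension one point $s$ and let $D_s$ be its closure. Because $f\colon X\to S$ is smooth, it is in particular flat. Moreover, the scheme-theoretic preimage $X_{D_s}=X\times_S D_s$ is smooth over $D_s$. It is therefore reduced over the generic point of $D_s$, and hence reduced along every component dominating $D_s$.

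Now localize at $\mathcal{O}_{S,s}$, a DVR with uniformizer $t$. The divisor $f^*D_s$ is cut out by $t$. Let $F_i$ be a component of $X_{D_s}$ dominating $D_s$, and let $a_i$ be its multiplicity in $f^*D_s$. Since $f$ is smooth, $t$ is part of a regular system of parameters at the generic point of $F_i$. Equivalently, the fibre $X_s$ is reduced there, and so $a_i=1$.

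Two cases remain. If $X_s$ is nonempty, then $m_s^+=\gcd(a_i)=1$, so $X_s$ is not divisible. If $X_s$ is empty, then $m_s^+=\infty$ and the fibre would count as divisible. This case is ruled out because $f$ is surjective, so $X_s\neq\emptyset$. Note also that every component of $X_{D_s}$ dominates $D_s$: flatness means no component lies over a smaller closed subset of $D_s$.

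The only point requiring care is the multiplicity-one claim. It follows from the standard fact that the fibres of a smooth morphism are smooth, hence reduced, and that multiplicities of components of $f^*D_s$ equal the lengths of the local rings of $X_s$ at generic points of components. With (1)--(3) verified, Theorem~\ref{thm:family_of_ws_vars} implies that $X$ is weakly special.
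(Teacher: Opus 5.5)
Your proposal is correct and follows the paper's proof. The paper also deduces the corollary from Theorem~\ref{thm:family_of_ws_vars}, noting only that smooth surjective morphisms have no divisible fibres. You verify exactly that point in more detail: reducedness of the smooth fibre gives every dominating component multiplicity one, and surjectivity rules out empty fibres.
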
  
\begin{proof}
As smooth surjective morphisms have no divisible fibres,  this   follows from Theorem \ref{thm:family_of_ws_vars}.
\end{proof}

%
%

\begin{corollary}\label{corollary:xa1etc}
Let $a_1 \leq \cdots \leq a_n$ be positive integers with $\gcd(a_1,\ldots,a_n)=1$.   Define $X= \mathbb{A}^{n}\setminus Z(x_1^{a_1} \cdot \ldots \cdot x_n^{a_n} -1)$ and let $\Sigma\subset X$ be a closed subset of codimension at least two. Then $U=X\setminus \Sigma$  is weakly special.  Furthermore,  $U$ is special if and only if $a_1 =1$. 
\end{corollary}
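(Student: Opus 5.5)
We apply Theorem~\ref{thm:family_of_ws_vars} to the morphism $f\colon U\to S:=\mathbb{A}^1\setminus\{1\}$, $f(x)=x_1^{a_1}\cdots x_n^{a_n}$.

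\textbf{Hypotheses of the theorem.}
\begin{enumerate}
\item The base $S\cong\mathbb{G}_m$ is weakly special.
\item For $t\neq 0,1$, the fibre $f^{-1}(t)$ is $\{x_1^{a_1}\cdots x_n^{a_n}=t\}\subset\mathbb{G}_m^n$. Since $\gcd(a_i)=1$, the character $\chi=(a_1,\dots,a_n)$ is primitive, so $\ker\chi\cong\mathbb{G}_m^{n-1}$ is connected. Hence $f^{-1}(t)$ is a translate of a torus, and a torus is weakly special (it is special). Removing $\Sigma$ only affects a dense set of fibres in codimension $\ge 1$: for general $t$, $\Sigma\cap f^{-1}(t)$ has codimension $\ge 2$ in $f^{-1}(t)$, or is empty. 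Removing a codimension $\geq2$ subset from a smooth variety preserves weak specialness, since finite étale covers extend over it by purity and log-general type maps extend rationally. So general fibres are weakly special.
\item The same torus description gives geometric connectedness of the generic fibre.
\item Surjectivity: $f$ hits $0$ via the coordinate hyperplanes. Since $\Sigma$ has codimension $\geq 2$, $f|_U$ is still surjective onto $S$ except possibly finitely many points. Surjectivity can be arranged by shrinking $S$ slightly, or one simply notes $\Sigma$ cannot contain a whole fibre, which is a divisor.
\end{enumerate}

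\textbf{No divisible fibres.} The only codimension-one points of $S$ with non-smooth fibre are possibly $t=0$, since over $t\ne 0,1$ the map is smooth ($\chi$ primitive). Over $t=0$, $f^*[0]=\sum a_i\{x_i=0\}$. The components $\{x_i=0\}\cap U$ are nonempty, because $\Sigma$ has codimension $\ge2$. So $m_0^+=\gcd(a_i)=1$ and the fibre is not divisible. Theorem~\ref{thm:family_of_ws_vars} then gives that $U$ is weakly special.

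\textbf{Specialness.} The inf-multiplicity over $0$ is $m_0=\min a_i=a_1$. Over $t=1$ the fibre is empty, so its multiplicity is $\infty$; the point $\infty$ is also not hit.

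If $a_1\geq2$, the orbifold base of $U\to\mathbb{P}^1$ is $(\mathbb{P}^1,[1]+[\infty]+(1-\frac1{a_1})[0])$. Its degree is $>2$, so $f$ is a fibration of general type and $U$ is not special.

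If $a_1=1$, the map $(x_1,\dots,x_n)\mapsto(x_1^{a_1}\cdots x_n^{a_n},x_2,\dots,x_n)$ gives an isomorphism $\{x_1 x_2^{a_2}\cdots x_n^{a_n}\neq1\}\cap\{x_2\cdots x_n\neq0\}\cong(\mathbb{A}^1\setminus\{1\})\times\mathbb{G}_m^{n-1}$, which is special. This does not cover all of $X$, so a better route is needed. Argue instead that $X$ is special via Campana's fibration criterion: $f$ has special fibres (tori) and orbifold base $(\mathbb{P}^1,[1]+[\infty])$, which is special. Since there are no multiple fibres (a reduced component with $a_1=1$), $X$ is special by the orbifold additivity of specialness for fibrations with special fibres over a special orbifold base. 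Finally, specialness is insensitive to removing codimension-$2$ subsets $\Sigma$ after resolution.

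\textbf{Main obstacle.} The main obstacle is the $a_1=1$ direction, i.e.\ correctly citing or establishing Campana's fibration result in the quasi-projective setting, together with the invariance of specialness under removing $\Sigma$. I would try first to exhibit $X$ directly as special, via a dominating family of $\mathbb{G}_m$-orbits and $\mathbb{A}^1$-curves.
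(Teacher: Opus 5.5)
Your overall strategy is the paper's. You apply Theorem~\ref{thm:family_of_ws_vars} to $U\to\mathbb{A}^1\setminus\{1\}$, $x\mapsto x_1^{a_1}\cdots x_n^{a_n}$. You use $\gcd(a_i)=1$ and the nonemptiness of the components $\{x_i=0\}\cap U$ to see that the fibre over $0$ is not divisible. You read off non-specialness for $a_1\geq 2$ from the orbifold base $(\mathbb{P}^1,[1]+[\infty]+(1-\frac{1}{a_1})[0])$. All of this matches.

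The one step that does not work as written is your verification of hypothesis (2). You claim that removing a codimension $\geq 2$ subset from a smooth weakly special variety preserves weak specialness, because finite étale covers extend by purity and maps to log-general-type varieties ``extend rationally''. The second half is the problem. A dominant strictly rational map $T^\circ\dashrightarrow Y$ from a big open $T^\circ\subset T$ only extends to a rational map $T\dashrightarrow Y$. Weak specialness of $T$ says nothing about such a map unless it is \emph{strictly} rational, and it need not be: over the removed locus the graph closure can leave $Y$. Already the identity of $T\setminus\{q\}$, viewed as a map $T\dashrightarrow T\setminus\{q\}$, is not strictly rational. So your argument does not establish stability of weak specialness under codimension-two punctures, and the paper deliberately does not use such a statement.

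The fix is to pass through specialness, as the paper does. The general fibre $U_s$ is a big open of the torus $\mathbb{G}_m^{n-1}$, and tori are special. Big opens of smooth special varieties are special by \cite[Theorem~G]{BJL}, and special varieties are weakly special. With this replacement, the weak-specialness part of your proof goes through. (Also drop the suggestion of ``shrinking $S$'' for surjectivity; your second argument, that $\Sigma$ cannot contain a fibre, is the right one.)

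For $a_1=1$, the concern in your last paragraph is unnecessary. The paper applies \cite[Lemma~2.10]{Bartsch} directly to $U\to\mathbb{A}^1\setminus\{1\}$: the base is special, the general fibres are special (as above), and there are no inf-multiple fibres because $\{x_1=0\}$ has multiplicity one. This avoids your detour through $X$ and a separate puncturing step.

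Your abandoned first idea also works. $U\cap\{x_2\cdots x_n\neq 0\}$ is a big open in the torus $(\mathbb{A}^1\setminus\{1\})\times\mathbb{G}_m^{n-1}$, hence special by \cite[Theorem~G]{BJL}. A smooth variety containing a dense open special subvariety is itself special, since a fibration of general type restricts to one on a dense open (inf-multiplicities can only grow). So that route already gives specialness of $U$.
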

\begin{proof} Consider the surjective morphism $X\to \mathbb{A}^1\setminus \{1\}$ given by $(x_1,\ldots,x_n) \mapsto x_1^{a_1} \cdot \ldots \cdot x_n^{a_n}$.  
Since $\Sigma$ has codimension at least two in $X$, it cannot contain a fibre of $X\to \mathbb{A}^1\setminus\{1\}$; hence $U\to \mathbb{A}^1\setminus\{1\}$ remains surjective. 
Also, for every $s\neq 0$, the fibre $X_s$ of   $X\to \mathbb{A}^1\setminus \{1\}$ over $s$ is isomorphic to $\mathbb{G}_m^{n-1}$.    

Note that the morphism $U\to \mathbb{A}^1\setminus \{1\}$ has no divisible fibres.   Indeed,  for all $s\neq 0$, the fibre $U_s$ is smooth (as it is a dense open of $X_s$). Moreover,  the fibre   $U_0$ over $s=0$ decomposes as $\sum_{i=1}^n a_i F_i$, where  the $F_i$ are the irreducible components of $U_0$.   Since $\gcd(a_1,\ldots,a_n)=1$ by assumption, we see that  $U_0$ is not divisible.
 Moreover,    since $\Sigma$ is of codimension at least two,  for   all $s$ in a dense open subset of $\mathbb{A}^1\setminus \{1\}$, the fibre  $U_s$ of  $U\to \mathbb{A}^1\setminus \{1\}$ over $s$ is a dense open subset   of $\mathbb{G}_m^{n-1}$ whose complement is of codimension at least two. In particular,  as it is a ``big open'' of a smooth special variety, it is special  by \cite[Theorem~G]{BJL}, hence weakly special.  
 This shows that $U\to \mathbb{A}^1\setminus \{1\}$ satisfies the  hypotheses of  Theorem \ref{thm:family_of_ws_vars_intro},   and thus  $U$ is weakly special. 
 
  If $a_1>1$, the orbifold base of $U\to \mathbb{A}^1\setminus \{1\}$ is of general type, so that $U$ is not   special.   
If $a_1 =1$, then the   morphism has no inf-multiple fibres, and thus $U$ is special  by \cite[Lemma~2.10]{Bartsch}.
\end{proof}

We state the special case $n=2$ separately; note that Theorem~\ref{thm:A2_minus_x2y3} from the introduction is then the special case that $a=2$, $b=3$, and $\Sigma = \emptyset$.  

\begin{corollary}\label{cor:ab_puncture} Let $1\leq a\leq b$ be  coprime integers.
Let $\Sigma\subset \mathbb{A}^2 \setminus Z(x^ay^b-1)=X$ be  finite. Then $U=X\setminus \Sigma$ is weakly special. Furthermore, $U$ is special if and only if $a=1$.
\end{corollary}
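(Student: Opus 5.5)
This is the case $n=2$ of Corollary~\ref{corollary:xa1etc}, so the plan is to reduce to it. Put $a_1=a$, $a_2=b$, so that $a_1\le a_2$ and $\gcd(a_1,a_2)=1$ by hypothesis. Then $X=\mathbb{A}^2\setminus Z(x^ay^b-1)$ is exactly the variety $\mathbb{A}^n\setminus Z(x_1^{a_1}\cdots x_n^{a_n}-1)$ of that corollary with $n=2$.

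The one point to check is that a finite set $\Sigma\subset X$ is a closed subset of codimension at least two. Indeed, $\Sigma$ is a finite set of closed points, hence closed. Since $X$ is a smooth irreducible surface, these points have codimension two.

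Corollary~\ref{corollary:xa1etc} then gives that $U=X\setminus\Sigma$ is weakly special, and that $U$ is special if and only if $a_1=a=1$. This is precisely the claim.

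For completeness, the argument inside Corollary~\ref{corollary:xa1etc} in this case runs as follows.
\begin{enumerate}
\item Consider the fibration $U\to\mathbb{A}^1\setminus\{1\}$, $(x,y)\mapsto x^ay^b$. Removing finitely many points keeps it surjective with connected generic fibre $\mathbb{G}_m$.
\item The fibre over $0$ is $aF_1+bF_2$, where $F_1$ and $F_2$ are the two coordinate axes minus the origin and minus $\Sigma$. Since $\gcd(a,b)=1$, this fibre is not divisible; all other fibres are reduced.
\item The general fibre is all of $\mathbb{G}_m$, since only finitely many fibres meet $\Sigma$, so it is weakly special. The base $\mathbb{A}^1\setminus\{1\}\cong\mathbb{G}_m$ is weakly special.
\item Theorem~\ref{thm:family_of_ws_vars} therefore applies and $U$ is weakly special.
\item If $a\ge 2$, the orbifold base is $(\mathbb{P}^1,[1]+[\infty]+(1-\tfrac1a)[0])$. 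It has degree $2-\tfrac1a>2$'s threshold, i.e. $-2+2+1-\tfrac1a>0$, so it is of general type and $U$ is not special.
\item If $a=1$, there are no inf-multiple fibres, and specialness follows from the cited lemma.
\end{enumerate}

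There is essentially no obstacle here: the only step needing care is the codimension check, and the rest is inherited from Corollary~\ref{corollary:xa1etc}.
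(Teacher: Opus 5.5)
Your proposal is correct and matches the paper, which states this corollary as the case $n=2$ of Corollary~\ref{corollary:xa1etc}; the only check needed is that a finite $\Sigma$ in the smooth surface $X$ is closed of codimension two. Two minor slips in your optional sketch: the origin lies in $X$, so the fibre components are the coordinate axes minus $\Sigma$ only, and $\deg \Delta = 3-\frac{1}{a}$ (not $2-\frac{1}{a}$), so that $\deg(K_{\mathbb{P}^1}+\Delta)=1-\frac{1}{a}>0$ for $a\geq 2$.
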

 
 \begin{remark}
Let $X $ be a weakly special variety and let  $f\colon X\to \mathbb{G}_m$  be a fibration of general type (with no divisible fibres) with orbifold base $(\mathbb{G}_m, \frac{1}{2} [1])$.  For $n\geq 1$, define $X_n$ via the Cartesian diagram 
\[
\xymatrix{ X_n \ar[rr] \ar[d]_{f_n} & & X \ar[d]^{f} \\ \mathbb{G}_m \ar[rr]_{z\longmapsto z^n} & & \mathbb{G}_m   }
\] Then $X_n$ is weakly special and $f_n\colon X_n\to \mathbb{G}_m$ is a fibration of general type with ``larger'' orbifold base  $(\mathbb{G}_m, \frac{1}{2} \left( [1] + [\zeta_n] +\ldots + [\zeta_n^{n-1}]\right))$,  where $\zeta_n$ is a primitive $n$-th root of unity.
\end{remark}
\begin{remark} 
The weakly special varieties constructed here share a common feature: their fibrations of general type arise as quotients of affine varieties by torus actions (with possibly negative weights). This perspective motivates the notion of a Campana stack introduced below (see Section \ref{section:campana_stacks}   and  also Section \ref{section:remarks_on_orbi_mordell}).
\end{remark}

\subsection{Elliptic fibrations}\label{section:elliptic_fibrations}

 Every smooth projective weakly special surface is special.  This stems from the fact that every inf-multiple fibre of an elliptic fibration (or more generally abelian fibration) is divisible \cite[Remarque~1.3]{Ca05}.  Indeed,    the Kodaira-N\'eron classification of singular fibres of an elliptic fibration shows that the only inf-multiple fibres are those of type $\tensor[_m]{\mathrm{I}}{_n}$. 
   
Nevertheless, other examples of weakly special nonspecial surfaces can be obtained from applying    Theorem \ref{thm:family_of_ws_vars_intro} to Jacobian\footnote{Recall that an elliptic fibration $X\to S$ is said to be \emph{Jacobian}  if it has a section.  If $X$ and $S$ are regular,  a Jacobian elliptic fibration   $X\to S$ has no inf-multiple fibres in codimension one.}  elliptic fibrations over $\mathbb{P}^1$ with   $\mathrm{II}^\ast$,  $\mathrm{III}^\ast$ and   $\mathrm{IV}^\ast$ fibres, by removing   reduced components of the singular fibres. This construction of inf-multiple but non-divisible fibres on a family of curves was  the starting observation of this work.

\begin{corollary}\label{cor:elliptic_fibrations} Let $X$ be a smooth quasi-projective surface over $k$,  let $C$ be a smooth quasi-projective special curve and let $X\to C$ be a  proper surjective morphism whose general  fibre is a genus one curve.   Suppose that $X\to C$ has no multiple fibres and has a singular fibre of type  $\mathrm{II}^\ast,  \mathrm{III}^\ast$, or $\mathrm{IV}^\ast$. 	Let $F$ denote the   union of all reduced components  that singular fibre and let $U=X\setminus F$. Then, the following statements hold. 
\begin{enumerate}
\item The smooth quasi-projective surface $U$ is weakly special.
\item Assume $C$ is projective of   genus one and that  $X\to C$ has a non-reduced fibre. Then $U$ is not special.
\item Assume that $ C  = \mathbb{A}^1\setminus \{0\}$ and that $X\to C$ has a non-reduced fibre. Then $U$ is not special.
\end{enumerate} 
\end{corollary}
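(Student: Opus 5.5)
We would deduce (1) from Theorem~\ref{thm:family_of_ws_vars} applied to the restriction $f\colon U\to C$, and (2), (3) by computing the orbifold base of $f$ at the point $c_0\in C$ carrying the singular fibre of type $\mathrm{II}^\ast$, $\mathrm{III}^\ast$ or $\mathrm{IV}^\ast$.

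\textbf{Hypotheses of Theorem~\ref{thm:family_of_ws_vars}.}
\begin{itemize}
\item $C$ is special, hence weakly special, so condition (1) holds.
\item The general fibre of $X\to C$ is a genus one curve and $F$ lies in a single fibre. Hence for $c\neq c_0$ in a dense open the fibre $U_c=X_c$ is a smooth genus one curve, which is weakly special. This gives condition (2).
\item The generic fibre of $U\to C$ equals that of $X\to C$. It is a smooth genus one curve and is connected, so geometrically connected; we may pass to the Stein factorization if needed.
\item Surjectivity: each of these three Kodaira types has components of multiplicity $\geq 2$, which survive in $U_{c_0}$. Hence $U\to C$ is surjective.
\end{itemize}

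\textbf{No divisible fibres.} For $c\neq c_0$ we have $U_c=X_c$. Since $X\to C$ has no multiple fibres, the Kodaira--N\'eron classification shows that every fibre type other than ${}_m\mathrm{I}_n$ with $m\geq 2$ has gcd of multiplicities equal to $1$. So these fibres are not divisible.

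For $c_0$ we use the multiplicities of the components in the Kodaira classification and delete those of multiplicity $1$:
\begin{itemize}
\item $\mathrm{II}^\ast$: the multiplicities are $1,2,3,4,5,6,4,2,3$; deleting the $1$ leaves gcd $1$.
\item $\mathrm{III}^\ast$: the multiplicities are $1,2,3,4,3,2,1,2$; deleting the $1$'s leaves $\gcd(2,3,4)=1$.
\item $\mathrm{IV}^\ast$: the multiplicities are $1,2,3,2,1,2,1$; deleting the $1$'s leaves $\gcd(2,3)=1$.
\end{itemize}
So $U_{c_0}$ is not divisible. It is, however, inf-multiple with $m_{c_0}=2$. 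Theorem~\ref{thm:family_of_ws_vars} now gives (1).

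\textbf{Non-speciality in (2) and (3).} We would show that $f$ is a rational map of general type. Since $C$ is a curve, any birational model of it is its smooth completion $\overline C$, so only modifications $U'\to U$ need to be handled.

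For a log-resolution $U'\to U$, the orbifold divisor $\Delta_{f'}$ can only grow. Components over $c_0$ that dominate $c_0$ are either strict transforms, keeping their multiplicities, or exceptional divisors, whose multiplicities along the fibre are positive combinations of the multiplicities of the components through their centre. All such multiplicities are therefore $\geq 2$. Points of $\overline C\setminus C$ have empty fibre, so they receive coefficient $1$. Hence
\[
\Delta_{f'}\geq \tfrac12[c_0]+(\overline C\setminus C).
\]
\begin{itemize}
\item In case (2), $\deg\bigl(K_{\overline C}+\Delta_{f'}\bigr)\geq 0+\tfrac12>0$.
\item In case (3), $\overline C=\mathbb{P}^1$ and $\deg\bigl(K_{\overline C}+\Delta_{f'}\bigr)\geq -2+2+\tfrac12>0$.
\end{itemize}
In both cases the divisor is big, so $U$ is not special.

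The hypothesis that $X\to C$ has a non-reduced fibre appears automatically satisfied here, since the chosen fibre of type $\mathrm{II}^\ast$, $\mathrm{III}^\ast$ or $\mathrm{IV}^\ast$ is itself non-reduced.

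\textbf{Main obstacle.} The delicate step is verifying the inequality for $\Delta_{f'}$ over $c_0$ for every modification, that is, that blowing up cannot create a dominating component of multiplicity $1$. We would argue this locally: near any point of $U_{c_0}$ the pullback of a uniformizer at $c_0$ vanishes to order $\geq 2$ along every component of $U_{c_0}$. Its order along any divisor over $U$ centred in $U_{c_0}$ is then also $\geq 2$.
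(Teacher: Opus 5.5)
Your proposal is correct and follows the same route as the paper: you apply Theorem~\ref{thm:family_of_ws_vars} to $U\to C$, using that deleting the reduced components of a $\mathrm{II}^\ast$, $\mathrm{III}^\ast$ or $\mathrm{IV}^\ast$ fibre leaves a non-divisible fibre of inf-multiplicity two, and you then observe that the orbifold base $\bigl(\overline{C},\tfrac12[c_0]+(\overline{C}\setminus C)\bigr)$ is of general type. Your explicit multiplicity tables, the valuation argument showing that no modification of $U$ can lower the multiplicity over $c_0$, and the remark that the non-reduced-fibre hypothesis is automatic all supply details that the paper leaves implicit.
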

\begin{proof}        Removing the reduced components of a fibre of type II$^\ast$, III$^\ast$ or IV$^\ast$ results again in  a non-divisible inf-multiple fibre (with inf-multiplicity two).  Thus, since $C$ is special  (hence weakly special) and the general fibres of $U\to C$ are smooth projective genus one curves (hence special) and $U\to C$ has no divisible fibres,  the first statement   follows from Theorem \ref{thm:family_of_ws_vars_intro}.  

To prove the second statement,  assume that $C$ is projective of genus one. If $X\to C$ has a non-reduced fibre, then $U\to C$ has  a non-trivial orbifold divisor $\Delta$. Since   $(U,\Delta)$ is of general type,  the morphism $U\to C $ is a fibration of general type on $U$, so that $U$ is not special.  
The third statement follows in a similar manner.    
\end{proof}

 \section{Campana stacks}\label{section:campana_stacks} 
 
In this section, we construct the Campana stack associated to a C-pair $(X,\Delta)$ in several steps. We begin by recalling the construction of the Campana stack $\mathcal{C}_{X,D,a}$ attached to an effective Cartier divisor $D$ on $X$ and an ordered tuple of positive integers $a=(a_1,\ldots,a_r)$ (see Definition \ref{def:campana_stack_intro}).  Roughly speaking, the tuple $a$ records the multiplicities of the irreducible components of a   fibre of the form $\sum_{i=1}^r a_iF_i$.

We then pass from tuples to subsets of    $\mathbb{Z}_{\geq 1}$ closed under addition such as $\mathbb{Z}_{\geq m}$.  Viewing $\mathbb{Z}_{\geq m}$ as the semigroup  generated by $m,m+1,\ldots,2m-1$,  we define the Campana stack attached to a pair $(X,(1-\frac{1}{m})D)$.   Finally, given a C-pair $(X,\Delta)$,  by combining these constructions for the components of $\Delta$, we obtain the Campana stack $\mathcal{C}_{X,\Delta}$; see Definition \ref{def:campana_stack_2}.

   \subsection{Universal inf-multiple fibre}
Let $r\geq 1$ and let $a=(a_1,\ldots, a_r)$ be an   $r$-tuple of positive integers.  As in Section \ref{section:intro_campana},  we define  $p_a \colon \mathbb{A}^r \to \mathbb{A}^1$ to be the morphism defined by 
\[
p_a(x_1,\ldots,x_r) = x_1^{a_1} \cdot \ldots \cdot x_r^{a_r}.
\]  Note that the fibre  of $p_a$ over $0$ is connected and decomposes as $\sum_{i=1}^r a_i F_i$, where   $F_i = Z(x_i)$ is the $i$-th coordinate hyperplane in $\mathbb{A}^r$.  Consider the morphism   of algebraic stacks 
\[
P_a \colon [\mathbb{A}^r/\mathbb{G}_m^r] = [\mathbb{A}^1/\mathbb{G}_m]^r \to [\mathbb{A}^1/\mathbb{G}_m]
\]   given by $((L_1,s_1),\ldots, (L_r,s_r))\mapsto (L_1^{\otimes a_1}\otimes \ldots \otimes L_r^{\otimes a_r}, s_1^{\otimes a_1}\otimes \ldots \otimes s_r^{\otimes a_r})$;  we view $P_a$ as  the ``universal inf-multiple fibre with multiplicities  $a_1,  \ldots, a_r$''. Note that the composed morphism $\mathbb{A}^r\to [\mathbb{A}^r/\mathbb{G}_m^r] = [\mathbb{A}^1/\mathbb{G}_m]^r \to [\mathbb{A}^1/\mathbb{G}_m]$ factors as $q\circ p_a$, where $q\colon \mathbb{A}^1\to [\mathbb{A}^1/\mathbb{G}_m]$ is the quotient morphism. We now recall the definition of the Campana stack $\mathcal{C}_{X,D,a}$ (see Definition \ref{def:campana_stack_intro}); note that we allow for $X$ to be an algebraic stack here.

\begin{definition}\label{def:campana_stack}
Let $X$ be an algebraic stack, let $D$ be a generalized effective Cartier divisor on $X$, and let $f_D\colon X\to [\mathbb{A}^1/\mathbb{G}_m]$ be the associated moduli map. Then, the \emph{Campana stack $\mathcal{C}_{X,D,a}$  of $(X,D,a)$}  is defined by the  Cartesian diagram
\[
\xymatrix{ \mathcal{C}_{X,D,a}  \ar[rr] \ar[d] & & [\mathbb{A}^1/\mathbb{G}_m]^r \ar[d]^{P_a} \\ X \ar[rr]_{f_D} & & [\mathbb{A}^1/\mathbb{G}_m]}
\]
The \emph{Campana space $U_{X,D,a}$  of the triple $(X,D,a)$} is defined  by the Cartesian diagram
\[
\xymatrix{ U_{X,D,a} \ar[rr] \ar[d]_{\mathbb{G}_m^r-\text{torsor}} & &  \mathbb{A}^r \ar[d]^{\mathbb{G}_m^r-\text{torsor}} \\ 
\mathcal{C}_{X,D,a}  \ar[rr]   & & [\mathbb{A}^1/\mathbb{G}_m]^r  }
\]
\end{definition}

\begin{remark} \label{remark:representability} The morphism    $\mathbb{A}^r\to [\mathbb{A}^r/\mathbb{G}_m^r]\to [\mathbb{A}^1/\mathbb{G}_m]$  is affine (and representable).  It follows that   $U_{X,D,a}\to X$    is affine.  In particular, if $X$ is an algebraic space (resp. a scheme), then $U_{X,D,a}$ is an algebraic space (resp. a scheme).  If $X$ is affine (resp. quasi-projective) over a scheme $S$,  then $U_{X,D,a}$ is affine (resp.  quasi-projective) over $S$.  
\end{remark}

\begin{remark}\label{remark:reduce_to_root_stack}
If $r=1$ and $a_1 = n$, then $\mathcal{C}_{X,D,a}$ is the $n$-th root stack $\sqrt[n]{D/X}$.     If $n= \gcd(a_1,\ldots,a_r)$, then   $\mathcal{C}_{X,D,a} \to X$ factors over $\mathcal{X}_n=\sqrt[n]{D/X}$. In fact, $\mathcal{C}_{X,D,a}$ is the Campana stack $\mathcal{C}_{\mathcal{X}_n,\mathcal{D},b}$, where $\mathcal{D}$ is the pull-back of $D$ to $\mathcal{X}_n$ and $b = (b_1,\ldots,b_r)$ with $b_i = \frac{a_i}{n}$. 
\end{remark}

\begin{lemma}\label{lemma:c-stack-finitetype} Let $X$ be an algebraic stack and let $D$ be an effective Cartier divisor on $X$. Then the morphism $\mathcal{C}_{X,D,a}\to X$ is of finite type and an isomorphism over $X\setminus \supp(D)$.  
\end{lemma}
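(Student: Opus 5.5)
Both properties are stable under base change, and $\mathcal{C}_{X,D,a}\to X$ is by definition the base change of $P_a\colon [\mathbb{A}^1/\mathbb{G}_m]^r\to[\mathbb{A}^1/\mathbb{G}_m]$ along $f_D$. So the plan is to verify them for $P_a$ itself and pull back. Finite type is stable under base change of algebraic stacks. For the second property, the open substack $X\setminus\supp(D)$ is precisely $f_D^{-1}([\mathbb{G}_m/\mathbb{G}_m])$, since the section of the universal line bundle is invertible exactly off the divisor. It therefore suffices to show that $P_a$ is an isomorphism over the open point $[\mathbb{G}_m/\mathbb{G}_m]\cong\Spec\mathbb{Z}$.

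For finite type, I use the smooth atlas $q\colon\mathbb{A}^1\to[\mathbb{A}^1/\mathbb{G}_m]$. As noted just before Definition~\ref{def:campana_stack}, the composite $\mathbb{A}^r\to[\mathbb{A}^1/\mathbb{G}_m]^r\to[\mathbb{A}^1/\mathbb{G}_m]$ equals $q\circ p_a$. I first form the fibre product $\mathcal{Y}:=[\mathbb{A}^1/\mathbb{G}_m]^r\times_{[\mathbb{A}^1/\mathbb{G}_m]}\mathbb{A}^1$. Concretely, $\mathcal{Y}$ is the quotient of $\mathbb{A}^r\times\mathbb{G}_m$ (or a closely related affine scheme of finite type over $\mathbb{A}^1$) by $\mathbb{G}_m^r$, acting through the character $(a_1,\dots,a_r)$ on the extra coordinate. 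The map $\mathcal{Y}\to\mathbb{A}^1$ admits the smooth atlas $\mathbb{A}^r\times\mathbb{G}_m\to\mathcal{Y}$, and this atlas is of finite type over $\mathbb{A}^1$. Hence $\mathcal{Y}\to\mathbb{A}^1$ is of finite type, and $P_a$ is of finite type by fppf-local nature on the target. Alternatively, I can argue directly: $P_a$ is a morphism between stacks of finite type over $\Spec\mathbb{Z}$ with quasi-separated diagonal (both are quotients of affine schemes by affine groups), so $P_a$ is locally of finite type and quasi-compact, hence of finite type.

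For the isomorphism, I compute $P_a^{-1}([\mathbb{G}_m/\mathbb{G}_m])$. An object $((L_i,s_i))_i$ lies over the open locus iff $\bigotimes_i s_i^{\otimes a_i}$ is nowhere vanishing. Since all $a_i\ge1$, this happens iff each $s_i$ is nowhere vanishing. So the preimage is $[\mathbb{G}_m/\mathbb{G}_m]^r\cong\Spec\mathbb{Z}$, and $P_a$ restricts to the identity of $\Spec\mathbb{Z}$. In groupoid terms: over any base $T$, a tuple of trivialized line bundles is unique up to unique isomorphism, and the same holds for their tensor product.

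The only step requiring care is describing $\mathcal{Y}$ concretely and noting it is of finite type over $\mathbb{A}^1$; I expect this to be routine. The isomorphism statement is formal once the open substack is identified.
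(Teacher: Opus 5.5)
Your proposal is correct and follows the same route as the paper. Since $\mathcal{C}_{X,D,a}\to X$ is the base change of $P_a$ along $f_D$, it suffices that $P_a$ is of finite type and an isomorphism over the open substack $[(\mathbb{A}^1\setminus\{0\})/\mathbb{G}_m]$, whose preimage under $f_D$ is $X\setminus\supp(D)$. You also supply the verifications the paper leaves implicit, namely the atlas argument for finite type and the identification $P_a^{-1}([\mathbb{G}_m/\mathbb{G}_m])=[\mathbb{G}_m^r/\mathbb{G}_m^r]$.
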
      
\begin{proof}
The first statement follows from the fact that the morphism $P_a$ is of finite type.  The second statement follows from the fact that   the morphism $P_a\colon [\mathbb{A}^r/\mathbb{G}_m^r]\to [\mathbb{A}^1/\mathbb{G}_m]$ is an isomorphism over the open substack $[(\mathbb{A}^1\setminus \{0\})/\mathbb{G}_m]$ of $[\mathbb{A}^1/\mathbb{G}_m]$. 
\end{proof}

\begin{lemma}\label{lemma:structure_of_Campana_stacks}  Let $X$ be a scheme and let $D$ be an effective Cartier divisor on $X$. The  stack $\mathcal{C}_{X,D,a}$ is an  algebraic stack  whose inertia groups are  subgroups of $\mathbb{G}_m^r$.   If $X$ has affine diagonal, then $\mathcal{C}_{X,D,a}$ has affine diagonal.
\end{lemma}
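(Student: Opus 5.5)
The plan is to make the Cartesian square in Definition \ref{def:campana_stack} do all the work: each property of $\mathcal{C}_{X,D,a}$ will be inherited from the corresponding property of $P_a$ and of the fibre product.

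\emph{Algebraicity.} All three stacks $X$, $[\mathbb{A}^1/\mathbb{G}_m]^r$ and $[\mathbb{A}^1/\mathbb{G}_m]$ are algebraic: the last two are quotients of a scheme by a smooth affine group scheme. Fibre products of algebraic stacks are algebraic (Stacks Project), so $\mathcal{C}_{X,D,a}$ is algebraic. One could also argue via Remark \ref{remark:representability}: $U_{X,D,a}$ is a scheme, affine over $X$, and $U_{X,D,a}\to\mathcal{C}_{X,D,a}$ is a $\mathbb{G}_m^r$-torsor. Hence $\mathcal{C}_{X,D,a}\cong[U_{X,D,a}/\mathbb{G}_m^r]$, which is a global quotient stack.

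\emph{Inertia.} I will use this global quotient description. Let $u$ be a geometric point of $U_{X,D,a}$. Its automorphism group in $[U_{X,D,a}/\mathbb{G}_m^r]$ is the stabilizer $\mathrm{Stab}(u)\subseteq\mathbb{G}_m^r$, a closed subgroup. An alternative is to compute directly in the fibre product. Since $X$ is a scheme, an automorphism of a point $(x,\xi,\phi)$ is an automorphism of $\xi$ in $[\mathbb{A}^1/\mathbb{G}_m]^r$, which lies in $\mathbb{G}_m^r$, compatible with the identity on $f_D(x)$. So the inertia group is the subgroup of $\mathbb{G}_m^r$ consisting of elements that fix the point and satisfy $P_a(g)=1$. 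In particular it is contained in $\ker(\mathbb{G}_m^r\to\mathbb{G}_m,\ (t_i)\mapsto\prod t_i^{a_i})$. Either way the inertia groups are subgroups of $\mathbb{G}_m^r$.

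\emph{Affine diagonal.} The diagonal of $\mathcal{C}=X\times_{\mathcal{Y}}\mathcal{Z}$, with $\mathcal{Y}=[\mathbb{A}^1/\mathbb{G}_m]$ and $\mathcal{Z}=[\mathbb{A}^1/\mathbb{G}_m]^r$, factors as
\[
\mathcal{C}\to \mathcal{C}\times_{X}\mathcal{C} \to \mathcal{C}\times\mathcal{C},
\]
or more cleanly as a base change of $\Delta_X\times\Delta_{\mathcal{Z}}$ composed with the relative diagonal over $\mathcal{Y}$. The cleanest route is to view $\mathcal{C}\to X$ as the base change of $P_a$. Then I would check three things. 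First, $P_a$ has affine relative diagonal: this holds because $\Delta_{P_a}$ is a base change of $\mathcal{Z}\to\mathcal{Z}\times\mathcal{Z}$, whose source and target are quotients by affine groups, so the diagonal is affine. Second, $\mathcal{C}\to X$ therefore has affine diagonal, since that property is stable under base change. Third, $\Delta_{\mathcal{C}}$ is the composite of $\Delta_{\mathcal{C}/X}$ with the base change of $\Delta_X$ along $\mathcal{C}\times\mathcal{C}\to X\times X$; both are affine, hence so is $\Delta_{\mathcal{C}}$.

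The first check is the only point needing care. The diagonal of $[\mathbb{A}^r/\mathbb{G}_m^r]$ is the map
\[
[(\mathbb{G}_m^r\times\mathbb{A}^r)/\mathbb{G}_m^r]\to[\mathbb{A}^r/\mathbb{G}_m^r]^2,
\]
and its pullback along $\mathbb{A}^r\times\mathbb{A}^r$ is the affine scheme $\{(g,x,y): gx=y\}$. The same computation for $[\mathbb{A}^1/\mathbb{G}_m]$ shows its diagonal is affine, and the relative diagonal of a morphism between stacks with affine diagonal is affine.
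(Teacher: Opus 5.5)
Your proof is correct and follows the paper's argument. Algebraicity comes from the fibre product. Inertia comes from the representability of $\mathcal{C}_{X,D,a}\to[\mathbb{A}^1/\mathbb{G}_m]^r$, which is what your direct computation spells out; the description $\mathcal{C}_{X,D,a}\cong[U_{X,D,a}/\mathbb{G}_m^r]$ is an equally good shortcut. The affine diagonal comes from $\Delta_{P_a}$ being affine, base change to $X$, and composition with the pullback of $\Delta_X$.

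You should delete the claim that $\Delta_{P_a}$ is a base change of $\Delta_{\mathcal{Z}}$, because it is false. The map $j\colon\mathcal{Z}\times_{\mathcal{Y}}\mathcal{Z}\to\mathcal{Z}\times\mathcal{Z}$ is a base change of $\Delta_{\mathcal{Y}}$ and is not a monomorphism: pulling $\Delta_{\mathcal{Z}}$ back along $j$ gives $\mathcal{Z}\times_{\mathcal{Y}}I_{\mathcal{Y}}$, not $\mathcal{Z}$. The valid justification is the one you end with, namely cancellation in $\Delta_{\mathcal{Z}}=j\circ\Delta_{P_a}$, using that $j$ is affine and hence separated.
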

\begin{proof}
Since any  fibre product of algebraic stacks is an algebraic stack, we see that $\mathcal{C}_{X,D,a}$ is an algebraic stack.  Since $X\to [\mathbb{A}^1/\mathbb{G}_m] $ is representable (as $X$ is a scheme), the morphism $\mathcal{C}_{X,D,a}\to [\mathbb{A}^1/\mathbb{G}_m]^r$ is representable. 
Since the morphism $\mathcal{C}_{X,D,a}\to [\mathbb{A}^1/\mathbb{G}_m]^r$ is representable, the inertia groups of $\mathcal{C}_{X,D,a}$ embed into those of $[\mathbb{A}^1/\mathbb{G}_m]^r$, hence are subgroups of $\mathbb{G}_m^r$.

Since the stack $[\mathbb{A}^r/\mathbb{G}_m]^r$ has affine diagonal,  the morphism $P_a\colon [\mathbb{A}^1/\mathbb{G}_m]^r\to [\mathbb{A}^1/\mathbb{G}_m]$ has affine diagonal. Therefore, the morphism $\mathcal{C}_{X,D,a}\to X$ has affine diagonal.   In particular, if $X$ has affine diagonal,  then  $\mathcal{C}_{X,D,a}$ has  affine diagonal. 
 \end{proof}

\begin{lemma}\label{lemma:campana_stack_of_a_1} Let $X$ be a scheme and let $D$ be an effective Cartier divisor on $X$. Let $a=(a_1,\ldots,a_r)$ be an   $r$-tuple of positive integers with $\gcd(a_1,\ldots,a_r)=1$.  If $T$ is a scheme,  then any morphism $f\colon T\to X$ with $f(T)\subset \supp(D)$ factors over $\mathcal{C}_{X,D,a}$.
\end{lemma}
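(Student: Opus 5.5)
To produce a factorization of $f$ through $\mathcal{C}_{X,D,a}$, we need to lift the composite $f_D\circ f\colon T\to[\mathbb{A}^1/\mathbb{G}_m]$ along $P_a$, since $\mathcal{C}_{X,D,a}$ is the fibre product. Concretely, $f_D\circ f$ corresponds to the pair $(L,s)=(f^*\mathcal{O}_X(D), f^*s_D)$ on $T$, where $s_D$ is the canonical section. The hypothesis $f(T)\subset\supp(D)$ means that $s_D$ pulls back to a section vanishing at every point of $T$. The subtlety is that $f^*s_D$ need only be nilpotent when $T$ is non-reduced, not zero. So I first want to reduce to the case $f^*s_D=0$, or handle the nilpotent case directly.

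Suppose first that $f^*s_D=0$. Then $(L,0)$ is the pulled-back object of $[\mathbb{A}^1/\mathbb{G}_m]$. Since $\gcd(a_1,\ldots,a_r)=1$, choose integers $c_1,\ldots,c_r$ with $\sum a_ic_i=1$, and set $L_i=L^{\otimes c_i}$ and $s_i=0$. Then $\bigotimes L_i^{\otimes a_i}\cong L^{\otimes\sum a_ic_i}=L$ canonically, and $\bigotimes s_i^{\otimes a_i}=0$, which matches $s=0$ under this isomorphism. This gives an object of $[\mathbb{A}^1/\mathbb{G}_m]^r(T)$ together with an isomorphism $P_a(\{(L_i,s_i)\})\cong(L,s)$, hence a $T$-point of $\mathcal{C}_{X,D,a}$ lying over $f$. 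Note that $c_i$ may be negative, which is harmless for line bundles, but the sections must be zero for this to make sense. That is exactly why zero sections are needed.

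The main obstacle is the nilpotent case. I see two routes:

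\begin{enumerate}
\item Interpret "$f(T)\subset\supp(D)$" as saying that $f$ factors scheme-theoretically through the closed subscheme $D$, so that $f^*s_D=0$. This is presumably the intended reading, and it is consistent with the C-pair applications, where $T$ is integral and regular, hence reduced.
\item Handle the general case directly. If $T$ is reduced, a section vanishing at every point is zero, so the argument above applies verbatim.
\end{enumerate}

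If $T$ is non-reduced and $f^*s_D$ is a nonzero nilpotent, the naive construction fails. I would then try to exhibit a counterexample, e.g. $T=\Spec k[\epsilon]/\epsilon^2$, $r=2$, $a=(2,3)$, with $s=\epsilon$. A lift would require $s_1^2s_2^3=\epsilon$ locally, but locally both $s_i$ lie in the maximal ideal (since $s\in(\epsilon)$ vanishes at the point), forcing $s_1^2s_2^3=0$. So the lemma needs $T$ reduced, or needs the containment to be scheme-theoretic.

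I would therefore write the proof assuming $T$ is reduced, or that $f$ factors through $D$, and remark on this. The core step is the Bézout choice of the $c_i$ together with the observation that zero sections impose no constraint.
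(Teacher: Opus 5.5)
Your argument is the paper's: choose Bézout coefficients with $\sum_i \lambda_i a_i = 1$ and lift the object $(L,0)$ through $P_a$ via $(L^{\otimes \lambda_i},0)_i$. Your caveat is also correct. The paper tacitly uses $f^\ast s_D=0$, which holds when $T$ is reduced (as in its use for integral regular $T$ in Theorem~\ref{thm:C-pair-Campana-stack}) or when $f$ factors through the closed subscheme $D$. Your $k[\epsilon]/(\epsilon^2)$ example does refute the literal statement when all $a_i\geq 2$, though the justification should be that \emph{some} $s_j$ lies in $(\epsilon)$, which already forces $s_j^{a_j}=0$, not that both $s_i$ do.
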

\begin{proof}
A morphism $T\to [\mathbb{A}^1/\mathbb{G}_m]$ corresponds to a generalized effective Cartier divisor $(L,\rho)$ on $T$. Likewise, a morphism
$
T\to [\mathbb{A}^1/\mathbb{G}_m]^r
$
amounts to giving $r$ such divisors $(L_1,\rho_1),\ldots,(L_r,\rho_r)$ and the morphism $T\to [\mathbb{A}^1/\mathbb{G}_m]$ associated to $(L,\rho)$ factors through
\[
P_a\colon [\mathbb{A}^1/\mathbb{G}_m]^r \to [\mathbb{A}^1/\mathbb{G}_m]
\]
if and only if
\[
(L,\rho) \cong \big(L_1^{\otimes a_1}\otimes \cdots \otimes L_r^{\otimes a_r},\;
\rho_1^{\otimes a_1}\otimes \cdots \otimes \rho_r^{\otimes a_r}\big).
\]   Now,   the composition $T\to X\to [\mathbb{A}^1/\mathbb{G}_m]$ corresponds to the  generalized effective Cartier divisor $(L,0)$, where $L=f^\ast \mathcal{O}_X(D)$.  Since $\gcd(a_1,\ldots, a_r)=1$, we may write $1= \lambda_1 a_1 + \ldots +\lambda_r a_r$ for some integers $\lambda_i\in \mathbb{Z}$.  Note that $L \cong  (L^{\lambda_1})^{a_1} \otimes \ldots \otimes (L^{\lambda_r})^{a_r}$,  and thus $f$ factors over $\mathcal{C}_{X,D,a}$, as required. 
\end{proof}

\begin{lemma}\label{lemma:campana_stack_of_a_2} Let $X$ be a scheme and let $D$ be an effective Cartier divisor on $X$. Let $a=(a_1,\ldots,a_r)$ be an   $r$-tuple of positive integers.
A morphism $f\colon T\to X$ of schemes whose image is not contained in the support of $D$ factors over $\mathcal{C}_{X,D,a}$ if and only if there exist effective Cartier divisors $D_1,\ldots, D_r$ such that
$f^\ast D = \sum_{i=1}^r a_i D_i $.
\end{lemma}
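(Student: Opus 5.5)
The plan is to unwind the fibre product defining $\mathcal{C}_{X,D,a}$ in terms of generalized effective Cartier divisors, exactly as in the proof of Lemma~\ref{lemma:campana_stack_of_a_1}. A morphism $T\to X$ factors over $\mathcal{C}_{X,D,a}$ if and only if there are line bundles with sections $(L_1,\rho_1),\ldots,(L_r,\rho_r)$ on $T$ and an isomorphism
\[
(f^\ast\mathcal{O}_X(D), f^\ast s_D)\;\cong\;\big(L_1^{\otimes a_1}\otimes\cdots\otimes L_r^{\otimes a_r},\ \rho_1^{\otimes a_1}\otimes\cdots\otimes\rho_r^{\otimes a_r}\big).
\]
Here $s_D$ is the canonical section of $\mathcal{O}_X(D)$. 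The lemma therefore reduces to comparing such tuples with tuples of effective Cartier divisors $D_i$ satisfying $f^\ast D=\sum a_iD_i$.

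I will read the hypothesis that $f(T)\not\subset\supp(D)$ as guaranteeing that $f^\ast s_D$ is a regular section, i.e.\ that $f^\ast D$ is an effective Cartier divisor. This is automatic for $T$ integral, which is the case of interest for C-pairs. I expect this to be the main point requiring care: for non-reduced or reducible $T$ one must either add this assumption or interpret $f^\ast D$ as a generalized divisor. With that understood, the rest is local algebra.

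\emph{($\Leftarrow$)} Given effective Cartier divisors $D_i$ with $f^\ast D=\sum a_iD_i$, take $(L_i,\rho_i)=(\mathcal{O}_T(D_i),s_{D_i})$. Equality of effective Cartier divisors yields a canonical isomorphism $\mathcal{O}_T(f^\ast D)\cong\bigotimes\mathcal{O}_T(D_i)^{\otimes a_i}$ sending $f^\ast s_D$ to $\bigotimes s_{D_i}^{\otimes a_i}$. To see this, check locally on a cover trivializing all $D_i$ that the local equations satisfy $g=u\prod g_i^{a_i}$ with $u$ a unit, and rescale the trivialization by $u$. This gives the required factorization.

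\emph{($\Rightarrow$)} Given the factorization, work locally where each $L_i$ is trivial, so that $\rho_i$ becomes a function $g_i$ and $f^\ast s_D$ becomes $g=u\prod g_i^{a_i}$ with $u$ a unit. Since $g$ is a nonzerodivisor, each factor $g_i$ is a nonzerodivisor: if $g_ih=0$ then $gh=0$, so $h=0$. Hence each $(L_i,\rho_i)$ is the line bundle with section of an effective Cartier divisor $D_i$, namely the one locally cut out by $g_i$. The local identity $g=u\prod g_i^{a_i}$ says precisely that $f^\ast D=\sum a_iD_i$ as effective Cartier divisors. These local descriptions glue because effective Cartier divisors are determined by their ideal sheaves, and the ideal sheaf of $D_i$ is the image of $\rho_i^\vee\colon L_i^\vee\to\mathcal{O}_T$, which is intrinsic.
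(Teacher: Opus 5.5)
Your argument is correct and is essentially the paper's proof. You unwind the fibre product via generalized effective Cartier divisors, so that a factorization amounts to an isomorphism $(f^\ast\mathcal{O}_X(D),f^\ast s_D)\cong(\bigotimes_i L_i^{\otimes a_i},\bigotimes_i\rho_i^{\otimes a_i})$, and then note that the factors of an effective Cartier divisor are themselves effective Cartier. You also make explicit two points the paper leaves implicit: the nonzerodivisor check for the $g_i$, and the fact that $f(T)\not\subset\supp(D)$ only guarantees that $f^\ast D$ is an effective Cartier divisor under an extra hypothesis such as $T$ integral, which holds in every application in the paper.
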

\begin{proof}
Since $f\colon T\to X$ does not factor over $\supp(D)$, the composition
\[
T \to X \to [\mathbb{A}^1/\mathbb{G}_m]
\]
corresponds to the effective Cartier divisor $f^*D$. By definition of $\mathcal{C}_{X,D,a}$, the morphism $f$ factors through $\mathcal{C}_{X,D,a}$ if and only if the generalized effective Cartier divisor $f^*D$ lies in the image of $P_a$, that is, if and only if there exist generalized effective Cartier divisors $D_1,\ldots,D_r$ on $T$ such that
\[
f^*D = \sum_{i=1}^r a_i D_i.
\] Since $f^\ast D$ is an effective Cartier divisor,  each $D_i$ is then an effective Cartier divisor.  This proves  the lemma.
\end{proof}

%

\begin{definition}
Let $X$ be an algebraic stack, let $n\geq 1$ be an integer, let $D_1,\ldots, D_n$ be effective Cartier divisors on $X$, let $r_1,\ldots, r_n$ be positive integers, and let $a^i$ be an $r_i$-tuple of positive integers.  We define the \emph{Campana stack of the tuple $(X,(D_i,a^i)_{i=1}^n)$} to be  
\[
\mathcal{C}_{X,(D_i,a^i)_{i=1}^n} = \mathcal{C}_{X,D_1,a^1}\times_X \ldots \times_X \mathcal{C}_{X,D_n,a^n}.
\] We define the \emph{Campana space of the tuple  $(X,(D_i,a^i)_{i=1}^n)$} to be 
\[
U_{X,(D_i,a^i)_{i=1}^n} = \ U_{X,D_1,a^1}\times_X \ldots \times_X U_{X,D_n,a^n}.
\]
\end{definition}

\subsection{Semigroups attached to a divisor}  

As we will see now,  the Campana stack of a C-pair $(X,\Delta)$ is a specific instance of the Campana stacks constructed above.     

 In this paper, a \emph{semigroup} will be a (possibly empty) subset of $\mathbb{Z}_{\geq 1}$ closed under addition.   
Given a semigroup $M$, we recall that $M$ has a unique minimal set of generators, whose elements are called the \emph{atoms} of $M$.    By passing from a semigroup to its minimal set of generators,  we are led to the Campana stack of a semigroup attached to a divisor.

\begin{definition}
Let $X$ be an algebraic stack and let $D$ be an effective Cartier divisor on $X$. Let $M$ be a  semigroup.
  Let $\{a_1,\ldots,a_r\}$ be the (possibly empty) minimal set    of positive integers which generate $M$ as a semigroup with $r\geq 0$.  If $r>0$, define $a_M=(a_1,\ldots,a_r)$ and    set
\[
\mathcal{C}_{X,D,M} := \mathcal{C}_{X,D,a_M}, \quad U_{X,D,M} = U_{X,D,a_M}.
\] If $r=0$ (i.e., $M=\emptyset$),  define $\mathcal{C}_{X,D,M} =  X\setminus \supp(D)$ and $U_{X,D,M} = X\setminus \supp(D)$.  (The chosen ordering of the set of atoms of $M$ does not change the stack.) More generally, 
let $D_1,\ldots,D_n$ be effective Cartier divisors on $X$. For each $i$, let $M_i$ be a  semigroup.
We define  
\[
\mathcal{C}_{X,(D_i,M_i)_i}
=
\mathcal{C}_{X,D_1,M_1}
\times_X
\cdots
\times_X
\mathcal{C}_{X,D_n,M_n}, \quad  
U_{X,(D_i,M_i)_i}  
=
U_{X,D_1,M_1}
\times_X
\cdots
\times_X
U_{X,D_n,M_n}.
\]
\end{definition}

\begin{proposition}\label{prop:C_pair_vs_Campana_stack}    
Let $X$ be an integral   noetherian scheme and let $D$ be an effective Cartier divisor on $X$. Let $M$ be a  semigroup.  Let $T$ be an integral regular noetherian scheme and let $f\colon T\to X$ be a morphism whose image is not contained in the support of $D$. Then $f$ factors through $\mathcal{C}_{X,D,M}$ if and only if 
\[f^\ast D = \sum_{j=1}^\ell \alpha_{j} E_{j},\]
where the $E_{j}$ are the irreducible components of   $\supp f^\ast$  and   $\alpha_{j}\in M$ for every $j$.
\end{proposition}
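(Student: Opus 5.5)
The plan is to reduce to Lemma~\ref{lemma:campana_stack_of_a_2} and then turn the resulting statement about Cartier divisors into a statement about the multiplicities of prime divisors. Since $T$ is integral, regular and noetherian, it is locally factorial. So effective Cartier divisors on $T$ are the same as effective Weil divisors $\sum n_E E$ with $n_E\geq 0$, and adding Cartier divisors corresponds to adding coefficients. We first treat the case $M=\emptyset$. Here $\mathcal{C}_{X,D,M}=X\setminus\supp(D)$, so $f$ factors if and only if $f^\ast D=0$. This matches the condition that every $\alpha_j$ lie in the empty set, which holds exactly when $\ell=0$.

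Now assume $M\neq\emptyset$ and let $a_M=(a_1,\ldots,a_r)$ be its atoms. By Lemma~\ref{lemma:campana_stack_of_a_2}, $f$ factors through $\mathcal{C}_{X,D,a_M}$ if and only if there are effective Cartier divisors $D_1,\ldots,D_r$ on $T$ with $f^\ast D=\sum_i a_iD_i$.

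For the forward direction, write $D_i=\sum_j c_{ij}E_j$ with $c_{ij}\in\mathbb{Z}_{\geq 0}$. Each $D_i$ is effective and their weighted sum is $f^\ast D$, so every $D_i$ is supported on $\supp f^\ast D$. Comparing coefficients gives $\alpha_j=\sum_i c_{ij}a_i$. This is a positive integer, because $E_j$ is a component of $\supp f^\ast D$, and a nonnegative integer combination of atoms, hence $\alpha_j\in M$.

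For the converse, suppose every $\alpha_j\in M$. Since the atoms generate $M$, we can write $\alpha_j=\sum_i c_{ij}a_i$ with $c_{ij}\in\mathbb{Z}_{\geq 0}$. Set $D_i:=\sum_j c_{ij}E_j$. By local factoriality each $D_i$ is an effective Cartier divisor, and $\sum_i a_iD_i=\sum_j\alpha_jE_j=f^\ast D$ as Weil divisors, hence as Cartier divisors. Lemma~\ref{lemma:campana_stack_of_a_2} then gives the factorization.

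The main point needing care is identifying effective Cartier divisors with effective Weil divisors on $T$, compatibly with addition. This is where regularity of $T$ is used, through the Auslander--Buchsbaum theorem that regular local rings are UFDs. The same identification lets us read off coefficients of $f^\ast D$ along the $E_j$. Everything else is bookkeeping with semigroup generators.
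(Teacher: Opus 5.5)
Your proof is correct and follows the paper's argument. Like the paper, you reduce to Lemma~\ref{lemma:campana_stack_of_a_2}, identify effective Cartier divisors with effective Weil divisors on the regular scheme $T$, and write each multiplicity as a nonnegative combination of the atoms of $M$. Your separate treatment of $M=\emptyset$, where $\mathcal{C}_{X,D,M}=X\setminus\supp(D)$ by definition, covers a case the paper's proof leaves implicit.
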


\begin{proof}  Write $a=a_M$  for the tuple of atoms of $M$.     The composition
$
T \to X \to [\mathbb{A}^1/\mathbb{G}_m]
$
corresponds to the effective Cartier divisor $f^*D$ on $T$. By Lemma \ref{lemma:campana_stack_of_a_2},   the morphism $f$ factors through $\mathcal{C}_{X,D,a} = \mathcal{C}_{X,D,M}$ if and only if  there exist effective Cartier divisors $D_1,\ldots,D_r$ on $T$ such that
$
f^*D = \sum_{i=1}^r a_i D_i.
$ Now,  viewing Cartier divisors on the  integral regular  noetherian scheme $T$  as Weil divisors,
if $f^*D=\sum a_i D_i$, then the multiplicity of every irreducible component of $\supp f^\ast D$  clearly lies in $M$.  
If, for every irreducible component $E$ of $\supp f^\ast D$, the coefficient  of $E$ in $f^*D$ lies in $M$, then we may write
\[
f^*D = \sum_{j=1}^\ell m_j E_j
\]
with $E_1,\ldots,E_\ell$ the irreducible components of $f^{-1}(D)$ and $m_j\in M$. We  may express each $m_j$ as
\[
m_j = \sum_{i=1}^r n_{j,i} a_i
\]
with $n_{j,i}\ge 0$. It follows that
\[
f^*D
=
\sum_{i=1}^r a_i \left(\sum_{j=1}^\ell n_{j,i} E_j\right),
\]
which exhibits $f^*D$ as $\sum a_i D_i$ for suitable effective Cartier divisors $D_i$.  This completes the proof. 
\end{proof}

\subsection{Campana stacks of C-pairs}
 
For every integer $m\geq 1$,  the subset  $\mathbb{Z}_{\geq m}$ is a  semigroup whose set of atoms is   $\{m, m+1,\ldots, 2m-1\}$

\begin{definition}\label{def:campana_stack_2}
Let $X$ be a scheme, let $D_1,\ldots, D_n$ be prime divisors and let $m_i\in \mathbb{Z}_{\geq 1} \cup\{\infty\}$ for $i=1,\ldots, n$.  Write $\Delta  = \sum_{i=1}^n \left(1-\frac{1}{m_i}\right) D_i$. 
 If $m_i=\infty$, let $M_i  $ be the empty semigroup. If $m_i\in\mathbb{Z}_{\geq 1}$, let $M_i=\mathbb{Z}_{\geq m_i}$.   
We  define the \emph{Campana stack $\mathcal{C}_{X,\Delta}$ of the C-pair $(X,\Delta)$} to be 
$
\mathcal{C}_{X,\Delta} = \mathcal{C}_{X,(D_i,M_i)_{i=1}^n}
$  and we define the \emph{Campana space $U_{X,\Delta}$} of $(X,\Delta)$ to be $U_{X,\Delta} = U_{X,(D_i,M_i)_{i=1}^n}$.
\end{definition}

As a straightforward consequence of Proposition  \ref{prop:C_pair_vs_Campana_stack} we see that the Campana stack of $(X,\Delta)$   parametrizes maps to the C-pair (first stated as Theorem \ref{thm:campana_stack_intro}  in Section \ref{section:c-pair}), under suitable regularity assumptions. 

\begin{theorem}\label{thm:C-pair-Campana-stack}  Let $(X,\Delta)$ be a C-pair    and   let $T$ be an integral regular noetherian scheme.    A morphism  $f\colon T\to X $   defines a morphism of C-pairs $T\to (X,\Delta)$ if and only if $T\to X$ factors over $\mathcal{C}_{X,\Delta}$. 
\end{theorem}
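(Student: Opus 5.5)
Since $\mathcal{C}_{X,\Delta}$ is the fibre product over $X$ of the stacks $\mathcal{C}_{X,D_i,M_i}$, a morphism $f\colon T\to X$ factors through $\mathcal{C}_{X,\Delta}$ if and only if it factors through each $\mathcal{C}_{X,D_i,M_i}$. This is the universal property of the 2-fibre product; the chosen lifts need no compatibility beyond lying over the same $f$. The defining condition of a morphism of C-pairs is likewise a conjunction over $i$: $f$ must avoid $D_i$ if $m_i=\infty$, and satisfy the multiplicity condition along $D_i$ otherwise. So it suffices to fix one index $i$ and show that $f$ factors through $\mathcal{C}_{X,D_i,M_i}$ if and only if $f$ satisfies the C-pair condition for $D_i$. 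Throughout, I assume (as implicit in the setup of C-pairs) that $X$ is integral, noetherian and locally factorial, so that each prime divisor $D_i$ is an effective Cartier divisor and the stacks are defined.

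Fix $i$ and write $D=D_i$, $m=m_i$, $M=M_i$. I split into three cases.

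\emph{Case $m=\infty$.} Here $M=\emptyset$ and $\mathcal{C}_{X,D,M}=X\setminus\supp(D)$ by definition. Factoring through it means exactly that $f$ lands in $X\setminus D$, which is the C-pair requirement that $f$ avoid the logarithmic part.

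\emph{Case $m$ finite and $f(T)\subset D$.} The C-pair definition imposes no condition here. On the stack side, the atoms of $\mathbb{Z}_{\geq m}$ are $m,\ldots,2m-1$, and these have gcd $1$ because consecutive integers occur when $m\geq 2$, while for $m=1$ the only atom is $1$. Hence Lemma~\ref{lemma:campana_stack_of_a_1} shows that $f$ always factors.

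\emph{Case $m$ finite and $f(T)\not\subset D$.} Since $T$ is integral, $f^{-1}(D)$ is then a proper closed subset, and $f^\ast D$ is an effective Cartier divisor, viewed as a Weil divisor on the regular scheme $T$. Proposition~\ref{prop:C_pair_vs_Campana_stack} says that $f$ factors through $\mathcal{C}_{X,D,M}$ if and only if every component of $f^\ast D$ has coefficient in $M=\mathbb{Z}_{\geq m}$, that is, coefficient at least $m$. This is exactly the C-pair condition for $D$.

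The argument is essentially bookkeeping, and the main point requiring care is the second case. That is where the convention allowing $f$ to factor through a finite-multiplicity divisor $D_i$ matters, and it rests on the gcd computation for the atoms of $\mathbb{Z}_{\geq m}$ together with Lemma~\ref{lemma:campana_stack_of_a_1}.
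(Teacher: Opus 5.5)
Your proposal is correct and follows the same route as the paper. It reduces to a single component via the fibre product, uses Lemma~\ref{lemma:campana_stack_of_a_1} (the atoms of $\mathbb{Z}_{\geq m}$ have gcd $1$) when $f(T)\subset D_i$, and uses Proposition~\ref{prop:C_pair_vs_Campana_stack} otherwise. Your per-component case split, which treats $m_i=\infty$ separately via $\mathcal{C}_{X,D_i,\emptyset}=X\setminus\supp(D_i)$, is slightly more careful than the paper's global reduction ``we may assume the image is not contained in the support of $\Delta$''.
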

\begin{proof}   Write $\Delta = (1-\frac{1}{m_i})D_i$ and define $M_i$ as in Definition \ref{def:campana_stack_2}. By Lemma \ref{lemma:campana_stack_of_a_1}, we may assume that the image of $T\to X$ is not contained in the support of $\Delta$.  Now,  by Proposition \ref{prop:C_pair_vs_Campana_stack},  the morphism $f\colon T\to X$ is a morphism of C-pairs $T\to (X,\Delta)$  if and only if,  for every $i$,    the morphism $f\colon T\to X$ factors over $\mathcal{C}_{X,D_i,M_i}$ for every $i$.  By definition of $\mathcal{C}_{X,\Delta}$, the latter is equivalent to $f\colon T\to X$ factoring over $\mathcal{C}_{X,\Delta}$.
\end{proof}

\begin{remark}\label{remark:sheaf_vs_stack} 
Consider the C-pair  $(X,\Delta)$  with $X=\mathbb{A}^1$ and $\Delta = \tfrac{1}{2}[0]$ over $\mathbb{C}$.  Any   subfunctor of $h_X = \mathrm{Hom}(-,X)$ which is defined by 
\[
T \longmapsto \{\text{Campana maps } T\to (X,\Delta)\}
\]
   on smooth  schemes over $\mathbb{C}$ does not satisfy   fppf descent and therefore is not representable by a scheme.  Indeed, the identity morphism $\mathbb{A}^1\to\mathbb{A}^1$ is not a morphism to $(\mathbb{A}^1,\Delta)$, since the pullback of $[0]$ has multiplicity one at the origin. However, after pulling back along the fppf cover $\mathbb{A}^1\to\mathbb{A}^1$ defined by $z\mapsto z^2$, the composite becomes a morphism of C-pairs. Thus the Campana condition does not hold for the identity   morphism itself, but does hold  after an fppf  covering showing that the corresponding  functor fails to be an fppf sheaf.
   
The stack $\mathcal{C}_{\mathbb{A}^1, \Delta}$ introduced above resolves this issue. A morphism to the Campana stack records not only the underlying morphism to $X$, but also additional data encoding a decomposition of the pullback divisor.  In the above example,  we can spell this out explicitly. Namely, with $T'=T=\mathbb{A}^1$ and  with the fppf covering $T'\to T$ defined by   $z\mapsto z^2$, we consider   the composed morphism $T'\to T\to X=\mathbb{A}^1$ and the fibre product $T'\times_T T' = \Spec k[x,y]/(x^2-y^2)$. We now see that the two pullbacks of $[0] = \{z=0\}$ to the fibre product are $\{x^2=0\}$ and $\{y^2=0\}$, respectively.   These are the same Cartier divisors and they can both be written as two times some effective Cartier divisor.   A map to the Campana stack however also asks one to fix such a decomposition   as $2D$ with $D$ a Cartier divisor on the source.  For the morphism $T\to \mathbb{A}^1$,  this ends up being $D=\{z=0\}$.  However,  the two pullbacks of this $D$ to the fibre product are $\{x=0\}$ and $\{y=0\}$, respectively. These are not the same Cartier divisor, and thus  there is no descent datum.
\end{remark}

\subsection{Regularity of Campana stacks}  

Recall that if $n\geq 1$ and $D$ is a regular effective Cartier divisor on a regular scheme $X$ of characteristic zero, then the root stack $\sqrt[n]{D/X}$ is regular.  The analogous statement holds for Campana stacks and  is a consequence of the local description of Campana stacks and the following lemma.

\begin{lemma}\label{lemma:comm_alg}
Let $(R,\mathfrak m)$ be an equicharacteristic regular local noetherian ring and let
$f\in \mathfrak m\setminus \mathfrak m^2$.
Fix positive integers $a_1,\dots,a_r$.
Then
\[
A := R[x_1,\dots,x_r]/(x_1^{a_1}\cdots x_r^{a_r}-f)
\]
is regular.
\end{lemma}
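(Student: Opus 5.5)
The plan is to check regularity of $A$ at every prime $\mathfrak q$, splitting into two cases according to whether $\mathfrak q$ lies over $\mathfrak m$. The first step is to use Cohen's structure theorem to write $\widehat R \cong k\powerseries{t_1,\dots,t_d}$ with $f$ mapping to $t_1$. This is possible because $R$ is equicharacteristic and $f\in\mathfrak m\setminus\mathfrak m^2$, so $f$ can be taken as part of a regular system of parameters. The extension $R\to\widehat R$ is faithfully flat with regular fibres (in fact $R/\mathfrak m = \widehat R/\widehat{\mathfrak m}$). I would nevertheless avoid completing and argue directly, since $R$ need not be excellent.

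The direct argument goes as follows. Set $B=R[x_1,\dots,x_r]$, which is regular. Let $\mathfrak Q\subset B$ be a prime containing $g:=x_1^{a_1}\cdots x_r^{a_r}-f$. Since $B_{\mathfrak Q}$ is regular and $A_{\mathfrak Q}=B_{\mathfrak Q}/(g)$ is a hypersurface, $A_{\mathfrak Q}$ is regular exactly when $g\notin \mathfrak Q^2 B_{\mathfrak Q}$. We need $g\neq 0$ in $B_{\mathfrak Q}$, which holds because $B$ is a domain and $g\neq 0$. The question is thus reduced to showing $g\notin\mathfrak Q^{(2)}$ for each such $\mathfrak Q$.

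\emph{Case 1: $\mathfrak Q\cap R\neq\mathfrak m$.} Put $\mathfrak p=\mathfrak Q\cap R$. Then $f\in\mathfrak p$, because otherwise $f$ would be a unit at $\mathfrak p$. In that case the monomial $\prod x_i^{a_i}$ equals $f$ and is therefore a unit, so every $x_i$ is a unit in $B_{\mathfrak Q}$. Then $\partial g/\partial x_1 = a_1 x_1^{a_1-1}\prod_{i\ge2}x_i^{a_i}$, which is a unit since $a_1$ is invertible: in equicharacteristic, $\mathfrak p\neq\mathfrak m$ does not by itself force characteristic zero, but the paper works over $\mathbb Q$, where this is automatic. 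A derivation $\partial/\partial x_1$ of $B_{\mathfrak Q}$ carries $\mathfrak Q^2B_{\mathfrak Q}$ into $\mathfrak Q B_{\mathfrak Q}$. Hence $g\notin \mathfrak Q^2B_{\mathfrak Q}$.

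\emph{Case 2: $\mathfrak Q\supseteq \mathfrak m B$.} Reduce modulo $\mathfrak m$. We have $B_{\mathfrak Q}/\mathfrak m B_{\mathfrak Q} = k[x]_{\bar{\mathfrak Q}}$, which is regular, and $\mathfrak m B_{\mathfrak Q}$ is generated by a regular system of parameters $f=t_1,t_2,\dots,t_d$ of $R$. Together with a regular system of parameters $u_1,\dots,u_e$ of $k[x]_{\bar{\mathfrak Q}}$, suitably lifted, these form a regular system of parameters of $B_{\mathfrak Q}$. This uses flatness of $R\to B$ and the regularity of the fibre, so that the dimensions add. Now $g=-t_1+\prod x_i^{a_i}$. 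If the monomial lies in $\mathfrak Q$, write its image in $\mathfrak Q/\mathfrak Q^2$: since $t_1$ is part of a basis of the $\kappa(\mathfrak Q)$-vector space $\mathfrak Q/\mathfrak Q^2$, the class of $g$ is $-\bar t_1+\overline{\prod x_i^{a_i}}$. The class $\overline{\prod x_i^{a_i}}$ lies in the span of the $\bar u_j$, because the monomial comes from $k[x]\subset B$ and $\mathfrak Q/\mathfrak Q^2\cong \mathfrak m/\mathfrak m^2\otimes\kappa\oplus \bar{\mathfrak Q}/\bar{\mathfrak Q}^2$. Therefore the class of $g$ is nonzero. If the monomial does not lie in $\mathfrak Q$, then $g\notin\mathfrak Q$, which contradicts $g\in\mathfrak Q$, so this subcase does not occur.

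The main obstacle is making the splitting $\mathfrak Q/\mathfrak Q^2 \cong (\mathfrak m/\mathfrak m^2)\otimes\kappa(\mathfrak Q)\oplus \bar{\mathfrak Q}/\bar{\mathfrak Q}^2$ precise. I would use the exact sequence $0\to \mathfrak mB_{\mathfrak Q}/(\mathfrak m\mathfrak Q)\to \mathfrak Q/\mathfrak Q^2\to\bar{\mathfrak Q}/\bar{\mathfrak Q}^2\to0$, whose first term is $\mathfrak m/\mathfrak m^2\otimes\kappa$ by a dimension count. The key point is that an element of $k[x]\subset B$, for which the coefficient field $k\subset R$ is needed (via equicharacteristic and Cohen's theorem; alternatively, choose any lift), maps into a complement of that first term. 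Failing that, I would pass to the completion $\widehat{B_{\mathfrak Q}}$, where $g=-t_1+(\text{element independent of }t_1)$, which is visibly part of a regular system of parameters. Finally, one notes that regularity of $A$ descends from regularity at all localizations.
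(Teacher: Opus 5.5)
Your strategy of checking $g\notin\mathfrak Q^2B_{\mathfrak Q}$ prime by prime in the regular ring $B=R[x_1,\dots,x_r]$ is sound. The gap is that your case split misses a class of primes.

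\textbf{Missing primes.} In Case~1 you assert $f\in\mathfrak p$, but the argument you then give needs $f$ to be a unit at $\mathfrak p$: the monomial $\equiv f$ is then a unit, so every $x_i$ is a unit and $\partial g/\partial x_1$ is a unit. So Case~1 really only treats $f\notin\mathfrak p$. The primes with $f\in\mathfrak p:=\mathfrak Q\cap R\subsetneq\mathfrak m$ fall into neither case. They exist as soon as $\dim R\ge 2$: for example $\mathfrak Q=(f,x_1)B$ is prime, contains $g$, and satisfies $\mathfrak Q\cap R=fR$. If moreover $a_1\ge 2$, every $\partial g/\partial x_i$ lies in $\mathfrak Q$, so no $x$-derivative argument applies.

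\textbf{Repairing the case analysis.} You can localize: $R_{\mathfrak p}$ is an equicharacteristic regular local ring, and $f\in\mathfrak pR_{\mathfrak p}\setminus\mathfrak p^2R_{\mathfrak p}$ because $(R/fR)_{\mathfrak p}$ is regular. Your Case~2 then applies over $R_{\mathfrak p}$. More simply, treat every $\mathfrak Q\ni f$ uniformly. Either the monomial lies in $\mathfrak Q^2$, so $g\equiv -f$, and $f\notin\mathfrak Q^2$ because $B_{\mathfrak Q}/(f)=(R/f)[x]_{\mathfrak Q}$ is regular of dimension one less. Or exactly one $x_{i_0}$ lies in $\mathfrak Q$ with $a_{i_0}=1$, so the monomial is $u\,x_{i_0}$ with $u$ a unit. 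Then $f$ and $x_{i_0}$ are independent in $\mathfrak Q/\mathfrak Q^2$, since $B_{\mathfrak Q}/(f,x_{i_0})$ is regular of dimension two less.

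\textbf{Coefficient field in Case~2.} The uniform argument also removes your need for a coefficient field $k\subset R$ when splitting $\mathfrak Q/\mathfrak Q^2$. Cohen's theorem gives such a field in $\widehat R$, not in $R$. For instance, $R=\mathbb Q[t]_{(t^2+1)}$ contains no field mapping onto its residue field $\mathbb Q(i)$. So your justification via ``$k[x]\subset B$'' is not available as written.

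\textbf{Characteristic.} Case~1 needs $a_1$ invertible. The lemma is stated for equicharacteristic $R$, and if $\operatorname{char}=p$ divides every $a_i$, then all $\partial g/\partial x_i$ vanish. Your argument therefore does not cover the statement in that generality.

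\textbf{Comparison with the paper.} The paper avoids all of these issues. It passes to $\widehat R\cong K\powerseries{y_1,\dots,y_d}$ and uses the single $K$-derivation $\partial/\partial y_1$ on $\widehat R[x]$. This derivation kills the monomial and sends $g$ to $-\partial f/\partial y_1$, which is a unit. Hence $g\notin\mathfrak Q^2$ at every prime, with no case split and in any characteristic. Your reason for avoiding completion is unfounded: regularity descends along the faithfully flat map $A\to A\otimes_R\widehat R$, because flat local maps reflect regularity without any excellence hypothesis. This is exactly the reduction the paper makes.
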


\begin{proof}  We may assume that $R$ is complete
 \cite[Tag 00OF]{stacks-project},  and thus that $R$   is a power series ring
$
R \cong K \powerseries{y_1,\dots,y_d}
$
for some field $K$.  
Let
$
g := x_1^{a_1}\cdots x_r^{a_r}-f \in R[x_1,\dots,x_r].
$
As $f\in \mathfrak m\setminus \mathfrak m^2$, the power series $f$
contains a nonzero linear term. After reordering the variables if
necessary, we may assume that the coefficient of $y_1$ in $f$ is
nonzero. In particular,
$
\frac{\partial f}{\partial y_1}
$
is a unit in $R$.
Consider the $K$-derivation
$
\frac{\partial}{\partial y_1}\colon R \to R,
$
which extends uniquely to a derivation
\[
\frac{\partial}{\partial y_1}\colon
R[x_1,\dots,x_r] \to R[x_1,\dots,x_r]
\]
by letting it act trivially on the variables $x_i$.
We then compute
$\frac{\partial g}{\partial y_1} = -\frac{\partial f}{\partial y_1}$. 
Since $\frac{\partial f}{\partial y_1}$ is a unit in $R$, it follows
that $\frac{\partial g}{\partial y_1}$ is a unit in
$R[x_1,\dots,x_r]$, and hence also in $A$.
By the Jacobian criterion
\cite[Tag 07PF]{stacks-project}, this implies that $A$ is regular.
\end{proof}

\begin{definition}\label{def:K} Let $a = (a_1,\ldots,a_r)$ be an $r$-tuple of positive integers.
 Let $\mu_a\colon \mathbb{Z}^r\to \mathbb{Z}$ be the   homomorphism given by $(v_1,\ldots,v_r)\mapsto \sum_{i=1}^r a_i v_i$. Let $K_a$ be the kernel of $\mu_a$.  
 \end{definition}
 
 If   $\gcd(a_1,\ldots,a_r)=1$,  then $\mu_a$ is surjective.   
 In this case, we  have a  short exact sequence 
 \[
 0\to K_a \to \mathbb{Z}^r \to \mathbb{Z}\to 0
 \] and we choose a splitting $\sigma_a\colon \mathbb{Z}\to \mathbb{Z}^r$.  Such a splitting induces a splitting of the torus $\mathbb{G}_m^r  = (\mathbb{G}_m \otimes K_a) \times \mathbb{G}_m$.  In particular, the action of $\mathbb{G}_m\otimes K_a$ on $\mathbb{A}^r$ induces a morphism of stacks $[\mathbb{A}^r/(\mathbb{G}_m\otimes K_a)]\to [\mathbb{A}^r/\mathbb{G}_m^r]$ and a Cartesian diagram
 \[
 \xymatrix{  [ \mathbb{A}^r/( \mathbb{G}_m\otimes  K_a)] \ar[rr] \ar[d]_{(x_1,\ldots,x_r)\longmapsto x_1^{a_1}\cdot \ldots \cdot x_r^{a_r}}  & & [\mathbb{A}^r/\mathbb{G}_m^r]  = [\mathbb{A}^r/(\mathbb{G}_m\times (\mathbb{G}_m\otimes K_a))]\ar[d]^{P_a}    \\
 \mathbb{A}^1 \ar[rr]  & & [\mathbb{A}^1/\mathbb{G}_m]  }
 \]
 
\begin{theorem}\label{thm:regularity} 
Let $X$ be a regular noetherian algebraic stack over $\mathbb{Q}$ and let $D$ be a regular effective Cartier divisor on $X$.  
Suppose that $\gcd(a_1,\ldots,a_r) =1$. 
Then $\mathcal{C}_{X,D,a}$ is regular.
\end{theorem}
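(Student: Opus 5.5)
Regularity is smooth-local, so I will reduce to a local computation where Lemma~\ref{lemma:comm_alg} applies. Let $V\to X$ be a smooth surjective morphism from a scheme. Then $D_V$, the pullback of $D$, is again a regular effective Cartier divisor on the regular scheme $V$, since smooth base change preserves regularity of both $V$ and $D_V$. Moreover, Campana stacks commute with base change, because they are defined by a fibre product:
\[
\mathcal{C}_{X,D,a}\times_X V\cong \mathcal{C}_{V,D_V,a}.
\]
Hence $\mathcal{C}_{V,D_V,a}\to \mathcal{C}_{X,D,a}$ is smooth surjective, and it suffices to treat the case where $X$ is a scheme. Shrinking further, I may take $X=\Spec R$ affine with $D=Z(f)$ principal. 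Over $X\setminus\supp(D)$ the stack is isomorphic to $X$ by Lemma~\ref{lemma:c-stack-finitetype}, so only points of $D$ matter.

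When $D$ is principal, $f_D\colon X\to[\mathbb{A}^1/\mathbb{G}_m]$ factors as $X\xrightarrow{f}\mathbb{A}^1\xrightarrow{q}[\mathbb{A}^1/\mathbb{G}_m]$. Since $\gcd(a_1,\ldots,a_r)=1$, the Cartesian square displayed just before the theorem identifies $\mathbb{A}^1\times_{[\mathbb{A}^1/\mathbb{G}_m]}[\mathbb{A}^r/\mathbb{G}_m^r]$ with $[\mathbb{A}^r/(\mathbb{G}_m\otimes K_a)]$, the map to $\mathbb{A}^1$ being induced by $p_a$. Base changing along $f$ then gives
\[
\mathcal{C}_{X,D,a}\cong \big[\Spec A/(\mathbb{G}_m\otimes K_a)\big],\qquad A=R[x_1,\ldots,x_r]/(x_1^{a_1}\cdots x_r^{a_r}-f).
\]
The group $\mathbb{G}_m\otimes K_a$ is a split torus, hence smooth, so $\Spec A\to\mathcal{C}_{X,D,a}$ is a smooth atlas (a torsor under a smooth group). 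It therefore suffices to show that $\Spec A$ is regular.

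For regularity of $A$, take a prime $\mathfrak{q}$ of $A$ lying over $\mathfrak{p}\subset R$. If $f\notin\mathfrak{p}$, then $A_{\mathfrak{p}}$ is smooth over $R_{\mathfrak{p}}$: the map is a $\ker$-torus torsor over the locus $f\neq 0$, or one can see directly that some $x_i$ is invertible and the Jacobian of $x_1^{a_1}\cdots x_r^{a_r}-f$ in that $x_i$ is nonzero in characteristic zero. So $A_{\mathfrak q}$ is regular. If $f\in\mathfrak{p}$, then regularity of $D$ at $\mathfrak{p}$ together with regularity of $R_{\mathfrak{p}}$ gives $f\in\mathfrak{p}R_\mathfrak{p}\setminus\mathfrak{p}^2R_\mathfrak{p}$. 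Since $R_\mathfrak{p}$ is equicharacteristic (we are over $\mathbb{Q}$), Lemma~\ref{lemma:comm_alg} shows that $R_\mathfrak{p}[x]/(x^{a}-f)$ is regular, and $A_\mathfrak{q}$ is a localization of it.

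The main obstacle is justifying the identification of $\mathcal{C}_{X,D,a}$ with the quotient stack $[\Spec A/(\mathbb{G}_m\otimes K_a)]$. This requires checking that the square involving the splitting $\sigma_a$ is genuinely Cartesian, so that $\mathbb{A}^r\to[\mathbb{A}^r/(\mathbb{G}_m\otimes K_a)]$ is smooth surjective. I would verify this by noting that the $\mathbb{G}_m^r$-torsor $\mathbb{A}^r\to[\mathbb{A}^r/\mathbb{G}_m^r]$, restricted over $\mathbb{A}^1$, becomes the torsor for the subtorus $\mathbb{G}_m\otimes K_a$. Alternatively, one can bypass this point: the Campana space $U_{X,D,a}\to\mathcal{C}_{X,D,a}$ is a $\mathbb{G}_m^r$-torsor and hence smooth, and $U_{X,D,a}\cong\Spec A\times\mathbb{G}_m$ is regular by the argument above.
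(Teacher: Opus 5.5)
Your proof is correct and follows the paper's argument. Like the paper, you reduce to $X=\Spec R$ with $D=Z(f)$ and use the splitting $\sigma_a$, which is where $\gcd(a_1,\ldots,a_r)=1$ enters. This presents $\mathcal{C}_{X,D,a}$ as the quotient of $\Spec R[x_1,\ldots,x_r]/(x_1^{a_1}\cdots x_r^{a_r}-f)$ by the torus $\mathbb{G}_m\otimes K_a\cong\mathbb{G}_m^{r-1}$, and you conclude by Lemma~\ref{lemma:comm_alg}. Your version is only more careful in bookkeeping: you make the smooth-local reduction from stacks to schemes explicit, and you localize at each prime $\mathfrak{q}$ of $A$ rather than simply assuming $R$ is local.
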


\begin{proof} Write $K=K_a$. 
Since the  statement is local on $X$, we may assume that $X=\Spec R$ is affine and that $R$ is a   regular local ring with   maximal ideal $\mathfrak{m}$.  Moreover, we may assume that $D$ corresponds to an element $f\in R$. If $f\not \in \mathfrak{m}$, then $D$ is trivial and $\mathcal{C}_{X,D,a}=X$. Thus, we may assume that $f\in \mathfrak{m}$.  In that case, we have that $f\in \mathfrak{m}\setminus \mathfrak{m}^2$, as $D = \Spec (R/f)$ is regular.    The moduli map $X\to [\mathbb{A}^1/\mathbb{G}_m]$ associated to $D$ factors over  the morphism $X\to \mathbb{A}^1$ given by $f$.   Then we obtain the following    diagram in which every square is Cartesian.  
\[
\xymatrix{V \ar[r] \ar[d]  &  \mathbb{A}^r  \ar[d] & & \\ 
\mathcal{C}_{X,D,a} \ar[r] \ar[d]  &  [\mathbb{A}^r/(\mathbb{G}_m\otimes K)] \ar[d] \ar[r] &  [\mathbb{A}^r/\mathbb{G}_m^r] \ar[d] \\ \Spec R  = X \ar[r]^{f}  &  \mathbb{A}^1 \ar[r]  & [\mathbb{A}^1/\mathbb{G}_m]}
\] 
In this diagram, the vertical map $\mathbb{A}^r\to \mathbb{A}^1$ is the morphism $p_a\colon (x_1,\ldots,x_r)\longmapsto x_1^{a_1}\cdot \ldots \cdot x_r^{a_r}$.   Thus $V$ is isomorphic to $\Spec R[x_1,\ldots,x_r]/(x_1^{a_1}\cdot \ldots \cdot x_r^{a_r} - f)$, and is thus regular since $f\in \mathfrak{m}\setminus \mathfrak{m}^2$ (see Lemma \ref{lemma:comm_alg}).  Since $V\to \mathcal{C}_{X,D,a}$ is a $ \mathbb{G}_m^{r-1}$-torsor (as $\mathbb{G}_m\otimes K\cong \mathbb{G}_m^{r-1}$), we see that $\mathcal{C}_{X,D,a}$ is regular, as required.
\end{proof}

\begin{corollary}\label{cor:regularity}
Let $X$ be a regular noetherian algebraic stack over $\mathbb{Q}$ and let $D$ be a regular divisor on $X$.   
Then $\mathcal{C}_{X,D,a}$ is regular  and the Campana space $U_{X,D,a}$ is regular.
\end{corollary}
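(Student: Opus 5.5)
The plan is to reduce to Theorem~\ref{thm:regularity} by removing the $\gcd$ assumption through a root stack, as suggested by Remark~\ref{remark:reduce_to_root_stack}. Regularity of $U_{X,D,a}$ will then follow formally from regularity of $\mathcal{C}_{X,D,a}$.

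\emph{Step 1 (factoring through a root stack).} Let $n=\gcd(a_1,\ldots,a_r)$ and $b=(a_1/n,\ldots,a_r/n)$, so that $\gcd(b_1,\ldots,b_r)=1$. Write $e_n\colon [\mathbb{A}^1/\mathbb{G}_m]\to[\mathbb{A}^1/\mathbb{G}_m]$ for $z\mapsto z^n$. I would check directly on the functor of points that $P_a=e_n\circ P_b$: both send $((L_i,s_i))_i$ to $\bigl(\bigotimes L_i^{\otimes a_i},\bigotimes s_i^{\otimes a_i}\bigr)$. Set $\mathcal{X}_n=\sqrt[n]{D/X}=X\times_{f_D,[\mathbb{A}^1/\mathbb{G}_m],e_n}[\mathbb{A}^1/\mathbb{G}_m]$, and let $\mathcal{E}$ be the tautological divisor on $\mathcal{X}_n$, i.e.\ the one classified by the second projection $f_{\mathcal{E}}$. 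Pasting Cartesian squares gives
\[
\mathcal{C}_{X,D,a}=X\times_{[\mathbb{A}^1/\mathbb{G}_m]}[\mathbb{A}^1/\mathbb{G}_m]^r\cong \mathcal{X}_n\times_{f_{\mathcal{E}},[\mathbb{A}^1/\mathbb{G}_m],P_b}[\mathbb{A}^1/\mathbb{G}_m]^r=\mathcal{C}_{\mathcal{X}_n,\mathcal{E},b}.
\]
This is the precise form of Remark~\ref{remark:reduce_to_root_stack}. The divisor to use is the $n$-th root $\mathcal{E}$, not the pullback of $D$, which equals $n\mathcal{E}$ and is not regular.

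\emph{Step 2 (regularity of $\mathcal{X}_n$ and $\mathcal{E}$).} This is the only point needing care. The question is local on $X$, so I would take $X=\Spec R$ with $R$ regular local and $D=Z(f)$. If $f$ is a unit there is nothing to prove. Otherwise $f\in\mathfrak m\setminus\mathfrak m^2$ because $D$ is regular. Locally
\[
\mathcal{X}_n\cong[\Spec R[t]/(t^n-f)\,/\,\mu_n].
\]
The ring $R[t]/(t^n-f)$ is regular by Lemma~\ref{lemma:comm_alg} with $r=1$. Since $\mu_n$ is étale in characteristic zero, $\mathcal{X}_n$ is regular. The divisor $\mathcal{E}$ is cut out by $t$, and $R[t]/(t^n-f,t)\cong R/(f)$ is regular, so $\mathcal{E}$ is a regular effective Cartier divisor on $\mathcal{X}_n$.

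\emph{Step 3 (conclusion).} Theorem~\ref{thm:regularity} applies to the regular noetherian stack $\mathcal{X}_n$ over $\mathbb{Q}$, the regular divisor $\mathcal{E}$, and the tuple $b$ with $\gcd(b)=1$. Combined with Step 1, this shows $\mathcal{C}_{X,D,a}$ is regular. By definition, $U_{X,D,a}\to\mathcal{C}_{X,D,a}$ is the base change of the $\mathbb{G}_m^r$-torsor $\mathbb{A}^r\to[\mathbb{A}^r/\mathbb{G}_m^r]$. It is therefore smooth, and $U_{X,D,a}$ is regular.
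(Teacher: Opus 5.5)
Your proposal is correct and follows the paper's proof. Both reduce, via the $n$-th root stack $\sqrt[n]{D/X}$, to Theorem~\ref{thm:regularity} for the tuple $b=a/\gcd(a)$, and then pass to $U_{X,D,a}$ through the $\mathbb{G}_m^r$-torsor $U_{X,D,a}\to\mathcal{C}_{X,D,a}$. Your local check of regularity of the root stack via Lemma~\ref{lemma:comm_alg} and your choice of the tautological divisor $\mathcal{E}$ rather than the literal pullback $n\mathcal{E}$ simply make two steps precise. The paper calls that divisor ``the pullback of $D$'' but justifies its regularity by saying it is a $\mu_n$-gerbe over $D$, so it means $\mathcal{E}$.
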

\begin{proof}    Let $n=\gcd(a_1,\ldots,a_r)$ and define $b=(\frac{a_1}{n},\ldots, \frac{a_r}{n})$. Let $\mathcal{X}_n = \sqrt[n]{D/X}$ be the $n$-th root stack of $X$ along $D$.  Note that $\mathcal{X}_n$ is regular.  (This follows for example from the description in \cite[10.3.10]{OlssonBook} as shown in \cite[Proposition~8.3]{Moerman}.)  Let $D_n$ be the pullback of $D$ to $\mathcal{X}_n$, and note that $D_n$ is regular (as it is a $\mu_n$-gerbe over $D$ and $n$ is invertible on $X$). In particular,  the Campana stack $\mathcal{C}_{\mathcal{X}_n,D_n,b}$  is regular  by Theorem \ref{thm:regularity}.   Since $\mathcal{C}_{X,D,a} = \mathcal{C}_{\mathcal{X}_n, D_n,b}$,  we conclude that $\mathcal{C}_{X,D,a}$ is regular. 
Finally,  since the Campana space $U_{X,D,a}$ is a $\mathbb{G}_m^r$-torsor over $\mathcal{C}_{X,D,a}$ and the latter is regular, we conclude that $U_{X,D,a}$ is regular.
\end{proof}

\subsection{The morphism from the Campana space to $X$}

Let $X$ be an integral regular noetherian scheme over $\mathbb{Q}$ and let $D$ be a regular effective Cartier divisor on $X$. Let $a=(a_1,\ldots,a_r)$ be an $r$-tuple of positive integers and write $U=U_{X,D,a}$ for the associated Campana space. By Corollary \ref{cor:regularity}, the scheme $U$ is integral and regular.
Recall that $p_a\colon \mathbb{A}^r\to \mathbb{A}^1$ is given by
\[
p_a(x_1,\ldots,x_r)=x_1^{a_1}\cdots x_r^{a_r}.
\]
In particular, the composite morphism
\[\xymatrix{
q_a\colon
\mathbb{A}^r \ar[r]^{\mathrm{quot}} &
[\mathbb{A}^r/\mathbb{G}_m^r] \ar[r]^{P_a} &
[\mathbb{A}^1/\mathbb{G}_m]
}
\]
factors as $q\circ p_a$, where $q\colon \mathbb{A}^1\to[\mathbb{A}^1/\mathbb{G}_m]$ is the quotient map. 

 To describe the scheme-theoretic fibre of $U\to X$ over $D$, define
$V=\mathbb{A}^r\times_{q_a,[\mathbb{A}^1/\mathbb{G}_m],q}\mathbb{A}^1$.
Since $q_a=q\circ p_a$, we obtain
\[
V\cong \mathbb{A}^r\times_{p_a,\mathbb{A}^1}
(\mathbb{A}^1\times_{[\mathbb{A}^1/\mathbb{G}_m]}\mathbb{A}^1).
\]
As $q\colon\mathbb{A}^1\to[\mathbb{A}^1/\mathbb{G}_m]$ is a $\mathbb{G}_m$-torsor, we see that 
$\mathbb{A}^1\times_{[\mathbb{A}^1/\mathbb{G}_m]}\mathbb{A}^1\cong\mathbb{A}^1\times\mathbb{G}_m$, and hence
$V\cong\mathbb{A}^r\times\mathbb{G}_m$.

Let $F=\sum_{i=1}^r a_iF_i$ be the fibre of $p_a$ over $0$, where $F_i=Z(x_i)$. Thus $F$ is a connected divisor with $r$ irreducible components of multiplicities $a_1,\ldots,a_r$.
Let
\[
X' = X\times_{[\mathbb{A}^1/\mathbb{G}_m]}\mathbb{A}^1
\]
and define $V'$ similarly so that the following diagram is commutative and all squares are Cartesian.
 \begin{center}
\begin{tikzcd}
V' \arrow[dd] \arrow[rr] \arrow[rd] &                                   & V \arrow[rd] \arrow[dd] &                                           \\
                                    & {U_{X,D,a}} \arrow[rr] \arrow[dd] &                                 & \mathbb{A}^r \arrow[d]    \arrow[dl, "p_a"]   \arrow[dd, bend left =60, "q_a"]          \\
X' \arrow[rd] \arrow[rr]    &                                   & \mathbb{A}^1 \arrow[rd, "q" ']         & {[\mathbb{A}^r/\mathbb{G}_m^r]} \arrow[d, "P_a"] \\
                                    & X \arrow[rr, "f_D" ']                      &                                 & {[\mathbb{A}^1/\mathbb{G}_m]}            
\end{tikzcd}
\end{center}

Since $V\cong\mathbb{A}^r\times\mathbb{G}_m$ and the morphism
$V\to\mathbb{A}^1$ factors through $p_a$, the fibre over $0$
decomposes in the same way as the fibre of $p_a$ over $0$,  so that $V_0=\sum_{i=1}^r a_i\widetilde F_i$. By the Cartesian diagram above, the morphism $V'\to X'$ is obtained from $V\to\mathbb{A}^1$ by base change along $X'\to\mathbb{A}^1$, and $V'\to X'$ is the pullback of $U\to X$ along $X'\to X$. Consequently, the scheme-theoretic fibre of $U\to X$ over $D$ decomposes as $\sum_{i=1}^r a_iE_i$, where $E_1,\ldots,E_r$ are its irreducible components.

In particular, the orbifold base of $U\to X$ is $(X,(1-\frac{1}{\inf(a_1,\ldots,a_r)})D)$. The inf-multiplicity and gcd-multiplicity of the fibre of $U\to X$ over $D$ are $\inf(a_1,\ldots,a_r)$ and $\gcd(a_1,\ldots,a_r)$, respectively.
 
\begin{proposition}\label{prop:U_maps_to_X} Let $X$ be an integral regular noetherian scheme over $\mathbb{Q}$, let $D$ be an effective Cartier divisor on $X$, and let $a=(a_1,\ldots,a_r)$ be an $r$-tuple of positive integers. Let $U=U_{X,D,a}$ be the Campana space.  Then,  $U$ is an integral regular noetherian scheme and the morphism $U\to X$ is affine flat finite type. Moreover,  $U\to X$ is smooth over $X\setminus \supp(D)$ with geometric fibres isomorphic to $\mathbb{G}_m^r$,  the orbifold base divisor of   $U\to X$  is $ (1-\frac{1}{\inf(a_1,\ldots,a_r)})D$ and the gcd-multiplicity of $U\to X$ over $D$ is $\gcd(a_1,\ldots,a_r)$. 
\end{proposition}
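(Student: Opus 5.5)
The plan is to reduce everything to the local computation already carried out just before the statement. I will then read off each claim from the Cartesian squares in the displayed diagram. One caveat first: the discussion preceding the proposition assumes $D$ is regular, and regularity of $U$ comes from Corollary~\ref{cor:regularity}, which needs that hypothesis. So I will work under the standing assumption of that paragraph, namely that $D$ is a regular effective Cartier divisor. Integrality of $U$ needs a separate argument. Regularity makes $U$ a disjoint union of integral schemes, so it suffices to show $U$ is connected. For that I will use flatness, proved below: every irreducible component of $U$ then dominates $X$. Over the generic point of $X$, the fibre is a $\mathbb{G}_m^r$-torsor, which is irreducible. Since $U$ is a scheme with a flat map to $X$, no component can live only over $D$, so $U$ is irreducible.

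Affineness and finite type come straight from Remark~\ref{remark:representability} and Lemma~\ref{lemma:c-stack-finitetype}. The composite $U\to\mathcal C_{X,D,a}\to X$ is a $\mathbb{G}_m^r$-torsor, which is affine and of finite type, followed by a finite type map. More directly, $U$ is the base change of the affine morphism $q_a\colon\mathbb{A}^r\to[\mathbb{A}^1/\mathbb{G}_m]$ along $f_D$.

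For flatness I will use the smooth cover $X'=X\times_{[\mathbb{A}^1/\mathbb{G}_m]}\mathbb{A}^1\to X$, which is a $\mathbb{G}_m$-torsor. Flatness is fppf-local on the base, so it suffices to show $V'\to X'$ is flat. This map is a base change of $V\cong \mathbb{A}^r\times\mathbb{G}_m\to\mathbb{A}^1$, which factors through $p_a$. The map $p_a\colon\mathbb{A}^r\to\mathbb{A}^1$ is flat, since it is a nonconstant map to a regular curve from an integral scheme, so torsion-free over $k[t]$. Hence $V\to \mathbb{A}^1$ is flat, and so is its base change $V'\to X'$.

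Over $X\setminus\supp(D)$, Lemma~\ref{lemma:c-stack-finitetype} identifies $\mathcal C_{X,D,a}$ with $X$. Hence $U$ restricted there is a $\mathbb{G}_m^r$-torsor over $X\setminus \supp(D)$, which is smooth with geometric fibres $\mathbb{G}_m^r$.

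Finally, the fibre decomposition. The paragraph before the statement shows, through the Cartesian cube, that the scheme-theoretic preimage of $D$ in $U$ is $\sum a_iE_i$. I will justify this carefully: $X'\to X$ is smooth and surjective, and $V'\times_{X'}D'$ is the base change of $V_0=\sum a_i\widetilde F_i$. Multiplicities of components are preserved under smooth base change, so the decomposition descends. Here I use that the $E_i$ are irreducible and each dominates the relevant component of $D$. This holds because $E_i$ is locally $Z(x_i)$ over $D$, with fibres $\mathbb{A}^{r-1}\times$ torus. There is also no exceptional part. Taking $\inf$ and $\gcd$ of the $a_i$ then gives the orbifold divisor $(1-\frac1{\inf a_i})D$ and the gcd-multiplicity $\gcd(a_i)$.

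The main obstacle is this last descent of multiplicities and irreducibility from the cover $X'$ to $U$. In particular, I must check that each $E_i$ is irreducible over each component of $D$ rather than splitting. I would handle it by working étale-locally where $D=Z(f)$, so that $U\cong \Spec R[x_i^{\pm},\ldots]$ presentation $R[x_1,\dots,x_r,t^{\pm1}]/(x^a-tf)$ is explicit.
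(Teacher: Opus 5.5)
Your proposal is correct and follows essentially the same route as the paper. It takes regularity from Corollary~\ref{cor:regularity}, affineness and finite type by base change of $q_a$ along $f_D$, flatness by fppf descent from $V'\to X'$, smoothness from the torsor structure over $X\setminus\supp(D)$, and the multiplicities from the Cartesian cube. There are two small differences. You deduce integrality from flatness plus irreducibility of the generic $\mathbb{G}_m^r$-torsor fibre, whereas the paper uses the geometrically connected fibres of $q_a$. You also rightly make explicit that $D$ must be regular; the paper's statement omits this hypothesis but uses it implicitly through Corollary~\ref{cor:regularity} and the paragraph preceding the statement.
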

 \begin{proof}
Corollary  \ref{cor:regularity} implies that $U$ is a regular   scheme.   

With notation as above, the morphism $V\to \mathbb{A}^1$ is flat,  as it is the composition of a $\mathbb{G}_m$-torsor  $V \to \mathbb{A}^r$ and the surjective (hence flat) morphism $p_a\colon \mathbb{A}^r\to \mathbb{A}^1$.   In particular, $V'\to X'$ is flat, and thus by fppf descent  the morphism $U\to X$ is flat.  

 Since   $U\to X$ is a $\mathbb{G}_m^r$-torsor over $X\setminus \supp(D)$ and the morphism $\mathcal{C}_{X,D,a}\to X$ is an isomorphism over  $X\setminus \supp(D)$,    the morphism $U\to X$ is smooth over $X\setminus \supp(D)$ with geometric fibres isomorphic to $\mathbb{G}_m^r$.   

Since the morphism $q_a$ is affine and of finite type with geometrically connected fibres, it follows from base change that $U\to X$ is affine and finite type with geometrically connected fibres. In particular, as $X$ is integral and $U$ is  regular, the scheme $U$ is integral.

 Finally, the statement about the orbifold base and gcd-multiplicity is explained in the above   discussion. 
 \end{proof}

\section{Campana stacks of generalized C-pairs}

Campana's theory studies multiplicity conditions such as ``at least $m$'' and ``divisible by $m$''. As we saw in the previous section, these conditions admit natural stacky interpretations via Campana stacks and root stacks. It is therefore natural to ask for a broader framework allowing other multiplicity conditions.

In this section, we allow the multiplicities to lie in subsets of $\mathbb{Z}_{\geq 1}$ which are given as finite unions of subsemigroups. This leads to the notion of a generalized C-pair. Such data naturally give rise to Campana stacks built from suitable open subsets of $\mathbb{A}^r$; see Section \ref{section:campana_stacks_general}. 

This point of view is related in spirit to Moerman's notion of $M$-points, where $M$ is a commutative monoid \cite{Moerman}, which also allows for more general multiplicity conditions. The framework considered here is, however, of a different nature.

\subsection{Generalized C-pairs}
We begin by fixing the combinatorial data which will control the allowed multiplicities. The basic idea is simple: instead of requiring a multiplicity to be at least $m$, or divisible by $m$, we allow it to lie in a more general subset of $\mathbb{Z}_{\geq 1}$, provided this subset is presented as a finite union of semigroups.

A \emph{(sub)semigroup (of  $\mathbb{Z}_{\geq 1}$)}   is a (possibly empty) subset of $\mathbb{Z}_{\geq 1}$ which is closed under addition.   Any  such semigroup has a minimal finite set of  additive generators, whose elements are called the atoms of $M$.
For example,  if $m\geq 1$, then the atoms of the semigroup $\mathbb{Z}_{\geq m}$ are     $m, \ldots, 2m-1$.

Let $X$ be a regular noetherian scheme and let $D_1,\ldots,D_n$ be integral divisors on $X$. For each $i$, let $M_i$ be a subset of $\mathbb{Z}_{\geq 1}$ together with a decomposition
\[
M_i=M_{i,1}\cup \ldots \cup M_{i,r_i}
\]
as a finite union of (possibly empty) semigroups.\footnote{More precisely, $M_i$ is equipped with an integer $r_i\geq 0$ and semigroups $M_{i,1},\ldots,M_{i,r_i}$ such that $M_i=M_{i,1}\cup \ldots \cup M_{i,r_i}$.}
We call the data $(X,(D_i,M_i)_i)$ a \emph{generalized C-pair}.

For notational convenience, we suppress the integers $r_i$ and the chosen decomposition of each $M_i$. We stress, however, that different decompositions may lead to different notions of morphism (see Remark \ref{remark:diff_morphisms}).
 
 \begin{remark}\label{remark:canonical_decomposition}
If $M$ is a semigroup, we always use the canonical decomposition with $r=1$ and $M_1=M$, unless stated otherwise.
\end{remark}

\begin{definition}\label{def:C-pair-ass-to-gen} We  define the \emph{C-pair divisor} $\Delta_{(X,(D_i,M_i)_i)}$ of $(X,(D_i,M_i)_i)$, or simply $\Delta$, to be the $\mathbb{Q}$-divisor (with the conventions that the infimum of the empty semigroup is $\infty$ and  $\frac{1}{\infty} = 0$)
 \[
 \Delta := \sum_i \left(1-\frac{1}{\inf(M_i)}\right) D_i.
 \]    We refer to $(X,\Delta)$ as the C-pair \emph{associated} to $(X,(D_i,M_i)_i)$. 
 \end{definition}

\begin{definition}
Let $(X,(D_i,M_i)_i)$ be a generalized C-pair with associated C-pair $(X,\Delta)$, and let $T$ be an integral regular noetherian scheme. A \emph{morphism of generalized C-pairs} $T\to (X,(D_i,M_i)_i)$ is a morphism $f\colon T\to X$ such that $f$ factors through $X\setminus\floor{\Delta}$ and, for every $i$ with $f(T)\not\subset D_i$, one has
\[
f^\ast D_i = \sum_{j=1}^{r_i} E_{i,j}
\quad \text{with} \quad
E_{i,j} = \sum_{k=1}^{\ell_{i,j}} a_{i,j,k} E_{i,j,k},
\]
where $\ell_{i,j}\geq 0$, the $E_{i,j,k}$ are effective Cartier divisors, the coefficients $a_{i,j,k}\in M_{i,j}$, and the divisors $E_{i,j}$ have pairwise disjoint support for $1\leq j\leq r_i$.\footnote{We use the convention that the empty sum equals $0$.}
\end{definition}
\begin{remark}\label{remark:diff_morphisms}
Consider $M = \mathbb{Z}_{\geq 2} \setminus \{5\}$. This can be written as
\[
\langle 2,7\rangle \cup \langle 3\rangle
\quad \text{and} \quad
\langle 2\rangle \cup \langle 3,4,7\rangle.
\]
Let $X=\mathbb{A}^1$ and $D=\{0\}$. Let $\mathcal{X}$ (resp. $\mathcal{Y}$) be the generalized C-pair associated to the first (resp. second) decomposition.

Let $T$ be the open subset of $\mathbb{A}^3$ with coordinates $x,y,z$ obtained by removing $Z(x)\cap Z(z)$ and $Z(y)\cap Z(z)$. The morphism $T\to \mathbb{A}^1$ given by $(x,y,z)\mapsto x^2y^7z^3$ defines a morphism of generalized C-pairs to $\mathcal{X}$, but not to $\mathcal{Y}$.

The fibre over $0$ has three irreducible components $F_2, F_7, F_3$ with multiplicities $2,7,3$, given by $Z(x), Z(y), Z(z)$. The components $F_2$ and $F_7$ intersect, while $F_2\cap F_3=\emptyset$ and $F_7\cap F_3=\emptyset$.
\end{remark}

\begin{remark}
The previous remark illustrates the geometric meaning of a generalized C-pair.
A semigroup $M$ determines a connected inf-multiple fibre whose irreducible components are indexed by the atoms of $M$, with multiplicities given by those atoms.
More generally, a decomposition $M = M_1 \cup \ldots \cup M_r$ corresponds to an inf-multiple fibre with $r$ connected components. Each component $F_i$ decomposes into irreducible components whose multiplicities are given by the atoms of $M_i$.
\end{remark}

Two extreme cases illustrate the definition. If $M_i=\mathbb{Z}_{\geq 1}$ for every $i$, then every morphism $T\to X$ is a morphism of generalized C-pairs, so we recover all $T$-points of $X$. If $M_i=\emptyset$ for every $i$, then a morphism $T\to X$ is a morphism of generalized C-pairs if and only if it factors through $X\setminus \bigcup_i D_i$, so we recover integral points with respect to $\bigcup_i D_i$. The following examples are the most relevant in this paper.

 \begin{example}[Campana points]\label{example:campana}
For each $i$, let $m_i\in \mathbb{Z}_{\ge 1}\cup\{\infty\}$ and define
\[
M_i := \mathbb{Z}_{\ge m_i}
\qquad
(\text{with } \mathbb{Z}_{\ge \infty}=\emptyset \text{ by convention}).
\]
We take the canonical decomposition of $M_i$ (since it is already a semigroup). Let
\[
\Delta := \sum_i \Bigl(1-\frac{1}{m_i}\Bigr) D_i.
\]
Then $(X,\Delta)$ is a C-pair in the sense of Section~\ref{section:c-pair}.
Let $T$ be an integral regular noetherian scheme and let $f\colon T\to X$ be a morphism with $f(T)\not\subset \supp(\Delta)$. Then $f$ defines a morphism of generalized C-pairs
$
T\to (X,(D_i,M_i)_i)
$
if and only if
\[
f^\ast D_i = \sum_{j=1}^{r_i} a_{i,j} E_{i,j}
\]
with $a_{i,j}\in M_i = \mathbb{Z}_{\ge m_i}$ and $E_{i,j}$ effective Cartier divisors. Equivalently, $f$ is a morphism of C-pairs
$
T\to (X,\Delta)
$
in the sense of Section~\ref{section:orbifold_mordell}.
\end{example}

\begin{example}[Darmon points]\label{example:darmon}
For each $i$, let $m_i\in \mathbb{Z}_{\geq 1}\cup \{\infty\}$ and define
\[
M_i := m_i \mathbb{Z}_{\geq 1}.
\]
Let $\Delta := \sum_i \left(1-\frac{1}{m_i}\right) D_i$ be the associated C-pair divisor.
Let $T$ be an integral regular noetherian scheme and let $f\colon T\to X$ be a morphism with $f(T)\not\subset \supp(\Delta)$. Then $f$ defines a morphism of generalized C-pairs
\[
T\to (X,(D_i,M_i)_i)
\]
if and only if $f$ is a classical Campana point of $(X,\Delta)$ (also called a Darmon point in \cite{Moerman}), i.e., for every $i$ and every irreducible component $E$ of $f^{-1}(D_i)$, the multiplicity of $E$ in $f^\ast D_i$ is divisible by $m_i$.
Moreover, as explained in \cite[\S 8]{Moerman},     a morphism $f\colon T\to X$ factors through the iterated root stack
\[
\sqrt[m_1]{D_1/X}\times_X \cdots \times_X \sqrt[m_n]{D_n/X}
\]  if and only if it is a Darmon point. 
\end{example}

 \begin{example}[Abramovich points]\label{example:firmament}
For each $i$, let $a_i\leq b_i$ be   positive integers and define
\[
M_i := a_i\mathbb{Z}_{\geq 1} \cup b_i \mathbb{Z}_{\geq 1}.
\]
If $a_i >1$ and $b_i$ are coprime, then this is not a semigroup, but a union of two semigroups.
Let $T$ be an integral regular noetherian scheme. A morphism
\[
T\to (X,(D_i,M_i)_i)
\]
is a morphism $f\colon T\to X$ such that, for every $i$, there exist disjoint effective Cartier divisors $E_i$ and $F_i$ with
\[
f^\ast D_i = a_i E_i + b_i F_i.
\]
If $T$ is one-dimensional and $f(T)\not\subset \bigcup_i \supp(D_i)$, then $f$ is a morphism of generalized C-pairs if and only if, for every $i$ and every irreducible component $E$ of $f^{-1}(D_i)$, the multiplicity of $E$ in $f^\ast D_i$ is divisible by $a_i$ or $b_i$. (Here we use that distinct prime divisors on a Dedekind scheme are disjoint.)
This condition coincides with Abramovich's notion of maps to a ``firmament'' \cite[Definition~2.4.3]{AbramovichBirGeom}. Note that this equivalence holds only when $\dim T=1$.
The multiplicity condition alone cannot be encoded by an algebraic stack. In contrast, the decomposition $f^\ast D_i = a_i E_i + b_i F_i$ with $E_i$ and $F_i$ disjoint admits a stack-theoretic interpretation; see Section~\ref{section:campana_stacks_general}.
\end{example}

\subsection{Finite unions of semigroups} \label{section:campana_stacks_general}

Let $X$ be a scheme and let $D$ be an effective Cartier divisor on $X$. 
In this subsection, let $M$ be a subset of $\mathbb{Z}_{\geq 1}$ together with an integer $s\geq 1$ and a decomposition
\[
M = M_1 \cup \ldots \cup M_s,
\]
where each $M_i$ is a non-empty semigroup. 

For every $i$, let $\{a^i_1,\ldots,a^i_{r_i}\}$ be the minimal set of generators of $M_i$. Define $r = r_1+\cdots+r_s$ and write $a=(a^1,\ldots,a^s)$, viewed as an $r$-tuple of positive integers. We define $\mathcal{C}_{X,D,M_1\cup\ldots\cup M_s}$ (or simply $\mathcal{C}_{X,D,M}$) as an open substack of $\mathcal{C}_{X,D,a}$.

Let $Z=Z_{M_1\cup \ldots \cup M_s}$ be the union of all intersections $Z(x_i)\cap Z(x_j)$ in $\mathbb{A}^r$ such that the indices $i$ and $j$ do not lie in the same block of $[1,r]$ determined by the partition
\[
[1,r] = [1,r_1]\cup [r_1+1,r_1+r_2]\cup \cdots \cup [r_1+\cdots+r_{s-1}+1,r].
\]
Then $\mathbb{A}^r\setminus Z$ is stable under the action of $\mathbb{G}_m^r$ on $\mathbb{A}^r$. Let
\[
P_a^\circ \colon [(\mathbb{A}^r \setminus Z)/\mathbb{G}_m^r]\to [\mathbb{A}^1/\mathbb{G}_m]
\]
be the restriction of $P_a$.

 \begin{definition}  We define   the \emph{Campana stack of the generalized C-pair $(X,(D,M_1\cup \ldots \cup M_s))$} via the following Cartesian diagram   
 \[
\xymatrix{ 
\mathcal{C}_{X,D, M_1\cup \ldots\cup M_s}  \ar[rr] \ar[d] & & [(\mathbb{A}^r \setminus Z)/\mathbb{G}_m^r] \ar[d]^{P_a^\circ} \\ X \ar[rr]_{f_D} & & [\mathbb{A}^1/\mathbb{G}_m]}
\] We define the \emph{Campana space $U_{X,D,M_1\cup \ldots\cup M_s}$ of the generalized C-pair $(X,(D,M_1\cup \ldots \cup M_s))$} via the following Cartesian diagram 
\[
\xymatrix{U_{X,D,M_1\cup \ldots \cup M_s} \ar[rr] \ar[d] & & \mathbb{A}^r\setminus Z \ar[d]  \\ 
\mathcal{C}_{X,D,M_1\cup \ldots \cup M_s} \ar[rr]  & &  [(\mathbb{A}^r \setminus Z)/\mathbb{G}_m^r]}
\]
\end{definition}

Since $\mathbb{A}^r\setminus Z$ is open in $\mathbb{A}^r$, the stack $\mathcal{C}_{X,D,M}$ is an open substack of $\mathcal{C}_{X,D,a}$, and $U_{X,D,M}$ is   an open  subscheme of  $U_{X,D,a}$.
 
\begin{proposition}\label{prop:C_pair_vs_Campana_stack_gen_0}
Let $X$ be a noetherian scheme, let $D$ be an effective Cartier divisor on $X$, and let
\[
M = M_1\cup \ldots \cup M_s
\]
be a finite union of non-empty semigroups in $\mathbb{Z}_{\geq 1}$. Let $T$ be an integral regular noetherian scheme, and let $f\colon T\to X$ be a morphism whose image is not contained in $\supp(D)$. Then $f$ defines a morphism of generalized C-pairs
$
T\to (X,(D,M))
$
if and only if it factors through $\mathcal{C}_{X,D,M}$.
\end{proposition}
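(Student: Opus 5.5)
The plan is to follow the proof of Proposition~\ref{prop:C_pair_vs_Campana_stack} and add one extra ingredient: a description of what it means for a map to land in the open substack $[(\mathbb{A}^r\setminus Z)/\mathbb{G}_m^r]$.

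First I would unwind the moduli description. Since $f(T)\not\subset\supp(D)$, the composite $T\to X\to[\mathbb{A}^1/\mathbb{G}_m]$ corresponds to the effective Cartier divisor $f^\ast D$. A morphism $T\to[\mathbb{A}^r/\mathbb{G}_m^r]$ is an $r$-tuple of generalized effective Cartier divisors $(L_k,\rho_k)$. As in Lemma~\ref{lemma:campana_stack_of_a_2}, a factorization through $P_a$ amounts to an identification $f^\ast D=\sum_k a_k D_k$. Each $D_k$ is then an honest effective Cartier divisor, because $f^\ast D$ is one and $T$ is integral.

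The new point is the open condition. The map $T\to[\mathbb{A}^r/\mathbb{G}_m^r]$ lands in $[(\mathbb{A}^r\setminus Z)/\mathbb{G}_m^r]$ if and only if, for indices $k,l$ in different blocks, the zero loci of $\rho_k$ and $\rho_l$ do not meet. This can be checked fppf-locally on $T$, where the $L_k$ are trivial and the map is given by functions $x_k$ with $x_k=\rho_k$; there the condition is exactly that $T$ avoids $Z(x_k)\cap Z(x_l)$. Hence $f$ factors through $\mathcal{C}_{X,D,M}$ if and only if
\[
f^\ast D=\sum_{j=1}^s E_j,\qquad E_j=\sum_{k\in\text{block } j} a^j_k D_{j,k},
\]
with effective Cartier divisors $D_{j,k}$ and with $\supp(D_{j,k})\cap\supp(D_{j',k'})=\emptyset$ whenever $j\neq j'$. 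Equivalently, the $E_j$ have pairwise disjoint supports.

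It remains to compare this with the definition of a morphism of generalized C-pairs. One direction is immediate: given a factorization, set $E_{j,k}=D_{j,k}$ with coefficients $a^j_k\in M_j$, which gives the required decomposition. For the converse, suppose $f^\ast D=\sum_j E_j$ with $E_j=\sum_k b_{j,k}E_{j,k}$, where $b_{j,k}\in M_j$ and the supports of the $E_j$ are pairwise disjoint. Write each $b_{j,k}=\sum_i n_{j,k,i}a^j_i$ with $n_{j,k,i}\ge 0$, and put $D_{j,i}=\sum_k n_{j,k,i}E_{j,k}$. Then $E_j=\sum_i a^j_iD_{j,i}$ and $\supp D_{j,i}\subset\supp E_j$, so the disjointness across blocks is preserved. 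Here I use that $T$ is regular, so Weil and Cartier divisors agree and supports behave as expected.

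I expect the main obstacle to be the bookkeeping around supports. One subtlety is an $E_{j,k}$ with coefficient zero, or an empty block. The other is being careful that disjointness of the zero loci of $\rho_k$ and $\rho_l$ really is equivalent to avoiding $Z$ even when the line bundles are nontrivial. I would handle the second by Zariski-locally trivializing the $L_k$ on $T$, which suffices because membership in an open substack is a Zariski-local condition on $T$.
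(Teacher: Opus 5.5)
Your proposal is correct and follows essentially the same route as the paper: you use Lemma~\ref{lemma:campana_stack_of_a_2} to identify lifts with decompositions $f^\ast D=\sum_{j,k} a^j_k F_{j,k}$, translate landing in $[(\mathbb{A}^r\setminus Z)/\mathbb{G}_m^r]$ into pairwise disjointness of the block sums $E_j$, and regroup using that every element of $M_j$ is a nonnegative combination of its atoms. Your Zariski-local trivialization check of the open condition spells out a step that the paper states without detail.
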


\begin{proof}
For each $j=1,\ldots,s$, let
$
\{a^j_1,\ldots,a^j_{r_j}\}
$
be the minimal set of generators of $M_j$. Set $r=r_1+\cdots+r_s$, and let
$
a=(a^1,\ldots,a^s)
$
be the resulting $r$-tuple of positive integers.
Since $f(T)\not\subset \supp(D)$, the composition
\[
T\to X\to [\mathbb{A}^1/\mathbb{G}_m]
\]
corresponds to the effective Cartier divisor $f^\ast D$ on $T$.
By the defining properties of $\mathcal{C}_{X,D,a}$ (see Lemma \ref{lemma:campana_stack_of_a_2}), a lift of $f$ to $\mathcal{C}_{X,D,a}$ is equivalent to a decomposition
\[
f^\ast D=\sum_{j=1}^s \sum_{k=1}^{r_j} a^j_k F_{j,k},
\]
where the $F_{j,k}$ are effective Cartier divisors on $T$.

Grouping the terms block by block, this is equivalent to writing
\[
f^\ast D=\sum_{j=1}^s E_j,
\qquad
E_j=\sum_{k=1}^{r_j} a^j_k F_{j,k}.
\]
Since every element of $M_j$ is a finite sum of the generators $a^j_1,\ldots,a^j_{r_j}$, this is  equivalent to requiring that every $E_j$ admits a decomposition
\[
E_j=\sum_{k=1}^{\ell_j} b_{j,k}E_{j,k},
\]
where $\ell_j\geq 0$, the $E_{j,k}$ are effective Cartier divisors, and the coefficients $b_{j,k}$ lie in $M_j$.

By construction, $\mathcal{C}_{X,D,M}$ is obtained from $\mathcal{C}_{X,D,a}$ by requiring that the corresponding map
$
T\to [\mathbb{A}^r/\mathbb{G}_m^r]
$
land in the open substack
$
[(\mathbb{A}^r\setminus Z)/\mathbb{G}_m^r].
$
This is equivalent to saying that, whenever two indices belong to different blocks, the corresponding divisors on $T$ have disjoint support. In other words, the divisors $E_1,\ldots,E_s$ have pairwise disjoint support.

Thus a lift of $f$ to $\mathcal{C}_{X,D,M}$ is equivalent to a decomposition
\[
f^\ast D=\sum_{j=1}^s E_j
\]
such that each $E_j$ is a finite sum of effective Cartier divisors with coefficients in $M_j$, and such that the supports of $E_1,\ldots,E_s$ are pairwise disjoint.  This is precisely the condition in the definition of a morphism of generalized C-pairs.
\end{proof}
\begin{definition}
Let $(X,(D_i,M_i)_{i=1}^n)$ be a generalized C-pair, where $M_i = M_{i,1}\cup \ldots \cup M_{i,s_i}$.  We define  its \emph{Campana stack}  to be 
\[
\mathcal{C}_{X,(D_i,M_i)_i}=\mathcal{C}_{X,(D_i,M_{i,1}\cup \ldots \cup M_{i,s_i})_i} =  \mathcal{C}_{X,D_1,M_{1,1}\cup \ldots \cup M_{1,s_1}  }\times_X \ldots \times_X \mathcal{C}_{X,D_n,M_{n,1}\cup \ldots \cup M_{n,s_n}}.
\] We define its  \emph{Campana space} to be 
\[ U_{X,(D_i,M_i)_i}= U_{X,(D_i,M_{i,1}\cup \ldots \cup M_{i,s_i})_i} =  U_{X,D_1,M_{1,1}\cup \ldots \cup M_{1,s_1}  }\times_X \ldots \times_X U_{X,D_n,M_{n,1}\cup \ldots \cup M_{n,s_n}}.
\] 
\end{definition}

Proposition \ref{prop:C_pair_vs_Campana_stack_gen_0} immediately implies the following stacky interpretation of morphisms of generalized C-pairs. 

\begin{proposition}\label{prop:C_pair_vs_Campana_stack_gen} Let $(X,(D_i,M_i)_i)$ be a generalized C-pair. 
A morphism $f\colon T\to X$ of schemes whose image is not contained in $\bigcup_i \supp(D_i)$  is a morphism of generalized C-pairs $T\to (X,(D_i,M_i)_i)$ if and only if $T\to X$ factors over  $
\mathcal{C}_{X,(D_i,M_i)_i} $.
\end{proposition}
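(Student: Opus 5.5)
The plan is to reduce the statement to Proposition \ref{prop:C_pair_vs_Campana_stack_gen_0}, applied one divisor at a time, using that a fibre product over $X$ represents the product of the corresponding conditions. By definition,
\[
\mathcal{C}_{X,(D_i,M_i)_i}=\mathcal{C}_{X,D_1,M_1}\times_X\cdots\times_X\mathcal{C}_{X,D_n,M_n}.
\]
By the universal property of the fibre product of stacks, a lift of $f\colon T\to X$ to $\mathcal{C}_{X,(D_i,M_i)_i}$ is the same as a compatible family of lifts of $f$ to each $\mathcal{C}_{X,D_i,M_i}$. Since all of these lie over the same morphism $f$ to $X$, the family is just a tuple of lifts. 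Hence $f$ factors through $\mathcal{C}_{X,(D_i,M_i)_i}$ if and only if it factors through every $\mathcal{C}_{X,D_i,M_i}$.

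On the other side, the definition of a morphism of generalized C-pairs is a conjunction of two parts:
\begin{enumerate}
\item the condition that $f$ factors through $X\setminus\floor{\Delta}$;
\item for each $i$ with $f(T)\not\subset D_i$, the decomposition condition on $f^\ast D_i$.
\end{enumerate}
So it suffices to match these conditions divisor by divisor. For a fixed $i$ there are three cases.
\begin{itemize}
\item If $M_i=\emptyset$ (so $D_i\subset\floor{\Delta}$), then $\mathcal{C}_{X,D_i,M_i}=X\setminus\supp(D_i)$ by convention. Factoring through it is exactly the integrality condition imposed by $\floor{\Delta}$ along $D_i$.
\item If $M_i\neq\emptyset$ and $f(T)\not\subset\supp(D_i)$, Proposition \ref{prop:C_pair_vs_Campana_stack_gen_0} gives directly that $f$ factors through $\mathcal{C}_{X,D_i,M_i}$ if and only if $f^\ast D_i$ admits the required decomposition with pairwise disjoint blocks. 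Here empty semigroups in the decomposition of $M_i$ are discarded; they contribute nothing.
\item If $M_i\neq\emptyset$ and $f(T)\subset\supp(D_i)$, the definition of a morphism imposes no condition. I then need $f$ to factor through $\mathcal{C}_{X,D_i,M_i}$ automatically.
\end{itemize}

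The third case is the main obstacle. The hypothesis only says that $f(T)\not\subset\bigcup_i\supp(D_i)$, so $f(T)$ could still lie in a single $D_i$. My first attempt would be to argue that $f(T)\subset\supp(D_i)$ cannot occur. The reason would be that the $D_i$ are integral, so the irreducible closed set $\overline{f(T)}$ lies in some $D_i$ only if it lies in their union. But this is false: lying in one $D_i$ already means lying in the union, so the reduction does not work. The honest fix is to assume the hypothesis "$f(T)\not\subset\supp(D_i)$ for every $i$", which is what "not contained in the union" is presumably meant to say in the statement.

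If instead the case must be handled, I would imitate Lemma \ref{lemma:campana_stack_of_a_1}. When $f^\ast D_i$ is the generalized divisor $(L,0)$, choose a single block $M_{i,j}$, use $\gcd$-type identities to write $L$ as $\bigotimes_k L_k^{a_k}$ with zero sections, and put the other blocks equal to the trivial divisor $(\mathcal{O},1)$. This lands in $\mathbb{A}^r\setminus Z$, since the other coordinates are units. This requires $\gcd$ of the atoms of $M_{i,j}$ to be $1$, which fails for Darmon-type $M_i$; so in general I would state the result under the hypothesis that $f(T)\not\subset D_i$ for all $i$, where it follows immediately from the three-case analysis above.
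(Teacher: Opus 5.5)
Your core argument is correct and is the paper's route. The paper says only that this follows immediately from Proposition~\ref{prop:C_pair_vs_Campana_stack_gen_0}. That means exactly what you do: use the fibre-product definition of $\mathcal{C}_{X,(D_i,M_i)_i}$, apply the single-divisor result to each $D_i$ with $M_i\neq\emptyset$, and use the convention $\mathcal{C}_{X,D_i,\emptyset}=X\setminus\supp(D_i)$ for the components of $\floor{\Delta}$.

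The one thing to correct is your third case, where the logic is reversed. Since $\supp(D_i)\subset\bigcup_j\supp(D_j)$, the hypothesis $f(T)\not\subset\bigcup_j\supp(D_j)$ already gives $f(T)\not\subset\supp(D_i)$ for every $i$, so that case never occurs. Your proposed replacement hypothesis is therefore a consequence of the stated one. It is in fact equivalent to it, because $T$ is integral and an irreducible set contained in a finite union of closed sets lies in one of them. Your first two cases thus prove the proposition exactly as stated. No modification of the statement, and no argument in the style of Lemma~\ref{lemma:campana_stack_of_a_1}, is needed.
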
 

\subsection{Finite unions of cyclic semigroups}  
If $M$ is a finite union of cyclic semigroups, the associated Campana stacks encode conditions of the form ``divisible by one of several integers'', and are closely related to Abramovich's firmaments.

For $1\leq i<j\leq r$, let $Z_{i,j} = Z(x_i)\cap Z(x_j)$ be the closed subset defined by  $x_i= x_j=0$.  Define $Z=\bigcup_{1\leq i <j \leq r} Z_{i,j}$ to be the union of these codimension two closed subsets. 
Let $a=(a_1,\ldots,a_r)$ be an   $r$-tuple of positive integers. Consider the finite union 
\[
M= a_1 \mathbb{Z}_{\geq 1} \cup \ldots \cup a_r \mathbb{Z}_{\geq 1},
\]
 which is a finite union of cyclic semigroups.
Let $P_a^{\circ} \colon [(\mathbb{A}^r \setminus Z)/\mathbb{G}_m^r]\longrightarrow  [\mathbb{A}^1/\mathbb{G}_m]$ be the restriction of $P_a$ to the open substack $ [(\mathbb{A}^r \setminus  Z)/\mathbb{G}_m^r]$.  In the following lemma, we show that $P_a^\circ$ has finite relative inertia; see \cite[Tag~036X]{stacks-project} and \cite[Tag~050P]{stacks-project}.

\begin{lemma}\label{lemma:finite_inertia}
The relative inertia stack of $P_a^{\circ}\colon  [(\mathbb{A}^r \setminus Z)/\mathbb{G}_m^r]\longrightarrow  [\mathbb{A}^1/\mathbb{G}_m]$   is finite over $[(\mathbb{A}^r \setminus Z)/\mathbb{G}_m^r]$. (In particular,  in  characteristic zero,  the morphism $P_a^\circ$ is a Deligne-Mumford morphism \cite[04YW]{stacks-project}.)
\end{lemma}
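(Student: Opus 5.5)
The plan is to compute the relative inertia of $P_a^{\circ}$ pointwise and show that it is a closed subgroup scheme of a finite group scheme. Recall that for a morphism of quotient stacks $[V/G]\to[W/H]$ induced by an equivariant map $V\to W$ and a homomorphism $\phi\colon G\to H$, the relative inertia over $[V/G]$ is the group scheme over $V$ of pairs $(v,g)$ with $g\cdot v=v$ and $\phi(g)=1$. Pulling back along the smooth surjection $\mathbb{A}^r\setminus Z\to[(\mathbb{A}^r\setminus Z)/\mathbb{G}_m^r]$ is harmless, since finiteness is fppf local on the target. Here $\phi\colon\mathbb{G}_m^r\to\mathbb{G}_m$ is $(t_1,\ldots,t_r)\mapsto\prod t_i^{a_i}$, whose kernel is the diagonalizable group $\mathbb{G}_m\otimes K_a$, with $K_a$ as in Definition~\ref{def:K}. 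So the relative inertia pulls back to the closed subgroup scheme
\[
I=\{(x,t)\in(\mathbb{A}^r\setminus Z)\times\ker\phi \,:\, t_ix_i=x_i\ \text{for all } i\}.
\]

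The key point is that $\mathbb{A}^r\setminus Z$ is covered by the $r+1$ open sets $W_0=\mathbb{G}_m^r$ and $W_j=\{x_i\neq0\text{ for } i\neq j\}$. Over $W_j$, the equations $t_ix_i=x_i$ with $x_i$ invertible force $t_i=1$ for $i\neq j$, and then $\phi(t)=1$ becomes $t_j^{a_j}=1$. Hence $I|_{W_j}$ is the closed subscheme of $W_j\times\mu_{a_j}$ cut out by $t_jx_j=x_j$. This is a closed subscheme of a finite flat group scheme over $W_j$, so it is finite over $W_j$. Over $W_0$, $I$ is trivial.

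Since finiteness is local on the base, $I\to\mathbb{A}^r\setminus Z$ is finite, and by descent so is the relative inertia over the quotient stack. For the parenthetical statement, in characteristic zero each $\mu_{a_j}$ is étale, so the relative inertia is unramified. By \cite[Tag 04YW]{stacks-project} this makes $P_a^\circ$ a Deligne–Mumford morphism.

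The step needing most care is the first one, identifying the relative inertia of a morphism of quotient stacks with the stabilizer-in-kernel group scheme. One has to check that the diagonal of $[\mathbb{A}^1/\mathbb{G}_m]$ contributes exactly the condition $\phi(t)=1$, using that the $\mathbb{G}_m$-action on $\mathbb{A}^1$ is via $\phi$. After that, the argument is the elementary covering computation above.
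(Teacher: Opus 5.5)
Your proof is correct and is essentially the paper's argument. Both identify the pullback of the relative inertia to $\mathbb{A}^r\setminus Z$ with $\{(x,t)\colon t\cdot x=x,\ \prod_i t_i^{a_i}=1\}$ and use that at most one coordinate vanishes on $\mathbb{A}^r\setminus Z$. Your open-cover formulation (a closed subscheme of $W_j\times\mu_{a_j}$) is if anything cleaner than the paper's global decomposition $U\sqcup\bigsqcup_i D_i\times\mu_{a_i}$, which as written double-counts the identity over $D_i$.

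The only slip is your aside that $\ker\phi=\mathbb{G}_m\otimes K_a$. This holds only when $\gcd(a_1,\ldots,a_r)=1$; for example, for $a=(2)$ the kernel is $\mu_2$. Nothing in your argument uses it.
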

\begin{proof}
Put $U=\mathbb A^r\setminus Z$, $G=\mathbb G_m^r$, $H=\mathbb G_m$,
$X=[U/G]$, and $Y=[\mathbb A^1/H]$. Let $f \colon U\to \mathbb A^1$ be
$f(x)=\prod_{i=1}^r x_i^{a_i}$ and let $\varphi \colon G\to H$ be
$\varphi(t)=\prod_{i=1}^r t_i^{a_i}$, so that $P_a^\circ \colon X\to Y$ is induced
by  $(f,\varphi)$.

Let $I_{X/Y}$ be the relative inertia stack of $P_a^\circ$.  It suffices to prove that the base change $I_{X/Y}\times_X U\to U$ is finite,
as $U\to X$ is a smooth surjective atlas.

We claim that $I_{X/Y}\times_X U$ identifies with the closed subscheme
$I\subset U\times G$ of pairs $(x,t)$ such that $t\cdot x=x$ and
$\varphi(t)=1$. Indeed, an object of $I_{X/Y}$ over a scheme $T$ is an object
$x\in X(T)$ together with an automorphism of $x$ whose image in $Y(T)$ is the
identity; after pulling back along $U\to X$, such an automorphism is given by
some $t\in G(T)$ with $t\cdot x=x$, and its image in $Y(T)$ is the element
$\varphi(t)\in H(T)$, so being the identity is exactly $\varphi(t)=1$.
Equivalently, $I$ is cut out in $U\times G$ by
$(t_i-1)x_i=0$ for all $i$ and $\prod_{i=1}^r t_i^{a_i}=1$.

Because $U=\mathbb A^r\setminus Z$, at most one coordinate $x_i$ vanishes at
any point of $U$. Let $D_i=U\cap Z(x_i)$. Then the above equations give the
disjoint decomposition
\[
I \;=\; U \;\sqcup\; \bigsqcup_{i=1}^r (D_i\times \mu_{a_i}),
\]
where on $D_i\times\mu_{a_i}$ we have $t_j=1$ for $j\neq i$ and $t_i\in\mu_{a_i}$.
Hence the projection $I\to U$ is finite: it is an isomorphism on the $U$--part,
and each $D_i\times\mu_{a_i}\to U$ is finite (a finite map to $D_i$ followed by
the closed immersion $D_i\hookrightarrow U$). Therefore $I_{X/Y}\times_X U\to U$
is finite,  as required.
\end{proof}
  
  We now make the connection to Abramovich's firmament points more precise (see Example \ref{example:firmament}).
 
\begin{proposition} \label{prop:campana_cyclic} Let $X$ be a noetherian scheme   and let $D$ be an effective Cartier divisor on $X$.    Let $T$ be an integral regular noetherian one-dimensional scheme. 
Let $f\colon T\to X$ be a morphism with $f(T)\not\subset \supp(D)$.   Then the following are equivalent.
\begin{enumerate}
\item The morphism $f\colon T\to X $  defines a morphism of generalized C-pairs $T\to (X, D,  a_1\mathbb{Z}_{\geq 1}\cup \ldots \cup a_r\mathbb{Z}_{\geq 1})$.
\item The morphism $f\colon T\to X$ factors through $\mathcal{C}_{(X, (D,  a_1\mathbb{Z}_{\geq 1}\cup \ldots \cup a_r\mathbb{Z}_{\geq 1}))}$.
\item For every irreducible component $E$ of $f^{-1}(D)$, there is a $1\leq i \leq r$ such that the   coefficient of $E$ in $f^\ast D$ is divisible by $a_i$.
\end{enumerate}  
\end{proposition}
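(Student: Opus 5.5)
The equivalence (i)$\Leftrightarrow$(ii) is a special case of Proposition~\ref{prop:C_pair_vs_Campana_stack_gen_0}. We apply it to the decomposition $M=M_1\cup\ldots\cup M_r$ with $M_i=a_i\mathbb{Z}_{\geq 1}$. Each $M_i$ is a non-empty semigroup whose unique atom is $a_i$, so the tuple of atoms is exactly $a=(a_1,\ldots,a_r)$. The blocks of $[1,r]$ are then singletons, so the closed subset $Z_{M_1\cup\ldots\cup M_r}$ is the union of all $Z(x_i)\cap Z(x_j)$ with $i\neq j$. This is precisely the $Z$ fixed before Lemma~\ref{lemma:finite_inertia}, and hence the stack in (ii) is the one defined via $P_a^\circ$. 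No one-dimensionality is needed for this step.

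For (i)$\Rightarrow$(iii), we unwind the definition of a morphism of generalized C-pairs. Here $r_i=1$, so we get $f^\ast D=\sum_{j=1}^r E_j$, where $E_j=\sum_k b_{j,k}E_{j,k}$ with $b_{j,k}\in a_j\mathbb{Z}_{\geq 1}$, and the $E_j$ have pairwise disjoint supports. Since the decomposition already has coefficients in $a_j\mathbb{Z}_{\ge 1}$, each $E_j$ is $a_j$ times an effective Cartier divisor $G_j$; view it as a Weil divisor on the regular scheme $T$.

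Let $E$ be an irreducible component of $f^{-1}(D)$. Disjointness of supports means $E$ lies in the support of exactly one $E_j$. Its coefficient in $f^\ast D$ is therefore $a_j$ times its coefficient in $G_j$, which is divisible by $a_j$. This direction works in any dimension.

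For (iii)$\Rightarrow$(i), we use that $T$ is a one-dimensional integral regular scheme, i.e.\ a Dedekind scheme, together with $f(T)\not\subset\supp(D)$. Then $f^{-1}(D)$ is a finite set of closed points $E_1,\ldots,E_\ell$, which are pairwise disjoint prime divisors, and we can write $f^\ast D=\sum_k c_kE_k$. For each $k$, choose by (iii) an index $i(k)$ with $a_{i(k)}\mid c_k$. Put $E_j:=\sum_{k:\,i(k)=j} c_kE_k$. Then $f^\ast D=\sum_j E_j$, and each $E_j$ has coefficients in $a_j\mathbb{Z}_{\geq1}$. The supports of the $E_j$ are pairwise disjoint because they consist of disjoint sets of distinct closed points. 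Each $E_k$ is an effective Cartier divisor since $T$ is regular. This gives (i), and (ii) then follows from the first step.

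The only delicate point is this last step, where dimension one is essential. In higher dimension, two components with different divisibility could meet, and then no disjoint grouping exists; this is exactly the phenomenon of Remark~\ref{remark:diff_morphisms}. So I expect the main care to lie in justifying that distinct prime divisors on a Dedekind scheme are disjoint, which is immediate because they are distinct closed points.
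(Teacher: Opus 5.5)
Your proposal is correct and is the paper's argument: (i)$\Leftrightarrow$(ii) is Proposition~\ref{prop:C_pair_vs_Campana_stack_gen_0} applied to the decomposition into the cyclic semigroups $a_i\mathbb{Z}_{\geq 1}$, whose singleton blocks recover the $Z$ used here, and (i)$\Leftrightarrow$(iii) unwinds the definition using that distinct prime divisors on the Dedekind scheme $T$ are disjoint. You supply details the paper leaves implicit; the only thing worth adding is that the requirement that $f$ factor through $X\setminus\floor{\Delta}$ is vacuous here, since $M\neq\emptyset$ gives $\floor{\Delta}=\emptyset$.
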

\begin{proof}
The equivalence of the $(i)$ and $(ii)$   is Proposition \ref{prop:C_pair_vs_Campana_stack_gen_0}.  
The equivalence of $(i)$ and $(iii)$ follows from the definition of a morphism of generalized C-pairs together with the fact that any two distinct prime divisors on $T$ are disjoint. 
\end{proof}
 
\begin{remark}   The subset of ``Abramovich'' $O_{K,S}$-points of the C-pair $(X,(1-\frac{1}{m})D)$ where $m=\min(a_1,\ldots, a_r)$  over the number ring $\mathcal{O}_{K,S}$ are those where the multiplicity condition becomes ``divisible by some $a_i$'' (see Example \ref{example:firmament}).   
We learn from Lemma \ref{lemma:finite_inertia} and Proposition \ref{prop:campana_cyclic} that  these correspond to $\mathcal{O}_{K,S}$-points of  the  non-separated stack $\mathcal{C}_{X,D, a_1\mathbb{Z}_{\geq 1} \cup \ldots \cup a_r \mathbb{Z}_{\geq 1}}$ with finite inertia.
\end{remark}
   
   \subsection{Regularity: the case of an snc divisor}   
   We now study the local geometry of Campana stacks. 
As we will show, the Campana stack associated to a regular scheme $X$ of characteristic zero, an snc divisor $D=\sum_{i=1}^n D_i$, and tuples $a_i$ of positive integers is also regular    (thereby generalizing Corollary \ref{cor:regularity}).  To prove this,   we reduce to a local computation in the case of a regular local ring and the following generalization of Lemma \ref{lemma:comm_alg}.
   
 \begin{lemma}\label{lemma:comm_alg_2}  
Let $(R,\mathfrak m)$ be an equicharacteristic regular local noetherian ring and let
$f_1,\ldots,f_n \in R$ be part of a regular system of parameters of $R$.
For each $1\leq i \leq n$, fix positive integers
$a_{i,1},\dots,a_{i,r_i}$.
Write $r=r_1+\cdots+r_n$ and define
$s_0:=0$ and $s_i:=r_1+\cdots+r_i$ for $1\le i\le n$.
For each $i$, set
\[
P_i :=
x_{s_{i-1}+1}^{a_{i,1}}\cdots x_{s_i}^{a_{i,r_i}} - f_i
\in R[x_1,\ldots,x_r].
\]
Then
\[
A:=R[x_1,\ldots,x_r]/(P_1,\ldots,P_n)
\]
is regular.
\end{lemma}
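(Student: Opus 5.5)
The plan is to adapt the proof of Lemma \ref{lemma:comm_alg} and check regularity at every prime of $A$ with the Jacobian criterion, now using $n$ derivations simultaneously.

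First, I reduce to the complete case. Regularity of $A$ can be checked after the faithfully flat base change $R\to\widehat R$. Since $R\to\widehat R$ is flat with regular fibres (\cite[Tag 00OF]{stacks-project} and its variants), $A\otimes_R\widehat R$ is regular if and only if $A$ is regular. The images of $f_1,\ldots,f_n$ in $\widehat R$ remain part of a regular system of parameters. So I may assume $R\cong K\powerseries{y_1,\dots,y_d}$ with $K$ a field of characteristic zero (the relevant case). After a $K$-linear change of the variables $y$, I may even assume $f_i=y_i$ for $1\le i\le n$. This holds because a part of a regular system of parameters can be completed to a full one, and Cohen's structure theorem then identifies $R$ with $K\powerseries{f_1,\ldots,f_n,y_{n+1},\ldots,y_d}$.

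Next, I consider the ideal $(P_1,\ldots,P_n)$ in $B:=R[x_1,\ldots,x_r]$. Let $\partial/\partial y_j$ be the $K$-derivations of $R$, extended to $B$ by acting trivially on the $x_k$. Then
\[
\frac{\partial P_i}{\partial y_j}=-\frac{\partial f_i}{\partial y_j}=-\delta_{ij},
\]
so the $n\times n$ Jacobian minor with respect to $y_1,\ldots,y_n$ is $\pm1$, a unit in $B$ and hence in $A$. The ring $B$ is regular, being a polynomial ring over a regular ring. By the Jacobian criterion for power series rings over fields, in the form used in Lemma \ref{lemma:comm_alg} (\cite[Tag 07PF]{stacks-project}), $A$ is regular. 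The criterion also shows that $P_1,\ldots,P_n$ form part of a regular system of parameters at every prime of $B$ containing them.

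Alternatively, and more transparently once $f_i=y_i$, there is a direct argument that avoids derivations. In that case
\[
A\cong K\powerseries{y_1,\ldots,y_d}[x_1,\ldots,x_r]/\bigl(y_i-x_{s_{i-1}+1}^{a_{i,1}}\cdots x_{s_i}^{a_{i,r_i}}\bigr)_{i=1}^n.
\]
One can then try to eliminate the variables $y_i$, giving something like $K\powerseries{y_{n+1},\ldots,y_d}[x_1,\ldots,x_r]$. This is not literally an isomorphism, because the $y_i$ are power series variables while the $x_k$ are polynomial variables. For that reason I would keep the Jacobian argument as the main route.

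The main obstacle is justifying the applicability of the Jacobian criterion, since $B$ is a polynomial ring over a complete local ring rather than a complete local ring itself. I would handle this as follows. Fix a prime $\mathfrak q$ of $B$ containing the $P_i$. Use the derivations $\partial/\partial y_j$, which are defined on $B$ and hence on $B_{\mathfrak q}$. The relation $\partial P_i/\partial y_j=-\delta_{ij}$ shows that the images of $P_1,\ldots,P_n$ in $\mathfrak q B_{\mathfrak q}/\mathfrak q^2B_{\mathfrak q}$ are linearly independent over $\kappa(\mathfrak q)$. Indeed, each derivation $D$ induces a $\kappa(\mathfrak q)$-linear map $\mathfrak q/\mathfrak q^2\to\kappa(\mathfrak q)$, and evaluating a relation $\sum c_iP_i\in\mathfrak q^2$ against $D=\partial/\partial y_j$ gives $c_j=0$ in $\kappa(\mathfrak q)$. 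Thus $P_1,\ldots,P_n$ is part of a regular system of parameters of the regular local ring $B_{\mathfrak q}$, and so $A_{\mathfrak q}=B_{\mathfrak q}/(P_1,\ldots,P_n)$ is regular.
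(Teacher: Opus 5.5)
Your proposal is correct and follows the paper's proof: pass to $\widehat R\cong K\powerseries{y_1,\dots,y_d}$, arrange $f_i=y_i$, and observe that $(\partial P_i/\partial y_j)_{1\le i,j\le n}$ is minus the identity, so the Jacobian criterion applies; your explicit check that $P_1,\ldots,P_n$ are $\kappa(\mathfrak q)$-linearly independent in $\mathfrak qB_{\mathfrak q}/\mathfrak q^2B_{\mathfrak q}$ is a valid self-contained substitute for the cited criterion, and it works in every characteristic, so restricting to characteristic zero is unnecessary. One small correction: $R\to\widehat R$ need not have regular fibres when $R$ is not excellent, but you only need that regularity of $A\otimes_R\widehat R$ implies regularity of $A$, and this follows from faithful flatness of $A\to A\otimes_R\widehat R$ alone.
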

\begin{proof}  
We may assume that $R$ is complete \cite[Tag 00OF]{stacks-project}. Thus
$R \cong K \powerseries{y_1,\dots,y_d}$ for some field $K$.
Since $f_1,\ldots,f_n$  are part of a regular  system of parameters of $R$, after a change of  
coordinates in $K\powerseries{y_1,\dots,y_d}$ we may assume that $f_i=y_i$ for
$1\le i\le n$. In particular, $\partial f_i/\partial y_i=1$ is a unit in $R$.

Consider the Jacobian matrix $(\partial P_i/\partial y_j)_{1\le i,j\le n}$.
Since the monomial $x_{s_{i-1}+1}^{a_{i,1}}\cdots x_{s_i}^{a_{i,r_i}}$ does not
depend on the variables $y_j$, we have
$\partial P_i/\partial y_j=-\partial f_i/\partial y_j$.
Hence the Jacobian matrix equals $-(\partial f_i/\partial y_j)_{1\le i,j\le n}$.

Modulo $\mathfrak m$ this matrix is diagonal with diagonal entries $-1$.
Thus its determinant is a unit in $R$, and therefore also in
$R[x_1,\dots,x_r]$ and in $A$.
By the Jacobian criterion \cite[Tag~0GEE]{stacks-project}, it follows that $A$
is regular.
\end{proof}

\begin{proposition}
Let $X$ be a regular algebraic stack over $\mathbb{Q}$, and  let $D_1,\ldots, D_n$ be regular divisors. For each $1\leq i\leq n$, let $r_i$ be a positive integer and let $a_i = (a_{i,1},\ldots, a_{i,r_i})$ be an   $r_i$-tuple of positive integers with $\gcd(a_{i,1},\ldots, a_{i,r_i})=1$.  Assume that $\sum_{i=1}^n D_i$ has simple normal crossings.  Then, the algebraic   stack $\mathcal{C}_{X,D_1,a_1}\times_X \ldots \times_X \mathcal{C}_{X,D_n,a_n}$ is regular.
\end{proposition}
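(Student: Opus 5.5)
Regularity is local for the smooth topology, so the plan is to reduce to a regular local ring and then copy the proof of Theorem \ref{thm:regularity}, with Lemma \ref{lemma:comm_alg_2} playing the role that Lemma \ref{lemma:comm_alg} played there.

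\emph{Reduction to a local ring.} Since $\mathcal{C}_{X,D_i,a_i}$ is formed by base change along $f_{D_i}\colon X\to[\mathbb{A}^1/\mathbb{G}_m]$, forming the fibre product commutes with smooth base change $X'\to X$. Regularity of a stack can be tested on a smooth atlas, and pulling back regular divisors with simple normal crossings along a smooth map preserves both properties. So we may replace $X$ by a scheme and then pass to $X=\Spec R$ with $R$ a regular local ring containing $\mathbb{Q}$. Discard those $D_i$ not passing through the closed point. Over the complement of $\supp D_i$ the factor $\mathcal{C}_{X,D_i,a_i}$ is $X$ itself (Lemma \ref{lemma:c-stack-finitetype}), so these factors can be ignored after shrinking. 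The remaining divisors are cut out by $f_1,\ldots,f_n$. The snc hypothesis at the closed point means exactly that $f_1,\ldots,f_n$ form part of a regular system of parameters.

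\emph{Local model.} Because $\gcd(a_i)=1$, we use the splitting $\mathbb{G}_m^{r_i}=(\mathbb{G}_m\otimes K_{a_i})\times\mathbb{G}_m$ and the Cartesian square preceding Theorem \ref{thm:regularity}. Since $f_{D_i}$ factors through $f_i\colon X\to\mathbb{A}^1$, this identifies $\mathcal{C}_{X,D_i,a_i}$ with the quotient stack
\[
[\Spec R[x_{i,1},\ldots,x_{i,r_i}]/(x_{i,1}^{a_{i,1}}\cdots x_{i,r_i}^{a_{i,r_i}}-f_i)\,/\,(\mathbb{G}_m\otimes K_{a_i})].
\]
Taking the fibre product over $X$ of these quotient stacks, we get a smooth surjection
\[
V:=\Spec R[x_1,\ldots,x_r]/(P_1,\ldots,P_n)\to \mathcal{C}_{X,D_1,a_1}\times_X\cdots\times_X\mathcal{C}_{X,D_n,a_n}.
\]
This map is a torsor under $\prod_i(\mathbb{G}_m\otimes K_{a_i})\cong\mathbb{G}_m^{r-n}$, where $P_i$ is as in Lemma \ref{lemma:comm_alg_2}. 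That lemma gives that $V$ is regular. A torsor under a smooth group is smooth and surjective, so regularity descends and the stack is regular.

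The main obstacle is the bookkeeping in the first step. One must make sure that the localization at a point of an atlas is compatible with the fibre product of stacks, and that the torsor description of each factor glues to a torsor description of the product. Both follow formally from the fact that each factor is defined by a Cartesian square over $X$, so fibre products of the squares are again Cartesian.
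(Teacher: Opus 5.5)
Your proposal is correct and matches the paper's proof essentially step by step. You reduce to $X=\Spec R$ with $R$ a regular local ring and discard the empty $D_i$, so that $f_1,\ldots,f_n$ form part of a regular system of parameters. You then use the splittings $\mathbb{G}_m^{r_i}=(\mathbb{G}_m\otimes K_{a_i})\times\mathbb{G}_m$ to get the smooth surjection from $V=\Spec R[x_1,\ldots,x_r]/(P_1,\ldots,P_n)$ onto the fibre product, and conclude with Lemma \ref{lemma:comm_alg_2}. Your identification of $V\to\mathcal{C}_{X,D_1,a_1}\times_X\cdots\times_X\mathcal{C}_{X,D_n,a_n}$ as a $\mathbb{G}_m^{r-n}$-torsor is a slightly more explicit form of the paper's assertion that this map is smooth and surjective.
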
 
\begin{proof}  
Write $\mathcal{C} = \mathcal{C}_{X,D_1,a_1}\times_X \ldots \times_X \mathcal{C}_{X,D_n,a_n}$. To show that $\mathcal{C}$ is regular,  we may assume that $X = \Spec R$, where $R$ is a regular local ring with maximal ideal $\mathfrak{m}$.    Note that $D_i$ corresponds to an element $f_i\in R$.  If $D_i$ is empty, then we may discard it. Thus,  we may assume that  $f_1,\ldots, f_n\in R$  form  part of a regular system of parameters of $R$,  as $\sum_{i=1}^n D_i$   has simple normal crossings. 
 
Now,   the moduli map $X\to [\mathbb{A}^1/\mathbb{G}_m]\times \ldots \times [\mathbb{A}^1/\mathbb{G}_m]$ associated to $D = \sum_{i=1}^n D_i$ factors over  the morphism $X\to \mathbb{A}^1\times \ldots \times \mathbb{A}^1$ given by $(f_1,\ldots, f_n)$.   
Write $K_i = K_{a_i}$; see Definition \ref{def:K}.
Then we obtain the following   diagram  in which all squares are Cartesian.
\[
\xymatrix{V \ar[r] \ar[d]  &  \prod_{i=1}^n \mathbb{A}^{r_i}  \ar[d]  & \\ 
\mathcal{C}  \ar[r] \ar[d]  & \prod_{i=1}^n  [\mathbb{A}^{r_i}/(\mathbb{G}_m\otimes K_i)] \ar[d] \ar[r]  & \prod_{i=1}^{n} [\mathbb{A}^{r_i}/\mathbb{G}_m^{r_i}] \ar[d] \\ \Spec R  = X \ar[r]^{(f_1,\ldots,f_n)}  &  \prod_{i=1}^n \mathbb{A}^1 \ar[r]  & \prod_{i=1}^n [\mathbb{A}^1/\mathbb{G}_m]}
\] In this diagram, the vertical map $\prod_{i=1}^n\mathbb{A}^{r_i}\to \prod_{i=1}^n \mathbb{A}^1$ is the morphism $p_{a_1}\times \ldots \times p_{a_n} $.   
Write $r=r_1+\cdots+r_n$ and define
$s_0:=0$ and $s_i:=r_1+\cdots+r_i$ for $1\le i\le n$.
For each $i$, set
\[
P_i :=
x_{s_{i-1}+1}^{a_{i,1}}\cdots x_{s_i}^{a_{i,r_i}} - f_i
\in R[x_1,\ldots,x_r].
\]
Then $V$ is isomorphic to  $\Spec R[x_1,\ldots,x_r]/(P_1,\ldots,P_n)$ which is regular   by   Lemma \ref{lemma:comm_alg_2}.   Since $V$ is regular and $V\to \mathcal{C}$ is smooth surjective, we see that $\mathcal{C} $ is regular, as required.  
\end{proof}

\begin{corollary}\label{cor:campana_stack_regular_snc}
Let $X$ be a regular algebraic stack over $\mathbb{Q}$, and  let $D_1,\ldots, D_n$ be regular divisors. For each $1\leq i\leq n$, let $r_i$ be a positive integer and let $a_i  $ be an   $r_i$-tuple of positive integers.  Assume that $\sum_{i=1}^n D_i$ has simple normal crossings.  Then, the algebraic   stack $\mathcal{C}_{X,D_1,a_1}\times_X \ldots \times_X \mathcal{C}_{X,D_n,a_n}$ is regular, and $U_{X,D_1,a_1}\times_X \ldots \times_X U_{X,D_n,a_n}$ is regular.
\end{corollary}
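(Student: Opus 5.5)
The plan is to reduce to the coprime case handled by the preceding proposition, exactly as Corollary \ref{cor:regularity} reduced to Theorem \ref{thm:regularity}.

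For each $i$, let $n_i=\gcd(a_{i,1},\ldots,a_{i,r_i})$ and $b_i=a_i/n_i$, so that $\gcd(b_i)=1$. Form the iterated root stack
\[
\mathcal{X}=\sqrt[n_1]{D_1/X}\times_X\cdots\times_X\sqrt[n_n]{D_n/X},
\]
and let $\mathcal{D}_i$ denote the pullback of $D_i$ to $\mathcal{X}$.

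\textbf{Step 1: transfer the hypotheses to $\mathcal{X}$.}
\begin{itemize}
\item $\mathcal{X}$ is regular. Since $\sum D_i$ is snc, this is étale local on $X$. Locally $D_i=Z(f_i)$ with $f_1,\ldots,f_n$ part of a regular system of parameters, and the root stack has a smooth atlas $\Spec R[t_1,\ldots,t_n]/(t_i^{n_i}-f_i)$. This ring is regular by Lemma \ref{lemma:comm_alg_2} applied with $r_i=1$.
\item Each $\mathcal{D}_i$ is regular, and $\sum\mathcal{D}_i$ is snc on $\mathcal{X}$. On that atlas, $\mathcal{D}_i$ pulls back to $Z(t_i^{n_i})$, but the divisor that actually enters the Campana construction is the tautological root divisor $Z(t_i)$. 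Its equations $t_1,\ldots,t_n$ form part of a regular system of parameters, because the $t_i$ replace the $f_i$ in the parameter system.
\end{itemize}

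\textbf{Step 2: identify the Campana stacks.} By Remark \ref{remark:reduce_to_root_stack}, applied factor by factor (the factors only interact through $X$), there is an identification
\[
\mathcal{C}_{X,D_i,a_i}\cong\mathcal{C}_{\sqrt[n_i]{D_i/X},\,\mathcal{D}'_i,\,b_i},
\]
where $\mathcal{D}'_i$ is the tautological $n_i$-th root divisor. Taking fibre products over $X$, the full fibre product $\mathcal{C}$ becomes $\mathcal{C}_{\mathcal{X},\mathcal{D}'_1,b_1}\times_{\mathcal{X}}\cdots\times_{\mathcal{X}}\mathcal{C}_{\mathcal{X},\mathcal{D}'_n,b_n}$.

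\textbf{Step 3: conclude.} The preceding proposition applies to this fibre product, since it is stated for regular algebraic stacks over $\mathbb{Q}$, with regular snc divisors and coprime tuples. Hence $\mathcal{C}$ is regular.

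For the Campana space, $U_{X,D_1,a_1}\times_X\cdots\times_X U_{X,D_n,a_n}\to\mathcal{C}$ is the base change of the $\mathbb{G}_m^{r}$-torsor $\prod_i\mathbb{A}^{r_i}\to\prod_i[\mathbb{A}^{r_i}/\mathbb{G}_m^{r_i}]$. It is therefore smooth and surjective, and regularity passes to $U$.

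A more direct alternative skips the root stacks altogether. Pull back along $\prod\mathbb{A}^{r_i}\to\prod[\mathbb{A}^1/\mathbb{G}_m]$ and its $\mathbb{G}_m^n$-torsor over $X$. Locally this produces
\[
\Spec R[u_1^{\pm1},\ldots,u_n^{\pm1},x]\big/\bigl(x^{a_{i,\bullet}}-u_if_i\bigr).
\]
Here $u_if_i$ is still part of a regular system of parameters of $R[u^{\pm1}]$ localized, so Lemma \ref{lemma:comm_alg_2} applies directly (after localizing at primes). This would show that $U$ is regular and, by smooth descent, that $\mathcal{C}$ is regular, with no gcd assumption needed. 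I would likely present this version, since it avoids the regularity-of-$\mathcal{D}_i$ subtlety.

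\textbf{Main obstacle.} In the alternative route, the delicate point is checking that $u_if_i$ remain part of a regular system of parameters at every prime of the localized Laurent ring, not only at the closed point. In the root-stack route, it is choosing the correct root divisor $\mathcal{D}'_i$ in place of the pullback $\mathcal{D}_i$.
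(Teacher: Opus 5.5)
Your root-stack argument is the paper's proof, which just says ``similar to Corollary \ref{cor:regularity}''. You are right to insist on the tautological root divisor $Z(t_i)$. The paper's Remark \ref{remark:reduce_to_root_stack} speaks of the pull-back of $D$, which is $n_i Z(t_i)$: it is non-reduced, and the identification $\mathcal{C}_{X,D,a}\cong\mathcal{C}_{\mathcal{X}_n,\mathcal{D},b}$ only holds with the reduced root divisor. The $\mu_n$-gerbe invoked in the proof of Corollary \ref{cor:regularity} is in fact the reduced divisor.

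The direct route you say you would present is a genuinely different argument, and it also works.
\begin{itemize}
\item The point you flag as delicate is harmless. $R\to R[u_1^{\pm1},\ldots,u_n^{\pm1}]$ is smooth, so at any prime $\mathfrak{P}$ the $f_i$ lying in $\mathfrak{P}$ stay part of a regular system of parameters of the localization. The units $u_i$ do not change the ideals, since $(u_if_i)=(f_i)$.
\item You do need one addition. Lemma \ref{lemma:comm_alg_2} requires all the $u_if_i$ to lie in the maximal ideal. At primes where some $u_if_i$ is a unit, discard that factor: over $X\setminus D_i$ the map $U_{X,D_i,a_i}\to X$ is a $\mathbb{G}_m^{r_i}$-torsor, hence smooth. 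This is the analogue of ``if $D_i$ is empty, then we may discard it'' in the paper's proposition.
\end{itemize}

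Compared with the paper, your direct route computes the torsor $U$ itself locally over $X$. It needs neither root stacks (whose regularity the paper imports from the literature) nor the splitting $\sigma_a$. It also shows that the gcd hypothesis in the preceding proposition is superfluous, since Lemma \ref{lemma:comm_alg_2} has no such hypothesis. Regularity of $\mathcal{C}$ then follows because $U\to\mathcal{C}$ is a smooth atlas, which is the reverse of the paper's order. The paper's route instead reuses the coprime case already proved. It also makes structurally visible that the gcd part of each $a_i$ contributes exactly a root stack.
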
 
 \begin{proof} Similar to  Corollary \ref{cor:regularity}.  
 \end{proof}
 
 \begin{corollary}  \label{cor:campana-stack-of-gen-C-pair-is-regular}
Let $X$ be a regular algebraic stack over $\mathbb{Q}$, and let $(X,(D_i,M_i)_{i=1}^n)$  be  a generalized C-pair  with each $M_i$ a semigroup such that the divisor $\sum_i D_i$ has simple normal crossings.  Then $\mathcal{C}_{X,(D_i,M_i)_i}$ is regular and the associated Campana space $U_{X,(D_i,M_i)_i}$ is regular. 
\end{corollary}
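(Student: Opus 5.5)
The statement reduces directly to Corollary \ref{cor:campana_stack_regular_snc}. By definition, for each $i$ with $M_i$ non-empty we have $\mathcal{C}_{X,D_i,M_i}=\mathcal{C}_{X,D_i,a_{M_i}}$, where $a_{M_i}$ is the (finite, ordered) tuple of atoms of $M_i$. Moreover, $\mathcal{C}_{X,(D_i,M_i)_i}$ is the fibre product over $X$ of these stacks. The first step is therefore to dispose of the indices with $M_i=\emptyset$. For such $i$, the factor $\mathcal{C}_{X,D_i,M_i}=X\setminus\supp(D_i)$ is an open substack of $X$. Taking a fibre product over $X$ with it amounts to restricting to the open substack $X^\circ:=X\setminus\bigcup_{M_i=\emptyset}\supp(D_i)$. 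The same holds for the Campana space $U_{X,D_i,M_i}=X\setminus \supp(D_i)$.

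Concretely, reorder the indices so that $M_1,\ldots,M_k$ are non-empty and $M_{k+1},\ldots,M_n$ are empty. Then
\[
\mathcal{C}_{X,(D_i,M_i)_i}\cong \mathcal{C}_{X^\circ,D_1|_{X^\circ},a_{M_1}}\times_{X^\circ}\cdots\times_{X^\circ}\mathcal{C}_{X^\circ,D_k|_{X^\circ},a_{M_k}}.
\]
This uses that the formation of $\mathcal{C}_{X,D,a}$ is compatible with base change along the open immersion $X^\circ\to X$, which is immediate from its definition as a fibre product with $f_D$. The analogous identification holds for $U_{X,(D_i,M_i)_i}$.

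Next I check that the hypotheses of Corollary \ref{cor:campana_stack_regular_snc} hold on $X^\circ$. First, $X^\circ$ is a regular algebraic stack over $\mathbb{Q}$, being open in $X$. Second, the restrictions $D_i|_{X^\circ}$ for $i\le k$ are regular divisors. Third, their sum has simple normal crossings. The last two properties are local and are inherited by opens from the snc assumption on $\sum_i D_i$. The only point needing a word is that a component of an snc divisor is regular, so each $D_i$ is indeed a regular divisor as Corollary \ref{cor:campana_stack_regular_snc} requires. Applying that corollary then gives regularity of both the stack and the Campana space. The degenerate case $k=0$ is trivial: the stack and the space are just $X^\circ$, which is regular.

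I expect no real obstacle here. The step that deserves the most care is the bookkeeping for empty semigroups, since Corollary \ref{cor:campana_stack_regular_snc} only treats tuples of positive integers, so those factors must be absorbed by restricting to $X^\circ$ as above. One should also note that the atom tuple $a_{M_i}$ may have $\gcd>1$ (for instance when $M_i=m\mathbb{Z}_{\geq 1}$). This is already handled inside Corollary \ref{cor:campana_stack_regular_snc}, via the root-stack reduction of Corollary \ref{cor:regularity}.
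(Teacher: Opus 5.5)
Your proposal is correct and follows the paper's route: the paper also deduces the statement directly from Corollary \ref{cor:campana_stack_regular_snc}. The paper phrases this as $\mathcal{C}_{X,(D_i,M_i)_i}$ being an open substack of the regular stack $\mathcal{C}_{X,D_1,a_1}\times_X\cdots\times_X\mathcal{C}_{X,D_n,a_n}$, which also covers non-canonical decompositions $M_i=M_{i,1}\cup\cdots\cup M_{i,s_i}$, while your explicit restriction to $X^\circ$ handles the empty semigroups that the paper passes over silently.
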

\begin{proof}   This follows from  Corollary \ref{cor:campana_stack_regular_snc}.  Recall that $M_i$ is equipped with a decomposition as a union of semigroups, say $M_i = M_{i,1}\cup \ldots \cup M_{i,s_i}$. Let $a_{i,j}$ denote the   $r_{i,j}$-tuple of atoms of $M_{i,j}$.  Let $a_i = (a_{i,1}, \ldots, a_{i,s_i})$ and recall  that $\mathcal{C}_{X,D_i,M_i}$ is an open substack of $\mathcal{C}_{X,D_i,a_i}$. In particular, $\mathcal{C}_{X,(D_i,M_i)_i}$ is an open substack of the regular stack $\mathcal{C}_{X,D_1,a_1}\times_X \ldots \times_X \mathcal{C}_{X,D_n,a_n}$, and is thus regular. 
\end{proof}

\section{Properties of the Campana space and applications}\label{section:generalized_C-pair_theorems}

In this section, we study the geometry of the Campana space associated to a generalized C-pair. 
 
  \subsection{Geometry of the Campana space}
We start by proving the following more precise result on the Campana space associated to a generalized C-pair $(X,(D_i,M_i)_i)$, from which Theorem \ref{thm:existence_of_U_intro} follows.

\begin{theorem} \label{thm:U_to_X_and_orbifold_base}
Let $(X,(D_i,M_i)_{i=1}^n)$ be a generalized C-pair, where $X$ is an integral regular noetherian scheme over $\mathbb{Q}$ and $\sum_{i=1}^n D_i$ has simple normal crossings. 
Let $U = U_{X,(D_i,M_i)_{i=1}^n}$ be the Campana space of $(X,(D_i,M_i)_i)$, and let $(X,\Delta)$ be the associated C-pair (Definition \ref{def:C-pair-ass-to-gen}).

Then $U$ is a regular noetherian scheme, and the morphism $U \to X$ is affine, flat, and of finite type, with image $X \setminus \floor{\Delta}$. 
Moreover, it satisfies the following properties.

\begin{enumerate}
\item The morphism $U \to X$ induces a morphism of generalized C-pairs $U \to (X,(D_i,M_i)_i)$, and hence a morphism of C-pairs $U \to (X,\Delta)$.
\item For every $x \in X \setminus \supp(\Delta)$, the fibre $U_x$ is a smooth affine torus over the residue field $k(x)$.
\item Assume that for every $1\leq i \leq n$, the subsemigroup of $\mathbb{Z}_{\geq 1}$ generated by $M_i$ has finite complement. Then the morphism $U \to X \setminus \floor{\Delta}$ has no divisible fibres.
\item The orbifold base divisor of $U\to X$ is $\Delta$.
\end{enumerate}   
\end{theorem}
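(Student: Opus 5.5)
The plan is to reduce everything to the local model already analysed in Section \ref{section:campana_stacks} and in Lemma \ref{lemma:comm_alg_2}. First, components with $M_i=\emptyset$ contribute the factor $X\setminus D_i$, so they only shrink $X$ to an open subscheme. Removing them, we may assume every $M_i$ is non-empty; the image statement then reduces to showing $U\to X\setminus\floor{\Delta}$ is surjective. By definition, $U$ is the fibre product over $X$ of the $U_{X,D_i,M_i}$. Each of these is an open subscheme of $U_{X,D_i,a_i}$, where $a_i$ is the concatenated tuple of atoms of the blocks $M_{i,j}$.

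Regularity of $U$ then follows from Corollary \ref{cor:campana-stack-of-gen-C-pair-is-regular}, or more directly from Corollary \ref{cor:campana_stack_regular_snc} together with openness. Affineness and finite type come from Remark \ref{remark:representability}, since fibre products and open subschemes of affine morphisms remain so. I must check that removing $Z$ preserves affineness. If it does not, I will still have quasi-affine and finite type, and I will note whether the statement needs this refinement. For flatness, I work locally on $X=\Spec R$ with $D_i=Z(f_i)$ and $f_1,\dots,f_n$ part of a regular system of parameters. As in the proof of the snc regularity proposition, $U$ is then an open subscheme of $\mathbb{G}_m^{n}\times\Spec R[x]/(P_1,\dots,P_n)$. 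Here the extra torus factor comes from $V\cong\mathbb{A}^r\times\mathbb{G}_m$ as in Proposition \ref{prop:U_maps_to_X}. This ring is flat over $R$ by the local criterion: $P_1,\dots,P_n$ form a regular sequence fibrewise, because each $P_i$ involves disjoint sets of $x$-variables and each fibre is a product of hypersurfaces $x^{a}=c$. Surjectivity onto $X\setminus\floor\Delta$ holds because $Z$ has codimension two in each block, so every fibre of $p_a$ over $0$ meets $\mathbb{A}^r\setminus Z$.

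For (1), I will apply Proposition \ref{prop:C_pair_vs_Campana_stack_gen} to the tautological map $U\to\mathcal C_{X,(D_i,M_i)}$. Then $U\to(X,\Delta)$ is a C-pair morphism, since every $a\in M_i$ is at least $\inf M_i$. For (2), over $X\setminus\supp\Delta$ each factor is a $\mathbb{G}_m^{r_i}$-torsor over $X$ by Lemma \ref{lemma:c-stack-finitetype}. So $U_x$ is a torsor under a split torus over $k(x)$, and is smooth, affine, and a torus after base change. Components with $m_i=1$ need care: there $\supp\Delta$ omits $D_i$, yet the fibre is still a possibly non-trivial hypersurface. I will treat this by noting that $1\in M_{i,j}$ makes $x^{a}=f$ solvable with $x_1=f/\prod\cdots$ on the relevant chart, so the fibre is still a torus torsor. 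I expect this point to require a short explicit computation.

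For (3) and (4), I compute fibres over the generic point $\eta_i$ of $D_i$, localising so that only $D_i$ meets. Exactly as in the discussion preceding Proposition \ref{prop:U_maps_to_X}, the fibre decomposes as $\sum_k a_{i,k}E_{i,k}$ with the $E_{i,k}$ irreducible and dominating $D_i$. These components are indexed by all atoms of all blocks $M_{i,j}$, and removing $Z$ kills none of them. The inf-multiplicity is therefore the minimum over atoms, which is $\inf M_i$; this gives (4), since the other codimension one points have smooth fibres. The gcd-multiplicity is the gcd of all atoms, which equals the gcd of the semigroup generated by $M_i$. That semigroup has finite complement by hypothesis, so the gcd is $1$, which gives (3). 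Here the expected obstacle is checking that components over $D_i$ are genuinely irreducible and dominant when other snc components pass through. I will handle this by working in $\mathcal{O}_{X,\eta_i}$, a DVR where the other $f_j$ are units.
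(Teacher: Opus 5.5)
Your outline follows the paper's proof: regularity from Corollary~\ref{cor:campana-stack-of-gen-C-pair-is-regular}, (1) from Proposition~\ref{prop:C_pair_vs_Campana_stack_gen}, and (3)--(4) by localizing at the generic points $\eta_i$. The one difference is flatness. You check it by a fibrewise regular-sequence (slicing) criterion, while the paper deduces it from flatness of $V\to\mathbb{A}^1$, fppf descent, and stability under open immersions and fibre products.

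The genuine problem is the two steps you left open. They cannot be completed, because the statement is false once some $M_i$ carries a decomposition with more than one block.

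\textbf{Affineness.} Your suspicion is correct. Take $X=\Spec\mathbb{Q}[t]$, $D=[0]$ and $M=2\mathbb{Z}_{\ge1}\cup3\mathbb{Z}_{\ge1}$, so $a=(2,3)$ and $Z=\{x=y=0\}$. Then $U_{X,D,a}=\Spec\mathbb{Q}[t,x,y,\lambda^{\pm1}]/(x^2y^3-\lambda t)\cong\mathbb{A}^2\times\mathbb{G}_m$ after eliminating $t$. Hence $U=(\mathbb{A}^2\setminus\{0\})\times\mathbb{G}_m$, which is not affine although $X$ is. Over $\mathbb{A}^1\setminus\{1\}$ the same computation gives $Y\times\mathbb{G}_m$, with $Y$ as in Theorem~\ref{thm:YmodGm_text}. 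In general you only get quasi-affine. Affineness does hold when every $M_i$ is a single block, since then $Z=\emptyset$ and Remark~\ref{remark:representability} applies. The paper's proof misses this: for $n=1$ it derives all geometric assertions from Proposition~\ref{prop:U_maps_to_X}, which concerns $U_{X,D,a}$, and affineness does not pass to the open subscheme.

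\textbf{Claim (2) when $\inf M_i=1$.} If $M_i$ has more than one block, no chart computation can produce a torus. For $M=\mathbb{Z}_{\ge1}\cup2\mathbb{Z}_{\ge1}$ on the same $X$ we have $\Delta=0$, but
$U_0=\bigl(\{x_1x_2^2=0\}\setminus\{x_1=x_2=0\}\bigr)\times\mathbb{G}_m$. This fibre is disconnected, and one of its components is non-reduced. With a single block, $M_i=\mathbb{Z}_{\ge1}$ and $a_i=(1)$. Locally $U_i=\Spec R[x,\lambda^{\pm1}]/(x-\lambda f_i)\cong\Spec R[\lambda^{\pm1}]$, which is simply a $\mathbb{G}_m$-torsor over $X$, so no chart is needed. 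The paper writes $U_i=X$ here, which omits this torsor but reaches the same conclusion.

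\textbf{Your local model.} Your description of $U$ is only right when the atoms have gcd $1$. In general $U$ is locally an open subscheme of $\Spec R[x,\lambda_1^{\pm1},\dots,\lambda_n^{\pm1}]/\bigl(p_{a_i}(x^{(i)})-\lambda_i f_i\bigr)_i$. The substitution $x_k\mapsto\lambda_i^{c_k}x_k$ with $\sum_k c_k a_{i,k}=1$ turns this into $\mathbb{G}_m^n\times\Spec R[x]/(P_1,\dots,P_n)$, but only when $\gcd(a_i)=1$. For $a=(2)$ it fails: $\{x^2=\lambda f\}$ has connected geometric generic fibre, whereas $\{x^2=f\}\times\mathbb{G}_m$ does not. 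Your flatness argument still goes through unchanged with the correct model. The relations involve disjoint sets of variables and are nonzero in every fibre, so they form a regular sequence fibrewise.

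Your proof is therefore complete for single-block decompositions, which include all C-pairs $M_i=\mathbb{Z}_{\ge m_i}$. For multi-block decompositions, the statement itself has to drop affineness, and also claim (2) whenever $1\in M_i$.
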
   

\begin{proof}
Let $U_i:=U_{X,D_i,M_i}$ for $1\leq i\leq n$. By definition,
\[
U=U_1\times_X \ldots \times_X U_n.
\]
By Corollary \ref{cor:campana-stack-of-gen-C-pair-is-regular}, $U$ is regular and noetherian.

We first prove $(i)$. For every $i$, the morphism $U_i\to X$ induces a morphism of generalized C-pairs
$U_i\to (X,(D_i,M_i))$ by Proposition \ref{prop:C_pair_vs_Campana_stack_gen}.  Composing with $U\to U_i$, we obtain a morphism of generalized C-pairs $U\to (X,(D_i,M_i))$ for every $i$. Hence we obtain a morphism of generalized C-pairs $U\to (X,(D_i,M_i)_i)$, and thus also a morphism of C-pairs $U\to (X,\Delta)$.

We now prove the remaining statements by induction on $n$. 
For $n=1$, write $D=D_1$ and $M=M_1$. Choose a decomposition
$M=N_1\cup \ldots \cup N_s$ into semigroups, and let $a$ be the tuple of atoms.
Then $U_{X,D,M}$ is an open subscheme of $U_{X,D,a}$ obtained by removing only   codimension two strata. Hence the irreducible components of the fibres and their multiplicities are unchanged. It follows that the geometric assertions, including the description of the image and the fibre multiplicities, follow from Proposition \ref{prop:U_maps_to_X}. 
This proves $(ii)$--$(iv)$ in the case $n=1$.

Assume $n>1$.
Write $U'=U_{X,(D_i,M_i)_{i=1}^{n-1}}$ and $U_n=U_{X,D_n,M_n}$. Then $U=U'\times_X U_n$. By induction, $U'\to X$ is affine, flat, and of finite type, with image $X\setminus \floor{\Delta'}$, where $\Delta'=\sum_{i=1}^{n-1}(1-\frac{1}{\inf(M_i)})D_i$. By the case $n=1$,  the morphism $U_n\to X$ is affine, flat, and of finite type, with image $X\setminus \floor{\Delta_n}$, where $\Delta_n=(1-\frac{1}{\inf(M_n)})D_n$. Hence $U\to X$ is affine, flat, and of finite type, with image $(X\setminus \floor{\Delta'})\cap (X\setminus \floor{\Delta_n})=X\setminus \floor{\Delta}$.

To prove $(ii)$, let $x\in X\setminus \supp(\Delta)$. Then for every $i$, either $x\notin D_i$ or $\inf(M_i)=1$. In the first case, $(U_i)_x$ is a smooth affine torus over $k(x)$ by the case $n=1$. In the second case, $U_i=X$, so $(U_i)_x=\Spec k(x)=\mathbb G_m^0$. Hence $U_x $ is a smooth affine torus over $k(x)$ by \cite[Tag~0BDA]{stacks-project}.

To prove $(iii)$ and $(iv)$, let $\eta_i$ be the generic point of $D_i$. Since $\sum_i D_i$ has simple normal crossings, we have $\eta_i\notin D_j$ for $j\neq i$. Thus, over a neighbourhood of $\eta_i$, the morphisms $U_j\to X$ for $j\neq i$ are tori. It follows that the inf-multiplicity and gcd-multiplicity of $U\to X$ over $D_i$ equals that of $U_i\to X$, respectively. By the case $n=1$, this equals the infimum (resp. gcd) of the atoms of the semigroup generated by $M_i$.  This proves $(iv)$.   For $(iii)$,   our assumption implies that this gcd is $1$.  
\end{proof}

    Theorem \ref{thm:U_to_X_and_orbifold_base} allows one to transfer geometric properties of the associated C-pair $(X,\Delta)$ to the Campana space $U$. In particular, one obtains the following consequence for weak specialness  and specialness; note that it implies
    Corollary \ref{cor:ws_intro}.
    
    \begin{corollary}\label{cor:ws_text}
    Let $(X,(D_i,M_i)_{i=1}^n)$ be a generalized C-pair with $X$ a smooth variety over a field $k$ of characteristic zero, and let $(X,\Delta)$ be the associated C-pair.   Assume that $\sum_{i=1}^n D_i$ has simple normal crossings.
    \begin{enumerate}
    
   \item Assume that $X\setminus \floor{\Delta}$ is weakly special and that, for every $1\leq i \leq n$, the subsemigroup of $\mathbb{Z}_{\geq 1}$ generated by $M_i$ has finite complement.  Then the Campana space $U$ of $(X,(D_i,M_i)_{i=1}^n)$ is   a smooth weakly special variety over $k$. 
   \item If $(X,\Delta)$ is of general type and $\dim X>0$, then $U$ is not special.
   \end{enumerate}
    \end{corollary}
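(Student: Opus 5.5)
For part (i), I would apply Theorem \ref{thm:family_of_ws_vars} to the morphism $U\to S:=X\setminus\floor{\Delta}$. Theorem \ref{thm:U_to_X_and_orbifold_base} gives that $U$ is regular, and $U\to X$ is affine, flat and of finite type with image $S$. Since $k$ has characteristic zero and $U$ is of finite type over $k$, $U$ is smooth over $k$. Quasi-projectivity then follows from Remark \ref{remark:representability} after replacing $X$ by a quasi-projective model if necessary; the weakly special notion only needs a smooth variety. I also need $U$ geometrically integral. Each $U_{X,D_i,a_i}\to X$ has geometrically connected fibres by the proof of Proposition \ref{prop:U_maps_to_X}, and the open parts removed in the generalized case have codimension two. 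So the generic fibre of $U\to S$ is a torus $\mathbb{G}_m^{r}$ over the function field, in particular geometrically connected, and $U$ is irreducible over $\overline{k}$.

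Next I check the three hypotheses of Theorem \ref{thm:family_of_ws_vars}. Hypothesis (1) is the assumption that $S$ is weakly special. For (2), Theorem \ref{thm:U_to_X_and_orbifold_base}(ii) says that for $s\in S\setminus\supp(\Delta)$, a dense open subset, the fibre $U_s$ is a smooth affine torus over $k(s)$. Over $\overline{k}$ this is $\mathbb{G}_m^r$, which is weakly special (indeed special), so the required set of $\overline{k}$-points is dense. For (3), codimension one points of $S$ lie either off $\supp(\Delta)$, where fibres are smooth tori and hence not divisible, or are generic points of $D_i$ with finite $\inf(M_i)$. At the latter points Theorem \ref{thm:U_to_X_and_orbifold_base}(iii) gives non-divisibility, using the finite-complement hypothesis so that the gcd of the atoms is $1$. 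Theorem \ref{thm:family_of_ws_vars} then yields that $U$ is weakly special.

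For part (ii), the plan is to show that $U\to X$ is a fibration of general type in the sense of Definition \ref{definition:special}. By Theorem \ref{thm:U_to_X_and_orbifold_base}(iv) its orbifold base is $(X,\Delta)$, and $\Delta$ is snc, but bigness must be checked on every birational model. I would first compactify $X$ to a smooth proper $\overline{X}$ with $\overline{X}\setminus X$ snc together with $\supp\Delta$, and extend $\Delta$ by giving boundary components multiplicity $\infty$. Since $U$ does not dominate these boundary divisors, the orbifold base of $U\to\overline{X}$ is this extended $\overline{\Delta}$, and $K_{\overline X}+\overline\Delta$ is big because $(X,\Delta)$ is of general type. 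The main obstacle is passing to arbitrary birational models $Y'$ of $X$ and modifications $U'$. Here I would use Campana's birational invariance of the orbifold Kodaira dimension for fibrations whose base is smooth with snc orbifold divisor, i.e.\ that such an ``adapted'' orbifold base computes the general-type property, citing \cite{Ca04,Ca11}. Since $\dim X>0$, this exhibits a dominant map of general type onto a positive-dimensional variety, so $U$ is not special.
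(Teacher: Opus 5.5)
Your part (i) is the paper's argument: Theorem~\ref{thm:family_of_ws_vars} applied to $U\to X\setminus\floor{\Delta}$, with all hypotheses read off from Theorem~\ref{thm:U_to_X_and_orbifold_base}. The case split for hypothesis (3) is unnecessary, since part (iii) of that theorem covers every codimension one point.

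Part (ii) follows the paper's strategy as well, but you settle the step you call the main obstacle with a principle that is \emph{false as stated}. A base whose orbifold divisor is snc and of general type does not by itself make a fibration of general type. Here is a counterexample.
\begin{itemize}
\item Let $L_1,\dots,L_4\subset\mathbb{P}^2$ be general lines, $p=L_1\cap L_2$, and $\pi\colon Y'=\mathrm{Bl}_p\mathbb{P}^2\to\mathbb{P}^2$ with exceptional curve $E$.
\item Let $g\colon V\to Y'$ be the Campana space of $(Y',\sum_i\frac56\tilde L_i)$, and put $f=\pi\circ g$.
\item The components of $V$ over $E$ are $f$-exceptional, so $\Delta_f=\sum_i\frac56 L_i$. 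This is snc, and $K_{\mathbb{P}^2}+\Delta_f\equiv\frac13 H$ is big.
\item On the model $g$, however, $K_{Y'}+\Delta_g=\pi^*(\frac13H)-\frac23E$. This has degree $-\frac13$ on the nef class $\pi^*H-E$, so it is not big, and $f$ is not of general type.
\end{itemize}
What fails is that $f$ is not a morphism of C-pairs to $(\mathbb{P}^2,\Delta_f)$: the components of $g^{-1}(E)$ have multiplicity $1$ in $f^*L_1$. Campana's results computing $\kappa(f)$ on a fixed model likewise need a hypothesis controlling $f$-exceptional divisors (neatness), and your formulation omits it.

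The missing input is Theorem~\ref{thm:U_to_X_and_orbifold_base}(i). It says $U\to(X,\Delta)$, hence $U\to(\overline X,\overline\Delta)$, is a morphism of C-pairs, and this property survives composition with any modification $U'\to U$. With it the comparison is direct.
\begin{itemize}
\item Take $\pi\colon Y'\to\overline X$ birational and $f'\colon U'\to Y'$.
\item For every prime $E\subset Y'$ and every $i$ with $c_i=\mathrm{ord}_E(\pi^*D_i)>0$, each component of $U'$ dominating $E$ has multiplicity at least $m_i/c_i$ in $f'^*E$. If $m_i=\infty$, there is no such component.
\item Combine this with $\mathrm{ord}_E(K_{Y'/\overline X})+1\ge\sum_i c_i$, which holds because $\supp\overline\Delta$ is snc. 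The coefficient of $E$ in $K_{Y'}+\Delta_{f'}-\pi^*(K_{\overline X}+\overline\Delta)$ is then at least $\sum_i c_i/m_i-1/m_E\ge 0$.
\item Hence $K_{Y'}+\Delta_{f'}\ge\pi^*(K_{\overline X}+\overline\Delta)$ is big.
\item Models $Y'$ that do not dominate $\overline X$ reduce to dominating ones $\rho\colon Y''\to Y'$, since $\rho_*(K_{Y''}+\Delta_{f''})\le K_{Y'}+\Delta_{f'}$ and pushforwards of big divisors are big.
\end{itemize}
This is exactly what the paper imports from \cite[Proposition~2.9 and Corollary~2.10]{BJRSeveri}.
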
 
    \begin{proof} Note that $U$ is a smooth variety over $k$ by Theorem \ref{thm:U_to_X_and_orbifold_base}. Moreover,   Theorem \ref{thm:U_to_X_and_orbifold_base} also says that  the morphism $U\to X$ has image $X\setminus \floor{\Delta}$ and no divisible fibres over $X\setminus \floor{\Delta}$. Therefore, the first statement follows from Theorem \ref{thm:family_of_ws_vars}.  
    
    To prove the second statement, note that    $U\to X$ induces a dominant morphism with orbifold base divisor $\Delta$. It follows   from  \cite[Proposition~2.9 and Corollary~2.10]{BJRSeveri} that  $U\to X$ is a map of general type. Thus, $U$ is not special. 
        \end{proof}

\begin{remark} 
Following  the terminology of  \cite[Subsection~1.3.1]{Ca04}  and \cite[Definition~1.2]{BJRSeveri},  
with the notation as in Corollary \ref{cor:ws_text},   it follows from \cite[Remark~2.11]{BJRSeveri} that  the ``Kodaira dimension'' of $ U\to X$ equals the Kodaira dimension of   $(X,\Delta)$.  
\end{remark}

 \subsection{Integral points of generalized C-pairs}
Integral points on the Campana space of  a generalized C-pair and those of the generalized C-pair itself are closely related; see Theorem \ref{thm:U_is_ar_spec_iff_XDelta_is_ar_spec} for a precise statement. We begin by formulating the appropriate notion of potential density of integral points for generalized C-pairs.

 Let $(X,(D_i,M_i)_i)$ be a generalized C-pair, where $X$ is a smooth variety over a number field $K$.   
We say that $(X,(D_i,M_i)_i)$ is \emph{arithmetically special} (or \emph{satisfies potential density of integral points}) if there is a finite field extension $L/K$,  a finite set $S$ of finite places of $L$, and a     regular model  $\mathcal{X}$ for $X_L$ of finite type over $ \mathcal{O}_{L,S}$ such  that,  
for $\mathcal{D}_i$ the closure of $D_i$ in $\mathcal{X}$, the image of the set $(\mathcal{X},  (\mathcal{D}_i,M_i)_i)(\mathcal{O}_{L,S})$  of morphisms of generalized C-pairs   
\[\Spec \mathcal{O}_{L,S}\to (\mathcal{X}, (\mathcal{D}_i,M_i)_i)
\]  is dense in $X_L$.  Note that, if $(X,(D_i,M_i)_i)$ is arithmetically special, then the associated C-pair $(X,\Delta)$ is also arithmetically special.

    Campana's observation that, given a morphism of C-pairs $U\to (X,\Delta)$, the integral points on $U$ induce integral points on $(X,\Delta)$ extends naturally to generalized C-pairs. We first show that the property of being a morphism of generalized C-pairs spreads out over a suitable open subset of the base.  
    
\begin{lemma}\label{lemma:spread_out_generalized_cpair}
Let $B$ be an integral  regular noetherian scheme with function field $K$. 
Let $(X,(D_i,M_i)_i)$ be a generalized C-pair over $K$, where $X$ is smooth over $K$.  
Let $U$ be a smooth variety over $K$ and let
\[
f\colon U\to (X,(D_i,M_i)_i)
\]
be a morphism of generalized C-pairs.      
Then there exists a dense open subset $B^{\circ} \subset B$, a smooth model $\mathcal{X}$ of $X$ over $B^{\circ}$, and a smooth model $\mathcal{U}$ of $U$ over $B^{\circ}$ such that $f$ extends to a morphism of generalized C-pairs
\[
\mathcal{F}\colon \mathcal{U}\to (\mathcal{X}, (\mathcal{D}_i,M_i)_i),
\]
where $\mathcal{D}_i$ denotes the closure of $D_i$ in $\mathcal{X}$.
\end{lemma}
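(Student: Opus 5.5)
The natural route is to spread out the \emph{stacky} description of $f$ instead of the multiplicity conditions directly. By Proposition~\ref{prop:C_pair_vs_Campana_stack_gen}, $f$ is a morphism of generalized C-pairs if and only if it factors through the Campana stack $\mathcal{C}_{X,(D_i,M_i)_i}$. We may discard the indices $i$ with $f(U)\subset D_i$ after shrinking $X$ away from nothing relevant. Those indices only impose $f(U)\subset X\setminus\floor{\Delta}$, and this inclusion spreads out trivially, since $f(U)\cap \floor{\Delta}=\emptyset$ is a closed condition that persists after shrinking $B$. For the remaining indices, a factorization amounts to finite data on $U$: a decomposition $f^\ast D_i=\sum_j E_{i,j}$ with $E_{i,j}=\sum_k a_{i,j,k}E_{i,j,k}$, where the $E_{i,j,k}$ are effective Cartier divisors, the $a_{i,j,k}$ lie in $M_{i,j}$, and the supports of the $E_{i,j}$ are pairwise disjoint for fixed $i$.

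First I choose models. Since $X$ and $U$ are smooth of finite type over $K$, standard spreading out gives a dense open $B^\circ\subset B$, smooth finite type models $\mathcal{X}$ and $\mathcal{U}$ over $B^\circ$, and a morphism $\mathcal{F}\colon\mathcal{U}\to\mathcal{X}$ extending $f$. Since $B$ is regular and $\mathcal{X},\mathcal{U}$ are smooth over $B^\circ$, both are regular, so Weil and Cartier divisors agree on them. I take $\mathcal{D}_i$ to be the closure of $D_i$ and $\mathcal{E}_{i,j,k}$ to be the closure of $E_{i,j,k}$ in $\mathcal{U}$. After further shrinking $B^\circ$ I arrange the following.
\begin{enumerate}
\item No component of $\mathcal{F}^{\ast}\mathcal{D}_i$ is vertical, i.e.\ supported over a proper closed subset of $B^\circ$. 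Equality of divisors can fail only along vertical components, which lie over finitely many codimension-one points of $B^\circ$ because $\mathcal{U}$ is of finite type. Removing their images makes $\mathcal{F}^{\ast}\mathcal{D}_i-\sum_{j,k}a_{i,j,k}\mathcal{E}_{i,j,k}$ a divisor that is trivial on the generic fibre with no vertical part, hence zero.
\item The closures $\mathcal{E}_{i,j}$ have pairwise disjoint supports for fixed $i$. Their intersection is a closed subset of $\mathcal{U}$ with empty generic fibre, so its image in $B^\circ$ is constructible and avoids the generic point. It therefore misses a dense open, and I shrink $B^\circ$ to that open.
\item $\mathcal{F}$ avoids the closure of $\floor{\Delta}$, which follows by the same constructibility argument.
\end{enumerate}

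These conditions together give exactly a morphism of generalized C-pairs $\mathcal{U}\to(\mathcal{X},(\mathcal{D}_i,M_i)_i)$. Equivalently, they produce a factorization of $\mathcal{F}$ through $\mathcal{C}_{\mathcal{X},(\mathcal{D}_i,M_i)_i}$, by Proposition~\ref{prop:C_pair_vs_Campana_stack_gen_0}.

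The main obstacle is condition~(1): making sure that $\mathcal{F}^\ast\mathcal{D}_i$ acquires no extra components over special fibres. If $\mathcal{F}(\mathcal{U})$ met $\mathcal{D}_i$ in a new vertical component, that component could carry a coefficient outside $M_i$. The fix is that such vertical components lie over finitely many points of $B$, which can be removed. This uses that $\mathcal{U}$ is noetherian, so $\mathcal{F}^{-1}(\mathcal{D}_i)$ has finitely many irreducible components.
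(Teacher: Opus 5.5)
Your argument is correct and is essentially the paper's proof. You spread out $\mathcal{F}\colon\mathcal{U}\to\mathcal{X}$, take closures $\mathcal{E}_{i,j,k}$, and shrink $B$ three times: to kill the vertical part of $\mathcal{F}^\ast\mathcal{D}_i$, to keep the supports of the $\mathcal{E}_{i,j}$ disjoint, and to avoid the logarithmic part; the only difference is that the paper first reduces to a single index, and your opening appeal to the Campana stack is framing that the argument never actually uses.
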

\begin{proof}   
It suffices to treat the case $n=1$, so we write $D=D_1$ and $M=M_1$.  Recall that, by definition of a generalized C-pair, the subset  $M$  of $\mathbb{Z}_{\geq 1}$ comes equipped with  a decomposition     $ 
M = N_1 \cup \ldots \cup N_r$ where each $N_i$ is a semigroup.  
Shrinking $B$ if necessary, we may choose a morphism $F\colon \mathcal{U}\to \mathcal{X}$ extending $f\colon U\to X$, where $\mathcal{U}$ and $\mathcal{X}$ are smooth finite type $B$-schemes.  

If $M$ is empty, then the image of $f$ is contained in $X\setminus \supp(D)$, and after shrinking $B$ we may arrange that the extension $F\colon \mathcal{U}\to \mathcal{X}$ factors over $\mathcal{X}\setminus \supp(\mathcal{D})$, as required.

Assume now that $M$ is nonempty. If $f$ factors over $\supp(D)$, then $F$ factors over $\supp(\mathcal{D})$, and the conclusion clearly holds.  Thus, we may assume that   the image of $f$ is not contained in $\supp(D)$.
After replacing $B$ by a dense open subset, we may choose smooth models $\mathcal{U}$ and $\mathcal{X}$ over $B$ such that $f$ extends to a morphism
$
F\colon \mathcal{U}\to \mathcal{X}.
$ Note that its image is not contained in $\supp(\mathcal{D})$, where $\mathcal{D}$ is the closure of $D$ in $\mathcal{X}$.
Since $f$ is a morphism of generalized C-pairs, we may write
\[
f^*D = \sum_{j=1}^{r}\ \sum_{k=1}^{\ell_j} a_{jk} E_{jk},
\]
where each $\ell_j$ is a nonnegative integer, the $E_{jk}$ are effective Cartier divisors on $U$, the coefficients satisfy $a_{jk}\in N_j$, and the divisors
\[
E_j := \sum_{k=1}^{\ell_j} a_{jk} E_{jk}
\]
have pairwise disjoint support for $j=1,\ldots,r$.
Let $\mathcal{E}_{jk}$ denote the schematic closure of $E_{jk}$ in $\mathcal{U}$.
The Cartier divisor $F^*\mathcal{D}$ on $\mathcal{U}$ agrees with $f^*D$ on the generic fibre over $B$. Hence we can write
\[
F^*\mathcal{D} = \sum_{j=1}^{r}\ \sum_{k=1}^{\ell_j} a_{jk}\, \mathcal{E}_{jk} + V,
\]
where $V$ is an effective divisor supported in fibres over a closed subset of $B$. Shrinking $B$ further, we may assume that $V=0$, and thus obtain an equality of Cartier divisors
\[
F^*\mathcal{D} = \sum_{j=1}^{r}\ \sum_{k=1}^{\ell_j} a_{jk}\, \mathcal{E}_{jk}.
\]
Also,  shrinking $B$ further again,  the divisors
\[
\mathcal{E}_j := \sum_{k=1}^{\ell_j} a_{jk}\, \mathcal{E}_{jk}
\]
have pairwise disjoint support (since disjointness of supports holds on the generic fibre). 
Thus $F$ is a morphism of generalized C-pairs
$
\mathcal{U}\to (\mathcal{X},(\mathcal{D},M))
$, as required.
\end{proof}

\begin{proposition}\label{prop:campana_integral_points}
Let $ (X,(D_i,M_i)_i)$ be a   generalized C-pair over a number field $K$, where $X$ is a smooth variety over $K$. 
Let $U$ be a smooth variety and let $U\to (X,(D_i,M_i)_i)$ be a morphism of generalized C-pairs.  
Then there is a finite set of finite places $S$ of $K$, a smooth model $\mathcal{U}$ for $U$ over $\mathcal{O}_{K,S}$, a smooth model $\mathcal{X}$ for $X$ over $\mathcal{O}_{K,S}$ such that, 
for every  number field $L/K$ and finite set $T$ of finite places of $L$ containing all the places of $L$ lying over $S$,  the image of $\mathcal{U}(\mathcal{O}_{L,T})\to \mathcal{X}(\mathcal{O}_{L,T})$ is contained in $(\mathcal{X},(\mathcal{D}_i,M_i)_i)(\mathcal{O}_{L,T})$, where  $\mathcal{D}_i$ is the closure of  $D_i$ in $\mathcal{X}$.
\end{proposition}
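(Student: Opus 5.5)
The plan is to spread out the morphism of generalized C-pairs using Lemma \ref{lemma:spread_out_generalized_cpair} with $B=\Spec \mathcal{O}_K$, and then pull back along integral points. Concretely, Lemma \ref{lemma:spread_out_generalized_cpair} gives a dense open $B^\circ\subset \Spec\mathcal{O}_K$, smooth models $\mathcal{U},\mathcal{X}$ over $B^\circ$, and a morphism of generalized C-pairs $\mathcal{F}\colon \mathcal{U}\to(\mathcal{X},(\mathcal{D}_i,M_i)_i)$ extending $f$. Since the complement of $B^\circ$ is a finite set of closed points, we get $B^\circ=\Spec\mathcal{O}_{K,S}$ for a finite set $S$ of finite places. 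This fixes $S$, $\mathcal{U}$ and $\mathcal{X}$.

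Now let $L/K$ and $T$ be as in the statement, so $\Spec\mathcal{O}_{L,T}\to\Spec\mathcal{O}_K$ factors through $\Spec \mathcal{O}_{K,S}$. Take $u\in\mathcal{U}(\mathcal{O}_{L,T})$ and set $x=\mathcal{F}\circ u$. We must show $x$ is a morphism of generalized C-pairs. First, $\mathcal{F}$ factors through $\mathcal{X}\setminus\floor{\Delta}$, so $x$ does too. Next fix $i$ with $x(\Spec\mathcal{O}_{L,T})\not\subset\mathcal{D}_i$.

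\emph{Case 1: the image of $\mathcal{F}$ lies in $\mathcal{D}_i$.} Then $x$ lies in $\mathcal{D}_i$ as well, and there is nothing to check for this $i$.

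\emph{Case 2: the image of $\mathcal{F}$ does not lie in $\mathcal{D}_i$.} Then we have a decomposition
\[
\mathcal{F}^*\mathcal{D}_i=\sum_j \mathcal{E}_{ij}, \qquad \mathcal{E}_{ij}=\sum_k a_{ijk}\mathcal{E}_{ijk},
\]
with the $\mathcal{E}_{ijk}$ effective Cartier, $a_{ijk}\in M_{i,j}$, and the $\mathcal{E}_{ij}$ pairwise disjoint. Pulling back along $u$ gives
\[
x^*\mathcal{D}_i=\sum_j u^*\mathcal{E}_{ij}, \qquad u^*\mathcal{E}_{ij}=\sum_k a_{ijk}\,u^*\mathcal{E}_{ijk}.
\]
Disjointness of supports is preserved, since $\supp u^*\mathcal{E}_{ij}=u^{-1}(\supp\mathcal{E}_{ij})$.

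The main subtlety is that $u^*\mathcal{E}_{ijk}$ must again be an effective Cartier divisor, which requires $u$ not to land in $\supp\mathcal{E}_{ijk}$. This is guaranteed because $\mathcal{E}_{ijk}\le \mathcal{F}^*\mathcal{D}_i$ and $x$ does not land in $\mathcal{D}_i$. Hence the local equations of the $\mathcal{E}_{ijk}$ pull back to nonzero elements of the domain $\mathcal{O}_{L,T}$, and so define effective Cartier divisors.

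Alternatively, one can argue stack-theoretically. By Proposition \ref{prop:C_pair_vs_Campana_stack_gen}, $\mathcal{F}$ factors through $\mathcal{C}_{\mathcal{X},(\mathcal{D}_i,M_i)_i}$, provided we first treat the components with $\mathcal{F}(\mathcal{U})\subset\mathcal{D}_i$ separately as in Case 1. Composing with $u$ then gives a factorization of $x$ through this stack, and the same proposition applied to $x$ concludes.
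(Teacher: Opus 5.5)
Your proposal is correct and follows the same route as the paper. You spread out via Lemma~\ref{lemma:spread_out_generalized_cpair} to fix $S$, $\mathcal{U}$, $\mathcal{X}$, then show that composing any $\mathcal{O}_{L,T}$-point of $\mathcal{U}$ with $\mathcal{F}$ gives a morphism of generalized C-pairs; your Case~2 check (effectivity of $u^*\mathcal{E}_{ijk}$ and preservation of disjoint supports) fills in the step the paper only asserts, and your Case~1 is in fact vacuous, since you had already fixed $i$ with $x$ not landing in $\mathcal{D}_i$.
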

\begin{proof}
By Lemma \ref{lemma:spread_out_generalized_cpair}, we may choose $S$,  $\mathcal{U}$ and $\mathcal{X}$ such that $U\to X$ extends to a  morphism $\mathcal{U}\to(\mathcal{X}, (\mathcal{D}_i,M_i)_i)$ of generalized C-pairs over $\mathcal{O}_{K,S}$.  
Now, let $\Spec \mathcal{O}_{L,T}\to \mathcal{U}$ be an $\mathcal{O}_{L,T}$-point of $\mathcal{U}$. 
Since $\mathcal{U}\to (\mathcal{X}, (\mathcal{D}_i,M_i)_{i=1}^n)$ is a morphism of generalized C-pairs, the composition \[\Spec \mathcal{O}_{L,T}\to \mathcal{U}\to \mathcal{X}\] is also a morphism of generalized C-pairs \[\Spec \mathcal{O}_{L,T}\to (\mathcal{X},(\mathcal{D}_i,M_i)_{i=1}^n).\]  This completes the proof of    the proposition.
\end{proof}

The previous proposition implies in particular that potential density of integral points on a smooth variety mapping dominantly to a generalized C-pair induces potential density on the generalized C-pair.

\begin{corollary}\label{cor:potential_density_of_U_to_gen_c-pair}
Let $ (X,(D_i,M_i)_i)$ be a generalized C-pair over a number field $K$, where $X$ is a smooth variety over $K$. 
Let $U$ be a smooth variety and let $U\to (X,(D_i,M_i)_i)$ be a dominant morphism of generalized C-pairs.   If $U$ satisfies potential density of integral points, then $(X,(D_i,M_i)_i)$ satisfies potential density of integral points. 
\end{corollary}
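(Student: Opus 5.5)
The argument is a direct application of Proposition~\ref{prop:campana_integral_points}, followed by a comparison of models.

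Assume $U$ satisfies potential density of integral points. Then there are a finite extension $L_0/K$, a finite set $T_0$ of finite places of $L_0$, and a finite type model $\mathcal{U}_0$ of $U_{L_0}$ over $\mathcal{O}_{L_0,T_0}$ such that $\mathcal{U}_0(\mathcal{O}_{L_0,T_0})$ is dense in $U_{L_0}$.

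Proposition~\ref{prop:campana_integral_points} gives a finite set $S$ of finite places of $K$ and smooth models $\mathcal{U}$ of $U$ and $\mathcal{X}$ of $X$ over $\mathcal{O}_{K,S}$. They have the following property: for every number field $L/K$ and every finite set $T$ of places of $L$ containing the places above $S$, the map $\mathcal{U}(\mathcal{O}_{L,T})\to\mathcal{X}(\mathcal{O}_{L,T})$ lands in $(\mathcal{X},(\mathcal{D}_i,M_i)_i)(\mathcal{O}_{L,T})$.

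Next we compare the two models of $U$. Any two finite type models of $U_{L_0}$ become isomorphic over $\mathcal{O}_{L_0,T}$ once $T$ is a large enough finite set. We therefore enlarge $T_0$ to a finite set $T$ containing $T_0$ and all places of $L_0$ above $S$, chosen so that $\mathcal{U}_0\otimes\mathcal{O}_{L_0,T}\cong\mathcal{U}\otimes\mathcal{O}_{L_0,T}$ compatibly with the identification on generic fibres. Enlarging $T$ only enlarges the set of integral points, since $\mathcal{O}_{L_0,T_0}\subset\mathcal{O}_{L_0,T}$. Hence $\mathcal{U}(\mathcal{O}_{L_0,T})\supseteq \mathcal{U}_0(\mathcal{O}_{L_0,T_0})$ is dense in $U_{L_0}$. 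This is the only bookkeeping step; it requires the standard spreading-out fact for models and nothing from the paper.

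Now apply the dominant morphism $f\colon U\to X$. The image under a dominant morphism of a dense subset of $U_{L_0}$ is dense in $X_{L_0}$: its closure contains $f(U_{L_0})$, which is dense. By the choice of $S$, these image points are $\mathcal{O}_{L_0,T}$-points of the generalized C-pair $(\mathcal{X},(\mathcal{D}_i,M_i)_i)$. The model $\mathcal{X}$ is smooth, hence regular, over $\mathcal{O}_{L_0,T}$ after base change. Also, $\mathcal{D}_i$ stays the closure of $D_i$ after base change, since flat base change preserves scheme-theoretic closure, and $\Spec\mathcal{O}_{L_0,T}\to\Spec\mathcal{O}_{K,S}$ is flat. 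So $(X,(D_i,M_i)_i)$ is arithmetically special, witnessed by $L_0$, $T$ and $\mathcal{X}_{\mathcal{O}_{L_0,T}}$.
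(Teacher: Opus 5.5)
Your proof is correct and is essentially the paper's argument: apply Proposition~\ref{prop:campana_integral_points}, then use that a dominant morphism sends a dense set of integral points to a dense subset of $X_{L_0}$. The paper compresses the comparison of the two models of $U$ and the base change of $\mathcal{X}$ and $\mathcal{D}_i$ to $\mathcal{O}_{L_0,T}$ into the phrase ``enlarging $K$ and $S$ if necessary''; you spell this bookkeeping out, and you do so correctly.
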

\begin{proof} Since $U\to X$ is dominant, its image in $X$ is dense.  By potential density of integral points on $U$, replacing $K$ by a finite field extension if necessary,  we have that $\mathcal{U}(\mathcal{O}_{K,S})$ is dense in $U$ for some finite set $S$ of finite places of $K$ and some finite type model $\mathcal{U}$ for $U$ over $\mathcal{O}_{K,S}$. Enlarging $K$ and $S$ if necessary, it follows from Proposition \ref{prop:campana_integral_points} that, for some finite type model $\mathcal{X}$ over $\mathcal{O}_{K,S}$ with $\mathcal{D}_i$ the closure of $D_i$ in $\mathcal{X}$, the $\mathcal{O}_{K,S}$-points on $(\mathcal{X},(\mathcal{D}_i,M_i)_i)$ are dense in $X$. 
\end{proof}
     
%

For the Campana space, the converse also holds. This yields the following equivalence, which is the main arithmetic result of this subsection. 

 \begin{theorem} \label{thm:U_is_ar_spec_iff_XDelta_is_ar_spec}  Let $(X,(D_i,M_i)_i)$ be a generalized C-pair,  where $X$ is a smooth  variety over $K$.   Let $U$ be the Campana space of $(X,(D_i,M_i)_i)$. 
 Then     $U$  satisfies potential density of integral points    if and only if $(X,(D_i,M_i)_i)$ satisfies potential density of integral points.  
 \end{theorem}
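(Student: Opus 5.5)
One direction is immediate. By Theorem~\ref{thm:U_to_X_and_orbifold_base}\,(i), the structure morphism $U\to X$ is a morphism of generalized C-pairs $U\to (X,(D_i,M_i)_i)$. It is dominant, since its image contains $X\setminus\supp(\Delta)$: over that locus the fibres are tori. So Corollary~\ref{cor:potential_density_of_U_to_gen_c-pair} shows that potential density of integral points on $U$ implies the same for $(X,(D_i,M_i)_i)$. (If the divisors are not snc, I would instead use that the fibres over $X\setminus\bigcup_i\supp D_i$ are tori by Lemma~\ref{lemma:c-stack-finitetype}; only dominance is needed.)

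For the converse, I spread the whole construction out over a ring of $S$-integers. The point is that the construction of $U$ commutes with base change, since it is defined by fibre products. Enlarging $K$ and $S$, I fix a regular model $\mathcal{X}$ over $\mathcal{O}_{K,S}$ in which $(\mathcal{X},(\mathcal{D}_i,M_i)_i)(\mathcal{O}_{K,S})$ is dense. I then take $\mathcal{U}:=U_{\mathcal{X},(\mathcal{D}_i,M_i)_i}$, which is affine and of finite type over $\mathcal{X}$ by Remark~\ref{remark:representability}, with generic fibre $U$. Enlarging $S$, I may assume $\mathcal{O}_{K,S}$ is a PID. Next I restrict to points $P\colon \Spec\mathcal{O}_{K,S}\to\mathcal{X}$ that are morphisms of generalized C-pairs with generic point outside $\bigcup_i\supp D_i$; these are still dense, because they are dense points minus a proper closed subset. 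By Proposition~\ref{prop:C_pair_vs_Campana_stack_gen}, each such $P$ lifts to a morphism $\Spec\mathcal{O}_{K,S}\to\mathcal{C}_{\mathcal{X},(\mathcal{D}_i,M_i)_i}$. The pullback of $\mathcal{U}\to\mathcal{C}$ along this lift is a $\mathbb{G}_m^{r}$-torsor over $\Spec\mathcal{O}_{K,S}$. Such torsors correspond to tuples of line bundles, and every line bundle over a PID is trivial, so the torsor is trivial. It therefore has a section, which gives an $\mathcal{O}_{K,S}$-point of $\mathcal{U}$ lying over $P$.

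It remains to get density in $U$, not just that the image of $\mathcal{U}(\mathcal{O}_{K,S})$ in $X$ is dense. The fibre of $\mathcal{U}(\mathcal{O}_{K,S})$ over a lift of $P$ is a torsor under $\mathbb{G}_m^r(\mathcal{O}_{K,S})=(\mathcal{O}_{K,S}^\times)^r$. Over $X\setminus\supp\Delta$ this group acts on the torus fibre $U_{P_K}\cong\mathbb{G}_{m,K}^r$ by translation. After enlarging $S$ so that the unit group is infinite (say rank at least $1$), the orbit of one point under $(\mathcal{O}_{K,S}^\times)^r$ is Zariski dense in the fibre torus. A set that is dense in a dense family of fibres of the flat, generically torus-fibred morphism $U\to X$ is dense in $U$: any closed $Z\subsetneq U$ meets the general fibre in a proper closed subset. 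This gives potential density on $U$.

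I expect the main obstacle to be the lifting step, with two issues. First, one must check that the lift to the stack is not only fppf-local on $\Spec\mathcal{O}_{K,S}$. Proposition~\ref{prop:C_pair_vs_Campana_stack_gen} does give a global lift, through an explicit decomposition of $P^*\mathcal{D}_i$ into effective Cartier divisors. Second, one must see that the resulting $\mathbb{G}_m^r$-torsor is trivial. Here I would simply enlarge $S$ to kill the finitely many ideal classes; this is harmless, because enlarging $S$ keeps the generalized C-pair points dense. The density-in-fibres argument is routine by comparison.
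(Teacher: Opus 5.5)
Your proposal is correct and follows essentially the same route as the paper. The forward direction goes through Theorem~\ref{thm:U_to_X_and_orbifold_base}\,(i) and Corollary~\ref{cor:potential_density_of_U_to_gen_c-pair}; the converse lifts dense generalized C-pair points to the Campana stack via Proposition~\ref{prop:C_pair_vs_Campana_stack_gen} (with $\mathcal{O}_{K,S}$ a principal ideal domain with infinite unit group), trivializes the pulled-back $\mathbb{G}_m^r$-torsor, and uses $(\mathcal{O}_{K,S}^\times)^r$ to make the points dense in the torus fibres. Your extra steps (checking dominance, discarding points whose generic point lies in $\bigcup_i \supp D_i$, and the fibrewise density argument via flatness) only spell out what the paper leaves implicit.
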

\begin{proof}  
By Theorem \ref{thm:U_to_X_and_orbifold_base},  the morphism $U \to X$ induces  a morphism of generalized C-pairs $U \to (X,(D_i,M_i)_i)$. Therefore,   if $U$ satisfies potential density of integral points,  it follows from Corollary  \ref{cor:potential_density_of_U_to_gen_c-pair} that $(X,(D_i,M_i)_i)$   satisfies potential density of integral points.

  Now, assume that $(X,(D_i,M_i)_i)$  satisfies potential density of integral points.  
   Let $S$ be a finite set of finite places of $K$ and let $\mathcal{X}\to \Spec \mathcal{O}_{K,S}$ be a smooth  finite type model of $X$ over $\mathcal{O}_{K,S}$ and let $\mathcal{D}_i$ be the closure of $D_i$ in $\mathcal{X}$.    
Assume that  $\mathcal{O}_{K,S}$ is a principal ideal domain with infinite unit group and that the set $(\mathcal{X},(\mathcal{D}_i,M_i)_i)(\mathcal{O}_{K,S})$ of $\mathcal{O}_{K,S}$-points of the generalized C-pair $(\mathcal{X},(\mathcal{D}_i,M_i)_i)$ is dense in $X$.   
   
 Consider the  Campana stack   $\mathcal{C} = \mathcal{C}_{\mathcal{X},(\mathcal{D}_i,M_i)_i}$    of the C-pair $(\mathcal{X},(\mathcal{D}_i,M_i)_i)$, and let $\mathcal{U}$ be the associated Campana space.  Note that $\mathcal{U}\to \mathcal{C}$ is a $\mathbb{G}_m^r$-torsor (for some $r>0$).  By Proposition \ref{prop:C_pair_vs_Campana_stack_gen},   every  $\mathcal{O}_{K,S}$-point  $\Spec  \mathcal{O}_{K,S}\to (\mathcal{X}, (\mathcal{D}_i,M_i)_i)$ which does not factor through the support of $\sum_i \mathcal{D}_i$ lifts to $\mathcal{C}$.  Moreover, since $\mathcal{U}\to \mathcal{C}$ is a $\mathbb{G}_m^r$-torsor and $\mathcal{O}_{K,S}^\times $ is infinite, the fibre of $\mathcal{U}\to \mathcal{X}$ over such an $\mathcal{O}_{K,S}$-point  has a dense set of   $\mathcal{O}_{K,S}$-points.  (Here we use that  each $\mathbb{G}_m$-torsor over $\mathcal{O}_{K,S}$ is trivial, as $\mathcal{O}_{K,S}$ is a principal ideal domain.)
 This shows that $\mathcal{U}(\mathcal{O}_{K,S})$ is dense, as required.  
\end{proof}

 \begin{remark}[Faltings + $\epsilon$ and generalized C-pairs]
Let $(X,\sum_{i=1}^n(1-\frac{1}{m_i})D_i)$ be a C-pair, where $X$ is a smooth proper curve of genus $g$ over a number field $K$ and $m_i\in \mathbb{Z}_{\geq 1}\cup \{\infty\}$. Assume that
\[
2g-2 + \sum_{i=1}^{n}\left(1-\frac{1}{m_i}\right) >0.
\]
Let $S$ be a finite set of finite places of $K$, and let $\mathcal{X}$ be a proper regular model of $X$ over $\mathcal{O}_{K,S}$. Writing $\Delta = \sum_{i=1}^n(1-\frac{1}{m_i})\mathcal{D}_i$, where $\mathcal{D}_i$ denotes the closure of $D_i$ in $\mathcal{X}$, Campana's Orbifold Mordell conjecture predicts that $(\mathcal{X},\Delta)(\mathcal{O}_{K,S})$ is finite.

Inside $(\mathcal{X},\Delta)(\mathcal{O}_{K,S})$, one has the subset of Darmon points, i.e., the $\mathcal{O}_{K,S}$-points of the generalized C-pair $(\mathcal{X},(\mathcal{D}_i, m_i\mathbb{Z}_{\geq 1})_{i=1}^n)$. As first observed by Darmon \cite{Darmon}, using a stacky version of the Chevalley--Weil theorem for finite \'etale morphisms of stacks together with Faltings's theorem \cite{Faltings2}, this subset is finite.

More generally, let $U$ be the Campana space of a generalized C-pair $(X,(D_i,M_i)_i)$ such that the semigroup generated by each $M_i$ is cofinite. If $X\setminus \floor{\Delta}$ is weakly special, then $U$ is weakly special. The cofiniteness  condition   excludes cyclic semigroups, and thus places the corresponding integral points strictly between those of $X\setminus \supp \Delta$ and those of the associated C-pair $(X,\Delta)$.

The aforementioned ``Faltings + $\epsilon$'' arguments apply to generalized C-pairs in which each $M_i$ is either empty or generates a cyclic semigroup (as in the Darmon setting), but they do not readily extend to more general semigroup structures.
\end{remark}

\subsection{Campana's generalized Lang conjecture} 

The equivalence in Theorem \ref{thm:U_is_ar_spec_iff_XDelta_is_ar_spec} allows one to reformulate conjectures on integral points of C-pairs in terms of the corresponding Campana spaces. We now apply this to Campana's generalized Lang conjecture.

\begin{conjecture}[Campana's generalized Lang conjecture]\label{conj:gen_lang}
   Let $(X,\sum_{i=1}^r (1-\frac{1}{m_i})D_i)$ be a smooth proper C-pair of general type over a number field $K$.  Let $S$ be a finite set of finite places of $K$ and let $\mathcal{X}\to \Spec \mathcal{O}_{K,S}$ be a smooth proper model of $X$ over $\mathcal{O}_{K,S}$ and let $\mathcal{D}_i$ be the closure of $D_i$ in $\mathcal{X}$.   
   Write $\Delta =\sum_{i=1}^r\left(1-\frac{1}{m_i}\right)\mathcal{D}_i$.  If $\dim X \geq 1$, then    $(\mathcal{X},\Delta)(\mathcal{O}_{K,S})$ is not dense in $X$.
\end{conjecture}

We now prove that Campana's generalized Lang conjecture is a consequence of his conjecture that all varieties satisfying potential density of integral points are special; the   converse of this statement is shown in \cite{BartschOrb}.

\begin{corollary}\label{cor:campana_conjectures_equivalence}
Assume that every     smooth quasi-projective variety  which satisfies potential density of integral points is special.
Then Campana's generalized Lang conjecture  (Conjecture \ref{conj:gen_lang}) holds.  
\end{corollary}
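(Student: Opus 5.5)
We argue by contraposition: take a smooth proper C-pair $(X,\Delta)$ of general type over $K$ with $\dim X\geq 1$ and $\Delta=\sum_{i=1}^r(1-\frac{1}{m_i})D_i$, and suppose some set $(\mathcal{X},\Delta)(\mathcal{O}_{K,S})$ is dense in $X$. We will produce a smooth quasi-projective variety that satisfies potential density of integral points but is not special, contradicting the hypothesis. The natural candidate is the Campana space $U=U_{X,\Delta}$, the Campana space of the generalized C-pair $(X,(D_i,\mathbb{Z}_{\geq m_i})_i)$; by Example \ref{example:campana} its associated C-pair is exactly $(X,\Delta)$.

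First, we check that $U$ is a smooth quasi-projective variety over $K$. Smoothness of $(X,\Delta)$ means $\sum_i D_i$ has simple normal crossings, so Theorem \ref{thm:U_to_X_and_orbifold_base} shows that $U$ is regular (hence smooth, as we are in characteristic zero). It also shows that $U\to X$ is affine and of finite type with image $X\setminus\floor{\Delta}$. Remark \ref{remark:representability} gives quasi-projectivity of $U$, since $X$ is projective. Geometric integrality follows from the connectedness of the fibres of $q_a$, as in Proposition \ref{prop:U_maps_to_X}, applied after base change to $\overline{K}$. Here the snc assumption is used to make the iterated fibre product integral: generically $U$ is a torus torsor over a dense open subset of $X$, and $U$ is regular, hence normal, so it is irreducible.

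Second, we transfer density. Density of $(\mathcal{X},\Delta)(\mathcal{O}_{K,S})$ is precisely potential density of integral points for the generalized C-pair $(X,(D_i,\mathbb{Z}_{\geq m_i})_i)$, using Theorem \ref{thm:C-pair-Campana-stack} and Example \ref{example:campana} to identify the two notions of integral point. A small point to address is that the conjecture fixes a smooth proper model, whereas the definition of arithmetic specialness allows any regular model; these agree after enlarging $S$. Theorem \ref{thm:U_is_ar_spec_iff_XDelta_is_ar_spec} then gives that $U$ satisfies potential density of integral points, so by the hypothesis $U$ is special.

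Third, Corollary \ref{cor:ws_text}(ii) applies, because $(X,\Delta)$ is of general type and $\dim X>0$; it says $U$ is not special. This contradiction proves the conjecture. The main obstacle is the bookkeeping in the second step: matching the arithmetic specialness notion for C-pairs with the density assumption in Conjecture \ref{conj:gen_lang}, including the passage to a finite extension $L/K$ and the choice of model. I would handle this by spreading out as in Lemma \ref{lemma:spread_out_generalized_cpair}. The geometric inputs needed are all supplied by earlier results.
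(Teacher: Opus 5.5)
Your proposal is correct and is essentially the paper's argument: pass to the Campana space $U_{X,\Delta}$, transfer potential density to it via Theorem~\ref{thm:U_is_ar_spec_iff_XDelta_is_ar_spec}, deduce from the hypothesis that $U_{X,\Delta}$ is special, and contradict this with the general-type morphism $U_{X,\Delta}\to (X,\Delta)$. The paper states this last step directly rather than citing Corollary~\ref{cor:ws_text}(ii), but it is the same input. One small caveat: your quasi-projectivity claim uses that $X$ is projective, whereas Conjecture~\ref{conj:gen_lang} only assumes $X$ proper; the paper tacitly makes the same assumption, so you should state it explicitly (or restrict to projective $X$).
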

\begin{proof}   Let $(X,\Delta)$ be a  smooth proper C-pair of general type over a number field. Assume that $(X,\Delta)$  satisfies potential density of integral points. Then,  the Campana space $U_{X,\Delta}$ also satisfies potential density of integral points   (Theorem \ref{thm:U_is_ar_spec_iff_XDelta_is_ar_spec}).  Thus,  the  variety $U_{X,\Delta}$ is special (by our assumption).
 Since $U_{X,\Delta}\to (X,\Delta)$ is a morphism of C-pairs and $(X,\Delta)$ is of general type, it follows  from the fact that $U_{X,\Delta}$ is special that    $\dim X =0$.   
\end{proof}

Since Campana's Orbifold Mordell conjecture (Conjecture \ref{conj:orb_mor}) is the one-dimensional case of Conjecture \ref{conj:gen_lang},   Corollary \ref{cor:campana_conjectures_equivalence}   implies Corollary \ref{cor:orb_mordell_and_campana_conj}.  Thus the study of integral points on smooth proper C-pairs may be reduced, via their Campana spaces, to the study of integral points on certain quasi-projective varieties.

\section{Remarks on Campana's Orbifold Mordell Conjecture}\label{section:remarks_on_orbi_mordell}

In this section, we discuss two explicit quotient-stack constructions attached to the surface
\[
X=\mathbb{A}^2_{\mathbb{Z}}\setminus Z(x^2y^3-1),
\]
and explain how they relate potential density of integral points to Campana points on certain C-pairs and generalized C-pairs. These examples illustrate the link between Campana's Orbifold Mordell conjecture and the Weakly Special Conjecture.

  Consider the action of   $\mathbb{G}_m$   on $\mathbb{A}^2$ defined by 
\[\lambda \cdot (x,y) = (\lambda^3x, \lambda^{-2}y).\] We now prove 
 Theorem \ref{thm:ws_contradicts_abc},  restated here as Theorem \ref{thm:ws_contradicts_abc_text}. 
 
\begin{theorem}\label{thm:ws_contradicts_abc_text}
Let $X= \mathbb{A}^2_{\mathbb{Z}}\setminus Z(x^2y^3-1)$ and let $\mathcal{X} = [X/\mathbb{G}_m]$ with respect to the $\mathbb{G}_m$-action defined above. Then the following are equivalent. 
\begin{enumerate}
\item The variety  $X_{\mathbb{Q}} $ satisfies potential density of integral points.  
\item    There  is a number field $K$ and a finite set $S$ of finite places of $K$ such that  the image of $\pi_0(\mathcal{X}(\mathcal{O}_{K,S}))$ in $\pi_0(\mathcal{X}(\overline{K}))$ is infinite.  
\item The C-pair $(\mathbb{P}^1_{\mathbb{Q}},   [0] + [\infty] +  \frac{1}{2} [1])$  is arithmetically special  (and thus fails ``Orbifold Mordell'').
\item  There is a number field $K$ and a finite set of finite places $S$ of $K$ such that $\mathcal{O}_{K,S}$ is a principal ideal domain and  
$\{x \in \mathcal{O}_{K,S}^\times~|~x-1~\textrm{is}~2\textrm{-full}\}$ is infinite.
\end{enumerate}
\end{theorem}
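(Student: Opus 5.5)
The plan is to pass everything through the morphism $g\colon X\to \mathbb{G}_m\setminus\{1\}$, $(x,y)\mapsto x^2y^3$. This map is $\mathbb{G}_m$-invariant, since $(\lambda^3x)^2(\lambda^{-2}y)^3=x^2y^3$. It realizes $X$ as (essentially) the Campana space of the generalized C-pair $(\mathbb{A}^1\setminus\{1\},([0],\langle 2,3\rangle))$ with $\langle 2,3\rangle=\mathbb{Z}_{\geq 2}$. Indeed, the atoms of $\mathbb{Z}_{\geq 2}$ are $2,3$, so $U_{\mathbb{A}^1\setminus\{1\},[0],(2,3)}=\{(t,x,y): x^2y^3=t,\ t\neq 1\}\cong X$. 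The stack $\mathcal{C}=[X/\mathbb{G}_m^2]$ is the quotient by the full torus, and $\mathcal{X}=[X/\mathbb{G}_m]$ sits in between as the quotient by the kernel $K_a$ of $\mu_a\colon(v_1,v_2)\mapsto 2v_1+3v_2$. That kernel is generated by $(3,-2)$, which is exactly the given action. By the Cartesian square preceding Theorem~\ref{thm:regularity}, $\mathcal{X}\cong \mathcal{C}_{\mathbb{A}^1\setminus\{1\},[0],(2,3)}\times_{[\mathbb{A}^1/\mathbb{G}_m]}\mathbb{A}^1$, which is simply the Campana stack over the scheme base $\mathbb{A}^1\setminus\{1\}$ (with trivialized divisor). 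With this dictionary I prove the cycle (i)$\Leftrightarrow$(iii), (iii)$\Leftrightarrow$(iv), and (ii)$\Leftrightarrow$(iii).

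\textbf{Step (i)$\Leftrightarrow$(iii).} Over $\mathbb{Q}$, the pair $(\mathbb{P}^1,[0]+[\infty]+\tfrac12[1])$ has the same integral points as the C-pair $(\mathbb{A}^1\setminus\{0\},\tfrac12[1])$. After the coordinate change $t\mapsto 1-t$, which swaps $0$ and $1$, this becomes $(\mathbb{A}^1\setminus\{1\},\tfrac12[0])$. By Theorem~\ref{thm:U_is_ar_spec_iff_XDelta_is_ar_spec} applied to its Campana space, which is $X$ under the identification above, potential density on $X$ is equivalent to arithmetic specialness of this C-pair. One check is needed: the integral model here is $\mathbb{A}^2_{\mathbb{Z}}\setminus Z(x^2y^3-1)$ itself, and the model-independence of potential density lets me enlarge $S$ freely.

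\textbf{Step (iii)$\Leftrightarrow$(iv).} I unwind the definition. An $\mathcal{O}_{K,S}$-point of $(\mathbb{A}^1\setminus\{0\},[0]+[\infty]+\tfrac12[1])$ is a unit $x\in\mathcal{O}_{K,S}^\times$ such that every prime dividing $x-1$ does so to order $\geq 2$, i.e. $x-1$ is $2$-full. Density in a curve is the same as infiniteness. Enlarging $S$ so that $\mathcal{O}_{K,S}$ is a PID keeps the set infinite, because $2$-fullness away from a larger $S$ is weaker.

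\textbf{Step (ii)$\Leftrightarrow$(iii).} I expect this to be the main obstacle, because it requires controlling $\pi_0$ of stacky points and their images in $\pi_0(\mathcal{X}(\overline K))$. An object of $\mathcal{X}(\mathcal{O}_{K,S})$ is a $\mathbb{G}_m$-torsor $P$ with an equivariant map $P\to X$. Composing with the invariant map $g$ gives a point $t\in(\mathbb{A}^1\setminus\{1\})(\mathcal{O}_{K,S})$. Over $\overline K$, with $t\neq 0$, the fibre of $X\to\mathbb{A}^1$ is the curve $x^2y^3=t$, and the action $\lambda\mapsto(\lambda^3x,\lambda^{-2}y)$ is transitive on it: it is a torsor under $\mathbb{G}_m$, since $\gcd(2,3)=1$ makes the stabilizer trivial. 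Hence $\pi_0(\mathcal{X}(\overline K))\supset \overline K\setminus\{0,1\}$, and the image of $\pi_0(\mathcal{X}(\mathcal{O}_{K,S}))$ is infinite exactly when infinitely many $t$ arise. The remaining work is to identify these $t$. By Lemma~\ref{lemma:campana_stack_of_a_2} / Proposition~\ref{prop:C_pair_vs_Campana_stack}, an $\mathcal{O}_{K,S}$-point of $\mathcal{X}$ with $t\neq 0$ amounts to a decomposition $\mathrm{div}(t)=2D_1+3D_2$ of effective divisors, which is possible exactly when $t$ is $2$-full. Moreover $t-1$ is a unit because $X$ removes $Z(x^2y^3-1)$. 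Setting $u=1-t$ gives $u\in\mathcal{O}_{K,S}^\times$ with $1-u$ $2$-full, which matches (iv) after replacing $u$ by its sign-adjusted version $x$. Conversely, given such $x$, I take $t=1-x$, decompose its divisor, and the torsor data give an $\mathcal{O}_{K,S}$-point of $\mathcal{X}$. I would double-check the line-bundle bookkeeping here: the points arise from line bundles $L_1,L_2$ with $L_1^{2}\otimes L_2^{3}$ trivial, and this is where one uses that $\mathcal{X}\to[X/\mathbb{G}_m^2]$ corresponds to trivializing $L_1^2L_2^3$. For the equivalence I might also use the simpler direct route (i)$\Rightarrow$(ii) via the map $X\to\mathcal{X}$.
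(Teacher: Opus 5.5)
Your dictionary contains a concrete error, and it invalidates Step (i)$\Leftrightarrow$(iii) as written: $X$ is \emph{not} the Campana space of $(\mathbb{A}^1\setminus\{1\},\tfrac12[0])$.

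\textbf{The Campana space is 3-dimensional.} By Proposition~\ref{prop:U_maps_to_X}, $U=U_{\mathbb{A}^1\setminus\{1\},[0],(2,3)}$ has fibres $\mathbb{G}_m^2$ over $t\neq 0$, so $\dim U=3$. Unwinding $U=(\mathbb{A}^1\setminus\{1\})\times_{[\mathbb{A}^1/\mathbb{G}_m]}\mathbb{A}^2$ gives $U\cong\{(x,y,\lambda)\in\mathbb{A}^2\times\mathbb{G}_m : x^2y^3\neq\lambda\}$. The map $(x,y,\lambda)\mapsto((\lambda x,\lambda^{-1}y),\lambda)$ identifies this with $X\times\mathbb{G}_m$.

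\textbf{The Campana stack is $\mathcal{X}$ itself.} Your $[X/\mathbb{G}_m^2]$ is not defined, since the coordinatewise $\mathbb{G}_m^2$-action does not preserve $\{x^2y^3\neq 1\}$. Because $[0]$ is principal, the Cartesian square preceding Theorem~\ref{thm:regularity} gives $\mathcal{C}_{\mathbb{A}^1\setminus\{1\},[0],(2,3)}\cong(\mathbb{A}^1\setminus\{1\})\times_{\mathbb{A}^1}[\mathbb{A}^2/(\mathbb{G}_m\otimes K_a)]=[X/\mathbb{G}_m]=\mathcal{X}$, with $K_a=\mathbb{Z}\cdot(3,-2)$. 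The surface $X$ is the $\mathbb{G}_m^{r-1}$-torsor $V$ over this stack from the proof of Theorem~\ref{thm:regularity}. The Campana space is the $\mathbb{G}_m^{r}$-torsor $X\times\mathbb{G}_m$.

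\textbf{Consequence and repair.} Theorem~\ref{thm:U_is_ar_spec_iff_XDelta_is_ar_spec} only tells you that $X\times\mathbb{G}_m$ is arithmetically special if and only if the C-pair is. You need the extra (easy) step that $X\times\mathbb{G}_m$ is arithmetically special if and only if $X$ is. One direction is projection. For the other, take products with $\mathcal{O}_{K,S}^\times$, which is dense in $\mathbb{G}_m$ once $S$ is enlarged. Alternatively, argue as the paper does:
\begin{itemize}
\item (i)$\Rightarrow$(iii): $(x,y)\mapsto x^2y^3$ is a dominant morphism of C-pairs $X\to(\mathbb{A}^1\setminus\{1\},\tfrac12[0])$, so Proposition~\ref{prop:campana_integral_points} applies.
\item (iii)$\Rightarrow$(i): over a PID, write a $2$-full $u$ as $a^2b^3$, and translate $(a,b)$ along its fibre by $\lambda\in\mathcal{O}_{K,S}^\times$.
\end{itemize}

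With this repaired the proof is complete, and your treatment of (ii) takes a genuinely different route from the paper's. The paper proves (i)$\Leftrightarrow$(ii) using only that $X\to\mathcal{X}$ is a $\mathbb{G}_m$-torsor: either Chevalley--Weil (Proposition~\ref{prop:cw_integrally_affine}), or directly, since $\mathbb{G}_m$-torsors over a PID are trivial and so every $\mathcal{O}_{K,S}$-point of $\mathcal{X}$ lifts. You instead compute the image of $\pi_0(\mathcal{X}(\mathcal{O}_{K,S}))$ directly. You use that $\mathcal{X}=\mathcal{C}_{\mathbb{A}^1\setminus\{1\},[0],(2,3)}$, which you do assert correctly in the end, and that $\mathcal{X}\to\mathbb{A}^1\setminus\{1\}$ is an isomorphism over $t\neq 0$. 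Lemma~\ref{lemma:campana_stack_of_a_2} then identifies the image, up to finitely many classes over $t=0$, with the set of $t$ such that $t-1$ is a unit and $t$ is $2$-full. This is correct and needs no class-group hypothesis. It also makes precise the sense in which $\mathcal{X}$ parametrizes points of the C-pair. The cost is that it depends on recognizing $\mathcal{X}$ as a Campana stack, whereas the paper's torsor argument is more elementary. (Minor: $g$ takes values in $\mathbb{A}^1\setminus\{1\}$, not $\mathbb{G}_m\setminus\{1\}$.)
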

\begin{proof} 
For every number field $K$, there is a finite set of finite places $S_0$ of $K$ such that, for every $S$ containing $S_0$, the ring of $S$-integers $\mathcal{O}_{K,S}$ is a principal ideal domain. In particular,  the equivalence of $(iii)$ and $(iv)$ follows  from the definition of Campana points on $(\mathbb{G}_m, \frac{1}{2}[1])$.

 Note that $(i)\implies (ii)$ as $X\to \mathcal{X}$ is surjective.  In particular, the image of $X(\mathcal{O}_{K,S})$ in $\mathcal{X}(\mathcal{O}_{K,S})$ is dense in $\mathcal{X}$ if  $X(\mathcal{O}_{K,S})$ is dense in $X$.
 
  To prove $(ii)\implies (i)$,  note that $X\to \mathcal{X}$ is a $\mathbb{G}_m$-torsor.   Thus, by the Chevalley--Weil theorem for $\mathbb{G}_m$-torsors over stacks (Proposition~\ref{prop:cw_integrally_affine} in the appendix), condition $(ii)$ implies $(i)$.  (For completeness, we briefly indicate a direct argument. Assuming $(ii)$ holds,  choose a number field $K$ and a finite set $S$ of finite places of $K$ such that $\mathcal{O}_{K,S}$ has trivial class group,  an infinite unit group, and  such that that the image of $\pi_0(\mathcal{X}(\mathcal{O}_{K,S}))$ in $\pi_0(\mathcal{X}(\overline{K}))$ is infinite, hence dense. Since every $\mathbb{G}_m$-torsor over $\Spec \mathcal{O}_{K,S}$ is trivial, we see that every $\mathcal{O}_{K,S}$-point of $\mathcal{X}$ lifts to $X$. Since $\mathbb{G}_m(\mathcal{O}_{K,S})$ is dense in $\mathbb{G}_{m,K}$, this shows that $X(\mathcal{O}_{K,S})$ is dense, as required.)

Note that $(i)\implies (iii)$, as the morphism $X\to \mathbb{A}^1\setminus \{1\}$ given by $(x,y)\mapsto x^2 y^3$ has orbifold base $(\mathbb{A}^1\setminus \{1\}, \frac{1}{2} [0])$ and $X\to (\mathbb{A}^1\setminus \{1\}, \frac{1}{2}[0])$ is a morphism of C-pairs, so that we can appeal to Proposition \ref{prop:campana_integral_points}.
 Identifying  the C-pairs $(\mathbb{G}_m, \frac{1}{2}[1])$ and $(\mathbb{A}^1 \setminus \{1\}, \frac{1}{2}[0])$, this shows that $(iii)$ holds.  

To see that $(iii)$ implies $(i)$,  choose $K$ and $S$ such that $\mathcal{O}_{K,S}$ has trivial class group, an infinite unit group, and such that the C-pair $(\mathbb{A}^1\setminus \{1\}, \frac{1}{2} [0])$ has infinitely many $\mathcal{O}_{K,S}$-points.  (The latter is possible by assumption.) Let $u$ be an $\mathcal{O}_{K,S}$-point of $(\mathbb{A}^1\setminus \{1\},\frac{1}{2}[0])$. Note that this is a 2-full element of $\mathcal{O}_{K,S}$. Since $\mathcal{O}_{K,S}$ is a principal ideal domain, any $2$-full element admits a factorization of the form $u=a^2 b^3$ for suitable $a,b\in\mathcal{O}_{K,S}$.     Since $u-1$ is a unit in $\mathcal{O}_{K,S}$, we   see that the point $(a,b)$ defines an $\mathcal{O}_{K,S}$-point of $X=\mathbb{A}^2_{\mathbb{Z}}\setminus Z(x^2 y^3-1)$. This proves $(i)$.
\end{proof}

 Theorem \ref{thm:ws_contradicts_abc_text} shows that potential density of integral points on the affine surface $X$ is equivalent to the failure of Campana's Orbifold Mordell conjecture for the   C-pair  curve $(\mathbb{P}^1_{\mathbb{Q}}, [0]+[\infty]+\frac12[1])$.

We now consider the $\mathbb{G}_m$-stable open subset $Y=X\setminus \{(0,0)\}$. In contrast to the previous theorem, the resulting quotient stack will be Deligne--Mumford, and the corresponding arithmetic statement is naturally expressed in terms of a generalized C-pair rather than an ordinary C-pair.

Campana's conjecture predicts that potential density of integral points on a smooth variety should persist on sufficiently large open subsets (see \cite[Conjecture~1.17.(3)]{BJL} and \cite{HassettTschinkel}). Thus, if $X$ has potentially dense integral points, the same should hold for the $\mathbb{G}_m$-stable open subset $Y=X\setminus\{(0,0)\}$. Since $Y$ is weakly special (Corollary~\ref{cor:ab_puncture}), proving that integral points on $Y$ are not potentially dense would already contradict the Weakly Special Conjecture.

Our next result relates integral points on $Y$ to points on the generalized C-pair
\[
(\mathbb{A}^1\setminus \{0\},([1],2\mathbb{Z}_{\ge1}\cup 3\mathbb{Z}_{\ge1})),
\]
introduced in Example~\ref{example:firmament} and closely related to Abramovich's notion of a firmament \cite[Definition~2.4.3]{AbramovichBirGeom}.

\begin{theorem}\label{thm:YmodGm_text} Let $X=\mathbb{A}^2_{\mathbb{Z}}\setminus Z(x^2y^3-1)$ and $Y=X\setminus \{(0,0)\}$.  Define $\mathcal{Y} = [Y/\mathbb{G}_m]$.  Then,  the following statements are equivalent.
\begin{enumerate}
\item The variety $Y_{\mathbb{Q}}$ satisfies potential density of integral points.  
\item    There  is a number field $K$ and a finite set $S$ of finite places of $K$ such that  the image of $\pi_0(\mathcal{Y}(\mathcal{O}_{K,S}))$ in $\pi_0(\mathcal{Y}(\overline{K}))$ is infinite.  
\item The generalized  C-pair $(\mathbb{A}^1\setminus \{0\}, ([1],\, 2\mathbb{Z}_{\ge 1}\cup 3\mathbb{Z}_{\ge 1}))$ is arithmetically special.\footnote{In other words, the generalized C-pair $(\mathbb{P}^1, ([0], \emptyset), ([\infty], \emptyset), ([1],  2\mathbb{Z}_{\ge 1}\cup 3\mathbb{Z}_{\ge 1}))$. }
\item There is a number field $K$ and a finite set of finite places $S$ of $K$ such that  $\mathcal{O}_{K,S}$ is a principal ideal domain and the set \[\{x\in \mathcal{O}_{K,S}^\times \  | \ \textrm{for every prime element } p \textrm{ of } \mathcal{O}_{K,S} \ \textrm{we have } v_p(x-1) \textrm{ is divisible by }  2 \textrm{ or } 3\}\] is infinite.
\end{enumerate}
\end{theorem}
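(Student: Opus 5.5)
We follow the structure of the proof of Theorem~\ref{thm:ws_contradicts_abc_text}, replacing the C-pair $(\mathbb{A}^1\setminus\{1\},\frac12[0])$ by the generalized C-pair $(\mathbb{A}^1\setminus\{1\},([0],2\mathbb{Z}_{\geq 1}\cup 3\mathbb{Z}_{\geq 1}))$. The latter is identified with the pair in $(iii)$ via $x\mapsto 1-x$, or equivalently via the morphism $(x,y)\mapsto x^2y^3$, composed as needed.

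\emph{Steps (iii)$\Leftrightarrow$(iv) and (i)$\Leftrightarrow$(ii).} We enlarge $S$ so that $\mathcal{O}_{K,S}$ is a PID. The equivalence (iii)$\Leftrightarrow$(iv) then holds by Proposition~\ref{prop:campana_cyclic}: over the Dedekind scheme $\Spec\mathcal{O}_{K,S}$, a morphism of generalized C-pairs is exactly the condition that each $v_p(x-1)$ be divisible by $2$ or $3$. For (i)$\Leftrightarrow$(ii), the map $Y\to\mathcal{Y}$ is a $\mathbb{G}_m$-torsor, so we argue verbatim as before. One direction holds by surjectivity. For the other, we choose $K,S$ with trivial class group and infinite unit group; then every $\mathcal{O}_{K,S}$-point of $\mathcal{Y}$ lifts to $Y$, and the $\mathbb{G}_m(\mathcal{O}_{K,S})$-orbits are dense in the fibres. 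Alternatively, one can invoke Proposition~\ref{prop:cw_integrally_affine}.

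\emph{Step (i)$\Rightarrow$(iii).} Consider $g\colon Y\to\mathbb{A}^1\setminus\{1\}$, $(x,y)\mapsto x^2y^3$, which is dominant. We will show that it is a morphism of generalized C-pairs to $(\mathbb{A}^1\setminus\{1\},([0],2\mathbb{Z}_{\geq1}\cup3\mathbb{Z}_{\geq1}))$. We have $g^*[0]=2Z(x)+3Z(y)$, and in $Y$ the divisors $Z(x)$ and $Z(y)$ are disjoint because $(0,0)$ has been removed. Taking $E_1=2Z(x)$ and $E_2=3Z(y)$ gives the required decomposition with disjoint supports. Corollary~\ref{cor:potential_density_of_U_to_gen_c-pair} then yields (iii). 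Alternatively, $Y$ is an open subset of the Campana space of this generalized C-pair, since $\mathbb{A}^2\setminus Z$ with $Z=\{(0,0)\}$.

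\emph{Step (iii)$\Rightarrow$(i).} This is the step requiring care. Take $K,S$ with $\mathcal{O}_{K,S}$ a PID, infinite unit group, and a dense (infinite) set of points $u$. Here $u\in\mathcal{O}_{K,S}$, $u-1\in\mathcal{O}_{K,S}^\times$, and each $v_p(u)$ is divisible by $2$ or $3$. For each prime $p\mid u$, we put $p^{v_p(u)}$ into $a^2$ if $2\mid v_p(u)$, and into $b^3$ otherwise. This gives $u=\varepsilon a^2b^3$ with $\varepsilon$ a unit and $\gcd(a,b)=1$.

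The obstacle is twofold. First, we must absorb $\varepsilon$. We enlarge $K$ to contain square roots of generators of $\mathcal{O}_{K,S}^\times/(\mathcal{O}_{K,S}^{\times})^2$; alternatively, since the unit group modulo squares is finite, we pass to a finite extension $L$ in which $\varepsilon$ becomes a square for all $\varepsilon$ at once. We then enlarge $S$ to restore the PID property. Primes of $K$ may split or ramify in $L$, but multiplicities only get multiplied by ramification indices, so the coprime factorization survives. Second, we must ensure that $(a,b)\neq(0,0)$ in every reduction, i.e.\ that the point $\Spec\mathcal{O}_{K,S}\to\mathbb{A}^2$ avoids $(0,0)$. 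This is exactly the coprimality $(a,b)=(1)$, which holds since no prime divides both $a$ and $b$.

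Finally, $x^2y^3-1=u-1$ is a unit, so $(a,b)\in Y(\mathcal{O}_{K,S})$. The image of these points under $g$ is infinite, hence dense in the curve. Acting by $\mathbb{G}_m(\mathcal{O}_{K,S})$, which preserves $Y$ and the fibres of $g$ since the weights are $3\cdot2-2\cdot3=0$, makes the points dense in each fibre. Hence they are dense in $Y$.
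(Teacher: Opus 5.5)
Your proof is correct and follows the paper's argument essentially step by step: the $\mathbb{G}_m$-torsor $Y\to\mathcal{Y}$ gives (i)$\Leftrightarrow$(ii), the map $(x,y)\mapsto x^2y^3$ is a morphism of generalized C-pairs because $Z(x)\cap Z(y)=\emptyset$ on $Y$, which gives (i)$\Rightarrow$(iii), and a coprime factorization $u=a^2b^3$ over a PID together with the weight-zero $\mathbb{G}_m$-action gives (iii)$\Rightarrow$(i). Two minor points: the field extension you use to absorb the unit is unnecessary, since $\varepsilon=(\varepsilon^{-1})^2\varepsilon^3$ gives $u=(\varepsilon^{-1}a)^2(\varepsilon b)^3$ with $(\varepsilon^{-1}a,\varepsilon b)$ still coprime over $\mathcal{O}_{K,S}$ itself, and your aside that $Y$ is an open subset of the Campana space is wrong (that space is a threefold, in fact isomorphic to $Y\times\mathbb{G}_m$), though your argument never uses it.
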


\begin{proof}
This is similar to the proof of Theorem \ref{thm:ws_contradicts_abc_text}.  Indeed, the equivalence of $(i)$ and $(ii)$ follows from Chevalley--Weil for $\mathbb{G}_m$-torsors (Proposition \ref{prop:cw_integrally_affine} in the appendix). To show that $(i)\implies (iii)$,  define $M\subset \mathbb{Z}_{\geq 1}$ to be the subset of integers divisible by $2$ or $3$ and note that $Y\to \mathbb{A}^1\setminus \{1\}$ given by $(x,y)\mapsto x^2 y^3$ defines a morphism of \emph{generalized} C-pairs $Y\to (\mathbb{A}^1\setminus \{1\}, ([0], M))$.  Since  $Y$ satisfies potential density of integral points, it follows from Proposition \ref{prop:campana_integral_points} that  the generalized C-pair $(\mathbb{G}_m, ([1], M))$ is arithmetically special. This proves that $(i)\implies (iii)$.

  To show that $(iii)\implies (i)$, we choose a number field $K$ and a finite set $S$ of finite places of $K$ such that $\mathcal{O}_{K,S}$ is a principal ideal domain with infinite unit group and  the generalized C-pair  $(\mathbb{A}^1\setminus \{1\}, ([0], M))$ has infinitely many $\mathcal{O}_{K,S}$-points. Let $u$ be an $\mathcal{O}_{K,S}$-point of the generalized C-pair  $(\mathbb{A}^1_{\mathbb{Z}}\setminus \{1\}, ([0],M))$.  Since $\mathcal{O}_{K,S}$ is a principal ideal domain,  there exist coprime elements $x$ and $y$ in $\mathcal{O}_{K,S}$ such that $u = x^2 y^3$.   In particular,  the element $(x,y)$ defines an $\mathcal{O}_{K,S}$-point of $Y$.   For every $v \in \mathcal{O}_{K,S}^\times$, we obtain a point $(v^{-1}x,vy)$ of $X$ lying over $u$. Since $x$ and $y$ are coprime, this point lies on $Y$. Varying $v$ over the infinite group $\mathcal{O}_{K,S}^\times$, we find that the fibre over $u$ contains infinitely many $\mathcal{O}_{K,S}$-points. Thus, $Y(\mathcal{O}_{K,S})$ is dense.
 
 Finally, the equivalence of $(iii)$ and $(iv)$ follows from the definition of an $\mathcal{O}_{K,S}$-point of the generalized C-pair $(\mathbb{G}_m, ([1], 2\mathbb{Z}_{\geq 1}\cup 3 \mathbb{Z}_{\geq 1}))$.
\end{proof}

\begin{remark}
In Theorem~\ref{thm:ws_contradicts_abc_text} the stack $[X/\mathbb{G}_m]$ is not Deligne--Mumford, 
while in Theorem~\ref{thm:YmodGm_text} the stack $[Y/\mathbb{G}_m]$ is Deligne--Mumford 
(but not separated) with finite inertia.   In fact,  the stack $[Y/\mathbb{G}_m]$ admits a root stack description (in contrast to $[X/\mathbb{G}_m]$).      Let $\mathbb{G}_m(1,1)$ be the non-separated scheme obtained by doubling the origin of $\mathbb{G}_m$; let $e_1$ and $e_2$ be the origins of $\mathbb{G}_m(1,1)$.    For integers $n,m\geq 1$,  consider the iterated   root stack 
\[
\mathbb{G}_m(n,m):=\sqrt[n]{\mathbb{G}_m(1,1)/e_1}\times_{\mathbb{G}_m(1,1)} \sqrt[m]{\mathbb{G}_m(1,1)/e_2}.
\] The morphism $Y\to \mathbb{G}_m$ which maps $(x,y)$ to $x^2y^3$ factors over $\mathbb{G}_m(2,3)$ and induces an isomorphism  of stacks $[Y/\mathbb{G}_m]\to \mathbb{G}_m(2,3)$.

\begin{center}
\begin{tikzpicture}[scale=1.5]

    \draw[thick] (-3,0) -- (-0.3,0);
    \draw[thick] (0.3,0) -- (3,0);

    \fill (0,0.25) circle (1.5pt);
    \node at (0,0.55) {$B\mu_2$};

    \fill (0,-0.25) circle (1.5pt);
    \node at (0,-0.55) {$B\mu_3$};

\end{tikzpicture}
\end{center}
  \end{remark}

These examples illustrate how quotient stacks and generalized C-pairs provide a natural framework for relating explicit Diophantine questions on affine surfaces to Campana's conjectural picture for C-pairs and special varieties.

\appendix


\setcounter{theorem}{0}
\renewcommand{\thetheorem}{\arabic{theorem}}

  \section{Chevalley--Weil for  torsors}
 
  In this appendix we prove a Chevalley--Weil type lifting result for torsors.
Recall that if $Y$ is a variety over a field $k$ and $G$ is a group scheme over $Y$, a morphism $X\to Y$ is a $G$-torsor (for the fppf topology) if $G$ acts freely and transitively on $X$ and there exists an fppf covering $Y'\to Y$ such that $X\times_Y Y'$ is isomorphic to $G\times_k Y'$; see \cite{PoonenRat}.

Let $k$ be a field of characteristic zero.
Campana's notion of a special variety ascends along  finite \'etale morphisms. More generally,  if $G$ is  an algebraic group, 
the property of being special   ascends along $G$-torsors.

\begin{proposition}[Campana] Let $Y$ be a variety over $k$.  Let $G\to Y$  be a finite type separated group scheme with geometrically connected fibres.  
Let $X\to Y$ be a $G$-torsor. Then $X$ is special if and only if $Y$ is special. 
\end{proposition}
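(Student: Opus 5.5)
The plan is to treat $X\to Y$ as a fibration with special fibres and no multiple fibres, and to prove the two implications separately. Throughout we may replace $k$ by $\overline{k}$, since specialness is defined after base change to $\overline{k}$. We may also assume $Y$ is smooth. To arrange this, take a resolution $Y'\to Y$ and pull the torsor back to $X'=X\times_Y Y'\to Y'$, which is again a $G_{Y'}$-torsor. The map $X'\to X$ is proper and birational: it is an isomorphism over the dense open subset where $Y'\to Y$ is one, and $X$ is integral because it is a variety. By Definition \ref{definition:special}, specialness only depends on a resolution, so nothing is lost.

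The implication ``$X$ special $\Rightarrow$ $Y$ special'' should be formal. A torsor is surjective, so $X\to Y$ is a dominant morphism. Campana's theorem that the image of a special variety under a dominant (rational) map is special then gives the claim. Concretely, a map of general type $Y\ratmap Z$ composes with the smooth surjection $X\to Y$ to a map of general type on $X$. The reason is that orbifold multiplicities can only drop when pulling back along $X\to Y$, and the reduced fibres of $X\to Y$ prevent them from dropping; this is the same computation as in the next paragraph.

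For ``$Y$ special $\Rightarrow$ $X$ special'' we view $f\colon X\to Y$ as a fibration and invoke the fibration criterion of Campana already used in Corollary \ref{corollary:xa1etc}, namely \cite[Lemma~2.10]{Bartsch}. This criterion says that a fibration over a special base with special general fibre and no inf-multiple fibres in codimension one has special total space. We verify its hypotheses in three steps.
\begin{enumerate}
\item For $y\in Y(\overline{k})$, the fibre $X_y$ is a $G_y$-torsor over an algebraically closed field, hence trivial. So $X_y\cong G_y$ is a connected algebraic group, which is smooth because the characteristic is zero, and connected algebraic groups are special by Campana.
\item For a codimension one point $s$ of $Y$, the fibre $X_s$ is a $G_s$-torsor over $k(s)$. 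It becomes isomorphic to $G_s$ after a field extension, so it is geometrically reduced, irreducible and non-empty. Hence $f^\ast D_s$ has a component of multiplicity one dominating $D_s$, so $m_s=1$ and the orbifold base of $f$ is $(Y,0)$.
\item The orbifold base $(Y,0)$ has trivial orbifold divisor, so every dominant rational map of general type from $X$ would have to factor through a map of general type from $Y$ or be detected on the special fibres, and both are excluded.
\end{enumerate}

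The main obstacle is the regularity needed to compute the orbifold base. The group scheme $G\to Y$ is not assumed flat or smooth, so $X$ is only known to be a variety, and a resolution $\widetilde X\to X$ could introduce new divisors over $D_s$. I would handle this as follows. Work over the generic point of each $D_s$, i.e.\ over the discrete valuation ring $\mathcal{O}_{Y,s}$. There, fppf-locally $X\cong G\times Y'$, and the generic fibre of $G$ over $D_s$ is smooth, so after shrinking $Y$ around $s$ the map $X\to Y$ is smooth near the generic point of $X_s$. The resolution is then an isomorphism there, and the multiplicity-one component of $X_s$ persists in $\widetilde X$. Since only codimension one points of $Y$ matter for the orbifold base, this suffices to apply the criterion.
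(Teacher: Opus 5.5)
Your proposal is correct and follows the paper's proof. For the forward direction, surjective images of special varieties are special (\cite[Lemma~2.8]{BJL}); for the converse, the fibration criterion \cite[Lemma~2.10]{Bartsch} applies because the fibres are connected algebraic groups (special by \cite[Lemma~2.11]{Bartsch}) and smooth by Cartier's theorem, so there are no inf-multiple fibres. Your step (3) is not needed, and your side remark that multiplicities ``drop'' is backwards: composing with a dominant morphism can only raise them, so reducedness plays no role there. Your regularity paragraph makes explicit what the paper leaves implicit, but it omits one point you should add: the flatness of $X$ over $\mathcal{O}_{Y,s}$, which follows from $X$ being integral and dominant over this discrete valuation ring.
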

\begin{proof}  Assume $X$ is special.  Since $X\to Y$ is surjective and $X$ is special,  it follows that $Y$ is special \cite[Lemma~2.8]{BJL}. Assume that $Y$ is special.  Since the geometric fibres of $X\to Y$ are connected algebraic groups, they are special  (see \cite[Lemma~2.11]{Bartsch}). Therefore,  as the fibres of $X\to Y $ are smooth (by Cartier's theorem), the statement follows from    \cite[Lemma~2.10]{Bartsch}.
\end{proof}

The same property holds in the setting of weakly special varieties.

\begin{proposition}\label{prop:campana}  Let $Y$ be a variety over $k$.  Let $G\to Y$  be a finite type separated group scheme with geometrically connected fibres.  
Let $X\to Y$ be a $G$-torsor. 
Then $X$ is weakly special if and only if $Y$ is weakly special. 
\end{proposition}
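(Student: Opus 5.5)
We prove the two implications separately, after reducing to $k$ algebraically closed. Weak specialness of a possibly singular variety is defined via a proper birational model, so we work over the smooth locus where necessary. Note that $X\to Y$ is fppf, hence flat, surjective and of finite type, with geometric fibres isomorphic to connected algebraic groups.

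\emph{$X$ weakly special $\Rightarrow$ $Y$ weakly special.} Let $Y'\to Y$ be a finite étale cover and $Y'\ratmap Z$ a dominant strictly rational map onto a positive-dimensional variety of log-general type. Base change gives a finite étale cover $X'=X\times_Y Y'\to X$, and $X'\to Y'$ is again a $G$-torsor, so it is surjective. Pick a connected component $X''$ of $X'$. It surjects onto $Y'$ because the fibres of $X'\to Y'$ are connected, being torsors under the connected groups $G_y$. The composite $X''\to Y'\ratmap Z$ is dominant and strictly rational: resolve $Y'\ratmap Z$ by a proper birational $Y''\to Y'$ and base change it to $X''$. Here we use that the pullback of a proper birational morphism along a flat morphism is proper birational on the main component. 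This contradicts weak specialness of $X$.

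\emph{$Y$ weakly special $\Rightarrow$ $X$ weakly special.} Here we apply the fibration criterion, Theorem~\ref{thm:family_of_ws_vars}, to $X\to Y$, after replacing $Y$ by a resolution $\tilde Y$ and $X$ by a resolution of $X\times_Y\tilde Y$. We check the hypotheses in turn. Hypothesis (i) holds by assumption. For the generic fibre, note that a torsor under a connected group over a field becomes trivial over an fppf extension, so it is geometrically connected. For hypothesis (ii), the fibres over $\overline{k}$-points are isomorphic to connected algebraic groups. These are special \cite[Lemma~2.11]{Bartsch}, hence weakly special. For hypothesis (iii), $X\to Y$ is smooth by Cartier's theorem, since $G$ is smooth in characteristic zero, so it has no divisible fibres in codimension one.

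\emph{Main obstacle.} The difficulty lies in hypothesis (iii) after passing to resolutions. Over the singular locus of $Y$ the pulled-back torsor is still smooth over $\tilde Y$: the base change $X\times_Y\tilde Y\to\tilde Y$ is a $G$-torsor, hence smooth. Therefore $X\times_Y\tilde Y$ is already smooth, no further resolution is needed, and its fibres over codimension one points are smooth and reduced, so they are not divisible. The remaining task is to check that $X\times_Y\tilde Y\to X$ is proper and birational. This holds because it is the base change of $\tilde Y\to Y$ along the flat morphism $X\to Y$, whose fibres are connected. Hence weak specialness of $X\times_Y\tilde Y$, given by Theorem~\ref{thm:family_of_ws_vars}, yields weak specialness of $X$ by Definition~\ref{def:ws}.
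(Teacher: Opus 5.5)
Your proposal is correct and follows the paper's route. One direction is descent of weak specialness along the surjection $X\to Y$. The other applies Theorem~\ref{thm:family_of_ws_vars} to the smooth morphism $X\to Y$: its fibres are connected algebraic groups, which are special by \cite[Lemma~2.11]{Bartsch}, and smoothness rules out divisible fibres.

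Base-changing to a resolution $\tilde Y$ to handle singular $Y$ is a refinement the paper leaves implicit. You should run the forward direction on $X\times_Y\tilde Y\to\tilde Y$ as well, since for singular $Y$ the definition tests finite \'etale covers of a resolution, not of $Y$ itself.
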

\begin{proof} Assume $X$ is weakly special.  Since $X\to Y$ is surjective, it follows that $Y$ is weakly special. Conversely, assume that $Y$ is weakly special.   
Since the general fibres of $X\to Y$ are connected algebraic groups,  they are special \cite[Lemma~2.11]{Bartsch} and thus weakly special. Since $X\to Y$ is smooth and its general fibres  are   weakly special varieties,  it follows from Theorem \ref{thm:family_of_ws_vars_intro}  that $X$ is weakly special.
\end{proof}

Campana's conjecture that a variety is special if and only if it has potentially dense integral points suggests the following ascension statement:
Let $Y$ be an arithmetically special variety and let  $X\to Y$ be a $G$-torsor, where  $G\to Y$ is a finite type separated group scheme with geometrically connected fibres.  Then $X$ is arithmetically special.  
  In this appendix, we will prove this expectation assuming $G$ is constant (but also  in a few other cases such as when  $G\to Y$ is  an abelian scheme or reductive group scheme).
  
\begin{theorem}[Constant group scheme] \label{thm:cw_arithmetic} 
Let $Y$ be a variety over a number field $K$ and let $G$ be a finite type group scheme over $K$. Write $G_Y = G\times Y $ and  let $X\to Y$ be a $G_Y$-torsor. Then $X$ is arithmetically special if and only if $Y$ is arithmetically special.
\end{theorem}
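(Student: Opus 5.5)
The plan is to prove the two implications separately. The implication ``$X$ arithmetically special $\Rightarrow$ $Y$ arithmetically special'' is formal. After enlarging $K$ and $S$, spread $X\to Y$ out to a morphism of finite type models $\mathcal{X}\to\mathcal{Y}$ over $\mathcal{O}_{K,S}$. Integral points of $\mathcal{X}$ map to integral points of $\mathcal{Y}$, and the image of a dense set under the surjective morphism $X\to Y$ is dense.

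For the converse, first reduce to the case that $G$ is connected. Let $G^0\subset G$ be the identity component. Then $X\to X/G^0$ is a $G^0_{X/G^0}$-torsor, and $X/G^0\to Y$ is a torsor under the finite étale group $(G/G^0)_Y$, hence a finite étale cover. By the classical Chevalley--Weil theorem (Hermite--Minkowski finiteness of unramified extensions of bounded degree), integral points of $Y$ lift to integral points of $X/G^0$ over a single finite extension $L/K$ and a suitable enlargement $T$ of $S$. Hence $X/G^0$ is arithmetically special, and it remains to treat connected $G$. By Chevalley's structure theorem, such a $G$ is an extension of an abelian variety by a connected linear group. Since torsors under an extension can be dealt with in two stages, via $X\to X/H\to Y$ for $H$ the linear part, I would treat the linear and abelian cases separately.

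\emph{Linear case.} Spread out so that $G$ is a smooth affine group scheme over $\mathcal{O}_{K,S}$ and $\mathcal{X}\to\mathcal{Y}$ is a $G$-torsor. Each integral point $y\in\mathcal{Y}(\mathcal{O}_{L,T})$ pulls back to a class in $H^1(\mathcal{O}_{L,T},G)$, which is finite by Borel--Serre finiteness. Each of these finitely many classes is trivialized over some finite extension, so one extension $L'/L$ (with $T'$ containing the places over $T$) trivializes all of them simultaneously. Then the fibre of $\mathcal{X}$ over each such point is $G_{\mathcal{O}_{L',T'}}$. It remains to use potential density of integral points on the connected linear group $G$ itself; this follows from the cases of tori and unipotent groups and, for the reductive part, from the density of $S$-arithmetic subgroups, which holds once $T'$ is enlarged so that $G$ is isotropic at some place of $T'$. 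Density of the base points together with density in each fibre then gives density in $X$.

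\emph{Abelian case.} Here $H^1(\mathcal{O}_{L,T},A)$ need not be finite, so I would avoid pointwise classes. The generic class $[X]\in H^1(K(Y),A)$ is torsion, say of order $n$. Therefore the pushout of $X$ along $[n]\colon A\to A$ is trivial over a dense open $Y^\circ\subset Y$, which gives an $A$-equivariant morphism $X^\circ\to A\times Y^\circ$ making $X^\circ$ an $A[n]$-torsor over $A\times Y^\circ$, that is, a finite étale cover. The product $A\times Y^\circ$ is arithmetically special, because $A$ is (potential density of rational points on abelian varieties; this requires proving the product statement) and integral points of $Y$ remain potentially dense on the open $Y^\circ$ (this needs justification). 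Chevalley--Weil for finite étale covers then shows that $X^\circ$, and hence $X$, is arithmetically special.

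The main obstacle is this abelian step. The two points I would check carefully are that passing to the open $Y^\circ$ preserves arithmetic specialness of $Y$, and that the factorization $X^\circ\to A\times Y^\circ$ really is an $A[n]$-torsor, hence finite étale. If the first point fails, I would instead spread the trivialization of the pushout along $[n]$ over all of $Y$, after replacing $Y$ by a finite étale cover on which the class extends.
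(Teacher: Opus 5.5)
\textbf{Gap in the abelian step.} The rest of your plan follows the paper: the formal direction, the reduction to $G^0$ via classical Chevalley--Weil for the finite \'etale quotient $X/G^0\to Y$, the d\'evissage $X\to X/H\to Y$ via Chevalley's theorem, and the linear case via finiteness of $\mathrm{H}^1(\mathcal{O}_{L,T},G)$ together with density of integral points on $G$. The problem is the abelian step.

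Your main route needs $Y^\circ$, and hence $A\times Y^\circ$, to be arithmetically special. This is false in general. For example, $\mathbb{A}^1$ is arithmetically special, but $\mathbb{A}^1\setminus\{0,1\}$ has only finitely many $\mathcal{O}_{L,T}$-points for every $L$ and $T$ (Siegel's theorem, or the $S$-unit equation).

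You also cannot simply use those integral points of $\mathcal{Y}$ whose generic fibre lies in $Y^\circ$. Chevalley--Weil for the finite \'etale cover $X^\circ\to A\times Y^\circ$ only controls points that are integral on a model of $A\times Y^\circ$. These are points avoiding the closure of $Y\setminus Y^\circ$ at all primes outside $S$. For points meeting that boundary at varying primes, you would need the cover to extend \'etale over the boundary, and this is exactly what you have not shown. As written, the abelian case is therefore not proved.

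\textbf{The missing input.} Your fallback points in the right direction, but it does not name what is needed, and no finite \'etale cover is required. The key fact is that, for $Y$ regular, a generically trivial $A$-torsor is trivial. Indeed, \'etale-locally a rational section is a rational map from a smooth variety to $A$. It extends by Weil's extension theorem, and the extensions glue by separatedness. Hence $\mathrm{H}^1(Y,A)\hookrightarrow \mathrm{H}^1(K(Y),A)$, so $n_*[X]=0$ on all of $Y$. Then $X\to A\times Y$ is an $A[n]$-torsor, finite \'etale over all of $A\times Y$, and Chevalley--Weil together with potential density on $A\times Y$ finishes the argument.

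This is the same regularity input the paper uses, in the form of Raynaud's theorem that $\mathrm{H}^1(\mathcal{Y},\mathcal{A})$ is torsion for regular $\mathcal{Y}$ (its abelian-scheme proposition assumes $\mathcal{Y}$ smooth). The paper then argues pointwise. Your objection that $\mathrm{H}^1(\mathcal{O}_{K,S},\mathcal{A})$ may be infinite does not matter there. If the global class has order $n$, each specialization $\mathcal{X}_P$ lies in $\mathrm{H}^1(\mathcal{O}_{K,S},\mathcal{A})[n]$. This group is a quotient of $\mathrm{H}^1(\mathcal{O}_{K,S},\mathcal{A}[n])$, which is finite by Hermite--Minkowski after inverting $n$. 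So all fibres over integral points become trivial over a single $\mathcal{O}_{L,T}$, exactly as in your linear case.
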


The proof of the classical Chevalley--Weil theorem (corresponding to the case where $G$ is finite in Theorem \ref{thm:cw_arithmetic}) relies on Hermite--Minkowski's finiteness theorem: given an integer $d$, a number field $K$, and a finite set of finite places $S$ of $K$, the set of isomorphism classes of degree $d$ field extensions $L/K$ unramified outside $S$ is finite. 

Our proof of Theorem \ref{thm:cw_arithmetic}  relies instead on other finiteness results for the ring $\mathcal{O}_{K,S}$ of $S$-integers in a number field $K$. For example, if $G$ is an affine finite type group scheme over $\mathcal{O}_{K,S}$, then the set $\mathrm{H}^1_{\mathrm{fppf}}(\Spec \mathcal{O}_{K,S}, G)$ of isomorphism classes of $G$-torsors over $\mathcal{O}_{K,S}$ is finite \cite[Proposition~5.1]{GilleMoretBailly}.

More broadly, finiteness properties for objects defined over $\mathcal{O}_{K,S}$, such as $G$-torsors,  $G$-gerbes, Brauer classes, smooth proper curves of fixed genus $g>1$, or certain Galois representations, often lead to Chevalley--Weil type statements via liftability properties for integral points along suitable morphisms; see Remark \ref{remark:arithmetically_continuous} for a brief discussion.

From this perspective, one aspect of the Chevalley--Weil theorem may be viewed as a statement about the liftability of integral points, a viewpoint that extends naturally to algebraic stacks (see for example \cite[\S 5]{JLalg}). We establish this lifting result for $G$-torsors with $G$ affine in this more general setting, as this perspective originally led us to Theorem~\ref{thm:cw_arithmetic}. Our motivation comes from Campana's Orbifold Mordell conjecture, which requires working in the stacky setting. Note that no connectedness assumption on $G$ is needed in the following result; in particular, it generalizes the lifting part of the classical Chevalley--Weil theorem.

 \begin{proposition} [Linear groups] \label{prop:cw_integrally_affine} Let $\mathcal{Y}$ be a finitely presented integral algebraic stack over $\mathcal{O}_{K,S}$. 
Let $\mathcal{G}$ be an affine finite type group scheme over $\mathcal{O}_{K,S}$ and let $\mathcal{X}\to \mathcal{Y}$ be a $\mathcal{G}$-torsor.   Then, 
 there is a number field $L$ and a finite set of finite places $T$ of $L$ containing  all the places of $L$ lying over $S$ such that every $\mathcal{O}_{K,S}$-point of $\mathcal{Y}$ lifts to an $\mathcal{O}_{L,T}$-point of $\mathcal{X}$.   
\end{proposition}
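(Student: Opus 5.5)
The plan is to pull the torsor back along each integral point and use finiteness of $\mathrm{H}^1_{\mathrm{fppf}}(\Spec \mathcal{O}_{K,S},\mathcal{G})$ to kill all the resulting torsors by a single finite extension. Let $y\colon \Spec \mathcal{O}_{K,S}\to \mathcal{Y}$ be an $\mathcal{O}_{K,S}$-point. The fibre product $\mathcal{X}\times_{\mathcal{Y},y}\Spec \mathcal{O}_{K,S}$ is a $\mathcal{G}$-torsor $P_y$ over $\Spec\mathcal{O}_{K,S}$. Since being a torsor is fppf-local and $\mathcal{X}\to\mathcal{Y}$ is representable, $P_y$ is an algebraic space; it is even a scheme because $\mathcal{G}$ is affine, so torsors under it are affine over the base by effective descent of affine morphisms. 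A lift of $y$ to an $\mathcal{O}_{L,T}$-point of $\mathcal{X}$ is exactly an $\mathcal{O}_{L,T}$-point of $P_y$. So it suffices to find $L,T$, independent of $y$, such that every $P_y$ becomes trivial over $\mathcal{O}_{L,T}$.

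\textbf{Step 1: finitely many torsor classes.} By \cite[Proposition~5.1]{GilleMoretBailly}, $\mathrm{H}^1_{\mathrm{fppf}}(\Spec\mathcal{O}_{K,S},\mathcal{G})$ is finite. Hence the torsors $P_y$, as $y$ ranges over all $\mathcal{O}_{K,S}$-points of $\mathcal{Y}$, fall into finitely many isomorphism classes, represented by $P_1,\ldots,P_N$. Nothing about the stack $\mathcal{Y}$ is needed beyond the existence of the pullback torsors; finite presentation of $\mathcal{Y}$ only ensures this setup makes sense.

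\textbf{Step 2: trivialize each representative.} Each $P_j$ is a nonempty finite type scheme over $\mathcal{O}_{K,S}$, faithfully flat over the base. Its generic fibre $P_{j,K}$ is a nonempty finite type $K$-scheme and therefore has a point over some number field $L_j$. Spreading out, that point extends to an $\mathcal{O}_{L_j,T_j}$-point of $P_j$ for a suitable finite set $T_j$ of places of $L_j$ containing those above $S$. Now let $L$ be the compositum of the $L_j$ and $T$ the set of places of $L$ lying over $S$ or over some $T_j$. Each $P_j$, and hence each $P_y$, then has an $\mathcal{O}_{L,T}$-point, which is the desired lift of $y$.

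The main obstacle is Step 1: the finiteness of $\mathrm{H}^1$ over $\mathcal{O}_{K,S}$ for an arbitrary affine finite type group scheme, possibly non-smooth and non-connected. I will take it from Gille--Moret-Bailly rather than reprove it. The only remaining care is in the representability and affineness of $P_y$, which follow from descent as indicated. No uniformity in $y$ is needed beyond Step 1, because the choice of $L$ and $T$ depends only on the finitely many classes $P_1,\ldots,P_N$.
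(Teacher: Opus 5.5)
Your proposal is correct and follows the paper's proof. Both pull the torsor back along each $\mathcal{O}_{K,S}$-point, invoke the finiteness of $\mathrm{H}^1_{\mathrm{fppf}}(\Spec\mathcal{O}_{K,S},\mathcal{G})$ from Gille--Moret-Bailly, and trivialize the finitely many classes over a single $\mathcal{O}_{L,T}$; your Step 2 (a point on the generic fibre, spreading out, then taking the compositum) only spells out the simultaneous trivialization that the paper asserts without detail.
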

\begin{proof} (This is similar to the proof of \cite[Lemma~8.1]{JLitt} where the case of $G = \mathrm{PGL}_N$ is considered.)
Since $\mathrm{H}^1_{\textrm{fppf}}(\Spec \mathcal{O}_{K,S}, \mathcal{G})$ is finite \cite[Proposition~5.1]{GilleMoretBailly}, there is a number field $L/K$ and a finite set of finite places $T$ of $L$ containing all the places of $L$ lying over $S$ such that every $\mathcal{G}$-torsor over $\mathcal{O}_{K,S}$  becomes trivial over $\mathcal{O}_{L,T}$ (and thus has an $\mathcal{O}_{L,T}$-point). 
Let $P\in \mathcal{Y}(\mathcal{O}_{K,S})$. 
 Define $\mathcal{X}_P \to \Spec \mathcal{O}_{K,S}$ as the pull-back of $\mathcal{X}\to \mathcal{Y}$ along $P\colon \Spec \mathcal{O}_{K,S}\to \mathcal{Y}$.  Note that $\mathcal{X}_P\to \Spec \mathcal{O}_{K,S}$ is a $\mathcal{G}$-torsor, and thus has a section over $\mathcal{O}_{L,T}$ (by the construction of $L$ and $T$).  This is summarized in the following diagram:  
\[ \xymatrix{ & & 
\mathcal{X}_P \ar[rr] \ar[d]^{\mathcal{G}-\text{torsor}} & & \mathcal{X} \ar[d] \\
\Spec \mathcal{O}_{L,T} \ar[rr] \ar@{.>}[urr]^{\exists} & & \Spec \mathcal{O}_{K,S} \ar[rr]_{P} & & \mathcal{Y} }
\]  Since the composed morphism $\Spec \mathcal{O}_{L,T}\to \Spec \mathcal{O}_{K,S}\to \mathcal{Y}$ factors over $\mathcal{X}$, this shows that $P$ lifts to an $\mathcal{O}_{L,T}$-point as required.
\end{proof} 
 
 \begin{proposition}\label{prop:cw_abelian}

  Let $\mathcal{Y}$ be a smooth finite type integral  scheme over $\mathcal{O}_{K,S}$. Let $\mathcal{G}\to \mathcal{Y}$ be a finite type group scheme  such that one of the following conditions holds.

 \begin{enumerate}
 \item $\mathcal{G}\to \mathcal{Y}$ is an abelian scheme.
 \item $ \mathcal{G}\to \mathcal{Y}$ is a smooth affine torus (i.e., there is an fppf covering $\mathcal{Y}'\to \mathcal{Y}$ such that $\mathcal{G}_{\mathcal{Y}'}$ is isomorphic to $\mathbb{G}_m^r$ over $\mathcal{Y}'$). 
 \item The morphism $\mathcal{G}\to \mathcal{Y}$ is smooth and its geometric fibres  are (affine) reductive groups.
 \end{enumerate}
 Then, 
 there is a number field $L$ and a finite set of finite places $T$ of $L$ containing  all the places of $L$ lying over $S$ such that $\mathcal{Y}(\mathcal{O}_{K,S})$ is contained in the image of $ \mathcal{X}(\mathcal{O}_{L,T})\to  \mathcal{Y}(\mathcal{O}_{L,T})$. 
 \end{proposition}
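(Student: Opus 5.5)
The plan is to mirror the proof of Proposition~\ref{prop:cw_integrally_affine}. Everything reduces to a finiteness statement: the set of isomorphism classes of torsors that can appear as fibres $\mathcal{X}_P$ over points $P\in\mathcal{Y}(\mathcal{O}_{K,S})$ should be finite. Once we have that, we pick a finite extension $L/K$ and a set $T$ of places over which every such torsor acquires a section.

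Fix $P\in \mathcal{Y}(\mathcal{O}_{K,S})$. Pulling back gives a group scheme $\mathcal{G}_P:=P^*\mathcal{G}$ over $\Spec\mathcal{O}_{K,S}$ and a $\mathcal{G}_P$-torsor $\mathcal{X}_P$. Unlike the constant case, $\mathcal{G}_P$ now depends on $P$. So the first step is to show that only finitely many isomorphism classes of $\mathcal{G}_P$ occur, at least after a finite extension.

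\textbf{Case (i).} The $\mathcal{G}_P$ are abelian schemes over $\mathcal{O}_{K,S}$ of fixed dimension. They all have the same $\ell$-adic Galois representation up to a bounded twist, coming from the monodromy of $\mathcal{G}\to\mathcal{Y}$. More directly, Faltings' Shafarevich theorem (with a polarization of bounded degree spread out over $\mathcal{Y}$) gives finitely many isomorphism classes. For an abelian scheme $A$ over $\mathcal{O}_{K,S}$, torsors are classified by $\mathrm{H}^1(\Spec\mathcal{O}_{K,S},A)$, which is essentially the Tate--Shafarevich-type group. That group is not finite in general, so here I plan to use finite generation instead. A torsor class $c$ is killed by some $n$ after passing to $K$: it lies in the image of $\mathrm{H}^1(\mathcal{O}_{K,S},A[n])$, and the torsor splits over an extension unramified outside $S$ and $n$. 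The obstacle is that $n$ is unbounded. So for abelian schemes I expect to need a different route: use that $\mathcal{X}$ itself is a smooth proper family over $\mathcal{Y}$, and lift $P$ via a multisection of $\mathcal{X}\to\mathcal{Y}$ of degree $d$. That multisection makes every $\mathcal{X}_P$ killed by $d$, so $\mathrm{H}^1(\mathcal{O}_{K,S},\mathcal{G}_P[d])$ is finite by Hermite--Minkowski. Every such torsor then splits over a fixed compositum $L$ of extensions of bounded degree unramified outside $S\cup\{d\}$.

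\textbf{Case (ii).} $\mathcal{G}_P$ is a torus over $\mathcal{O}_{K,S}$ that splits over an étale cover of bounded degree unramified outside $S$. By Hermite--Minkowski there are finitely many such splitting fields, hence finitely many tori up to isomorphism. For each torus, $\mathrm{H}^1$ over $\mathcal{O}_{K,S}$ is finite, by Gille--Moret-Bailly as invoked earlier.

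\textbf{Case (iii).} Reductive groups over $\mathcal{O}_{K,S}$ of a fixed type are forms of a split Chevalley group. They are classified by $\mathrm{H}^1(\mathcal{O}_{K,S},\mathrm{Aut})$, which is finite by the same theorem, and their torsor sets are again finite.

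In each case, take the finitely many torsors obtained and choose $L$ and $T$ so that all of them become trivial over $\mathcal{O}_{L,T}$. As in the proof of Proposition~\ref{prop:cw_integrally_affine}, the section of $\mathcal{X}_P$ over $\mathcal{O}_{L,T}$ then lifts $P$. The main obstacle is case (i): bounding the torsor classes requires the multisection, or the $n$-torsion, argument sketched above.
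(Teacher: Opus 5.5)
\textbf{Gap in case (i).} Your overall structure matches the paper's: finitely many isomorphism classes of $\mathcal{G}_P$, finitely many torsor classes $\mathcal{X}_P$, and one $\mathcal{O}_{L,T}$ that trivializes all of them. The gap is in case (i), at the step you flagged yourself. The paper gets the uniform bound from \cite[Proposition~XIII.2.6]{RaynaudAbelian}: since $\mathcal{Y}$ is regular, $\mathrm{H}^1(\mathcal{Y},\mathcal{A})$ is torsion. So if $n$ is the order of $[\mathcal{X}]$, every $[\mathcal{X}_P]$ is killed by $n$. You instead assert that a degree-$d$ multisection of $\mathcal{X}\to\mathcal{Y}$ kills every $\mathcal{X}_P$, but you do not say where such a multisection comes from. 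A closed point of degree $d$ on the generic fibre only gives a finite flat multisection over a dense open $\mathcal{Y}^\circ\subset\mathcal{Y}$. Once $\dim\mathcal{Y}\geq 2$, its closure can have positive-dimensional fibres, so nothing is said about points $P$ meeting $\mathcal{Y}\setminus\mathcal{Y}^\circ$. Conversely, a finite flat multisection over all of $\mathcal{Y}$ exists essentially if and only if $[\mathcal{X}]$ is torsion, so assuming one presupposes Raynaud's conclusion.

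\textbf{How to repair it without Raynaud.} First, it suffices to bound the order of $[(\mathcal{X}_P)_K]$ in $\mathrm{H}^1(K,\mathcal{G}_{P,K})$. Indeed, $\mathcal{X}_P$ is proper over $\mathcal{O}_{K,S}$, so by the valuative criterion $\mathrm{H}^1(\mathcal{O}_{K,S},\mathcal{G}_P)\to \mathrm{H}^1(K,\mathcal{G}_{P,K})$ is injective. Second, every fibre of $\mathcal{X}_K\to\mathcal{Y}_K$ over a $K$-point carries a zero-cycle of bounded degree. You can get this in either of two ways:
\begin{enumerate}
\item by noetherian induction on $\mathcal{Y}_K$: finitely many locally closed strata, each with a finite \'etale multisection of degree $d_i$, and $d=\mathrm{lcm}(d_i)$;
\item using that $\mathcal{Y}_K$ is smooth: the refined Gysin pullback of the closure of the generic multisection along $\Spec K\to\mathcal{Y}_K$ has degree $d$.
\end{enumerate}
Since the period of a torsor divides its index, $d[\mathcal{X}_P]=0$ for all $P$.

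\textbf{What your route buys.} With this repair, your argument is slightly more economical than the paper's. Restricted to $\mathcal{O}_{K,S'}$ with $S'=S\cup\{v\mid d\}$, each $\mathcal{X}_P$ is induced from a $\mathcal{G}_P[d]$-torsor, which is a finite \'etale cover of degree $d^{2g}$. The compositum of the finitely many extensions of degree at most $d^{2g}$ unramified outside $S'$ therefore trivializes all of them at once. So the appeal to \cite{Faltings2} for finiteness of the $\mathcal{G}_P$ is not needed.

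\textbf{Smaller point in (iii).} $\mathrm{Aut}(G_0)$ need not be of finite type: when $G_0$ is not semisimple, its outer part is a constant group that can be infinite, e.g.\ $\mathrm{GL}_r(\mathbb{Z})$ for $G_0=\mathbb{G}_m^r$. So \cite{GilleMoretBailly} does not apply directly to $\mathrm{H}^1(\mathcal{O}_{K,S},\mathrm{Aut}(G_0))$ as you claim. The outer part has to be handled as in your torus argument (bounded finite image, Hermite--Minkowski, finitely many conjugacy classes of finite subgroups). This is the input the paper takes from \cite{JL2}.
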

 \begin{proof}
The set of $\mathcal{O}_{K,S}$-isomorphism classes of abelian schemes (resp.  smooth affine tori, resp. reductive group schemes) of bounded dimension over $\mathcal{O}_{K,S}$ is finite by Faltings's theorem \cite{Faltings2}  and \cite[\S 1]{JL2}) (which itself builds on  \cite{BorelSerre}).  Thus, if  $P\in \mathcal{Y}(\mathcal{O}_{K,S})$ and $\mathcal{G}_P$ is the pull-back of $\mathcal{G}\to \mathcal{Y}$ along $P$, then  the $\mathcal{G}_P$ range over finitely many $\mathcal{O}_{K,S}$-isomorphism classes of group schemes over $\mathcal{O}_{K,S}$.   For a fixed such group scheme $G$ over $\mathcal{O}_{K,S}$, the set of $G$-torsors over $\mathcal{O}_{K,S}$ is finite in case (2) and (3) by the  finiteness of $\mathrm{H}^1_{\mathrm{fppf}}(\Spec \mathcal{O}_{K,S},  G)$ (see  \cite[Proposition~5.1]{GilleMoretBailly}).
 In the case of an abelian scheme $\mathcal{A}$ over $\mathcal{Y}$,  as $\mathcal{Y}$ is a regular noetherian scheme,   the group of $\mathcal{A}$-torsors over $\mathcal{Y}$ is torsion  \cite[Proposition~XIII.2.6]{RaynaudAbelian}.  Let $n \in \mathbb{Z}_{\geq 1}$ be the order of $\mathcal{X}\to \mathcal{Y}$ (assuming $\mathcal{G}\to \mathcal{Y}$ is an abelian scheme). Then each $\mathcal{X}_P$ is  a $\mathcal{G}_P$-torsor of order dividing $n$, and thus lies in $\mathrm{H}^1(\Spec \mathcal{O}_{K,S}, \mathcal{G}_P)[n]$ which is finite. 
This implies that the  $\mathcal{X}_P$ run over finitely many isomorphism classes and thus can be simultanously trivialized over some suitable finite extension $\mathcal{O}_{L,T}$ (as in the proof of Proposition \ref{prop:cw_integrally_affine}). This implies the result.
 \end{proof}

 \begin{theorem}\label{thm:lift}
  Let $\mathcal{Y}$ be a finite type scheme over $\mathcal{O}_{K,S}$. 
Let $\mathcal{G}$ be a  finite type group scheme over $\mathcal{O}_{K,S}$ and let $\mathcal{X}\to \mathcal{Y}$ be a $\mathcal{G}$-torsor.   Then, 
 there is a number field $L/K$ and a finite set of finite places $T$ of $L$ containing  all the places of $L$ lying over $S$ such that every $\mathcal{O}_{K,S}$-point of $\mathcal{Y}$ lifts to an $\mathcal{O}_{L,T}$-point of $\mathcal{X}$.   
 \end{theorem}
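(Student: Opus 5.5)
We plan to reduce Theorem \ref{thm:lift} to the two lifting statements already established: Proposition \ref{prop:cw_integrally_affine} for affine group schemes, and Proposition \ref{prop:cw_abelian}(1) for abelian schemes. The bridge between them is the structure theory of algebraic groups over $K$.

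\textbf{Step 1: decompose $G$ over $K$.} Write $G=\mathcal{G}_K$. By Chevalley's theorem (combined with Rosenlicht's in the nonconnected case), $G$ is an extension
\[
1\to H\to G\to A\to 1,
\]
where $H$ is an affine normal subgroup, namely the extension of $\pi_0(G)$ by the affine part of $G^0$, and $A$ is an abelian variety. After enlarging $K$ and $S$, we spread this out to an exact sequence of flat finite type group schemes $1\to\mathcal{H}\to\mathcal{G}\to\mathcal{A}\to 1$ over $\mathcal{O}_{K,S}$, with $\mathcal{H}$ affine and $\mathcal{A}$ an abelian scheme. Because finite extensions of $K$ and enlargements of $S$ are permitted in the conclusion, and $\mathcal{Y}(\mathcal{O}_{K,S})\subset\mathcal{Y}(\mathcal{O}_{K,S'})$ for $S'\supseteq S$, this enlargement is harmless. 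The points of $\mathcal{Y}$ lying over the finitely many removed places do not matter, since only points over the base ring are being lifted.

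\textbf{Step 2: lift through two torsors.} Put $\mathcal{Z}=\mathcal{X}/\mathcal{H}$, which is the contracted product $\mathcal{X}\times^{\mathcal{G}}\mathcal{A}$. Then $\mathcal{X}\to\mathcal{Z}$ is an $\mathcal{H}$-torsor and $\mathcal{Z}\to\mathcal{Y}$ is an $\mathcal{A}$-torsor.
\begin{enumerate}
\item For the first stage, apply Proposition \ref{prop:cw_abelian}(1) to $\mathcal{Z}\to\mathcal{Y}$. That proposition assumes $\mathcal{Y}$ is a smooth integral scheme, while here $\mathcal{Y}$ is an arbitrary finite type scheme. We would therefore rerun its argument pointwise. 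For $P\in\mathcal{Y}(\mathcal{O}_{K,S})$, the fibre $\mathcal{Z}_P$ is an $\mathcal{A}$-torsor over $\mathcal{O}_{K,S}$ with $\mathcal{A}$ fixed (it is constant here), so only the torsion and finiteness input is needed.
\item Having obtained a lift $P'\in\mathcal{Z}(\mathcal{O}_{L,T})$, apply Proposition \ref{prop:cw_integrally_affine} to $\mathcal{X}_{\mathcal{O}_{L,T}}\to\mathcal{Z}_{\mathcal{O}_{L,T}}$ to get a further extension $L'/L$ and a set $T'$ such that every $\mathcal{O}_{L,T}$-point of $\mathcal{Z}$ lifts to $\mathcal{X}(\mathcal{O}_{L',T'})$. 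That proposition is stated for integral stacks, but its proof uses only the finiteness of $\mathrm{H}^1_{\mathrm{fppf}}(\Spec\mathcal{O}_{L,T},\mathcal{H})$, so integrality of $\mathcal{Z}$ is irrelevant.
\end{enumerate}
Composing the two stages gives the theorem.

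\textbf{Main obstacle: finiteness in the abelian stage.} The set $\mathrm{H}^1(\mathcal{O}_{K,S},\mathcal{A})$ is torsion but may be infinite, so we need a uniform bound $n$ on the order of all the classes $[\mathcal{Z}_P]$. Then every $\mathcal{Z}_P$ lies in the finite group $\mathrm{H}^1(\mathcal{O}_{K,S},\mathcal{A})[n]$, whose finiteness follows from the finiteness of the Selmer group or of $\mathrm{H}^1(\mathcal{O}_{K,S},\mathcal{A}[n])$.

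The plan is to show that $[\mathcal{Z}]\in\mathrm{H}^1(\mathcal{Y},\mathcal{A})$ is itself torsion, after stratifying $\mathcal{Y}$ into finitely many regular (even smooth) locally closed pieces and using \cite[Proposition~XIII.2.6]{RaynaudAbelian} on each piece. A point $P$ factors set-theoretically through the strata but not through a single one, so that approach alone is insufficient. The fallback is therefore to argue directly, avoiding stratification:
\begin{enumerate}
\item The class $[\mathcal{Z}]$ becomes trivial on some fppf cover. Hence over the generic points of $\mathcal{Y}$ it comes from a finite extension, by the standard quasi-finite section argument, and is killed by the degree $d$ of that extension.
\item Noetherian induction then produces a uniform $n$ with $n[\mathcal{Z}]=0$ on each stratum.
\item For $P$ landing generically in a stratum, $n[\mathcal{Z}_P]$ is trivial over $K$. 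Since $\mathrm{Sha}$-type classes are of bounded $n$-torsion, this suffices; more simply, $\mathcal{Z}_P$ is the restriction of a class killed by $n$ on a reduced subscheme through which $P$ factors, namely the closure of the image of $P$.
\end{enumerate}

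Once the bound $n$ is in hand, the torsors $\mathcal{Z}_P$ range over finitely many isomorphism classes, so a single finite extension $L/K$, together with a suitable $T$, trivializes all of them, exactly as in the proof of Proposition \ref{prop:cw_integrally_affine}.
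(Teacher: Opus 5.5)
\textbf{The gap is in Step 1.} For a nonconnected $G$ there is in general no affine normal subgroup $H$ with $G/H$ an abelian variety. So your $\mathcal{Z}=\mathcal{X}/\mathcal{H}$ need not be an $\mathcal{A}$-torsor.

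Take $G=E\rtimes\mathbb{Z}/2\mathbb{Z}$, with $E$ an elliptic curve and the generator $\sigma$ acting by $-1$. If $H\subset G$ is normal and $G/H$ is connected, then $H$ contains some $(a,\sigma)$. Conjugating by $(b,1)$ gives $(a+2b,\sigma)\in H$. Since $(a+2b,\sigma)(a,\sigma)^{-1}=(2b,1)$, we get $H\supseteq [2](E)=E$. Hence $H=G$, which is not affine.

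Your candidate ``extension of $\pi_0(G)$ by $G^0_{\mathrm{aff}}$'' is here a subgroup $\{1,(a,\sigma)\}$ of order two, and it is not normal. The same obstruction occurs whenever $\pi_0(G)$ acts nontrivially on $G^0/G^0_{\mathrm{aff}}$. Citing a further structure theorem does not help.

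\textbf{The fix is the paper's first step.} After enlarging $S$, take the normal, geometrically connected subgroup $\mathcal{G}^0\subset\mathcal{G}$ with $\mathcal{G}/\mathcal{G}^0$ finite \'etale. First lift along the finite \'etale $\mathcal{G}/\mathcal{G}^0$-torsor $\mathcal{X}/\mathcal{G}^0\to\mathcal{Y}$, using Proposition~\ref{prop:cw_integrally_affine} or classical Chevalley--Weil. Only then apply Chevalley's theorem to the connected group $\mathcal{G}^0_K$. There the affine part is normal with abelian-variety quotient, and your two-stage argument (abelian stage, then affine stage) applies to the $\mathcal{G}^0$-torsor $\mathcal{X}\to\mathcal{X}/\mathcal{G}^0$. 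With this repair your proof becomes the paper's.

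\textbf{Your abelian stage is sound, and more careful than the paper.} The paper simply invokes Proposition~\ref{prop:cw_abelian}, whose smooth-and-integral hypothesis on the base is not available in this generality. After the reduction above, the base is $\mathcal{X}/\mathcal{G}^0$, which is no better. Your noetherian induction on generic points does give a uniform $n$ with $n[\mathcal{Z}_{P_K}]=0$ in $\mathrm{H}^1(K,A)$ for all $P$.

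Two points need correcting:
\begin{itemize}
\item The passage from there to $n[\mathcal{Z}_P]=0$ in $\mathrm{H}^1(\mathcal{O}_{K,S},\mathcal{A})$ has nothing to do with Tate--Shafarevich groups. It is the injectivity of $\mathrm{H}^1(\mathcal{O}_{K,S},\mathcal{A})\to\mathrm{H}^1(K,A)$. This holds because a torsor under an abelian scheme is proper over the Dedekind base, so a $K$-point extends to an $\mathcal{O}_{K,S}$-point. You should state this explicitly.
\item Your ``more simply'' variant should be dropped. The class of $\mathcal{Z}$ on the closure of the image of $P$ is essentially $[\mathcal{Z}_P]$ itself, so saying it is killed by $n$ is exactly what you are trying to prove.
\end{itemize}
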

\begin{proof} We first reduce to the case that $\mathcal{G}$ is geometrically connected over $\mathcal{O}_{K,S}$.  To do so,  enlarging $S$, if necessary,  we note that there  is a normal closed subgroup scheme $\mathcal{G}^0\subset \mathcal{G}$ over $\mathcal{O}_{K,S}$ such that $\mathcal{G}/\mathcal{G}^0$ is finite \'etale over $\mathcal{O}_{K,S}$ and $\mathcal{G}^0$ is geometrically connected over $\mathcal{O}_{K,S}$. By applying Proposition \ref{prop:cw_integrally_affine} (or the classical Chevalley--Weil theorem) to the finite \'etale morphism $\mathcal{X}/\mathcal{G}^0\to \mathcal{Y}$,  we reduce to the case that $\mathcal{G}=\mathcal{G}^0$.

 By applying Chevalley's structure theorem \cite{ConradChevalley} to $\mathcal{G}_K$,  we may assume that there is an affine finite type group scheme $\mathcal{G}^{a}$ over $\mathcal{O}_{K,S}$, an abelian scheme $\mathcal{A}$ over $\mathcal{O}_{K,S}$ and a short exact sequence 
\[ 1\to \mathcal{G}^a\to \mathcal{G}\to \mathcal{A}\to 1.\]  The statement now follows by first applying Proposition \ref{prop:cw_abelian} to the $\mathcal{A}$-torsor $\mathcal{X}/\mathcal{G}^a\to \mathcal{Y}$ and then applying Proposition \ref{prop:cw_integrally_affine} to the $\mathcal{G}^a$-torsor $\mathcal{X}\to \mathcal{X}/\mathcal{G}^a$.
\end{proof}

 \begin{remark}\label{remark:arithmetically_continuous}
The question of whether integral points on a variety (or stack) $X$ over a number field $K$ lift to a variety $Y$ dominating $X$ has been widely studied. It is closely related to the Hilbert property \cite[\S 3]{Serre} and to the notion of arithmetic surjectivity (see, for instance, \cite{LSS}).

In the terminology of Hassett and Tschinkel \cite[Definition~2.1]{HT01}, the above implies that $G$-torsors are arithmetically continuous when $G$ is constant or an abelian scheme over $Y$.     Projective bundles provide further examples: by Theorem~2.8 of \emph{loc.\ cit.} (due to Colliot-Th\'el\`ene), they are arithmetically continuous. A result of Colliot-Th\'el\`ene and Iyer shows that any smooth projective family of homogeneous spaces for linear algebraic groups is arithmetically continuous; see \cite{CTI}.  
\end{remark}

\begin{proof}[Proof of Theorem \ref{thm:cw_arithmetic}] If $X$ is arithmetically special, then so is $Y$.  Conversely,   suppose that $Y$ is arithmetically special.   Then, replacing $K$ by a finite extension if necessary, we may choose   a finite set of finite places $S$ of $K$, a model $\mathcal{Y}$ for $Y$ over $\mathcal{O}_{K,S}$ with $\mathcal{Y}(\mathcal{O}_{K,S})$ dense, a finite type group scheme $\mathcal{G}$ over $\mathcal{O}_{K,S}$   with generic fibre $G$, and a $\mathcal{G}$-torsor $\mathcal{X}\to \mathcal{Y}$ such that $\mathcal{X}_{K}$ is isomorphic to $X$.  By potential density of integral points on   finite type group schemes,  replacing $K$ by a finite field extension and $S$ by a larger finite set of finite places if necessary, we may also assume that    $\mathcal{G}(\mathcal{O}_{K,S})$ is dense.     Then, by Theorem \ref{thm:lift}, there is a number field $L/K$ and a finite set of finite places $T$ of $L$ containing  all the places of $L$ lying over $S$ such that the image of $\mathcal{X}(\mathcal{O}_{L,T})$ in $\mathcal{Y}(\mathcal{O}_{L,T})$ contains $\mathcal{Y}(\mathcal{O}_{K,S})$.   In particular, for every $P\in \mathcal{Y}(\mathcal{O}_{K,S})$, the fibre $\mathcal{X}_P$ is isomorphic to $\mathcal{G}$ over $\mathcal{O}_{L,T}$.  Thus, since $\mathcal{G}(\mathcal{O}_{K,S})$ is dense in $\mathcal{G}$, it follows that   $\mathcal{X}_P(\mathcal{O}_{L,T})$ is dense in $\mathcal{X}_P$.  
Therefore,  as $\mathcal{Y}(\mathcal{O}_{K,S})$ is dense in $Y$, we   conclude that $\mathcal{X}(\mathcal{O}_{L,T})$ is dense in $X$. 
\end{proof}



%
%
%
%
%

\bibliography{integral_wsc}{}

\def\cprime{$'$} \def\cprime{$'$}
\begin{thebibliography}{CTSSD97}

\bibitem[Abr09]{AbramovichBirGeom}
D.~Abramovich.
\newblock Birational geometry for number theorists.
\newblock In {\em Arithmetic geometry}, volume~8 of {\em Clay Math. Proc.},
  pages 335--373. Amer. Math. Soc., Providence, RI, 2009.

\bibitem[AGV08]{AGV}
D.~Abramovich, T.~Graber, and A.~Vistoli.
\newblock Gromov-{W}itten theory of {D}eligne-{M}umford stacks.
\newblock {\em Amer. J. Math.}, 130(5):1337--1398, 2008.

\bibitem[Bara]{BartschOrb}
F.~Bartsch.
\newblock The morphism to the orbifold base need not be an orbifold map.
\newblock {\em arXiv:2603.06083}.

\bibitem[Barb]{Bartsch}
F.~Bartsch.
\newblock New examples of geometrically special varieties: {K}3 surfaces,
  {E}nriques surfaces, and algebraic groups.
\newblock {\em arXiv:2502.09400}.

\bibitem[BCJW]{BCJW}
F.~Bartsch, F.~Campana, A.~Javanpeykar, and O.~Wittenberg.
\newblock The {W}eakly {S}pecial {C}onjecture contracts {O}rbifold {M}ordell,
  and thus abc.
\newblock {\em JEMS, to appear. arXiv:2410.06643}.

\bibitem[BHPT18]{BogHalPazTan}
F.~Bogomolov, L.~H. Halle, F.~Pazuki, and S.~Tanimoto.
\newblock Abelian {C}alabi-{Y}au threefolds: {N}\'{e}ron models and rational
  points.
\newblock {\em Math. Res. Lett.}, 25(2):367--392, 2018.

\bibitem[BJL25]{BJL}
F.~Bartsch, A.~Javanpeykar, and A.~Levin.
\newblock Symmetric products and puncturing {C}ampana-special varieties.
\newblock {\em Proc. Lond. Math. Soc. (3)}, 131(6):Paper No. e70108, 2025.

\bibitem[BJR]{BJRSeveri}
F.~Bartsch, A.~Javanpeykar, and E.~Rousseau.
\newblock The theorem of {M}aehara-{S}everi for maps of general type.
\newblock {\em arXiv:2602.02314}.

\bibitem[BJR25]{BJR}
F.~Bartsch, A.~Javanpeykar, and E.~Rousseau.
\newblock Weakly special threefolds and nondensity of rational points.
\newblock {\em J. Lond. Math. Soc. (2)}, 112(5):Paper No. e70348, 28, 2025.

\bibitem[BS64]{BorelSerre}
A.~Borel and J.-P. Serre.
\newblock Th{\'e}or{\`e}mes de finitude en cohomologie galoisienne.
\newblock {\em Comment. Math. Helv.}, 39:111--164, 1964.

\bibitem[BT00]{BT3}
F.~A. Bogomolov and Y.~Tschinkel.
\newblock Density of rational points on elliptic {$K3$} surfaces.
\newblock {\em Asian J. Math.}, 4(2):351--368, 2000.

\bibitem[BT04]{BT}
F.~Bogomolov and Y.~Tschinkel.
\newblock Special elliptic fibrations.
\newblock In {\em The {F}ano {C}onference}, pages 223--234. Univ. Torino,
  Turin, 2004.

\bibitem[Cad07]{Cadman}
C.~Cadman.
\newblock Using stacks to impose tangency conditions on curves.
\newblock {\em Amer. J. Math.}, 129(2):405--427, 2007.

\bibitem[Cam04]{Ca04}
F.~Campana.
\newblock Orbifolds, special varieties and classification theory.
\newblock {\em Ann. Inst. Fourier (Grenoble)}, 54(3):499--630, 2004.

\bibitem[Cam05]{Ca05}
F.~Campana.
\newblock Fibres multiples sur les surfaces: aspects geom{\'e}triques,
  hyperboliques et arithm{\'e}tiques.
\newblock {\em Manuscripta Math.}, 117(4):429--461, 2005.

\bibitem[Cam11]{Ca11}
F.~Campana.
\newblock Orbifoldes g{\'e}om{\'e}triques sp{\'e}ciales et classification
  bim{\'e}romorphe des vari{\'e}t{\'e}s k{\"a}hl{\'e}riennes compactes.
\newblock {\em Journal de l'Institut de Math{\'e}matiques de Jussieu},
  10(4):809--934, 2011.

\bibitem[CDY]{CDY}
B.~Cadorel, Y.~Deng, and K.~Yamanoi.
\newblock Hyperbolicity and fundamental groups of complex quasi-projective
  varieties (iii): applications.
\newblock {\em arXiv:2512.20360}.

\bibitem[Con02]{ConradChevalley}
B.~Conrad.
\newblock A modern proof of {C}hevalley's theorem on algebraic groups.
\newblock {\em J. Ramanujan Math. Soc.}, 17(1):1--18, 2002.

\bibitem[CP07]{CP07}
F.~Campana and M.~P{\u a}un.
\newblock Vari{\'e}t{\'e}s faiblement sp{\'e}ciales {\`a} courbes enti{\`e}res
  d{\'e}g{\'e}n{\'e}r{\'e}es.
\newblock {\em Compos. Math.}, 143(1):95--111, 2007.

\bibitem[CTI12]{CTI}
J.-L. Colliot-Th{\'e}l{\`e}ne and J.~N. Iyer.
\newblock Potential density for some families of homogeneous spaces.
\newblock {\em J. Ramanujan Math. Soc.}, 27(3):295--303, 2012.

\bibitem[CTSSD97]{CTSSD}
J.-L. Colliot-Th\'{e}l\`ene, A.~N. Skorobogatov, and P.~Swinnerton-Dyer.
\newblock Double fibres and double covers: paucity of rational points.
\newblock {\em Acta Arith.}, 79(2):113--135, 1997.

\bibitem[CW09]{CampanaWinkelmannBrody}
F.~Campana and J.~Winkelmann.
\newblock A {B}rody theorem for orbifolds.
\newblock {\em Manuscripta Math.}, 128(2):195--212, 2009.

\bibitem[Dar97]{Darmon}
H.~Darmon.
\newblock Faltings plus epsilon, {W}iles plus epsilon, and the generalized
  {F}ermat equation.
\newblock {\em C. R. Math. Rep. Acad. Sci. Canada}, 19(1):3--14, 1997.

\bibitem[Elk91]{Elkies}
N.~D. Elkies.
\newblock {$ABC$} implies {M}ordell.
\newblock {\em Internat. Math. Res. Notices}, (7):99--109, 1991.

\bibitem[Fal83]{Faltings2}
G.~Faltings.
\newblock Endlichkeitss{\"a}tze f{\"u}r abelsche {V}ariet{\"a}ten {\"u}ber
  {Z}ahlk{\"o}rpern.
\newblock {\em Invent. Math.}, 73(3):349--366, 1983.

\bibitem[GHS03]{GraberHarrisStarr}
T.~Graber, J.~Harris, and J.~Starr.
\newblock Families of rationally connected varieties.
\newblock {\em J. Amer. Math. Soc.}, 16(1):57--67, 2003.

\bibitem[GMB13]{GilleMoretBailly}
P.~Gille and L.~Moret-Bailly.
\newblock Actions alg{\'e}briques de groupes arithm{\'e}tiques.
\newblock In {\em Torsors, {\'e}tale homotopy and applications to rational
  points}, volume 405 of {\em London Math. Soc. Lecture Note Ser.}, pages
  231--249. Cambridge Univ. Press, Cambridge, 2013.

\bibitem[GRTW26]{GRTW}
C.~Gasbarri, E.~Rousseau, A.~Turchet, and J.~T.-Y. Wang.
\newblock Simply connectedness and hyperbolicity.
\newblock {\em Journal of Number Theory}, 285:194--208, 2026.

\bibitem[Has03]{HassettSurvey}
B.~Hassett.
\newblock Potential density of rational points on algebraic varieties.
\newblock In {\em Higher dimensional varieties and rational points ({B}udapest,
  2001)}, volume~12 of {\em Bolyai Soc. Math. Stud.}, pages 223--282. Springer,
  Berlin, 2003.

\bibitem[HT00a]{HarrisTschinkel}
J.~Harris and Y.~Tschinkel.
\newblock Rational points on quartics.
\newblock {\em Duke Math. J.}, 104(3):477--500, 2000.

\bibitem[HT00b]{HassettTschinkel}
B.~Hassett and Y.~Tschinkel.
\newblock Abelian fibrations and rational points on symmetric products.
\newblock {\em Internat. J. Math.}, 11(9):1163--1176, 2000.

\bibitem[HT01]{HT01}
B.~Hassett and Y.~Tschinkel.
\newblock Density of integral points on algebraic varieties.
\newblock In {\em Rational points on algebraic varieties}, volume 199 of {\em
  Progr. Math.}, pages 169--197. Birkh{\"a}user, Basel, 2001.

\bibitem[JL15]{JL2}
A.~Javanpeykar and D.~Loughran.
\newblock Good reduction of algebraic groups and flag varieties.
\newblock {\em Arch. Math. (Basel)}, 104(2):133--143, 2015.

\bibitem[JL21]{JLalg}
A.~Javanpeykar and D.~Loughran.
\newblock Arithmetic hyperbolicity and a stacky {C}hevalley-{W}eil theorem.
\newblock {\em J. Lond. Math. Soc. (2)}, 103(3):846--869, 2021.

\bibitem[JL24]{JLitt}
A.~Javanpeykar and D.~Litt.
\newblock Integral points on algebraic subvarieties of period domains: from
  number fields to finitely generated fields.
\newblock {\em Manuscripta Math.}, 173(1-2):23--44, 2024.

\bibitem[KPS22]{Smeets2}
S.~Kebekus, J.-V. Pereira, and A.~Smeets.
\newblock Failure of the {B}rauer-{M}anin principle for a simply connected
  fourfold over a global function field, via orbifold {M}ordell.
\newblock {\em Duke Math. J.}, 171(17):3515--3591, 2022.

\bibitem[Lan86]{Langconj}
S.~Lang.
\newblock Hyperbolic and diophantine analysis.
\newblock {\em Bull. Amer. Math. Soc. (N.S.)}, 14:159--205, 1986.

\bibitem[LSS20]{LSS}
D.~Loughran, A.~N. Skorobogatov, and A.~Smeets.
\newblock Pseudo-split fibers and arithmetic surjectivity.
\newblock {\em Ann. Sci. \'{E}c. Norm. Sup\'{e}r. (4)}, 53(4):1037--1070, 2020.

\bibitem[Moe]{Moerman}
B.~Moerman.
\newblock Generalized {C}ampana points and adelic approximation on toric
  varieties.
\newblock {\em arXiv:2407.03048}.

\bibitem[Ols16]{OlssonBook}
M.~Olsson.
\newblock {\em Algebraic spaces and stacks}, volume~62 of {\em American
  Mathematical Society Colloquium Publications}.
\newblock American Mathematical Society, Providence, RI, 2016.

\bibitem[Poo17]{PoonenRat}
B.~Poonen.
\newblock {\em Rational points on varieties}, volume 186 of {\em Graduate
  Studies in Mathematics}.
\newblock American Mathematical Society, Providence, RI, 2017.

\bibitem[Ray70]{RaynaudAbelian}
M.~Raynaud.
\newblock {\em Faisceaux amples sur les sch\'{e}mas en groupes et les espaces
  homog\`enes}.
\newblock Lecture Notes in Mathematics, Vol. 119. Springer-Verlag, Berlin-New
  York, 1970.

\bibitem[RTW21]{RTW}
E.~Rousseau, A.~Turchet, and J.~T.-Y. Wang.
\newblock Nonspecial varieties and generalised {L}ang-{V}ojta conjectures.
\newblock {\em Forum Math. Sigma}, 9:Paper No. e11, 29, 2021.

\bibitem[Ser08]{Serre}
J.-P. Serre.
\newblock {\em Topics in {G}alois theory}, volume~1 of {\em Research Notes in
  Mathematics}.
\newblock A K Peters, Ltd., Wellesley, MA, second edition, 2008.
\newblock With notes by Henri Darmon.

\bibitem[Sme17]{Smeets}
A.~Smeets.
\newblock Insufficiency of the \'{e}tale {B}rauer-{M}anin obstruction: towards
  a simply connected example.
\newblock {\em Amer. J. Math.}, 139(2):417--431, 2017.

\bibitem[{Sta}15]{stacks-project}
The {Stacks Project Authors}.
\newblock \emph{{S}tacks {P}roject}.
\newblock http://stacks.math.columbia.edu, 2015.

\bibitem[Voj86]{Vojta3}
P.~Vojta.
\newblock A higher-dimensional {M}ordell conjecture.
\newblock In {\em Arithmetic geometry ({S}torrs, {C}onn., 1984)}, pages
  341--353. Springer, New York, 1986.

\end{thebibliography}
\bibliographystyle{alpha}

\end{document}